\newtheorem*{rep@theorem}{\rep@title}
\newcommand{\newreptheorem}[2]{%
\newenvironment{rep#1}[1]{%
 \def\rep@title{#2 \ref{##1}}%
 \begin{rep@theorem}}%
 {\end{rep@theorem}}}
\newtheorem*{rep@cor}{\rep@title}
\newcommand{\newrepcor}[2]{%
\newenvironment{rep#1}[1]{%
 \def\rep@title{#2 \ref{##1}}%
 \begin{rep@cor}}%
 {\end{rep@cor}}}
\newtheorem*{rep@prop}{\rep@title}
\newcommand{\newrepprop}[2]{%
\newenvironment{rep#1}[1]{%
 \def\rep@title{#2 \ref{##1}}%
 \begin{rep@prop}}%
 {\end{rep@prop}}}
\newtheorem{cor}{Corollary}[section]
\newtheorem{theorem}[cor]{Theorem}
\newtheorem{prop}[cor]{Proposition}
\newtheorem{lemma}[cor]{Lemma}
\theoremstyle{definition}
\newtheorem{defi}[cor]{Definition}
\theoremstyle{remark}
\newtheorem{remark}[cor]{Remark}
\newtheorem*{remark*}{Remark}
\newtheorem{example}[cor]{Example}
\newtheorem*{notation*}{Notation}
\newlist{steps}{enumerate}{1}
\setlist[steps, 1]{itemsep=8pt,leftmargin=0cm,itemindent=.5cm,labelwidth=\itemindent,labelsep=0cm,align=left,label = \textbf{\emph{Step \arabic*}:\,}}
\newcommand{\myitem}[1]{%
\item[#1]\protected@edef\@currentlabel{#1}%
}
\newcommand{\minko}{\mathbb{R}^{1,2}}
\newcommand{\ads}{\mathbb{A}\mathrm{d}\mathbb{S}^3}
\newcommand{\HP}{\mathbb{HP}^3}
\newcommand{\isom}{\mathrm{Isom}}
\newcommand{\inner}[1] {\langle #1 \rangle}
\newcommand{\inners}{\langle \cdot, \cdot \rangle}
\begin{document}\raggedbottom

\setcounter{secnumdepth}{3}
\setcounter{tocdepth}{2}

\title[Infinitesimal earthquake]{The Infinitesimal earthquake theorem for vector fields on the circle}

\author[Farid Diaf]{Farid Diaf}
\address{Farid Diaf: Univ. Grenoble Alpes, CNRS, IF, 38000 Grenoble, France.} \email{farid.diaf@univ-grenoble-alpes.fr}

\thanks{}

\maketitle

\begin{abstract}
We prove that any continuous vector field on a circle is the extension in a suitable sense, of a unique infinitesimal earthquake of the hyperbolic plane. Furthermore, we obtain other extension results when the vector field is assumed only to be upper or lower semicontinuous. This leads to a generalization of Kerckhoff's and Gardiner's infinitesimal earthquake theorems to a broader setting, using a completely novel approach. The proof is based on the geometry of the dual of Minkowski three-space, also called Half-pipe three-geometry. In this way, we obtain a simple characterization of Zygmund vector fields on the circle in terms of width of convex hulls.
\end{abstract}
\tableofcontents
\section{Introduction}
In his $1990$ pioneering paper \cite{Mess}, Mess provided a deep connection between the study of Teichmüller theory of hyperbolic surfaces and three-dimensional Lorentzian geometries of constant sectional curvature. In the case of zero curvature, the \textit{Minkowski space} is the model of flat Lorentzian geometry. Mess was interested in \textit{flat maximal globally hyperbolic spacetimes}, whose study is motivated by general relativity. He showed that the moduli space of these spacetimes is the tangent bundle of the Teichmüller space of the spatial part. Several contributions have been made on this subject, see for example \cite{Note_on_paper_mess,flatspacetimes_bonsante,Barbot_flatspacetime,canorot,BS_flat_conical}.

Mess also studied the case of negative curvature Lorentzian geometry, specifically the so-called \textit{Anti-de Sitter space}, denoted by $\ads$. It is the Lie group $\mathrm{Isom}(\mathbb{H}^2)$ of orientation-preserving isometries of the hyperbolic plane $\mathbb{H}^2$, endowed with its bi-invariant metric induced by its Killing form.

Mess observed that through the \textit{Gauss map} construction, the convex hull in Anti-de Sitter space provides an earthquake map between closed hyperbolic surfaces. This discovery led him to recover of the so-called Thurston's \textit{earthquake theorem} \cite{Thurston}. Roughly speaking, earthquake maps are a continuous version of Dehn twists. Originally developed mainly by Kerckhoff and Thurston, they played a central role in the study of the mapping class group and were a fundamental step towards solving the Nielsen realization problem, as proved by Kerckhoff \cite{Kerearth}. Earthquake maps have been the subject of extensive research in various direction \cite{GHL,Junhu_Zygnorm,Gardiner1999QuasiconformalTT,Saric,miyachisaric,Fixedpoint,earthquekeandparticles}.

Since Mess's work, interest in three-dimensional Anti-de Sitter space has grown, and the Gauss map construction has been generalized to various settings, which have proven to be useful for finding extensions of orientation-preserving circle homeomorphisms to the disk, see for example \cite{Maximalsurface, Areapreserving}. See also \cite{diafseppi2023antide} for a detailed Anti-de Sitter proof of the general version of Thurston's earthquake theorem, which states that every orientation preserving circle homeomorphism is this extension on the boundary of an earthquake of $\mathbb{H}^2$.

\subsection{Main results}
The purpose of this paper is to investigate the infinitesimal counterpart of the aforementioned construction. Specifically, we aim to find a natural way to extend a vector field defined on the circle into the disk using the geometry of the three dimensional \textit{Half-pipe space} $\HP$, also known as the \textit{Co-Minkowski space}. One of the models of Minkowski space $\minko$, is the three-space dimensional vector space $\mathbb{R}^3$ endowed with a bilinear form of signature $(-,+,+)$. The Half-pipe space is the space of all spacelike planes in $\minko$. This space can be identified with $\mathbb{H}^2\times\mathbb{R}$. Indeed, for each pair $(\eta, t)\in\mathbb{H}^2\times\mathbb{R}$, one can associate a spacelike plane in $\minko$ for which the normal is given by $\eta$, and the oriented distance through the normal direction is $t$. Given that the tangent bundle of $\mathbb{S}^1$ is trivial, one can view each vector field $X$ on the circle as a function $\phi_X:\mathbb{S}^1 \to \mathbb{R}$, called the \textit{support function}, where $X(z) = iz\phi_X(z)$ for every $z \in \mathbb{S}^1$. We use the terminology of "support function" even though it is typically assigned to convex sets. Indeed, we will see in Remark \ref{remark_on_support_function}, that the function $\phi_X$, serves as the support function of a convex set in Minkowski space.
Now having identified a vector field $X$ with its support function $\phi_X:\mathbb{S}^1\to\mathbb{R}$, we can see the graph of $\phi_X$ in $\mathbb{S}^1\times\mathbb{R}\cong\partial\mathbb{H}^2\times\mathbb{R}$, can be considered as a curve in the boundary at infinity of $\HP$. The philosophy here is that any surface in $\HP$ that is bounded by that curve will provide an extension to the disk of our vector field $X$.

Applying this construction to the convex hull in $\HP$, we obtain the following result, which can be regarded as the infinitesimal version of Thurston's earthquake theorem in the universal setting (see Theorem \ref{thurs}):

\begin{theorem}\label{TH1}
Let $X$ be a continuous vector field on $\mathbb{S}^1$. Then there exists a left (or right) infinitesimal earthquake $\mathcal{E}$ on $\mathbb{H}^2$ along a geodesic lamination $\lambda$ which extends continuously to $X$ on $\mathbb{S}^1$. Moreover the infinitesimal earthquake is unique, except that there is a range of choices for the infinitesimal earthquake on any leaf of $\lambda$ where $\mathcal{E}$ is discontinuous.
\end{theorem}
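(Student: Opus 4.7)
The plan is to adapt to the infinitesimal setting Mess's proof of Thurston's earthquake theorem via convex hulls in Anti-de Sitter space, now replacing $\ads$ by the Half-pipe space $\HP$. Starting from the continuous vector field $X$ with support function $\phi_X:\mathbb{S}^1 \to \mathbb{R}$, I would view the graph $\Gamma_X \subset \mathbb{S}^1 \times \mathbb{R} \cong \partial_\infty \HP$ as a topological circle in the boundary at infinity. Because $\phi_X$ is continuous and $\mathbb{S}^1$ is compact, $\Gamma_X$ is compact; hence its convex hull $C(X) \subset \HP$ is a well-defined closed convex set with nonempty interior whose asymptotic boundary is exactly $\Gamma_X$. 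The distinguished vertical direction in the fibration $\HP \cong \mathbb{H}^2 \times \mathbb{R}$ then decomposes $\partial C(X)$ into an ``upper'' and a ``lower'' component $\partial^\pm C(X)$, each of which projects bijectively to $\mathbb{H}^2$.

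The next step is to analyze the intrinsic geometry of $\partial^\pm C(X)$. In Half-pipe geometry, totally geodesic spacelike planes are the horizontal slices $\mathbb{H}^2 \times \{t\}$, so the boundary of $C(X)$ will consist of maximal ``flat'' pieces glued along spacelike geodesics, and the jump of the height coordinate across a gluing geodesic encodes precisely an infinitesimal shear. Projecting these gluing geodesics vertically to $\mathbb{H}^2$ produces a geodesic lamination $\lambda^\pm$ carrying a transverse signed measure, i.e.\ an infinitesimal measured lamination. The infinitesimal left (resp.\ right) earthquake $\mathcal{E}$ is then defined by integrating these shears along $\lambda^+$ (resp.\ $\lambda^-$), exactly as in Gardiner's construction on simple finite laminations, extended by density/continuity; this is the natural linearization of the $\ads$ construction used in the finite earthquake theorem.

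The continuous extension to $X$ comes from a boundary trace analysis: approaching a point $z \in \mathbb{S}^1$ along $\partial^\pm C(X)$, the height coordinate tends to $\phi_X(z)$, and one checks that the support function of $\mathcal{E}$ coincides at the boundary with $\phi_X$. For uniqueness, I would argue that the graph of the support function of any other infinitesimal earthquake $\mathcal{E}'$ extending $X$ must lie between the upper and lower boundaries of $C(X)$ (since both support functions share the boundary $\Gamma_X$ and satisfy a Lipschitz/convexity constraint inherited from being shears along a lamination). A standard convex-hull rigidity argument then forces $\mathcal{E}'$ to coincide with $\mathcal{E}$ on every leaf where $\mathcal{E}$ is continuous; precisely on the leaves of discontinuity, $\partial^\pm C(X)$ contains a two-dimensional flat strip, and any horizontal value inside this strip is an admissible choice, accounting for the one-parameter indeterminacy described in the statement.

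The main obstacle will be setting up the pleated-surface formalism in $\HP$ rigorously: establishing that $\partial^\pm C(X)$ really is stratified into totally geodesic flat faces meeting along a geodesic lamination (rather than a more complicated laminar set), and that the ``jump data'' satisfies the local finiteness needed to define a transverse measure. A secondary difficulty is verifying that the resulting measure coincides with the shear measure of an infinitesimal earthquake in Gardiner's analytic sense, so that the geometric statement produced by the convex-hull construction actually matches the object appearing in Theorem~\ref{TH1}.
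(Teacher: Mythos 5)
Your setup (graph of $\phi_X$ in $\partial\HP$, convex hull, two boundary components over $\mathbb{D}^2$) matches the paper, but the core of your construction rests on two points that do not hold. First, totally geodesic spacelike planes in $\HP$ are \emph{not} the horizontal slices $\mathbb{H}^2\times\{t\}$: in the Klein model $\mathbb{D}^2\times\mathbb{R}$ they are the graphs of the affine functions $\eta\mapsto\langle(1,\eta),v\rangle_{1,2}$, dual to points $v\in\minko$, i.e.\ to Killing fields of $\mathbb{H}^2$ via $\Lambda$. This duality is exactly what makes the construction work: the paper defines $\mathcal{E}_X^{\pm}(\eta)$ \emph{pointwise} as the value at $\eta$ of the Killing field dual to a support plane of $\mathrm{epi}^{\mp}(\phi_X^{\pm})$ at $(\eta,\phi_X^{\pm}(\eta))$ (with the medial choice on bending lines), so no integration over the lamination is ever performed. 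If the faces were horizontal slices, the only bending data would be a scalar height jump and you could never produce the hyperbolic Killing fields that constitute the shear. Second, your actual definition of $\mathcal{E}$ --- extract the bending lamination $\lambda^{\pm}$ and then ``integrate the shears as in Gardiner's construction, extended by density/continuity'' --- fails in precisely the generality the theorem claims. Gardiner's integration requires a bounded (Thurston-norm finite) lamination, equivalently a Zygmund field; for a merely continuous $X$ the bending lamination of $\partial_{\pm}\mathcal{C}(X)$ need not be bounded, and there is no density/continuity argument that produces an earthquake field extending continuously to $X$ from the measure alone (recall that not even every measured lamination arises as an earthquake measure). Going beyond the Zygmund case is the whole content of Theorem \ref{TH1}, so reducing to Gardiner's analytic construction is circular for the cases that matter.

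Two further steps are substantially harder than you indicate. The boundary extension is not a matter of the height coordinate tending to $\phi_X(z)$: one must show that the Killing fields dual to support planes at points $(\eta_n,\phi_X^-(\eta_n))$ with $\eta_n\to z$ converge, evaluated at $\eta_n$, to $X(z)$, and the delicate case is when the dual vectors $\sigma_n$ diverge and the support planes degenerate to a vertical plane; the paper handles this with the quantitative control of $(1-r_n)b_n$ and $(1-r_n)c_n$ via the two-sided inequality \eqref{equationofextension}, an argument your ``boundary trace analysis'' does not supply. For uniqueness, the correct mechanism is not a Lipschitz/convexity constraint on a support function of $\mathcal{E}'$: one subtracts the Killing field of $\mathcal{E}'$ on a stratum, uses the sign property of comparison fields of left earthquakes (Lemma \ref{2.27.}) to show the resulting boundary support function is nonnegative, and then property $(\mathrm{P}4)$ forces the dual plane to be a support plane of $\mathrm{epi}^+(\phi_X^-)$; combined with Lemma \ref{carachterisation_conti_bending} (continuity at $\eta$ is equivalent to uniqueness of the support plane) this yields equality of the laminations and of the fields off the discontinuity leaves. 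Your sketch is in this spirit but omits the sign lemma and the support-plane characterization, which are what make the argument close.
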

In simpler terms, an infinitesimal earthquake is a vector field on the hyperbolic plane that acts as a \textit{Killing} vector field in the strata of the geodesic lamination and "slips" along the leaves of $\lambda$. The geodesic lamination $\lambda$ supports a transverse measure called the \textit{infinitesimal earthquake measure}, which quantifies the amount of shearing between two strata of $\lambda$. The infinitesimal earthquake is continuous except on the \textit{atomic} leaves of $\lambda$, namely, those leaves that have a positive measure.

Infinitesimal earthquakes are essential objects in \textit{infinitesimal Teichmüller theory}. For instance, when the infinitesimal earthquake is invariant under the action of a Fuchsian group, this vector field represents a tangent vector of the Teichmüller space of the hyperbolic surface obtained by quotienting the hyperbolic plane by the underlying Fuchsian group (see Section \ref{equiva_inf_earth} for more details). In this context, it can also be seen as the gradient of the length function, as proven by Wolpert \cite{wolpert}.

Theorem \ref{TH1} can be extended to cases where the vector field $X$ is only lower/upper semicontinuous, meaning that $\phi_X$ is lower/upper semi-continuous. In this situation, we obtain extensions along line segments.

\begin{theorem}\label{THrad}
Let $X$ be a vector field on $\mathbb{S}^1$. Then:
\begin{enumerate}
\item If $X$ is lower semicontinuous, then there exists a left infinitesimal earthquake $\mathcal{E}$ on $\mathbb{H}^2$ that extends radially to $X$.
\item If $X$ is upper semicontinuous, then there exists a right infinitesimal earthquake $\mathcal{E}$ on $\mathbb{H}^2$ that extends radially to $X$.
\end{enumerate}
\end{theorem}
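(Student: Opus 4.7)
The plan is to adapt the convex hull construction of Theorem \ref{TH1} to the semicontinuous setting. I focus on part (1); part (2) follows by the involution $\phi_X \mapsto -\phi_X$, which exchanges \emph{lsc} with \emph{usc} and \emph{left} with \emph{right}. The starting point is that although the graph of $\phi_X$ is no longer a Jordan curve in $\partial\HP \cong \partial\mathbb{H}^2 \times \mathbb{R}$, the closed epigraph
\[
E_X := \{(\xi, t) \in \partial\mathbb{H}^2 \times \mathbb{R} : t \geq \phi_X(\xi)\}
\]
is closed in $\partial\HP$, precisely because $\phi_X$ is lower semicontinuous. I would consider the closed convex hull of $E_X$ inside $\HP$ and extract its lower boundary $\Sigma_X$; this is a convex pleated surface, so the same dictionary used in Theorem \ref{TH1} produces a geodesic lamination $\lambda$ carrying a transverse measure, and hence an associated left infinitesimal earthquake $\mathcal{E}$ on $\mathbb{H}^2$.

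To verify that $\mathcal{E}$ extends radially to $X$, the strategy is to approximate. Using lower semicontinuity together with the metric structure on $\mathbb{S}^1$, choose an increasing sequence $\phi_n \nearrow \phi_X$ of continuous functions. By Theorem \ref{TH1} each $\phi_n$ is the continuous boundary extension of a left infinitesimal earthquake $\mathcal{E}_n$, produced from the lower boundary $\Sigma_n$ of the convex hull in $\HP$ of the graph of $\phi_n$. The monotone convergence $\phi_n \nearrow \phi_X$ translates into monotone convergence $\Sigma_n \nearrow \Sigma_X$ in the vertical direction, and then into convergence of the underlying measured laminations and pointwise convergence $\mathcal{E}_n \to \mathcal{E}$ on $\mathbb{H}^2$.

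The main obstacle is controlling the radial limit itself. For $\xi \in \mathbb{S}^1$, the radial geodesic ray in $\mathbb{H}^2$ ending at $\xi$ corresponds, under the duality between $\mathbb{H}^2$ and spacelike planes in $\HP$, to a family of supporting planes of $\Sigma_X$ whose asymptotic height is governed by the value of $\phi_X$ at $\xi$. At a continuity point of $\phi_X$ this is inherited from the corresponding statement for each $\mathcal{E}_n$ together with uniform control of the approximation. At a discontinuity point the delicate issue is that the vertical segment $\{\xi\} \times [\phi_X(\xi),\, \limsup_{\zeta \to \xi} \phi_X(\zeta)]$ sits inside the boundary at infinity of the convex hull, so the lower boundary $\Sigma_X$ touches this entire segment asymptotically; the specifically \emph{radial} approach (rather than any tangential one) must be used to single out the lowest point, which by lower semicontinuity equals $\phi_X(\xi)$. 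Once this is established, one obtains $\mathcal{E}(z) \to iz\,\phi_X(z)$ as a radial limit for every $z \in \mathbb{S}^1$, concluding the proof.
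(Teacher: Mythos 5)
Your overall setup (pass to the lower boundary of the convex hull of the graph of $\phi_X$ in $\HP$ and read off Killing fields from its support planes) is the same as the paper's, which works directly with the convex envelope $\phi_X^-$ of \eqref{phi-} and defines $\mathcal{E}_X^-$ by \eqref{infearth_convexhull}. But the way you propose to verify the radial extension --- approximating $\phi_X$ by continuous functions $\phi_n\nearrow\phi_X$ and passing Theorem \ref{TH1} to the limit --- has a genuine gap. Since $\phi_X$ is discontinuous, the convergence $\phi_n\to\phi_X$ is pointwise and monotone but never uniform, so even granting pointwise convergence $\mathcal{E}_n\to\mathcal{E}$ on $\mathbb{D}^2$ (itself not immediate: convergence of the convex envelopes $\phi_n^-$ is only locally uniform in the interior, and support planes at bending lines converge only up to subsequences), you cannot interchange the radial limit $s\to 0^+$ with the limit $n\to\infty$; the ``uniform control of the approximation'' you invoke at continuity points is exactly what is unavailable, and at discontinuity points your argument reduces to the assertion that the radial approach ``singles out the lowest point, which equals $\phi_X(\xi)$'' --- but that assertion is precisely the statement to be proved, not a consequence of the approximation scheme. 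A secondary imprecision: producing the infinitesimal earthquake ``from the transverse measure'' of the pleating lamination is not justified in this generality (the lamination need not be bounded, and no metering-type result for infinitesimal earthquakes is used in the paper); the vector field must be defined directly from the support planes, and the earthquake property checked via the comparison-field argument of Proposition \ref{earthquake_properties}, which the paper carries out for semicontinuous $X$ without any approximation.

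The paper's route is direct. The input that makes the semicontinuous case work is property $(\mathrm{P}2)$: when $\phi_X$ is lower semicontinuous, the radial boundary value of $\phi_X^-$ equals $\phi_X$ at \emph{every} point of $\mathbb{S}^1$. The proof of Proposition \ref{extension}(2) then repeats the support-plane estimates of the continuous case along a fixed radial segment: when the dual points $\sigma_n$ of the chosen support planes diverge, one traps $(1-r_n)b_n$ between differences of values of $\phi_X^-$ at points lying on the \emph{same} segment $[x,\eta_\infty)$ (see \eqref{equationofextension2}), so only radial limits of $\phi_X^-$ are needed --- which is exactly why the conclusion is a radial, rather than continuous, extension. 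If you wish to salvage the approximation strategy, you would still need a uniform-in-$n$ barrier along the ray playing the role of these estimates, at which point the direct argument is both shorter and stronger.
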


To clarify the statement, consider $\mathbb{D}^2$, the Klein model of the hyperbolic space $\mathbb{H}^2$. We say that a vector field $\hat{X}$ on $\mathbb{D}^2$ \textit{extends radially} to $X$ if, for all $z\in\mathbb{S}^1$ and $x\in \mathbb{D}^2$, we have:  
$$ \lim_{s \to 0^+}\hat{X}((1-s)z+sx)=X(z).$$
Not that one cannot expect that the field $\hat{X}$ extends continuously to $X$, see Remark \ref{derniere_remark}. It is worth noting that Gardiner \cite{Gardinerthurston} proved an infinitesimal earthquake theorem for vector fields on the circle that are Zygmund. These vector fields are, in particular, H\"older continuous for all $0<\alpha<1$. Our Theorem \ref{TH1} is a generalization of Gardiner's infinitesimal earthquake since we only assume continuity of $X$. Furthermore, our approach is entirely novel. In fact, using our approach, we provide the following characterization of \textit{Zygmund} vector fields in terms of the \textit{width} of convex hulls in Half-pipe geometry.

\begin{theorem}\label{TH2}
Let $X$ be a continuous vector field on $\mathbb{S}^1$. Let $\mathcal{E}$ be a left (or right) infinitesimal earthquake along a measured lamination $\lambda$, extending to $X$. Denote: 
\begin{itemize}
    \item $w(X)$ the width of $X.$
    \item $\lVert X\rVert_{cr}$ the cross ratio norm of $X$.
    \item $\lVert\lambda\rVert_{\mathrm{Th}}$ the Thurston norm of $\lambda$.
\end{itemize}
Then the following estimates hold:    $$\frac{1-\tanh(1)}{2\sqrt{2}}\lVert\lambda\rVert_{\mathrm{Th}}\leq w(X)\leq \frac{8}{3}\lVert X\rVert_{cr}.$$
\end{theorem}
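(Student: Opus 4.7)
The plan is to prove the two inequalities separately, in both cases interpreting the width $w(X)$ as the maximal vertical gap between the upper and lower boundary components of the convex hull in $\HP$ of the graph of the support function $\phi_X$. By the Half-pipe machinery already set up in the paper, these two boundary components are precisely the graphs of the support functions of the right and left infinitesimal earthquake extensions of $X$, so $w(X)$ can be computed point by point in $\mathbb{H}^2$ as the difference of the two earthquake extensions at that point (in the appropriate normalisation).

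For the upper bound $w(X)\leq \frac{8}{3}\lVert X\rVert_{cr}$, the strategy is to exploit that the cross-ratio norm measures the failure of $\phi_X$ to be affine with respect to four-point Möbius configurations, while supporting planes of the convex hull in $\HP$ correspond to linear functionals on $\minko$, hence to Möbius vector fields on $\mathbb{S}^1$. Thus, at any interior point, the vertical separation between two parallel supporting planes can be expressed as a difference between the values of $\phi_X$ and a Möbius vector field evaluated at four boundary points. After normalising by an isometry so that the point of interest lies at the origin of the Klein disk and the four relevant boundary points are symmetric, I would bound this difference by $\lVert X\rVert_{cr}$ times a universal constant, and then optimise to obtain the factor $8/3$. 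This mirrors, in the infinitesimal/linearised world, the classical four-point estimate used in the quasiconformal setting by Gardiner--Hu--Lakic.

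For the lower bound $\frac{1-\tanh(1)}{2\sqrt{2}}\lVert\lambda\rVert_{\mathrm{Th}}\leq w(X)$, the idea is to exploit the measured lamination side of the picture. By definition of the Thurston norm, there exists a geodesic arc $\gamma$ in $\mathbb{H}^2$ of unit length whose total transverse $\lambda$-mass is arbitrarily close to $\lVert\lambda\rVert_{\mathrm{Th}}$. After applying an isometry I may assume $\gamma$ is centered at a fixed basepoint. The two endpoints of $\gamma$ lie in strata separated by a total earthquake shear of roughly $\lVert\lambda\rVert_{\mathrm{Th}}$, and in the Half-pipe model this shear translates into a vertical gap between the corresponding pairs of supporting planes at a nearby interior point. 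The constant $\frac{1-\tanh(1)}{2\sqrt{2}}$ should emerge by computing explicitly, in the Half-pipe coordinates $\mathbb{H}^2\times\mathbb{R}$, the normal component at the basepoint of the Killing field associated with a geodesic at hyperbolic distance $1$; the appearance of $\tanh(1)$ is the expected signature of such an estimate, and the $\sqrt{2}$ reflects the orthogonal decomposition in $\minko$ of the translation direction.

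The main obstacle will be the lower bound, because translating a one-dimensional transverse measure bound into a three-dimensional thickness estimate requires choosing carefully both the arc $\gamma$ realising the Thurston norm (up to $\varepsilon$) and the interior point of $\HP$ at which one measures the width, and then certifying that no smaller gap arises elsewhere in the chosen stratum. The upper bound is expected to be more routine, amounting to a direct cross-ratio computation at the boundary of $\HP$ and an optimisation over four-point configurations; the main care needed there is to track the normalisation conventions so that the constant $8/3$ comes out sharp.
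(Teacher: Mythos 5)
Your overall architecture (prove the two inequalities separately, read the width fiberwise as the gap between the graphs of $\phi_X^-$ and $\phi_X^+$, normalise by isometries of $\HP$, and use the invariance of both the width and the cross-ratio norm) is the same as the paper's, but there is a genuine gap in your lower bound. You take a unit arc $\gamma$ nearly realising $\lVert\lambda\rVert_{\mathrm{Th}}$ and claim that the shear between the strata at its endpoints ``translates into a vertical gap between the corresponding pairs of supporting planes at a nearby interior point.'' The two supporting planes you produce are both support planes of the \emph{same} boundary component $\partial_-\mathcal{C}(X)$ (the lower one, say); their mutual separation or angle is not the width, which by definition compares $\partial_-\mathcal{C}(X)$ with $\partial_+\mathcal{C}(X)$. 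The missing step is precisely the bridge the paper supplies in Lemma \ref{simple_result}: after normalising so that one support plane of $\mathrm{epi}^+(\phi_X^-)$ is $\mathbb{D}^2\times\{0\}$ touching at the origin, the boundary function satisfies $\phi_X(z)\leq 2w(X)$ for all $z\in\mathbb{S}^1$, and this is proved using the \emph{concavity of the upper envelope} $\phi_X^+$ (a chord through $(-z,0)$ and $(z,\phi_X(z))$ forces $\phi_X^+(0,0)\geq\tfrac12\phi_X(z)$, while $\phi_X^-(0,0)=0$). Only with this sup bound on $\phi_X$ can one feed the support-plane inequality $\langle(1,\eta),w\rangle_{1,2}\leq\phi_X^-(\eta)$ at the four cardinal boundary points, together with $-w_0+\tanh(1)w_1\geq 0$ at the far endpoint of the unit segment, to control all three coordinates of $w$ and get $\lVert w\rVert\leq\frac{2\sqrt2}{1-\tanh(1)}w(X)$; combined with the definitional inequality $\lambda(I)\leq\lVert v-w\rVert$ (the two-plane polygonal approximation of the bending measure, which you assert in the form ``shear roughly equals transverse mass'' and should instead justify as an inequality in the needed direction), this gives the stated constant. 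Without an argument of this type your plan never brings $\partial_+\mathcal{C}(X)$ into play, so it cannot bound the tilt of the second support plane by $w(X)$.

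For the upper bound your plan is directionally the paper's (normalise $X$ to vanish at three points, bound the width at the origin by twice the sup of $|\phi_X|$, then bound that sup by the cross-ratio norm, and propagate by equivariance), but the quantitative heart is the sup-norm estimate $\max_{z}\lVert X(z)\rVert_{euc}\leq\frac43\lVert X\rVert_{cr}$ for a Zygmund field vanishing at $(1,0),(0,1),(-1,0)$, which the paper imports from Fan--Hu (Lemma \ref{funju}); your proposal only gestures at ``optimising a four-point estimate'' and does not produce this constant, so as written the factor $\frac83$ is unsupported. Either cite such a lemma or carry out the four-point computation explicitly; the rest of your upper-bound outline then goes through as in Proposition \ref{zygmund_to_width}.
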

Let us explain the statement, concerning the width of a vector field. First, by using the support function of $X$, one can define the \textit{convex hull} of $X$ as the convex hull of its support function $\phi_X:\mathbb{S}^1\to \mathbb{R}$. The boundary of this convex hull in $\HP\cong\mathbb{D}^2\times\mathbb{R}$ consists of two connected components homeomorphic to disks, which we will refer  as $\partial_+\mathcal{C}(X)$ and $\partial_-\mathcal{C}(X)$. Suppose we take points $(\eta, t)\in\partial_+\mathcal{C}(X)$ and $(\eta, s)\in\partial_+\mathcal{C}(X)$. Their dual points are spacelike planes in Minkowski space $\minko$, and these planes are parallel since they have the same normal. Therefore, one can define the distance between $(\eta, t)$ and $(\eta, s)$ as the timelike distance between their spacelike dual planes in $\minko$. The \textit{width} of $X$ is then defined as the supremum of the distance between points $(\eta, t)\in\partial_+\mathcal{C}(X)$ and $(\eta, s)\in\partial_+\mathcal{C}(X)$. Roughly speaking, the cross ratio norm of a vector field measures how far the vector field is from a Killing vector field, in the sense that $\lVert X\rVert_{cr}$ is zero if and only if $X$ is the extension to $\mathbb{S}^1$ of a Killing vector field of $\mathbb{H}^2$. A continuous vector field is \textit{Zygmund} if it has a finite cross-ratio norm. Finally, for a measured geodesic lamination $\lambda$ of $\mathbb{H}^2$, the \textit{Thurston norm} of $\lambda$ is defined as:
\begin{equation}\label{thurstonnorm}     \lVert \lambda\rVert_{\mathrm{Th}}:=\underset{I \ \ \ \ \ \  }{\sup\lambda(I)},          \end{equation} where the supremum is over all geodesic arcs $I$ of unit length that transversely intersect the support of $\lambda.$

It is known from the work of Jun Hu \cite{Junhu_Zygnorm} that $\lVert X\rVert_{cr}\leq C\lVert \lambda\rVert_{\mathrm{Th}}$ holds for some universal constant $C>0$. Combining this result with Theorem \ref{TH2}, we conclude that the three quantities—the Thurston norm, cross ratio norm, and the width—are equivalent and one is finite if and only if all the three quantities are finite. 

It's worth noting that the Anti-de Sitter version of Theorem \ref{TH2} has also been established. Indeed, Bonsante and Schlenker \cite{Maximalsurface} provided a characterization of quasisymmetric circle homeomorphisms in terms of finiteness of the \textit{width} of the convex core of their graph in $\ads$. Later, Seppi \cite{SEP19} provides a quantitative estimate between the width of a quasisymmetric homeomorphism and its cross-ratio norm.
\subsection{Length and Thurston norm}
We now provide an application of the estimates obtained in Theorem \ref{TH2} to derive a result that seems to be non-trivial by using only techniques from hyperbolic geometry. Let $\Sigma_g$ be a connected, oriented, closed surface of genus $g \geq 2$, and denote by $\isom(\mathbb{H}^2)$ the isometry group of $\mathbb{H}^2$. Recall that a representation $\rho: \pi_1(\Sigma_g) \to \isom(\mathbb{H}^2)$ is \textit{Fuchsian} if $\mathbb{H}^2/\rho(\pi_1(\Sigma_g))$ is a closed \textit{hyperbolic} surface:

\begin{prop}\label{lengthvsThurs}
Let $\rho: \pi_1(\Sigma_g) \to \isom(\mathbb{H}^2)$ be a Fuchsian representation, and $\lambda$ be a measured geodesic lamination on the hyperbolic surface $S_{\rho} := \mathbb{H}^2/\rho(\pi_1(\Sigma_g))$. Then, the following inequality holds:
\begin{equation}
\ell_{\rho}(\lambda) \leq \frac{64\pi(3e-1)}{3(e-1)}(g-1) \lVert \lambda\rVert_{\mathrm{Th}}^{\rho}.
\end{equation}
\end{prop}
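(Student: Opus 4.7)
The plan is to combine Theorem \ref{TH2} with a direct geometric bound relating the length of a cocompact measured lamination to the width of the associated earthquake vector field on $\mathbb{S}^1$, thereby converting width control into a Thurston-norm control via the cross-ratio norm.

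First I would lift $\lambda$ to a $\rho$-equivariant measured geodesic lamination $\tilde\lambda$ on $\mathbb{H}^2$; by Theorem \ref{TH1} there is an infinitesimal earthquake along $\tilde\lambda$ that extends continuously to a vector field $X$ on $\mathbb{S}^1$, and equivariance of $\tilde\lambda$ forces $X$ to be $\rho$-equivariant. Consequently the convex hull $\mathcal{C}(X)\subset\HP$, its upper and lower boundary support functions $\phi_\pm:\mathbb{H}^2\to\mathbb{R}$, and the width $w(X)$ all descend to intrinsic invariants of the hyperbolic surface $S_\rho$, so that it makes sense to speak of $w(X)$ as a quantity on $S_\rho$.

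Next I would establish the key length-width estimate
\[
\ell_\rho(\lambda)\leq \mathrm{area}(S_\rho)\cdot w(X)=4\pi(g-1)\,w(X).
\]
The idea is that the vertical height $\phi_+(\eta)-\phi_-(\eta)$ of $\mathcal{C}(X)$ above a point $\eta\in\mathbb{H}^2$ admits an integral-geometric interpretation as the transverse mass of leaves of $\tilde\lambda$ "visible from" $\eta$; integrating this pointwise identity over a fundamental domain for $\rho$ converts the pointwise supremum defining $w(X)$ into an $L^1$ bound on the length, with the Gauss--Bonnet factor $4\pi(g-1)$ appearing as the hyperbolic area of $S_\rho$. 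Finally, I would apply the right-hand inequality $w(X)\leq\tfrac{8}{3}\lVert X\rVert_{cr}$ of Theorem \ref{TH2} together with Jun Hu's estimate $\lVert X\rVert_{cr}\leq C_{\mathrm{Hu}}\lVert\lambda\rVert_{\mathrm{Th}}^\rho$, in the explicit form $C_{\mathrm{Hu}}=\tfrac{2(3e-1)}{e-1}$ obtained by running Jun Hu's argument through the half-pipe support-function model; multiplying the three bounds together produces the advertised constant $\tfrac{64\pi(3e-1)}{3(e-1)}$.

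The main obstacle will be the length-width identity of the middle step: one must verify that the vertical height $\phi_+-\phi_-$ truly encodes the transverse mass of $\tilde\lambda$ visible from the base point, and that Fubini across a fundamental domain genuinely recovers the intrinsic length of $\lambda$ on $S_\rho$ rather than some twisted or weighted analog; this is the place where the half-pipe convex-hull geometry does all the work. A secondary subtlety is tracking the explicit constant in Jun Hu's bound, which is what ultimately produces the $\tfrac{3e-1}{e-1}$ factor in the final estimate and must be recovered cleanly from the exponential weights appearing in the Klein-model support function.
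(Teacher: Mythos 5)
Your outer skeleton coincides with the paper's: lift $\lambda$, produce the associated vector field $X$ on $\mathbb{S}^1$, bound $w(X)\leq\tfrac83\lVert X\rVert_{cr}$ by Theorem \ref{TH2}, invoke Hu's explicit bound $\lVert X\rVert_{cr}\leq\tfrac{2(3e-1)}{e-1}\lVert\lambda\rVert_{\mathrm{Th}}^{\rho}$, and close with the Gauss--Bonnet factor $4\pi(g-1)$. But the step you yourself flag as ``the main obstacle'' --- the inequality $\ell_\rho(\lambda)\leq \mathrm{area}(S_\rho)\cdot w(X)$ --- is exactly where the proof lives, and your proposed justification does not hold up as stated. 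It is not true, and is not what is used, that the fiber height $\phi_X^+(\eta)-\phi_X^-(\eta)$ is pointwise ``the transverse mass of leaves of $\tilde\lambda$ visible from $\eta$'' whose integral over a fundamental domain recovers $\ell_\rho(\lambda)$. The actual mechanism is a nontrivial result of Barbot and Fillastre (Proposition \ref{Fillastre_barbot_length}): there is an intermediate function $\phi^m_{\rho,\tau_\lambda}$, the graph of a zero mean curvature surface in $\HP$ squeezed between $\phi^-_{\rho,\tau_\lambda}$ and $\phi^+_{\rho,\tau_\lambda}$, such that $\ell_\rho(\lambda)=\int_{S_\rho}\bigl(\overline{\phi^m_{\rho,\tau_\lambda}}-\overline{\phi^-_{\rho,\tau_\lambda}}\bigr)\,\mathrm{d}\mathrm{A}_\rho$; only then does one bound the integrand by $\overline{\phi^+}-\overline{\phi^-}\leq w(X)$ and integrate. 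Without this length-as-volume formula (or an independent proof of it), your middle step is an unproven claim, and the heuristic you offer would at best give an inequality in the wrong direction of rigor: the width controls the gap to the middle surface, not a pointwise decomposition of the length.

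Two further points must be supplied to make even this route run. First, passing from the lamination to the vector field is not Theorem \ref{TH1} (which goes from a vector field to an earthquake); you need the bounded infinitesimal earthquake $X=\dot{\mathrm{E}}_l^{\tilde\lambda}$ of \eqref{bounded_infinitesimal_earthquake} (Gardiner--Hu--Lakic), together with the Bonsante--Seppi facts (Proposition \ref{BS17_left_inf_earth}) that the bending measure of $\mathrm{gr}(\phi_X^-)$ is $\tilde\lambda$ and that this graph is invariant under the twisted action $\mathrm{Is}(\rho,\tau_\lambda)$; the uniqueness statement of Theorem \ref{nieseppi} then identifies $\phi_X$ with $\phi_{\rho,\tau_\lambda}$, which is what makes Barbot--Fillastre applicable and makes $\overline{\phi^+}-\overline{\phi^-}$ descend to $S_\rho$. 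Relatedly, $X$ is not $\rho$-equivariant but $(\rho,\tau_\lambda)$-equivariant (equivariance up to the cocycle), and it is the fiberwise difference, not $\phi^\pm$ individually, that is invariant and descends. Second, the constant $\tfrac{2(3e-1)}{e-1}$ is simply Hu's published estimate and is cited as such; there is no need (and it would be risky) to ``re-run'' his argument through the half-pipe model.
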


It is worth recalling that the \textit{length} $\ell_{\rho}(\cdot)$ of a measured geodesic lamination is the unique continuous homogeneous function that extends the length of simple closed geodesics of $S_{\rho}$. The Thurston norm $\lVert \lambda\rVert_{\mathrm{Th}}^{\rho}$ is defined as the Thurston norm (as in \eqref{thurstonnorm}) of the lift of $\widetilde{\lambda}$ in $\mathbb{H}^2$ via the covering map $\mathbb{H}^2\to S_{\rho}$. Proposition \ref{lengthvsThurs} follows from an interpretation obtained by Barbot and Fillastre \cite{barbotfillastre} of the length function in terms of volume of some region in Half-pipe geometry.

It is important to note that the converse inequality does not hold, namely we can not have $\ell_{\rho}(\lambda)\geq C_g\lVert \lambda\rVert_{\mathrm{Th}}^{\rho}$ for some constant $C_g>0$ depending only on the genus of $\Sigma_g$. Indeed, one can consider a sequence of Fuchsian representations $\rho_n$ such that the length of a closed geodesic $\alpha$ tends to $0$. According to the classical collar lemma in hyperbolic geometry, the tubular neighborhood of $\alpha$ is large for such representations. Now, take $\lambda$ to be a measured geodesic lamination supported on $\alpha$ and consider the measure as the intersection number with $\alpha$. Then, a geodesic segment $I$ of length $1$ and transverse to $\alpha$ can never escape the tubular neighborhood of $\alpha$, so $\lambda(I)=1$, and hence the Thurston norm of $\lambda$ is $1$. However, $\ell_{\rho_n}(\lambda)\to 0$, making the inequality $\ell_{\rho}(\lambda)\geq C_g\lVert \lambda\rVert_{\mathrm{Th}}^{\rho}$ impossible.

\subsection{The strategy of the proof}

The key idea behind proving Theorem \ref{TH1} is based on a correspondence between \textit{spacelike} planes in Half-pipe space and points in Minkowski space. A plane in the the projective model of Half-pipe space $\mathbb{D}^2\times\mathbb{R}$ is spacelike if it is not vertical. This correspondence can be seen as the infinitesimal version of the projective duality between points and spacelike planes in $\ads$. Another important aspect is that one of the models of Minkowski space is the Lie algebra $\mathfrak{isom}(\mathbb{H}^2)$ of the Lie group $\mathrm{Isom}(\mathbb{H}^2)$, where each element of $\mathfrak{isom}(\mathbb{H}^2)$ corresponds to a Killing vector field of $\mathbb{H}^2$. As a result of this discussion, we establish a homeomorphism:

\begin{equation}\label{introduction_duality}
\mathcal{K}: \{\mathrm{Spacelike}\ \mathrm{planes}\ \mathrm{in}\ \HP\}\cong \{ \mathrm{Killing}\ \mathrm{vector} \ \mathrm{fields}\ \mathrm{in}\ \mathbb{H}^2\}.
\end{equation}
The approach to prove Theorem \ref{TH1} is as follows: Given a continuous vector field $X$ on $\mathbb{S}^1$ and its support function $\phi_X$, we observe that, up to changing the labeling, the boundary $\partial_-\mathcal{C}(X)$ (resp. $\partial_+\mathcal{C}(X)$) is the graph of a convex function $\phi_X^-:\mathbb{D}^2\to\mathbb{R}$ (resp. concave $\phi_X^+:\mathbb{D}^2\to \mathbb{R}$). Now, we need to define a vector field on $\mathbb{D}^2$ that is a left (resp. right) infinitesimal earthquake.

Let $\eta\in \mathbb{D}^2$ and choose $\mathrm{P}_{(\eta,\phi_X^+(\eta))}$ and $\mathrm{P}_{(\eta,\phi_X^+(\eta))}$ support planes of $\partial_-\mathcal{C}(X)$ and $\partial_+\mathcal{C}(X)$ at $(\eta,\phi_X^-(\eta))$ and $(\eta,\phi_X^+(\eta))$, respectively. This choice of support planes can be made in a canonical way, which will be explained later. Then, using the identification \eqref{introduction_duality}, $\mathcal{K}(\mathrm{P}_{(\eta,\phi_X^+(\eta))})$ and $\mathcal{K}(\mathrm{P}_{(\eta,\phi_X^+(\eta))})$ correspond to Killing vector fields of $\mathbb{H}^2$. We define:
\begin{equation}\label{vector_field_duality}
    \mathcal{E}^{\pm}_X(\eta)=\mathcal{K}(\mathrm{P}_{(\eta,\phi_X^+(\eta))})(\eta).
\end{equation}
Remark that the boundaries $\partial_\pm\mathcal{C}(X)$ are almost everywhere totally geodesic except on a measured geodesic laminations $\lambda^{\pm} $, which is supported where the surface is bent. While proving that $\mathcal{E}_X^-$ (resp. $\mathcal{E}_X^+$) is a left (resp. right) infinitesimal earthquake along $\lambda^{-}$ (resp. $\lambda^{+}$) is relatively easy, the extension of $\mathcal{E}_X^\pm$ to $X$ requires a more technical argument. We prove in Section \ref{Section3} the following:

\begin{prop}\label{extension}
Let $X$ be a vector field on $\mathbb{S}^1$. Then: 
\begin{enumerate}
\item If $X$ is continuous, then $\mathcal{E}_X^+$ and $\mathcal{E}_X^-$ extend continuously to $X$.
\item If $X$ is lower semicontinuous, then $\mathcal{E}_X^-$ extends radially to $X$.
\item If $X$ is upper semicontinuous, then $\mathcal{E}_X^+$ extends radially to $X.$
\end{enumerate}
\end{prop}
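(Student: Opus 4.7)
The plan is to convert the extension problem into one about the boundary behavior of the affine functions defining the support planes $\mathrm{P}_\eta$, via a trace formula for the duality $\mathcal{K}$. By symmetry under $\phi_X \mapsto -\phi_X$ (swapping $\partial_-\mathcal{C}(X)$ with $\partial_+\mathcal{C}(X)$), it suffices to analyze $\mathcal{E}_X^-$.

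The first step is to establish the following boundary trace formula for Killing fields: if $\mathrm{P} \subset \HP \cong \mathbb{D}^2 \times \mathbb{R}$ is the graph of an affine function $a:\mathbb{D}^2 \to \mathbb{R}$, then $\mathcal{K}(\mathrm{P})$ extends continuously to $\overline{\mathbb{H}^2}$ and its support function on $\mathbb{S}^1$ equals $a|_{\mathbb{S}^1}$; equivalently, $\mathcal{K}(\mathrm{P})(z) = iz\,a(z)$ for $z \in \mathbb{S}^1$. This is natural from \eqref{introduction_duality}, and I would verify it by expanding both sides on a basis of $\mathfrak{isom}(\mathbb{H}^2) \cong \mathbb{R}^{1,2}$: one rotation and two boosts, checking that their dual planes in the projective model of $\HP$ are exactly the graphs of the corresponding affine functions (constant, and two linear coordinate functions).

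Granting the trace formula, the limit $\lim_{\eta\to z} \mathcal{E}_X^-(\eta) = \lim_{\eta\to z} K_\eta(\eta)$, with $K_\eta := \mathcal{K}(\mathrm{P}_\eta)$, splits into (i) comparing the interior value $K_\eta(\eta)$ with the boundary value $K_\eta(z)$, and (ii) controlling the boundary value $K_\eta(z) = iz\,a_\eta(z)$. For (ii), since $\mathrm{P}_\eta$ supports the graph of the convex function $\phi_X^-$ from below at $(\eta, \phi_X^-(\eta))$, I have $a_\eta \leq \phi_X^-$ on $\overline{\mathbb{D}^2}$ with equality at $\eta$. Combined with the fact that $\phi_X^- = \phi_X$ on $\mathbb{S}^1$ (continuous case) or $\phi_X^-(\eta) \to \phi_X(z)$ along radial segments (lower semicontinuous case, via the standard radial-limit property of lower convex envelopes of bounded lower semicontinuous functions on $\mathbb{S}^1$), one obtains both $a_\eta(z) \leq \phi_X(z)$ and $a_\eta(\eta) \to \phi_X(z)$. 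The matching lower bound $a_\eta(z) \geq \phi_X(z) - o(1)$ then follows by interpolating $a_\eta$ affinely from $\eta$ to $z$ along the radial segment, using that the canonical choice of $\mathrm{P}_\eta$ is made precisely so that the "slope in the outward radial direction" is controlled (it is chosen as the plane whose restriction to the chord from $\eta$ toward $z$ rises no faster than $\phi_X^-$ does). For (i), the equicontinuity of $\{K_\eta\}$ near $z$ follows from the boundedness of $\{a_\eta\}$ together with the canonical choice preventing degeneration to a vertical plane, so $K_\eta(\eta) - K_\eta(z) \to 0$.

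The main technical obstacle is the analysis in the semicontinuous cases (2) and (3). Two points must be established carefully: first, that the convex envelope $\phi_X^-$ of a bounded lower semicontinuous $\phi_X$ on $\mathbb{S}^1$ has radial limits at every boundary point equal to $\phi_X(z)$ (with the analogous statement for $\phi_X^+$ and upper semicontinuous $X$); and second, that the canonical choice of support plane at $(\eta, \phi_X^-(\eta))$ — which the paper distinguishes to ensure the vector field is a \emph{left} rather than right infinitesimal earthquake — always remains spacelike and has the directional control in the outward radial direction needed for the matching bound. Once these two facts are in place, the continuous case (1) follows from (2) and (3) applied to $X$ viewed simultaneously as lower and upper semicontinuous, giving full continuous convergence rather than merely radial convergence.
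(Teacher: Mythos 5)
Your outline founders at the point the paper's proof is actually designed to handle: the case where the support planes at $(\eta_n,\phi_X^-(\eta_n))$ degenerate to a \emph{vertical} plane as $\eta_n\to z$. You dismiss this by asserting two properties of the ``canonical choice'' that the construction does not have. The medial plane of Lemma \ref{extremelemma} is chosen only to make $\mathcal{E}_X^{\pm}$ well defined along bending lines; it is not selected to control the slope of $a_\eta$ along the chord from $\eta$ to $z$, and it cannot prevent vertical degeneration, since at points where the support plane is \emph{unique} (interiors of flat pieces, points of differentiability of $\phi_X^-$) there is no choice at all, and such unique support planes may still become vertical as $\eta\to z$ (this happens whenever the boundary gradient of $\phi_X^-$ blows up, e.g.\ for envelopes with behavior of the type $-\sqrt{1-x^2}$ near $z=(1,0)$). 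In that regime the vectors $\sigma_\eta$ dual to $\mathrm{P}_\eta$ diverge in $\minko$, so the Killing fields $K_\eta=\Lambda(\sigma_\eta)$ diverge in $\mathfrak{isom}(\mathbb{H}^2)$, the family $\{a_\eta\}$ is not bounded, and there is no equicontinuity of $\{K_\eta\}$ near $z$: your step (i) fails, and your step (ii)'s matching lower bound $a_\eta(z)\geq\phi_X(z)-o(1)$ is exactly the nontrivial cancellation that needs proof. The paper obtains it by normalizing via the isometry group so that $z=(1,0)$ and $\phi_X$ vanishes at $(1,0),(0,\pm1)$, expanding $\sigma_n$ in a moving frame adapted to the radial direction, and then squeezing the dangerous coefficient via the support-plane inequality evaluated at the two radial points $\eta_n/r_n$ and $(2r_n-1)\eta_n/r_n$ (the sandwich \eqref{equationofextension}, giving $(1-r_n)b_n\to0$ and then $(1-r_n)c_n\to0$ from \eqref{ineq}); note this argument uses no property of the medial choice and is valid for an arbitrary measurable choice of support planes, which is consistent with the uniqueness discussion elsewhere in the paper.

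A second, smaller gap: in case (1) you deduce continuous extension from the two radial statements, but radial convergence of $\mathcal{E}_X^-$ and $\mathcal{E}_X^+$ does not imply convergence along arbitrary sequences $\eta_n\to z$ without an additional uniformity argument. The paper proves (1) directly for arbitrary sequences, using that $\phi_X^{\pm}$ is continuous on $\overline{\mathbb{D}^2}$ when $\phi_X$ is continuous (Property (P3)), and then reruns the same computation along radial segments for (2) and (3), where only the radial boundary values of $\phi_X^{\pm}$ are available (Property (P2)). Your preliminary ``trace formula'' step is fine (it is Corollary \ref{support_function_of_killing} combined with Lemma \ref{kil_in_the_disc}), and your treatment of the case where the support planes stay spacelike in the limit matches the paper's easy case; but as written the proposal does not prove the proposition.
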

Next, in the case where $X$ is a continuous vector field on the circle, we will prove that our construction is the unique way to construct infinitesimal earthquake that extends continuously to $X$ in the following sense: if there exists another left or right infinitesimal earthquake that extends to $X$, it must be equal to $\mathcal{E}_X^{\pm}$ except on the leaves where $\mathcal{E}_X^{\pm}$ is discontinuous. Moreover, from our construction, we can see that the non-uniqueness on leaves of discontinuity is due to the non-uniqueness of support planes, and hence any choice of support plane provides another infinitesimal earthquake. To prove this, we need to use some fundamental properties of \textit{simple infinitesimal earthquakes}, which are, by definition, the restriction of a Killing vector field $K_i$ to a disk $D_i$ ($i = 1, 2$), where $D_1 \cup D_2$ covers the entire disk $\mathbb{D}^2$.

\subsection{Organization of the paper}

Section \ref{section2} reviews definitions and some of the techniques used in the paper. First, we recall the definition of Minkowski space and half-pipe space and explain the duality between these two geometries. Next, we introduce the support function of a vector field and define the width of a vector field. In Section \ref{novsec}, we discuss the notions of earthquake maps and infinitesimal earthquakes, and extract some fundamental properties of simple infinitesimal earthquakes.
In Section \ref{Section3}, we prove Proposition \ref{extension}, which is the extension part of Theorems \ref{TH1} and \ref{THrad}. After that, we show that the vector field obtained from the convex hull construction satisfies the properties of an infinitesimal earthquake.
Section \ref{section4} is devoted to proving the estimate between the width of a vector field, and its cross ratio norm. In Section \ref{section5}, we prove the estimate between the Thurston norm of the measured geodesic lamination, and the width and then proving Theorem \ref{TH2}. Finally, in the last Section \ref{sec7}, we use the estimates of Theorem \ref{TH2} to prove Proposition \ref{lengthvsThurs}.

\subsection{Acknowledgments}

I'm deeply grateful to Andrea Seppi for his constant support during the preparation of this work. I would like to thank him for his invaluable comments, remarks, and the  careful readings of this article.\\
Moreover, I would like to thank Sara Maloni for her interest and remarks on this work, which I initially presented in the Geometry seminar at University of Virginia during my stay in Spring 2023.\\
I would also like to thank François Fillastre for pointing out an application of Theorem \ref{TH2} to prove an estimate between the length function and the Thurston norm as stated in Proposition \ref{lengthvsThurs}.

\section{Preliminaries on three-dimensional geometries}\label{section2}
\subsection{Minkowski geometry} 
The Minkowski space is the vector space $\mathbb{R}^3$ endowed with a non degenerate bilinear form of signature $(-,+,+)$, it can be defined as
$$\minko:=\left( \mathbb{R}^3,  \inner{(x_0,x_1,x_2),(y_0,y_1,y_2)}_{1,2}=-x_0y_0+x_1y_1+x_2y_2\right).$$

The group of isometries of $\minko$ that preserve both the orientation and time orientation is identified as:
$$\mathrm{Isom}(\minko)=\mathrm{O}_0(1,2)\ltimes\minko,$$
where $\mathrm{O}(1,2)$ is the group the linear transformations that preserve the Lorentzian form $\inners_{1,2}$, $\mathrm{O}_0(1,2)$ denote the identity component of $\mathrm{O}(1,2)$, $\minko$ acts by translation on itself. In Minkowski space, there are three types of planes $\mathrm{P}$: \textit{spacelike} when the restriction of the Lorentzian metric to $\mathrm{P}$ is positive definite, \textit{timelike} when the restriction is still Lorentzian or \textit{lightlike} when the restriction is a degenerate bilinear form.

We define the hyperbolic plane as the upper connected component of the two-sheeted hyperboloid,
namely:
\begin{equation}\label{hyperboloid}\mathbb{H}^2:=\{(x_0,x_1,x_2)\in \mathbb{R}^{1,2}\mid -x_0^2+x_1^2+x_2^2=-1, \ x_0>0\}.\end{equation} This hyperboloid is a spacelike surface in $\minko$ which is an isometric copy of the hyperbolic plane embedded in Minkowski space. Recall that a surface $\mathrm{S}$ is $\minko$ is said to be \textit{spacelike} if for every $x\in \mathrm{S}$, the tangent plane $\mathrm{T}_x\mathrm{S}$ is a spacelike plane of $\minko$. The group $\isom(\mathbb{H}^2)$ of orientation preserving isometry of $\mathbb{H}^2$ is thus identified with $\mathrm{O}_0(1,2)$. The geodesics of $\mathbb{H}^2$ are of the form $\mathbb{H}^2 \cap x^{\perp}$ for a spacelike vector $x$ (i.e., $\inner{x,x}_{1,2}>0$), where $x^{\perp}$ denotes the orthogonal complement of $x$ with respect to the bilinear form $\inner{\cdot,\cdot}_{1,2}$. Consider the radial projection $\Pi$ defined on $\{(x_0,x_1,x_2)\in\mathbb{R}^3\mid x_0\neq 0\}$ by:
\begin{equation}\label{radial}
\Pi(x_0,x_1,x_2)=\left( \frac{x_1}{x_0},\frac{x_2}{x_0}   \right),\end{equation} 
The projection $\Pi$ identifies the hyperboloid $\mathbb{H}^2$ with the unit disk $\mathbb{D}^2$ which is the \textit{Klein projective model} of the hyperbolic plane. The boundary at infinity $\partial\mathbb{H}^2$ of the hyperbolic plane is then identified with the unit circle $\mathbb{S}^1$. We now move on to recall the definition of the Minkowski cross product.
\begin{defi}
    Let $x$, $y\in \minko$. The \textit{Minkowski cross product} of $x$ and $y$ is the unique vector $x\boxtimes y$ in $\minko$ such that:
    $$\inner{x\boxtimes y,v}_{1,2}=\det(x,y,v),$$
    for all $v\in \minko.$
\end{defi}
The Minkowski cross product gives rise to an isomorphism $\Lambda: \mathbb{R}^{1,2}\to \mathfrak{isom}(\mathbb{H}^2)$ between the Minkowski space $\minko$ and the Lie algebra of $\mathrm{Isom}(\mathbb{H}^2)$. Since $\mathrm{O}_0(1,2)$ is the isometry group of $\mathbb{H}^2$, $\mathfrak{isom}(\mathbb{H}^2)$ is the algebra of skew-symmetric matrices with respect to $\inner{\cdot,\cdot}_{1,2}$. More precisely we have:
\begin{equation}\label{lievskilling}
    \Lambda(x)(y):=y\boxtimes x.\end{equation}
The isomorphism $\Lambda$ is equivariant with respect to the linear action $\mathrm{O}_0(1,2)$ on $\minko$ and the adjoint action of $\mathrm{O}_0(1,2)$ on $\mathfrak{isom}(\mathbb{H}^2)$, namely for all $x\in \mathbb{R}^{2,1}$, $\mathrm{A}\in\mathrm{O}_0(1,2)$, we have:
\begin{equation}\label{adequivariance}
\Lambda(\mathrm{A}\cdot x)=\mathrm{A}\Lambda(x)\mathrm{A}^{-1}.\end{equation}

Recall that the Lie algebra $\mathfrak{isom}(\mathbb{H}^2)$ can be seen as the space of all \textit{Killing vector fields} on $\mathbb{H}^2$, where a Killing field $X$ is by definition a vector field whose flow is a one-parameter group of isometries of $\mathbb{H}^2$. Indeed each $X\in \mathfrak{isom}(\mathbb{H}^2)$ defines a Killing field on $\mathbb{H}^2$ given by:
$$\overline{X}(p)=\frac{d}{dt}\big\lvert_{t=0} (e^{tX}\cdot p)$$ and any Killing field on $\mathbb{H}^2$ is of this form for a unique $X \in \mathfrak{isom}(\mathbb{H}^2)$. A particular class of Killing fields that will play an important role in this paper are those of the form $\Lambda(x)$ for a spacelike vector $x$ (i.e., $\langle x, x \rangle_{1,2} > 0$). We call such a vector field a \textit{hyperbolic Killing field} of $\mathbb{H}^2$. The terminology comes from the following elementary Lemma.

\begin{lemma}\label{hyperbolic_killing_field}
Let $\sigma$ be a spacelike vector in $\minko$. Then the one-parameter family of isometries $e^{t\Lambda(\sigma)}$ is a hyperbolic isometry with an axis $\ell = \mathbb{H}^2 \cap \sigma^\perp$ and a translation length equal to $\vert t\vert \cdot \sqrt{\langle \sigma, \sigma \rangle_{1,2}}$.
\end{lemma}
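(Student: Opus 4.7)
The plan is to reduce to a normal form via the equivariance \eqref{adequivariance} and then compute the flow explicitly. First I would observe that $\mathrm{O}_0(1,2)$ acts transitively on the set of spacelike vectors of any fixed Minkowski norm (this set is a one-sheeted hyperboloid, which is connected), so by \eqref{adequivariance} it is enough to prove the statement for one representative. Conjugation by an element $A\in\mathrm{O}_0(1,2)$ sends $e^{t\Lambda(\sigma)}$ to $e^{t\Lambda(A\sigma)}$; it preserves the property of being hyperbolic, sends axes to axes, and preserves translation lengths. Hence, up to such a conjugation, I may assume $\sigma=(0,0,s)$ with $s=\sqrt{\inner{\sigma,\sigma}_{1,2}}>0$.

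Next I would compute $\Lambda(\sigma)$ explicitly from the defining relation $\inner{y\boxtimes\sigma,v}_{1,2}=\det(y,\sigma,v)$. A direct expansion of the determinant with $\sigma=(0,0,s)$ yields $y\boxtimes\sigma=(-sy_1,-sy_0,0)$, so $\Lambda(\sigma)$ is represented by the matrix
$$\Lambda(\sigma)=\begin{pmatrix}0 & -s & 0\\ -s & 0 & 0\\ 0 & 0 & 0\end{pmatrix}.$$
The square of this matrix equals $s^2\mathrm{diag}(1,1,0)$, so exponentiation of the nontrivial $2\times 2$ block gives
$$e^{t\Lambda(\sigma)}=\begin{pmatrix}\cosh(ts) & -\sinh(ts) & 0\\ -\sinh(ts) & \cosh(ts) & 0\\ 0 & 0 & 1\end{pmatrix},$$
which is the standard Lorentz boost of rapidity $ts$ in the $(x_0,x_1)$-plane.

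From this explicit form both statements are immediate. The plane $\sigma^{\perp}=\{x_2=0\}$ is globally fixed, hence so is its intersection $\ell=\mathbb{H}^2\cap\sigma^{\perp}$, which is the claimed axis. To compute the translation length I would take $p=(1,0,0)\in\ell$; then $e^{t\Lambda(\sigma)}p=(\cosh(ts),-\sinh(ts),0)\in\ell$, and the standard Minkowski distance formula $d_{\mathbb{H}^2}(p,q)=\mathrm{arccosh}(-\inner{p,q}_{1,2})$ gives
$$d_{\mathbb{H}^2}\bigl(p,\,e^{t\Lambda(\sigma)}p\bigr)=\mathrm{arccosh}(\cosh(ts))=|t|\,s=|t|\sqrt{\inner{\sigma,\sigma}_{1,2}}.$$
This shows that $e^{t\Lambda(\sigma)}$ translates the geodesic $\ell$ by the claimed amount, and is in particular hyperbolic for every $t\neq 0$. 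There is no real obstacle here; the only point that deserves care is verifying that the invariants in the statement (being hyperbolic, the axis, the translation length) are genuinely insensitive to conjugation by $\mathrm{O}_0(1,2)$, so that the normal-form reduction is legitimate.
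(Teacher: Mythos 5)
Your proof is correct and follows essentially the same route as the paper: reduce by the transitive action of $\mathrm{O}_0(1,2)$ (equivalently, by the equivariance \eqref{adequivariance}) to the normal form $\sigma=(0,0,s)$, compute $\exp(t\Lambda(\sigma))$ explicitly as a boost, and read off the axis and translation length. You merely make explicit the final identification of the axis and the distance computation, which the paper leaves as an immediate consequence of the displayed matrix.
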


\begin{proof}
 Let $\sigma\in\minko$ such that $\inner{\sigma,\sigma}_{1,2}>0$. By transitivity of the action of $\isom(\mathbb{H}^2)$ on the space of geodesics, we may assume that $\sigma=(0,0,a)$ where $\lvert a\rvert=\sqrt{\langle \sigma, \sigma \rangle_{1,2}}$. By elementary computation, one can check that:
$$\exp(t\Lambda(\sigma))=\begin{pmatrix}

 \cosh{(-ta)} & \sinh{(-ta)} & 0\\
 \sinh{(-ta)} & \cosh{(-ta)} &0 \\
 0           &     0&       1
\end{pmatrix}.$$ This completes the proof.
 
\end{proof}

We will call the geodesic $\ell=\mathbb{H}^2\cap \sigma^{\bot}$, the \textit{axis} of the hyperbolic Killing field $\Lambda(\sigma).$ We conclude this section with the following lemma, which expresses the Killing field in the Klein model of hyperbolic space.

\begin{lemma}\label{kil_in_the_disc}
Let $\sigma\in\minko$. The Killing field $\Lambda(\sigma)$ has the following expression in the Klein model:
\begin{equation}\label{Killing_Klein}\begin{array}{ccccc}
 &  & \mathbb{D}^2 & \to & \mathbb{R}^3 \\
 & & \eta & \mapsto & \mathrm{d}_{(1,\eta)}\Pi\left((1,\eta)\boxtimes  \sigma\right) \\
\end{array},\end{equation}
where $\Pi$ is the radial projection defined in \eqref{radial}.
\end{lemma}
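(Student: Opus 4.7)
The plan is to first realize $\overline{\Lambda(\sigma)}$ as a tangent vector field on the hyperboloid $\mathbb{H}^2\subset\minko$, then push it forward to the Klein disk via the radial projection $\Pi$, using the scale invariance of $\Pi$ to eliminate the normalization factor.

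For the first step, I recall that for any $X\in\mathfrak{isom}(\mathbb{H}^2)$ the associated Killing field is given by $\overline{X}(p)=\frac{d}{dt}\big|_{t=0} e^{tX}\cdot p = X(p)$, where $X$ is viewed as a linear endomorphism of $\mathbb{R}^{1,2}$. Applying this to $X=\Lambda(\sigma)$ and invoking \eqref{lievskilling}, I get
\[\overline{\Lambda(\sigma)}(p)=p\boxtimes\sigma.\]
The identity $\inner{p,p\boxtimes\sigma}_{1,2}=\det(p,p,\sigma)=0$ confirms that this vector is tangent to $\mathbb{H}^2$ at $p$, consistent with $e^{t\Lambda(\sigma)}$ preserving $\mathbb{H}^2$.

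For the second step, given $\eta\in\mathbb{D}^2$ its unique preimage in $\mathbb{H}^2$ under $\Pi$ is $p=\lambda(1,\eta)$ with $\lambda=(1-\lvert\eta\rvert^2)^{-1/2}$, and hence the Klein-model expression of $\overline{\Lambda(\sigma)}$ at $\eta$ is $\mathrm{d}_p\Pi(p\boxtimes\sigma)$. To match the formula in the statement I would exploit the scale invariance $\Pi(\mu x)=\Pi(x)$ for every $\mu>0$: differentiating this identity at $x$ along $w\in\mathbb{R}^{1,2}$ yields the homogeneity relation $\mathrm{d}_{\mu x}\Pi(\mu w)=\mathrm{d}_x\Pi(w)$. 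Applying it with $x=(1,\eta)$, $\mu=\lambda$, $w=(1,\eta)\boxtimes\sigma$, and using bilinearity of $\boxtimes$ so that $\lambda w = \lambda(1,\eta)\boxtimes\sigma = p\boxtimes\sigma$, one obtains
\[\mathrm{d}_p\Pi(p\boxtimes\sigma)=\mathrm{d}_{(1,\eta)}\Pi\bigl((1,\eta)\boxtimes\sigma\bigr),\]
which is precisely the expression of the lemma.

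The argument is a direct unwinding of definitions combined with the homogeneity of $\Pi$, so there is no real obstacle; the only point worth highlighting is that the normalization factor $\lambda$ cancels cleanly thanks to the bilinearity of the Minkowski cross product and the scale invariance of the radial projection, which is exactly what makes the formula of the lemma so clean.
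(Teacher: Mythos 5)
Your proposal is correct and follows essentially the same route as the paper: express $\Lambda(\sigma)$ on the hyperboloid as $p\mapsto p\boxtimes\sigma$ via \eqref{lievskilling} and push it forward to $\mathbb{D}^2$ by the radial projection $\Pi$. The only difference is that you spell out the step the paper leaves implicit, namely that the normalization factor relating $(1,\eta)$ to its hyperboloid lift cancels by the $0$-homogeneity of $\Pi$ together with the bilinearity of $\boxtimes$, which is a welcome clarification (and the tiny slip $\det(p,p,\sigma)$ versus $\det(p,\sigma,p)$ is harmless, as both vanish).
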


\begin{proof}
The Killing field $\Lambda(\sigma)$ in the hyperboloid model is given by:
 \begin{equation}\label{8}\begin{array}{ccccc}
 &  & \mathbb{H}^2 & \to & \mathbb{R}^3 \\
 & & y & \mapsto & y\boxtimes \sigma \\
\end{array}.\end{equation}
Now, since the radial projection $\Pi$ identifies the hyperboloid $\mathbb{H}^2$ with the Klein model $\mathbb{D}^2$, the Killing field $\Lambda(\sigma)$ in $\mathbb{D}^2$ is simply the push-forward of the formula \eqref{8} by the radial projection $\Pi$, which gives us the formula \eqref{Killing_Klein}.
\end{proof}

\subsection{Half-pipe geometry as dual of Minkowski Geometry}
In this section, we will present the so called \textit{Half-pipe} space. Following 
\cite{danciger_thesis}, it is defined as
$$\HP :=\{[x_0,x_1,x_2,x_3]\in\mathbb{RP}^3, \ -x_0^2+x_1^2+x_2^2<0\}.$$
The boundary at infinity $\partial\HP$ of $\HP$ is given by: 
$$\partial\HP=\{[x]\in \mathbb{RP}^3, \ -x_0^2+x_1^2+x_2^2=0\}.$$ The Half-pipe space has a natural identification with the dual of Minkowski space, namely the space of spacelike planes of Minkowski space. More precisely, we have the homeomorphism
\begin{equation}\label{hpminko}
    \mathcal{D}:\ \HP\cong \{\mathrm{Spacelike}\ \mathrm{planes}\ \mathrm{in}\ \minko\}
\end{equation}
which associates to each point $[x,t]$ in $\HP$, the spacelike plane of $\minko$ defined as:
 \begin{equation}\label{4}
    \mathrm{P}_{[x,t]}=\{  y \in \minko : \langle x,y \rangle_{1,2} = t         \}
\end{equation}
The homeomorphism $\mathcal{D}$ extends to a homeomorphism between $\partial\HP\setminus[0,0,0,1]$ and the space of \textit{lightlike} planes in Minkowski space $\minko$ using the same formula \eqref{4}. Another interesting model of $\HP$ derived from this duality is given by the diffeomorphism: $\HP\to\mathbb{H}^2\times\mathbb{R}$ defined by: 
\begin{equation}\label{L}
[x,t]\to([x],\mathrm{L}([x,t])),\end{equation}
where $\mathrm{L}([x,t])$ is the \textit{height function}, which is defined as the signed distance of the spacelike plane $\mathrm{P}_{[x,t]}$ to the origin along the future normal direction. It can be checked by elementary computation that: \begin{equation}\label{formule_L}
    \mathrm{L}([x,t])=\frac{t}{\sqrt{-\langle x,x\rangle}}_{1,2}.\end{equation}
We will call \textit{geodesics} (resp. \textit{planes}) of $\HP$ the intersection of lines (resp. planes) of $\mathbb{RP}^3$ with $\HP.$ We will also use the following terminology:
\begin{itemize}
\item[\textbullet] A geodesic in $\HP$ of the form $\{*\}\times \mathbb{R}$ is called a \textit{fiber}.
\item[\textbullet] A geodesic in $\HP$ which is not a fiber is called a \textit{spacelike} geodesic.
\item[\textbullet] A plane in $\HP$ is \textit{spacelike} if it does not contain a fiber.
\end{itemize}
From this, we can define a dual correspondence to the identification \eqref{hpminko} as follows:
\begin{equation}\label{D_star}
  \mathcal{D}^*: \ \minko\cong\{ \mathrm{Spacelike}\ \mathrm{planes}\ \mathrm{in}\ \HP  \}
\end{equation}
which associate to each vector $v$ in $\minko$, the spacelike plane in $\HP$ given by: 
$$\mathrm{P}_v:=\{[x,t]\in\HP \mid \inner{x,v}_{1,2}=t   \}.$$
Now, let us denote by $\isom(\HP)$ the group of transformations given by: 
$$\begin{bmatrix}
\mathrm{A} & 0  \\
v & 1 
\end{bmatrix},$$
where $\mathrm{A}\in \mathrm{O}_{0}(1,2)$ and $v\in \mathbb{R}^2$. Observe that the group $\isom(\HP)$ preserves the orientation of $\HP$ and the oriented fibers. The map $\mathcal{D}$ in \eqref{hpminko} induces an isomorphism between 
$\isom(\mathbb{R}^{1,2})$ and $\isom(\HP)$ given by (See \cite[Section 2.8]{riolo_seppi})

\begin{equation}\label{groupeduality}  
\begin{array}{ccccc}
\mathrm{Is}: &  & \isom(\mathbb{R}^{1,2}) & \to & \isom(\HP) \\
 & & (\mathrm{A},v) & \mapsto & \begin{bmatrix}
\mathrm{A} & 0  \\
^T v\mathrm{J}\mathrm{A} & 1 
\end{bmatrix}, \\
\end{array}
\end{equation}
where $\mathrm{J}=\mathrm{diag}(-1,1,1).$ By the duality approach, one can define the notion of \textit{angle} between spacelike planes of $\HP$:
\begin{prop}\cite[Lemma 4.10]{riolo_seppi}\label{hpangle}
 Let $v_1$, $v_2\in\minko$ and $\mathrm{P}_{v_1}$, $\mathrm{P}_{v_2}$ be their dual spacelike planes in $\HP$. Then $\mathrm{P}_{v_1}$ and $\mathrm{P}_{v_2}$ intersect along a spacelike geodesic in $\HP$ if and only if $v_1-v_2$ is spacelike in $\mathbb{R}^{1,2}$.\\ In that case, we define the angle between $\mathrm{P}_{v_1}$ and $\mathrm{P}_{v_2}$ as the non-negative real number
    $$\theta=\sqrt{\langle v_1-v_2, v_1-v_2        \rangle_{1,2}}.$$
\end{prop}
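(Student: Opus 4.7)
The plan is to prove the iff characterization by a direct linear-algebraic computation in the projective model, and then to justify the angle formula as an $\isom(\HP)$-invariant quantity.

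First I would unwind the intersection. A point $[x,t]$ lies in $P_{v_1}\cap P_{v_2}$ iff $\langle x,v_1\rangle_{1,2}=t=\langle x,v_2\rangle_{1,2}$, equivalently $\langle x,v_1-v_2\rangle_{1,2}=0$ together with $t=\langle x,v_1\rangle_{1,2}$. Under the identification $\HP\cong\mathbb{H}^2\times\mathbb{R}$ from \eqref{L}, the projection of this set to the first factor is $\mathbb{D}^2\cap(v_1-v_2)^{\perp}$. Since geodesics of the Klein model $\mathbb{D}^2$ are exactly the intersections $\mathbb{D}^2\cap\sigma^{\perp}$ for spacelike $\sigma$, this projection is a geodesic of $\mathbb{H}^2$ precisely when $v_1-v_2$ is spacelike; otherwise $(v_1-v_2)^{\perp}$ is spacelike or lightlike, and its intersection with the interior of the hyperbolic cone is empty or reduces to a boundary null ray. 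Lifting back to $\HP$ in the spacelike case, $P_{v_1}\cap P_{v_2}$ is the graph over this geodesic of the linear height function $x\mapsto\langle x,v_1\rangle_{1,2}$; being cut out in $\mathbb{RP}^3$ by two independent linear equations, it is a projective line whose intersection with $\HP$ is a non-fiber geodesic, hence spacelike.

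To justify the angle formula, I would verify that $\sqrt{\langle v_1-v_2,v_1-v_2\rangle_{1,2}}$ is intrinsic to the unordered pair $(P_{v_1},P_{v_2})$. Through the isomorphism $\mathrm{Is}$ of \eqref{groupeduality}, a direct substitution shows that $\mathrm{Is}(\A,b)$ sends $P_v$ to $P_{\A v+b}$: one plugs the matrix action on $[x,t]$ into the defining equation $\langle x,v\rangle_{1,2}=t$, uses that $\mathrm{J}$ represents the form so ${}^T b\,\mathrm{J}\,y=\langle b,y\rangle_{1,2}$, and invokes the $\mathrm{O}_0(1,2)$-invariance. Consequently $v_1-v_2$ is transported by the linear part $\A\in\mathrm{O}_0(1,2)$, so $\langle v_1-v_2,v_1-v_2\rangle_{1,2}$ is $\isom(\HP)$-invariant; combined with its non-negativity on the spacelike locus, the square root is a well-defined invariant of the configuration, justifying the terminology \emph{angle}.

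The main (and minor) obstacle is bookkeeping around the off-diagonal entry ${}^T b\,\mathrm{J}\,\A$ of $\mathrm{Is}(\A,b)$: one must carry it through carefully so that the induced action on the dual parameter $v$ reassembles into the natural affine isometry $v\mapsto \A v+b$ of $\minko$, in accordance with the general principle that $\mathcal{D}$ and $\mathcal{D}^*$ intertwine the two isometry groups. Apart from this, the content is routine linear algebra.
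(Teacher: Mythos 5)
Your argument is correct, and it is worth noting that the paper itself does not prove this statement at all: it imports it from Riolo--Seppi \cite[Lemma 4.10]{riolo_seppi}. So what you have written is a self-contained substitute for the citation. Your route is the natural one in the Klein model: a point $(\eta,t)$ lies on both planes iff $\langle(1,\eta),v_1-v_2\rangle_{1,2}=0$ and $t=\langle(1,\eta),v_1\rangle_{1,2}$, the first condition cuts the disk in $\Pi(\mathbb{H}^2\cap(v_1-v_2)^\perp)$, which is a geodesic exactly when $v_1-v_2$ is spacelike and is empty in the open disk when $v_1-v_2$ is timelike or lightlike; lifting, the intersection is a projective line meeting $\HP$ in a non-fiber (hence spacelike) geodesic, since it projects onto a full geodesic of $\mathbb{D}^2$ rather than a point. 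Your treatment of the second half is also the right reading: the displayed formula is a definition, and the only mathematical content is invariance, which follows from the equivariance $\mathrm{Is}(\mathrm{A},b)\,\mathrm{P}_v=\mathrm{P}_{\mathrm{A}v+b}$; this can be checked either by the matrix bookkeeping you describe or, more quickly, by plugging a point of $\mathrm{P}_w$ into the explicit action of Lemma \ref{rotationHP} and using $\langle \mathrm{A}(1,\eta),\mathrm{A}w\rangle_{1,2}=\langle(1,\eta),w\rangle_{1,2}$, so that $v_1-v_2$ transforms by the linear part and its Minkowski norm is preserved.

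Two small points to tidy up. First, your dichotomy ``otherwise $(v_1-v_2)^{\perp}$ is spacelike or lightlike'' omits the degenerate case $v_1=v_2$, where the difference is zero and the two planes coincide; this is consistent with the statement (the zero vector is not spacelike, and coinciding planes do not intersect along a geodesic), but it should be said explicitly since it is not covered by your case analysis. Second, when concluding that the intersection is a \emph{spacelike} geodesic, make sure to invoke the paper's definition (a geodesic of $\HP$ which is not a fiber) and to justify non-verticality by the fact that the line surjects onto a geodesic of $\mathbb{D}^2$ under the projection to the first factor; you do say this, but it is the one place where the paper's terminology must be matched exactly.
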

Now, we move on to describe the $\textit{Klein model}$ of the Half-pipe space which will be useful in this paper. It is defined as the cylinder $\mathbb{D}^2\times\mathbb{R}$ obtained by projecting $\HP$ in the affine chart  $\{x_0\neq 0\}$: \begin{equation}\label{7}
    \begin{array}{ccccc}
&  & \HP & \to & \mathbb{D}^2\times\mathbb{R} \\
 & & [x_0,x_1,x_2,x_3] & \mapsto & (\frac{x_1}{x_0},\frac{x_2}{x_0},\frac{x_3}{x_0}). \\
\end{array}\end{equation}
By the above discussion, it is clear that is this model the correspondence $\eqref{D_star}$ between Minkowski space and spacelike planes of $\HP$ associates to each vector $v\in\minko$, the non vertical plane
\begin{equation}\label{formule_dual_plan}
    \mathrm{P}_v:=\{(\eta,t)\in\mathbb{D}^2\times\mathbb{R} \mid \langle (1,\eta),v  \rangle_{1,2}=t  \}\end{equation} and this plane corresponds to the graph of an affine functions over $\mathbb{D}^2$.
We finish these preliminaries on Half-pipe geometry by a Lemma that describes the action of the isometries of $\HP$ in the Klein model. 
\begin{lemma}\cite[Lemma 2.26]{barbotfillastre}\label{rotationHP}
    Let $(\eta, t)\in \mathbb{D}^2\times\mathbb{R}$, $\mathrm{A}\in \mathrm{O}_{0}(1,2)$ and $v\in \mathbb{R}^{1,2}$. Then the isometry of
Half-pipe space defined by $\mathrm{Is}(\mathrm{A},v)$ acts on the Klein model $\mathbb{D}^2\times\mathbb{R}$ as follows:
$$\mathrm{Is}(\mathrm{A},v)\cdot(\eta,t)=\left( \mathrm{A}\cdot \eta, \frac{t}{-\langle \mathrm{A}\begin{pmatrix}
1 \\
\eta
\end{pmatrix} ,(1,0,0)\rangle_{1,2}}+\langle (1,\mathrm{A}\cdot \eta), v\rangle_{1,2} \right),$$
where $\mathrm{A}\cdot \eta$ is the image of $\eta$ by the isometry
of $\mathbb{D}^2$ induced by $\mathrm{A}.$
\end{lemma}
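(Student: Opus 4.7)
The plan is a direct matrix computation using the projective model definition in \eqref{7} together with the block form of $\mathrm{Is}(\mathrm{A},v)$ from \eqref{groupeduality}. The only subtlety is translating between the ``hyperboloid" action of $\mathrm{A}$ on $(1,\eta)\in\mathbb{R}^{1,2}$ and the induced Klein action $\mathrm{A}\cdot\eta\in\mathbb{D}^2$.

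First, I lift $(\eta,t)\in\mathbb{D}^2\times\mathbb{R}$ to projective coordinates $[1,\eta_1,\eta_2,t]\in\HP$, writing this as a column $(X,x_3)^{\mathrm{T}}$ with $X=(1,\eta)^{\mathrm{T}}\in\mathbb{R}^{1,2}$ and $x_3=t$. Applying the block matrix $\mathrm{Is}(\mathrm{A},v)$ yields the new projective vector
\begin{equation*}
\begin{bmatrix} \mathrm{A} & 0 \\ {}^{\mathrm{T}}v\,\mathrm{J}\mathrm{A} & 1 \end{bmatrix}\begin{pmatrix} X \\ x_3 \end{pmatrix}=\begin{pmatrix} \mathrm{A}X \\ {}^{\mathrm{T}}v\,\mathrm{J}\,\mathrm{A}X+t \end{pmatrix}.
\end{equation*}
Since $\mathrm{J}=\mathrm{diag}(-1,1,1)$, the scalar in the last row is exactly $\langle v,\mathrm{A}(1,\eta)^{\mathrm{T}}\rangle_{1,2}+t$.

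Next, to read off the Klein-model coordinates I divide through by the first component $a_0$ of $\mathrm{A}(1,\eta)^{\mathrm{T}}$. By the very definition of the projective action of $\mathrm{O}_0(1,2)$ on $\mathbb{D}^2$ we have the identity
\begin{equation*}
\mathrm{A}(1,\eta)^{\mathrm{T}}=a_0\,(1,\mathrm{A}\cdot\eta)^{\mathrm{T}},
\end{equation*}
so the spatial part of the projected point is $\mathrm{A}\cdot\eta$, as claimed. Observing that $\langle w,(1,0,0)\rangle_{1,2}=-w_0$ for every $w\in\mathbb{R}^{1,2}$, the scalar $a_0$ equals $-\langle \mathrm{A}(1,\eta)^{\mathrm{T}},(1,0,0)\rangle_{1,2}$, which produces the denominator appearing in the statement.

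Finally, using the identity $\mathrm{A}(1,\eta)^{\mathrm{T}}=a_0(1,\mathrm{A}\cdot\eta)^{\mathrm{T}}$ once more, the numerator splits as
\begin{equation*}
\langle v,\mathrm{A}(1,\eta)^{\mathrm{T}}\rangle_{1,2}+t = a_0\,\langle (1,\mathrm{A}\cdot\eta),v\rangle_{1,2}+t,
\end{equation*}
and dividing by $a_0$ gives the sum
\begin{equation*}
\frac{t}{-\langle \mathrm{A}(1,\eta)^{\mathrm{T}},(1,0,0)\rangle_{1,2}}+\langle (1,\mathrm{A}\cdot\eta),v\rangle_{1,2},
\end{equation*}
which is precisely the second coordinate in the formula. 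No step is genuinely difficult; the only point requiring a moment of care is keeping straight the bookkeeping between $\mathrm{J}$, the Lorentzian inner product, and the projectivization, in particular confirming the sign in $a_0=-\langle \mathrm{A}(1,\eta)^{\mathrm{T}},(1,0,0)\rangle_{1,2}$.
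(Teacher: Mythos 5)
Your computation is correct: applying the block matrix from \eqref{groupeduality} to the lift $[1,\eta_1,\eta_2,t]$, identifying the bottom row with $\langle v,\mathrm{A}(1,\eta)\rangle_{1,2}+t$, and projecting to the affine chart via division by $a_0=-\langle \mathrm{A}(1,\eta),(1,0,0)\rangle_{1,2}$ (positive since $\mathrm{A}\in\mathrm{O}_0(1,2)$ preserves the future cone) yields exactly the stated formula. The paper itself gives no proof, citing \cite[Lemma 2.26]{barbotfillastre} instead, and your direct verification is precisely the expected argument.
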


\subsection{Vector fields on the circle and Half-Pipe geometry}
A fundamental step in the proof of the main theorem involves associating a subset of $\partial\HP$ to any vector field on $\mathbb{S}^1$. Let us consider 
$$N=\{(x,y,z)\in\minko, \ -x^2+y^2+z^2=0\}\setminus\{0\},$$ so that $\mathbb{P}(N)\cong\mathbb{S}^1$. The next Lemma, proved in \cite{BS17}, states that there is a natural identification between vector fields on $\mathbb{S}^1$ and $1$-homogeneous functions on $N$.
\begin{lemma}\label{homogene}\cite[Lemma $2.23$]{BS17}
    There is a $1-$to$-1$ correspondence between vector fields $X$ on $\mathbb{S}^1$ and $1$-homogeneous functions $\Phi:N\to\mathbb{R}$ satisfying the following property: For any $\mathcal{C}^1$ spacelike section $s:\mathbb{S}^1\to N$ of the radial projection $\Pi:N\to\mathbb{S}^1$ and for $v$ the unit tangent vector field to
$s$, then 
\begin{equation}\label{secctionhomogene}
s_*(X(z))=\Phi(s(z))v(s(z))
\end{equation}
\end{lemma}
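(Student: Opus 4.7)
My plan is to construct the forward map $X \mapsto \Phi$ by the formula
$$\Phi(p) := \inner{s_*(X(\Pi(p))),\, v(p)}_{1,2},$$
for $p \in N$, where $s$ denotes any $\mathcal{C}^1$ spacelike section of $\Pi$ with $s(\Pi(p)) = p$ and $v(p) := s'(\Pi(p))/\|s'(\Pi(p))\|_{1,2}$. The inverse map would send a $1$-homogeneous $\Phi$ to the unique vector field $X$ determined by $s_*(X(z)) = \Phi(s(z))\, v(s(z))$ for any spacelike section $s$. The whole statement then reduces to verifying (a) that $\Phi(p)$ is independent of the section $s$, (b) that the displayed identity holds for every spacelike section, and (c) that $\Phi$ is $1$-homogeneous.

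For (a), given two spacelike sections $s_1, s_2$ with $s_1(z_0) = s_2(z_0) = p$, I would use that each fiber of $\Pi$ is a line through the origin to write $s_2(w) = \mu(w)\, s_1(w)$ locally with $\mu(z_0) = 1$. Differentiating at $z_0$ gives
$$s_2'(z_0) = \mu'(z_0)\, p + s_1'(z_0).$$
Since $p \in N$ satisfies $\inner{p,p}_{1,2} = 0$, and differentiating $\inner{s_1(w),s_1(w)}_{1,2} \equiv 0$ at $z_0$ yields $\inner{p,s_1'(z_0)}_{1,2} = 0$, expanding the Minkowski norm produces $\|s_2'(z_0)\|_{1,2} = \|s_1'(z_0)\|_{1,2}$; in particular $s_2$ is spacelike whenever $s_1$ is. Writing $X(z_0) = f(z_0)\, \partial_\theta$ in a local coordinate frame so that $s_{i*}(X(z_0)) = f(z_0)\, s_i'(z_0)$, a direct computation then gives $\inner{s_{i*}(X(z_0)), v(s_i(z_0))}_{1,2} = f(z_0)\, \|s_i'(z_0)\|_{1,2}$ for $i = 1, 2$, and these agree. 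Statement (b) is immediate after (a): the vectors $s_*(X(z))$ and $v(s(z))$ both lie along the $1$-dimensional tangent line to the image curve $s(\mathbb{S}^1)$ at $s(z)$, so they are proportional, and the proportionality coefficient is $\Phi(s(z))$ by the definition.

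For (c), rescaling by a constant $\lambda > 0$ produces a spacelike section $\tilde s := \lambda s$ through $\lambda p$ with $\tilde s'(z) = \lambda\, s'(z)$, hence $v(\tilde s(z)) = v(s(z))$ and $\tilde s_*(X(z)) = \lambda\, s_*(X(z))$, which forces $\Phi(\lambda p) = \lambda \Phi(p)$. The inverse construction is well-defined by the same Minkowski-norm computation, and the two assignments are mutually inverse by construction. The main obstacle is the bookkeeping in step (a): both the tangent vector $s'(z_0)$ and the unit tangent $v(s(z_0))$ genuinely depend on the section, but the decisive observation is that their discrepancies always lie along the lightlike ray $\mathbb{R}\cdot p$, which is Minkowski-orthogonal to every tangent vector to $N$ at $p$ (since $T_p N = p^\perp$), so that such contributions are annihilated by the inner product with $v$. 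Once this lightlike cancellation is isolated, the rest of the argument is formal.
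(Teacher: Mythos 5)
The paper does not actually prove this lemma: it is quoted verbatim from \cite{BS17} (Lemma 2.23), so there is no internal proof to compare against. Judged on its own terms, your construction is the right one and its core is correct: the decisive point, namely that two sections through the same $p\in N$ have derivatives differing by a multiple of the lightlike vector $p$, which spans the kernel of the restriction of $\langle\cdot,\cdot\rangle_{1,2}$ to $T_pN=p^{\perp}$, is exactly what makes $\langle s_*X,v\rangle_{1,2}$ section-independent; and once well-definedness is known, the identity $s_*(X(z))=\Phi(s(z))\,v(s(z))$ follows as you say because both vectors lie in the one-dimensional (spacelike, hence nondegenerate) tangent line to the image curve. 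Two small points deserve attention. First, you should record that spacelike $\mathcal{C}^1$ sections through every $p\in N$ exist (e.g.\ constant multiples of $z\mapsto(1,z)$, whose derivative $(0,iz)$ has Minkowski norm $1$), so that your defining formula is non-vacuous; your computation in fact shows more, namely that every $\mathcal{C}^1$ section is automatically spacelike, since $\lVert s_2'\rVert_{1,2}=\mu\,\lVert s_1'\rVert_{1,2}$ pointwise.

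Second, and this is the only genuine loose end: you verify homogeneity only for $\lambda>0$, while $N$ has two connected components (the future and past cones) and a continuous section lies entirely in one of them. The behaviour of $\Phi$ under $p\mapsto-p$ is therefore not a consequence of your scaling argument; it is governed by the orientation convention chosen for the unit tangent $v$ on past-cone sections. With the orientation induced from $\mathbb{S}^1$ through the section (which is what your formula $v=s'/\lVert s'\rVert_{1,2}$ implicitly uses), the compatibility condition applied to $s$ and to $-s$ forces $\Phi(-p)=\Phi(p)$, i.e.\ only positive $1$-homogeneity, not oddness. This matters later in the paper, where full $1$-homogeneity is invoked to define the projective graph $\mathcal{GR}(X)\subset\partial\HP$; so you should either restrict the statement to the future cone (which is all the paper really uses, via $\phi_X(z)=\Phi(1,z)$) or fix the sign convention for $v$ on the past cone so that the extension of $\Phi$ is odd. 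Apart from this convention issue, the argument is complete.
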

In this paper, we will mostly work in the Klein model $\mathbb{D}^2\times\mathbb{R}$ of the half-pipe space, so we need this definition.

\begin{defi}\label{supportfunction}
Given a vector field $X$ on $\mathbb{S}^1$ and $\Phi:N\to \mathbb{R}$ a one-homogeneous function as in \eqref{secctionhomogene}, the \textit{support function} of $X$ is the function $\phi_X:\mathbb{S}^1\to \mathbb{R}$ defined by:
$$\phi_X(z)=\Phi(1,z).$$
Therefore, the vector field $X$ can be written as: \begin{equation}\label{supportfunction_vs_vectorfield}
    X(z)=iz\phi_{X}(z)
\end{equation}
\end{defi}
\begin{remark}\label{remark_on_support_function}
Usually, the term "support function" is used to describe the support plane of a convex set in affine space. In our case, if $\phi_X:\mathbb{S}^1\to\mathbb{R}$ is a function, then we can define the \textit{domain of dependence} in Minkowski space as follows: $$\mathcal{D}=\bigcap_{z\in\mathbb{S}^1}\{\sigma\in \minko\mid\ \inner{(1,z),\sigma}_{1,2}<\phi_X(z) \}.$$
The domain of dependence $\mathcal{D}$ is a convex set which is the intersection of half-spaces bounded by the lightlike planes $\{\sigma\in \minko\mid\ \inner{(1,z),\sigma}_{1,2}=\phi_X(z)\}$. For more details on domain of dependence in Minkowski space, we refer the reader to \cite[Section $2.2$]{BS17} 
\end{remark}

Let us now study the support function of Killing vector field, we will show they are affine maps. To prove this, we need the following Lemma.
\begin{lemma}\label{crosspructlemma}
Let $z=(x,y)\in \mathbb{S}^1$ and $v=(-y,x)\in \mathrm{T}_{z}\mathbb{S}^1$ be a unit tangent vector at $z$ oriented in the counterclockwise direction. Then for any $\sigma$ in $\minko$, we have
    $$\inner{(1,z),\sigma}_{1,2}=\inner{(1,z)\boxtimes \sigma,(0,v)}_{1,2}.$$
\end{lemma}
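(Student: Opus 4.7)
The plan is to compute both sides directly using the defining property of the Minkowski cross product, namely $\inner{a \boxtimes b, c}_{1,2} = \det(a,b,c)$. Applied to the right-hand side, this gives
\[
\inner{(1,z) \boxtimes \sigma, (0,v)}_{1,2} = \det\bigl((1,z),\sigma,(0,v)\bigr).
\]
Writing $z=(x,y)$ with $x^2+y^2=1$, $v=(-y,x)$, and $\sigma=(\sigma_0,\sigma_1,\sigma_2)$, I would expand the $3\times 3$ determinant along the first row. The result simplifies using $x^2+y^2=1$ to $-\sigma_0 + x\sigma_1 + y\sigma_2$, which is precisely $\inner{(1,z),\sigma}_{1,2}$.

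So the entire proof amounts to a single three-line determinant expansion followed by the substitution $x^2+y^2=1$. There is no real obstacle: the only subtlety is to keep track of the sign of the $(1,1)$ entry of $\mathrm{J}=\mathrm{diag}(-1,1,1)$ when writing out the Lorentzian inner product, and to make sure the orientation convention for $v$ (counterclockwise) matches the sign that makes the identity come out without a minus. One could also phrase it more conceptually: since $(1,z)$ is lightlike and $(0,v)$ is the derivative of a parametrization of the positive light cone at $(1,z)$ (up to scaling), the pair $\{(1,z),(0,v)\}$ spans the null plane $(1,z)^{\perp_{1,2}}/\mathbb{R}(1,z)$ in a standard way, and the claim amounts to computing the pairing of $\sigma$ with $(1,z)$ via a determinant representation; but for the actual write-up the direct determinant expansion is the cleanest.
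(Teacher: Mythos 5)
Your proof is correct: the identity $\inner{(1,z)\boxtimes \sigma,(0,v)}_{1,2}=\det\bigl((1,z),\sigma,(0,v)\bigr)$ is exactly the defining property of $\boxtimes$, and expanding the determinant with $z=(x,y)$, $v=(-y,x)$, $\sigma=(\sigma_0,\sigma_1,\sigma_2)$ gives $-\sigma_0(x^2+y^2)+x\sigma_1+y\sigma_2=-\sigma_0+x\sigma_1+y\sigma_2=\inner{(1,z),\sigma}_{1,2}$, as claimed; the counterclockwise convention $v=(-y,x)$ is indeed what fixes the sign. This is a genuinely different route from the paper's argument: there, one sets $p=(1,0,0)$, $w=(1,z)-p$, notes that $\bigl(p,w,(0,v)\bigr)$ is an oriented orthonormal basis, records the cross-product relations $p\boxtimes w=(0,v)$, $w\boxtimes(0,v)=-p$, $(0,v)\boxtimes p=w$, decomposes $\sigma=ap+bw+c(0,v)$, and checks that both sides equal $b-a$. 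Your version is more direct and avoids having to verify the basis cross-product relations (which themselves ultimately rest on the same determinant identity), at the cost of a coordinate computation; the paper's adapted-basis computation is slightly longer but has the side benefit that it produces the explicit formula $(1,z)\boxtimes\sigma=(b-a)(0,v)-c(1,z)$, i.e.\ it exhibits the cross product itself, not just its pairing with $(0,v)$. Your closing ``conceptual'' remark is inessential and a little loosely phrased (the relevant point is just that $(0,v)$ spans a spacelike complement of $\mathbb{R}(1,z)$ inside the degenerate plane $(1,z)^{\perp}$), but the determinant argument you actually propose is complete and correct.
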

The proof of Lemma \ref{crosspructlemma} is given in the proof of Proposition $5.2$ in \cite{BS17}. Since the lemma will be used later, for the sake of clarity, we will provide its proof.

\begin{proof}[Proof of Lemma \ref{crosspructlemma}]
Let $p=(1,0,0)$ and consider $w=(1,z)-p$ so that $\left(p,w,(0,v)\right)$ is an oriented orthonormal basis. Hence we have 
\begin{equation}\label{cross_product_basis}
    p\boxtimes w=(0,v),\ w\boxtimes (0,v)=-p,\ (0,v)\boxtimes p=w.\end{equation}
Suppose that $\sigma=ap+bw+c(0,v)$, then we get 
\begin{equation}
    (1,z)\boxtimes\sigma=(b-a)(0,v)-cw-cp=(b-a)(0,v)-c(1,z).\end{equation} Since $\inner{(0,v),(1,z)}_{1,2}=0$ then $\inner{(1,z)\boxtimes\sigma,(0,v)}_{1,2}=b-a$, on the other hand 
$$\inner{(1,z),\sigma}_{1,2}=\inner{p+w,ap+bw+c(0,v)}_{1,2}=b-a,$$ this completes the proof.
\end{proof}By an elementary computation, we have 
\begin{equation}\label{differentialformula}
    \begin{array}{ccccc}
\mathrm{d}\Pi_{(1,x,y)} & : & \mathrm{T}_{(1,x,y)} N & \to \mathbb{R}^2&  \\
 & &v  & \mapsto & \inner{v,(0,-y,x)}_{1,2}(-y,x).\\
\end{array}\end{equation}where $\Pi$ is the radial projection defined in \eqref{radial}. 
This leads us to the following corollary.
\begin{cor}\label{support_function_of_killing} 
Let $\sigma\in \mathbb{R}^{1,2}$. The support function $\phi_{\Lambda(\sigma)}:\mathbb{S}^1\to \mathbb{R}$ of the Killing vector field $\Lambda(\sigma)$ is given by: 
$$\phi_{\Lambda(\sigma)}(z)=\inner{(1,z),\sigma}_{1,2}.$$
\end{cor}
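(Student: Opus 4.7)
The plan is to compute the support function directly from the definition using the canonical spacelike section $s : \mathbb{S}^1 \to N$ defined by $s(z) = (1,z)$. This section has unit tangent vector field $v(s(z)) = (0, iz)$ (with respect to $\langle\cdot,\cdot\rangle_{1,2}$), so by Definition \ref{supportfunction} together with Lemma \ref{homogene}, we need to show that when we push forward the Killing vector field $\Lambda(\sigma)$, viewed as a vector field on $\mathbb{S}^1$, through $s$, the resulting vector equals $\langle (1,z),\sigma\rangle_{1,2}\cdot (0,iz)$.

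First I would identify the extension of $\Lambda(\sigma)$ to $\mathbb{S}^1$: by Lemma \ref{kil_in_the_disc}, in the Klein model the Killing field is given by $\eta \mapsto d_{(1,\eta)}\Pi((1,\eta)\boxtimes \sigma)$, and this formula makes sense and extends continuously to $\eta = z \in \mathbb{S}^1$ since $(1,z)\in N$ and the radial projection $\Pi$ from \eqref{radial} is smooth away from $x_0=0$. Then, applying formula \eqref{differentialformula} at the point $(1,x,y)$ with $z=(x,y)$ yields
\[
\Lambda(\sigma)(z) = d_{(1,z)}\Pi\bigl((1,z)\boxtimes\sigma\bigr) = \bigl\langle (1,z)\boxtimes\sigma, (0,-y,x)\bigr\rangle_{1,2}\cdot(-y,x).
\]
Writing $iz = (-y,x)$, this says $\Lambda(\sigma)(z) = c\cdot iz$ with $c = \langle (1,z)\boxtimes\sigma,(0,iz)\rangle_{1,2}$.

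Next I would invoke Lemma \ref{crosspructlemma}, which asserts precisely that
\[
\bigl\langle (1,z)\boxtimes\sigma, (0,iz)\bigr\rangle_{1,2} = \bigl\langle (1,z),\sigma\bigr\rangle_{1,2}.
\]
Comparing with the defining relation $X(z) = iz\, \phi_X(z)$ from \eqref{supportfunction_vs_vectorfield}, we conclude $\phi_{\Lambda(\sigma)}(z) = \langle (1,z),\sigma\rangle_{1,2}$, as claimed.

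There is essentially no obstacle here; the corollary is a direct assembly of Lemma \ref{kil_in_the_disc}, formula \eqref{differentialformula}, and Lemma \ref{crosspructlemma}. The only mild subtlety is checking that $s(z)=(1,z)$ really is a spacelike $\mathcal{C}^1$ section of $\Pi: N \to \mathbb{S}^1$ (it is, since its tangent is $(0,iz)$ with $\langle(0,iz),(0,iz)\rangle_{1,2}=1$) so that Lemma \ref{homogene} can be applied with this section to extract $\phi_X(z)=\Phi(1,z)$ from the coefficient computed above.
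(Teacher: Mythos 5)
Your proposal is correct and follows essentially the same route as the paper's proof: express $\Lambda(\sigma)$ in the Klein model via Lemma \ref{kil_in_the_disc}, apply formula \eqref{differentialformula} at $(1,z)$, and then use Lemma \ref{crosspructlemma} to identify the coefficient of $iz$ with $\langle (1,z),\sigma\rangle_{1,2}$. The extra checks you mention (that $s(z)=(1,z)$ is a spacelike section and that the Klein-model formula makes sense at boundary points of $N$) are harmless refinements of what the paper does implicitly.
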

\begin{proof}
Recall that from Lemma \ref{kil_in_the_disc}, the Killing vector field $\Lambda(\sigma)$ can be written in the Klein model as: $$z\to\mathrm{d}_{(1,z)}\Pi\left((1,z)\boxtimes  \sigma\right).$$ Let $z=(x,y)$, according to definition of support function \eqref{supportfunction_vs_vectorfield}, we need to show that:
 $$\mathrm{d}_{(1,z)}\Pi\left((1,z)\boxtimes  \sigma\right)=\inner{(1,z),\sigma}_{1,2}(-y,x).$$
 Since $(1,z)\in N$ and $(1,z)\boxtimes \sigma\in \mathrm{T}_{(1,z)}N$, then by formula \eqref{differentialformula}, we have 
 $$\mathrm{d}_{(1,z)}\Pi\left((1,z)\boxtimes \sigma\right)=\inner{(1,z)\boxtimes \sigma,(0-y,x)}_{1,2}(-y,x).$$
Therefore, by Lemma \ref{crosspructlemma} we get \begin{equation}\label{pi(z,1)}
    \mathrm{d}_{(1,z)}\Pi\left((1,z)\boxtimes  \sigma\right)=\inner{(1,z),\sigma}_{1,2}(-y,x),\end{equation}
and this concludes the proof.\end{proof}
\begin{remark}\label{remarqueabc}
Let $a$, $b$, and $c$ be three real numbers. We can deduce from Corollary \ref{support_function_of_killing} the existence of a Killing vector field $X$ for which the support function $\phi_X$ satisfies the following:
\begin{equation}\label{abc}
\phi_X(1,0)=a, \ \phi_X(0,1)=b, \ \phi_X(-1,0)=c.
\end{equation}
To prove this, observe that the three points $(1,0,a)$, $(0,1,b)$, and $(-1,0,c)$ span a unique spacelike plane in $\mathbb{D}^2\times\mathbb{R}$. Hence, this plane is of the form $\mathrm{P}_\sigma$ for some $\sigma\in\minko$, where we recall that from \eqref{formule_dual_plan}, we have:
$$\mathrm{P}_\sigma=\{(\eta,t)\in\mathbb{D}^2\times\mathbb{R}\mid \inner{(1,\eta),\sigma}_{1,2}=t\}.$$
Now, take $X$ to be the Killing vector field $\Lambda(\sigma)$. From Corollary \ref{support_function_of_killing}, the support function of $X$ is given by: $$\phi_X(z)=\inner{(1,z),\sigma}_{1,2}$$ and by the construction of the plane $\mathrm{P}_\sigma$, the function $\phi_X$ satisfies the relations \eqref{abc}. 
\end{remark}
In the next lemma, we will study the behavior of a support function under the action of $\mathrm{O}_0(1,2)\ltimes\minko$ on vector fields of the circle. Recall that the linear part acts on the space of vector fields by pushforward, and the translation part acts by adding a Killing vector field.
\begin{lemma}\label{equii}
    Let $X$ be a vector field of $\mathbb{S}^1$ and $\phi_X$ be its support function. Let $\mathrm{A}\in\mathrm{O}_0(1,2)$ and $\sigma\in\minko$. The support function $\phi_{\mathrm{A}_*X+\Lambda(\sigma)}$ of the vector field $\mathrm{A}_*X+\Lambda(\sigma)$ satisfies:\begin{equation}
          \mathrm{gr}(\phi_{\mathrm{A}_*X+\Lambda(\sigma)})=\mathrm{Is}(\mathrm{A},\sigma)\mathrm{gr}(\phi_X).
    \end{equation}
    
\end{lemma}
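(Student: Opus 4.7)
The plan is to verify the claimed graph identity pointwise, by combining additivity of the support function under addition of vector fields with the explicit form of the support function of a pushforward, and then matching the result against the formula for $\mathrm{Is}(\mathrm{A},\sigma)$ given by Lemma \ref{rotationHP}. Additivity $\phi_{Y+Z}=\phi_Y+\phi_Z$ is immediate from \eqref{supportfunction_vs_vectorfield}, so
$$\phi_{\mathrm{A}_*X+\Lambda(\sigma)}=\phi_{\mathrm{A}_*X}+\phi_{\Lambda(\sigma)},$$
and Corollary \ref{support_function_of_killing} already identifies $\phi_{\Lambda(\sigma)}(w)=\langle(1,w),\sigma\rangle_{1,2}$. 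Hence the only real computation is that of $\phi_{\mathrm{A}_*X}$.

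To compute $\phi_{\mathrm{A}_*X}$ I would work with the $1$-homogeneous function $\Phi_X:N\to\mathbb{R}$ associated to $X$ via Lemma \ref{homogene}. Since $\mathrm{A}$ acts linearly on $N$ and the radial projection $\Pi$ intertwines this action with the boundary action of $\mathrm{A}$ on $\mathbb{S}^1$, the $1$-homogeneous function attached to $\mathrm{A}_*X$ is $\Phi_X\circ\mathrm{A}^{-1}$. Writing $\mathrm{A}(1,\eta)=c(\eta)(1,\mathrm{A}\cdot\eta)$ for $\eta\in\mathbb{S}^1$ with $c(\eta)>0$, one gets $\mathrm{A}^{-1}(1,w)=c(\mathrm{A}^{-1}\cdot w)^{-1}(1,\mathrm{A}^{-1}\cdot w)$, so $1$-homogeneity yields
$$\phi_{\mathrm{A}_*X}(w)=\Phi_X\bigl(\mathrm{A}^{-1}(1,w)\bigr)=\frac{\phi_X(\mathrm{A}^{-1}\cdot w)}{c(\mathrm{A}^{-1}\cdot w)}.$$

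Finally, since $\langle(1,\mathrm{A}\cdot\eta),(1,0,0)\rangle_{1,2}=-1$, the scalar $c(\eta)$ coincides with $-\langle\mathrm{A}(1,\eta),(1,0,0)\rangle_{1,2}$, which is precisely the denominator appearing in Lemma \ref{rotationHP}. Substituting $w=\mathrm{A}\cdot\eta$ and combining the two summands, the expression
$$\phi_{\mathrm{A}_*X+\Lambda(\sigma)}(\mathrm{A}\cdot\eta)=\frac{\phi_X(\eta)}{-\langle\mathrm{A}(1,\eta),(1,0,0)\rangle_{1,2}}+\langle(1,\mathrm{A}\cdot\eta),\sigma\rangle_{1,2}$$
is exactly the second coordinate of $\mathrm{Is}(\mathrm{A},\sigma)\cdot(\eta,\phi_X(\eta))$ produced by Lemma \ref{rotationHP}, while the first coordinates both equal $\mathrm{A}\cdot\eta$. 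Because the boundary action of $\mathrm{A}$ is a homeomorphism of $\mathbb{S}^1$, every point of $\mathrm{gr}(\phi_{\mathrm{A}_*X+\Lambda(\sigma)})$ is hit exactly once, giving the desired equality of graphs. The main obstacle I expect is justifying the equivariance $\Phi_{\mathrm{A}_*X}=\Phi_X\circ\mathrm{A}^{-1}$: Lemma \ref{homogene} defines $\Phi_X$ through an auxiliary $\mathcal{C}^1$ spacelike section of $\Pi$ and its unit tangent vector, so one has to check that this auxiliary data can be transported by $\mathrm{A}$ without affecting the resulting homogeneous function. Once this naturality is in place, the rest of the argument is bookkeeping with the conformal factor $c(\eta)$.
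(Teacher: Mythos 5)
Your proposal is correct and follows essentially the same route as the paper: both compute $\phi_{\mathrm{A}_*X}$ through the $1$-homogeneous function $\Phi_X\circ\mathrm{A}^{-1}$ and homogeneity, identify the Killing contribution via Corollary \ref{support_function_of_killing}, and match the result against the formula of Lemma \ref{rotationHP}. The only (cosmetic) difference is that you verify the graph identity in a single pointwise computation for $\mathrm{Is}(\mathrm{A},\sigma)$, whereas the paper treats the linear part and the translation part separately and then composes the two isometries; the equivariance $\Phi_{\mathrm{A}_*X}=\Phi_X\circ\mathrm{A}^{-1}$ that you flag as the remaining check is likewise asserted as an elementary computation in the paper.
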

\begin{proof}
 First we deduce from Corollary \ref{support_function_of_killing}, that the support function of $X+\Lambda(\sigma)$ is given by: 
 $$\phi_{X+\Lambda(\sigma)}(z)=\phi_X(z)+\inner{(1,z),\sigma}_{1,2}$$ and hence by Lemma \ref{rotationHP}:
  \begin{equation}\label{20}
        \mathrm{gr}(\phi_{X+\Lambda(\sigma)})=\mathrm{Is}(0,\sigma)\mathrm{gr}(\phi_X).
    \end{equation} 
Next, by  an elementary computation, one may deduce from Lemma \ref{homogene} that for a given vector field $X$ and $\Phi$ as in \eqref{secctionhomogene}, and for an isometry $\mathrm{A}$ of the hyperbolic plane, the function $\Phi\circ \mathrm{A}^{-1}:N\to\mathbb{R}$ is the $1-$homogeneous function corresponding to the vector field $\mathrm{A}_*X.$ Hence, the support function of $\mathrm{A}_*X$ is then given by: 
$$
    \phi_{\mathrm{A}_*X}=\Phi\circ \mathrm{A}^{-1}(1,z)=\Phi\left(-\langle \mathrm{A}^{-1}\begin{pmatrix}
1 \\
z 
\end{pmatrix} ,(1,0,0)\rangle_{1,2}  \begin{pmatrix}
1 \\
\mathrm{A}^{-1}\cdot z 
\end{pmatrix}  \right).$$
So, by homogeneity of $\Phi$, we obtain

\begin{equation}\label{17}\phi_{\mathrm{A}_*X}(z)=-\langle \mathrm{A}^{-1}\begin{pmatrix}
1 \\
z 
\end{pmatrix} ,(1,0,0)\rangle_{1,2} \phi_X(\mathrm{A}^{-1}\cdot z).\end{equation} Therefore using Lemma \ref{rotationHP}, we conclude that:
\begin{equation}\label{22}
    \mathrm{gr}(\phi_{\mathrm{A}_*X})=\mathrm{Is}(\mathrm{A},0)\mathrm{gr}(\phi_X).
\end{equation}
The proof of Lemma is then completed by combining equations \eqref{20} and \eqref{22}.
\end{proof}

Lemma \ref{homogene} allows us to view each vector field $X$ of $\mathbb{S}^1$ as a subset of $\partial\HP$. Indeed, by one homogeneity of the map $\Phi:N\to \mathbb{R}$ as in \eqref{secctionhomogene}, one may define the \textit{graph} of $X$ as the subset of $\partial\HP$ given by
\begin{equation}
    \mathcal{GR}(X):=\{[(x_0,x_1,x_2, \Phi(x_0,x_1,x_2))] \mid (x_0,x_1,x_2)\in N  \}
\end{equation}
Now, we want to take the convex hull of $\mathcal{GR}(X)$ in projective space. Recall that the convex hull of a set in projective space can be defined if it is contained in an affine chart.

\begin{lemma}
Let $X$ be a vector field on the circle, and let $\Phi:N\to\mathbb{R}$ be the one homogeneous function as in \eqref{secctionhomogene}. Then:
\begin{enumerate}
\item The graph $\mathcal{GR}(X)$ of $X$ is contained in an affine chart.
\item Let $\mathcal{C}(X)$ be the convex hull of $\mathcal{GR}(X)$. Then $\mathcal{C}(X)\subset \overline{\HP}\setminus\{[0,0,0,1]\}$.
\end{enumerate}
\end{lemma}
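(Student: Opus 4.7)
The plan is to identify the correct affine chart of $\mathbb{RP}^3$ and exploit the convexity of a cylinder sitting in it. For part (1), I would take the chart $U := \{[x_0:x_1:x_2:x_3] \in \mathbb{RP}^3 \mid x_0 \neq 0\}$. The key step is to check that every $(x_0,x_1,x_2) \in N$ satisfies $x_0 \neq 0$: if $x_0 = 0$, the defining equation $-x_0^2 + x_1^2 + x_2^2 = 0$ forces $x_1 = x_2 = 0$, contradicting $(x_0,x_1,x_2) \in N$. Combined with the $1$-homogeneity of $\Phi$, which makes $[x_0,x_1,x_2,\Phi(x_0,x_1,x_2)]$ a well-defined projective point independent of the representative chosen in $N$, this shows $\mathcal{GR}(X) \subset U$.

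For part (2), I would work in the Klein model, identifying $U$ with $\mathbb{R}^3$ via the map in \eqref{7}. In these coordinates $\overline{\HP} \cap U$ becomes the closed cylinder $\overline{\mathbb{D}^2} \times \mathbb{R}$, and by Definition \ref{supportfunction} the graph $\mathcal{GR}(X)$ becomes $\{(z,\phi_X(z)) \mid z \in \mathbb{S}^1\} \subset \mathbb{S}^1 \times \mathbb{R}$. Since $\overline{\mathbb{D}^2} \times \mathbb{R}$ is convex and contains $\mathcal{GR}(X)$, it also contains the convex hull $\mathcal{C}(X)$. To finish, I would verify that $[0,0,0,1]$ is the unique point of $\overline{\HP}$ missing from $U$: if $x_0 = 0$ and $-x_0^2 + x_1^2 + x_2^2 \leq 0$, then $x_1 = x_2 = 0$, so the projective class is $[0,0,0,1]$. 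Hence $\overline{\HP} \cap U = \overline{\HP} \setminus \{[0,0,0,1]\}$, yielding $\mathcal{C}(X) \subset \overline{\HP} \setminus \{[0,0,0,1]\}$.

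I do not foresee a substantial obstacle; the lemma is a structural verification whose main conceptual point is the simple observation that the projectivized light cone avoids the hyperplane $\{x_0 = 0\}$, which is precisely what allows the projection of $\mathcal{GR}(X)$ into a single affine chart. The only minor subtlety worth flagging is the well-definedness of $\mathcal{GR}(X)$ as a subset of $\mathbb{RP}^3$, which relies on the $1$-homogeneity of $\Phi$ to ensure that different representatives of the same ray in $N$ produce the same projective point in the fourth coordinate.
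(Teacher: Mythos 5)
Your proof is correct and follows essentially the same route as the paper: both work in the affine chart $\{x_0\neq 0\}$, observe that $N$ avoids $\{x_0=0\}$ so $\mathcal{GR}(X)$ lies in that chart, and then use that $\overline{\HP}\setminus\{[0,0,0,1]\}$ is exactly the convex cylinder $\overline{\mathbb{D}^2}\times\mathbb{R}$ in Klein coordinates, so it contains the convex hull of $\mathcal{GR}(X)$. Your explicit checks (well-definedness via $1$-homogeneity and that $[0,0,0,1]$ is the unique missing point) are just slightly more detailed versions of steps the paper leaves implicit.
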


\begin{proof}
It is enough to remark that for $(x_0,x_1,x_2)\in N$, $x_0\neq 0$, thus $N\subset \{x_0\neq 0\}$. Hence, $\mathcal{GR}(X)$ is contained in the affine chart $\{x_0\neq 0\}$. To prove the second item, note that $\overline{\HP}\setminus\{[0,0,0,1]\}$ is contained in the affine chart $\{x_0\neq 0\}$; moreover, it is convex since it is equal to $\overline{\mathbb{D}^2}\times\mathbb{R}$.
Now since $\mathcal{GR}(X)$ is contained in $\overline{\HP}\setminus{[0,0,0,1]}$ and $C(X)$ is the smallest convex set containing $\mathcal{GR}(X)$, then $\mathcal{C}(X)\subset\overline{\HP}\setminus\{[0,0,0,1]\}$, this completes the proof.
\end{proof}
Notice that the graph of the vector field $X$ corresponds to the graph of the support function $\phi_X$ in the affine chart $\{x_0\neq0\}$.

We denote by $\partial\mathcal{C}(X)$ the boundary of $\mathcal{C}(X)$ in $\overline{\HP}$. If $X$ is continuous, then the graph of $\phi_X$ forms a Jordan curve. By the Jordan-Brouwer separation theorem, $\partial\mathcal{C}(X)\setminus \mathcal{GR}(X)$ has two connected components, which we denote as $\partial_+\mathcal{C}(X)$ and $\partial_-\mathcal{C}(X)$ , respectively. Following the work of \cite{barbotfillastre}, we now define the \textit{width} of a vector field

\begin{defi}\label{width}
 Let $X$ be a continuous vector field of $\mathbb{S}^1$. Then the \textit{width} of $X$
is defined as:
$$w(X):=\underset{[(x,s)] \in \partial_- \mathcal{C}(X), \ [(x,t)]\in \partial_+\mathcal{C}(X)}{\sup} \vert \mathrm{L}([x,t])-\mathrm{L}([x,s])\vert  \in [0,+\infty],$$ where $\mathrm{L}:\HP\to\mathbb{R}$ is height function defined in \eqref{formule_L}
\end{defi}
The quantity $\mathrm{L}([x,t])-\mathrm{L}([x,s])$ can be interpreted as the timelike distance between the spacelike planes $\mathrm{P}_{[x,t]}$ and $\mathrm{P}_{[x,s]}$ of Minkowski space dual to $[x,t]$ and $[x,s]$ respectively (see Equation \eqref{4}). The next Lemma asserts that the width is an invariant under isometries of $\HP$.

\begin{lemma}\label{width_equivariant}
    Let $X$ be a continuous vector field. Then for any $\mathrm{A}\in\mathrm{O}_0(1,2)$ and $v\in\minko$, we have 
    $$w(\mathrm{A}_*X+\Lambda(v))=w(X).$$
\end{lemma}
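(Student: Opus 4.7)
The plan is to exploit Lemma \ref{equii}, which transfers the action of $(\mathrm{A},v)\in \mathrm{O}_0(1,2)\ltimes\minko$ on vector fields to the action of the isometry $\mathrm{Is}(\mathrm{A},v)$ on the graphs of their support functions in $\HP$. Since a projective transformation preserving $\HP$ maps convex sets to convex sets, and since $\mathrm{Is}(\mathrm{A},v)$ fixes the direction of the fibers, I expect the two boundary components $\partial_\pm\mathcal{C}$ to be permuted (or preserved) by this isometry. The remaining point is then to check that the quantity $\lvert \mathrm{L}([x,t])-\mathrm{L}([x,s])\rvert$ is invariant under $\mathrm{Is}(\mathrm{A},v)$ when $[x,t]$ and $[x,s]$ lie on the same fiber.

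First I would note that Lemma \ref{equii} gives $\mathrm{gr}(\phi_{\mathrm{A}_*X+\Lambda(v)})=\mathrm{Is}(\mathrm{A},v)\,\mathrm{gr}(\phi_X)$. Taking projective convex hulls (both graphs lie in the affine chart $\{x_0\neq 0\}$), and using that $\mathrm{Is}(\mathrm{A},v)$ is a projective isomorphism, one obtains $\mathcal{C}(\mathrm{A}_*X+\Lambda(v))=\mathrm{Is}(\mathrm{A},v)\,\mathcal{C}(X)$. Because $\mathrm{Is}(\mathrm{A},v)$ preserves the orientation of the fibers, it sends $\partial_\pm\mathcal{C}(X)$ to $\partial_\pm\mathcal{C}(\mathrm{A}_*X+\Lambda(v))$.

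Next, by Lemma \ref{rotationHP}, two points $(\eta,t_1),(\eta,t_2)$ on the same fiber in the Klein model are sent respectively to $(\mathrm{A}\cdot\eta,t_i/c(\eta)+d(\eta))$ with $c(\eta)=-\langle \mathrm{A}(1,\eta),(1,0,0)\rangle_{1,2}$ and $d(\eta)=\langle(1,\mathrm{A}\cdot\eta),v\rangle_{1,2}$, so they still lie on a common fiber. Using formula \eqref{formule_L} in the Klein model, $\mathrm{L}([1,\eta,t])=t/\sqrt{1-\lvert\eta\rvert^2}$. A short calculation — exploiting that $\mathrm{A}$ preserves $\langle\cdot,\cdot\rangle_{1,2}$, so that $\mathrm{A}(1,\eta)=c(\eta)(1,\mathrm{A}\cdot\eta)$ forces $c(\eta)^2(1-\lvert \mathrm{A}\cdot\eta\rvert^2)=1-\lvert\eta\rvert^2$ — shows
$$\mathrm{L}\bigl(\mathrm{Is}(\mathrm{A},v)\cdot(\eta,t_1)\bigr)-\mathrm{L}\bigl(\mathrm{Is}(\mathrm{A},v)\cdot(\eta,t_2)\bigr)=\frac{t_1-t_2}{c(\eta)\sqrt{1-\lvert \mathrm{A}\cdot\eta\rvert^2}}=\frac{t_1-t_2}{\sqrt{1-\lvert\eta\rvert^2}},$$
which equals $\mathrm{L}([1,\eta,t_1])-\mathrm{L}([1,\eta,t_2])$.

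Finally I would conclude by observing that the supremum in Definition \ref{width} is taken over pairs of points on common fibers in $\partial_\pm\mathcal{C}(\cdot)$, and both the family of such pairs and the individual height differences are preserved under $\mathrm{Is}(\mathrm{A},v)$; hence $w(\mathrm{A}_*X+\Lambda(v))=w(X)$. The only technical point is the identity $c(\eta)^2(1-\lvert\mathrm{A}\cdot\eta\rvert^2)=1-\lvert\eta\rvert^2$, which is a direct consequence of $\mathrm{A}\in\mathrm{O}_0(1,2)$; no genuine obstacle arises.
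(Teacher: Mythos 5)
Your proposal is correct and follows essentially the same route as the paper: transport the graph by $\mathrm{Is}(\mathrm{A},v)$ via Lemma \ref{equii}, use that isometries of $\HP$ preserve convexity and fibers to identify $\partial_\pm\mathcal{C}$, and check invariance of the height difference along fibers via Lemma \ref{rotationHP} and \eqref{formule_L}. The only difference is that you spell out the fiber-distance computation (through the identity $c(\eta)^2(1-\lvert\mathrm{A}\cdot\eta\rvert^2)=1-\lvert\eta\rvert^2$), which the paper simply asserts ``follows directly''.
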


\begin{proof}
Let $\Phi:N\to\mathbb{R}$ be a one-homogeneous function associated to the vector field $X$ as in \eqref{secctionhomogene}. Then, by Corollary \ref{support_function_of_killing} and Lemma \ref{equii}, the function $\Phi\circ\mathrm{A}^{-1}+\inner{\cdot,v}_{1,2}$ is the one-homogeneous function associated with the vector field $\mathrm{A}_*X+\Lambda(v)$. Hence,\begin{align*}
\mathcal{GR}(\mathrm{A}_*X+\Lambda(v))&=\{[(x_0,x_1,x_2, \Phi\circ\mathrm{A}^{-1}(x_0,x_1,x_2)+\inner{(x_0,x_1,x_2),v}_{1,2})] \mid (x_0,x_1,x_2)\in N  \}\\
&=\{[\mathrm{A}\cdot (x_0,x_1,x_2), \Phi(x_0,x_1,x_2)+\inner{\mathrm{A}\cdot(x_0,x_1,x_2),v}_{1,2})] \mid (x_0,x_1,x_2)\in N  \}.
\end{align*}
Hence, we deduce that $\mathcal{GR}(\mathrm{A}_*X+\Lambda(v))=\mathrm{Is}(\mathrm{A},v)\mathcal{GR}(X)$. Since isometries of $\HP$ preserve convex sets, we deduce that:
$$\partial_\pm\mathcal{C}(\mathrm{A}_*X+\Lambda(v))=\mathrm{Is}(\mathrm{A},v)\partial_\pm\mathcal{C}(X).$$
To complete the proof, we need to show that the \textit{distance along the fibers} is preserved under the isometry group of $\HP$, namely:
\begin{equation}\label{equivariant_length}
\mathrm{L}(\mathrm{Is}(\mathrm{A},v)[x,t])-\mathrm{L}(\mathrm{Is}(\mathrm{A},v)[x,s])= \mathrm{L}([x,t])-\mathrm{L}([x,s]),
\end{equation}
and this follows directly from Lemma \ref{rotationHP} and the formula in \eqref{L} of $\mathrm{L}$.
\end{proof}

Now, we will explain how the boundary $\partial_\pm\mathcal{C}(X)$ can be seen as a graph of a real function over $\mathbb{D}^2$. To do this, we need to recall some notions in convex analysis. For more detailed discussion, readers can refer to \cite{Rockconvex}. Given a convex (resp. concave) function $u:\mathbb{D}^2\to \mathbb{R}$, the \textit{boundary value} of $u$ is the function on $\mathbb{S}^1$ whose value at $z\in\mathbb{S}^1$ is given by: \begin{equation}\label{boundaryvalue}
    u(z) = \lim_{s \to 0^+}u((1-s)z+sx),\ \mathrm{for} \ \mathrm{any}\ x\in\mathbb{D}^2.\end{equation}
The value of the limit \eqref{boundaryvalue} is independent of the choice of $x\in \mathbb{D}^2$ (see \cite[Section 4]{ConvexAnal}). In fact if $u:\mathbb{D}^2\to\mathbb{R}$ is a convex (resp. concave) function then the function $u:\overline{\mathbb{D}^2}\to\mathbb{R}$ obtained by extending $u$ to $\mathbb{S}^1$ by \eqref{boundaryvalue} is a lower semicontinuous (resp. upper semicontinuous) function. For any function $\phi: \mathbb{S}^1 \to \mathbb{R}$, define two functions $\phi^-, \phi^+: \overline{\mathbb{D}^2} \to \mathbb{R}$ as follows:
\begin{equation}\label{phi-}
\phi^-(z)=\sup\{ a(z)\mid \ a:\mathbb{R}^2\to\mathbb{R} \ \mathrm{is}\ \mathrm{an} \ \mathrm{affine}\ \mathrm{function}\ \mathrm{with}  \ a_{\mid\mathbb{S}^1}\leq\phi   \},\end{equation}
\begin{equation}\label{phi+}
\phi^+(z)=\inf\{ a(z)\mid \ a:\mathbb{R}^2\to\mathbb{R} \ \mathrm{is}\ \mathrm{an} \ \mathrm{affine}\ \mathrm{function}\ \mathrm{with}  \ \phi\leq a_{\mid\mathbb{S}^1}   \}.\end{equation}
We need the following properties of the maps $\phi^-$ and $\phi^+$:\label{page 8}
\begin{enumerate}
    \item[(P1)]\label{P1} We have $\phi^-\leq\phi^+$, moreover, $\phi^-$ (resp. $\phi^+$)  is lower semicontinuous and convex (resp. upper semicontinuous and concave). As a consequence, $\phi^\pm$ are continuous in the unit disc $\mathbb{D}^2$.
\item[(P2)]\label{P2} If $\phi$ is lower semicontinuous (resp. upper semicontinuous), then the boundary value of $\phi^-$ (resp. $\phi^+$) is equal to $\phi$ on $\mathbb{S}^1$ (see \cite[Lemma 4.6]{ConvexAnal}).
\item[(P3)]\label{P3} If $\phi: \mathbb{S}^1\to\mathbb{R}$ is a continuous function, then $\phi^-$ and $\phi^+$ are continuous in $\overline{\mathbb{D}^2}$, see \cite[Proposition 4.2]{ConvexAnal}.
\item[(P4)]\label{P4} If $u:\mathbb{D}^2\to\mathbb{R}$ is a convex (resp. concave) function with boundary value $u |_{\mathbb{S}^1}\leq \phi$ (resp. $\phi\leq u |_{\mathbb{S}^1} $), then $u\leq\phi^-$ (resp. $\phi^+\leq u$) in $\mathbb{D}^2$, see \cite[Corollary 4.5]{ConvexAnal}.
\item[(P5)]\label{P5} If $X$ is a continuous vector field on $\mathbb{S}^1$, and $\phi_X$ is its support function (which is continuous), then the boundary of $\partial\mathcal{C}(X)$ is the union of the graphs $\phi_X^-$ and $\phi_X^+$, see \cite[Lemma 2.41]{barbotfillastre}.
\end{enumerate}
Now, we consider the epigraphs of $\phi^+$ and $\phi^-$ defined as follows: $$\mathrm{epi}^+(\phi^-) = \{(\eta,t)\in\overline{\mathbb{D}^2}\times\mathbb{R} \mid t\geq\phi^-(\eta)\} \ \mathrm{and}\  \mathrm{epi}^-(\phi^+) = \{(\eta,t)\in\overline{\mathbb{D}^2}\times\mathbb{R} \mid t\leq\phi^+(\eta)\}.$$ Then the first property $(\mathrm{P}1)$ on $\phi^-$ and $\phi^+$ is equivalent to saying that $\mathrm{epi}^+(\phi^-)$ and $\mathrm{epi}^-(\phi^+)$ are closed and convex.

Based on the last property $(\mathrm{P}5)$, we will always assume that for a continuous vector field $X$ on $\mathbb{S}^1$, and up to changing the labeling of $\partial_\pm\mathcal{C}(X)$, the following holds: $$\partial_{-}\mathcal{C}(X)=\mathrm{gr}(\phi_X^- |_{\mathbb{D}^2}) \ \mathrm{and}\  \partial_{+}\mathcal{C}(X)=\mathrm{gr}(\phi_X^+ |_{\mathbb{D}^2}).$$
And hence the width of $X$ can be written as:
\begin{equation}\label{width_with_function}
    w(X):=\underset{\eta\in\mathbb{D}^2}{\sup} \left( \mathrm{L}(\eta,\phi_X^+(\eta))-\mathrm{L}(\eta,\phi_X^{-}(\eta))\right).\end{equation}

We now proceed to define the notion of upper and lower semi-continuous vector fields on the circle. This definition will, a priori, depend on the orientation of the circle. Let us fix the standard orientation of $\mathbb{S}^1$. Suppose $V$ is a smooth section of $\mathrm{T}\mathbb{S}^1$ that is positively oriented, meaning that $\mathrm{det}(z,V(z))>0$. Let $X$ be a vector field on $\mathbb{S}^1$. Then, there exists a function $f:\mathbb{S}^1\to\mathbb{R}$ such that
\begin{equation}\label{XfV}
X=fV.
\end{equation}
We then define the following:
\begin{defi}
Given a vector field $X$ of $\mathbb{S}^1$ and a function $f:\mathbb{S}^1\to\mathbb{R}$ as in \eqref{XfV}. Then we say that:
\begin{itemize}
    \item $X$ is \textit{counter-clockwise lower semicontinuous} if $f$ is a lower semicontinuous function.
\item $X$ is \textit{counter-clockwise upper semicontinuous} if $f$ is a upper semicontinuous function.
\end{itemize}
\end{defi}

The definitions of counter-clockwise lower and upper semi-continuous vector fields do not depend on the choice of the positively oriented smooth section $V$. Therefore, if we take $V(z)=iz$, which is a positively oriented section of $\mathrm{T}\mathbb{S}^1$, then counter-clockwise lower/upper semicontinuous vector field $X$ is equivalent to the fact that the support function $\phi_X:\mathbb{S}^1\to\mathbb{R}$ is lower/upper semicontinuous. In the remainder of the paper, we will omit the word "counter-clockwise" and simply refer to such vector fields as lower/upper semi-continuous vector fields.

We conclude this section by introducing two types of extensions of vector fields from the disk to the circle, which are necessary for our purposes.

\begin{defi}\label{2.17}
Let $X$ be a vector field on $\mathbb{S}^1$, and let $\hat{X}$ be a vector field on $\mathbb{D}^2$. We define the following:
    \begin{itemize}
        \item $\hat{X}$ \textit{extends continuously} to $X$, if for any sequence $(\eta_n)_{n\in\mathbb{N}}$ in $\mathbb{D}^2$ converging to $\eta_{\infty}$, we have
        $$\hat{X}(\eta_n)\to X(\eta_{\infty}).$$
\item $\hat{X}$ \textit{extends radially} to $X$ if $\hat{X}$ extends to $X$ along line segments in $\mathbb{D}^2$, meaning that for each $z\in\mathbb{S}^1$, the following limit holds:
\begin{equation}\label{extendfunction}
    \lim_{s \to 0^+}\hat{X}((1-s)z+sx)=X(z),\ \mathrm{for} \ \mathrm{any}\ x\in\mathbb{D}^2.\end{equation} 
       
    \end{itemize}
\end{defi}
Note that in Definition \ref{2.17}, we don't assume that $\hat{X}$ is continuous on the disk $\mathbb{D}^2$. As we will see later, in Theorems \ref{TH1} and \ref{THrad}, the infinitesimal earthquakes are not continuous in $\mathbb{D}^2$, but they extend (continuously or radially) to the circle.

\begin{remark}\label{derniere_remark}
 Clearly, if $\hat{X}$ extends continuously to $X$, then $\hat{X}$ extends radially to $X$. However, the converse does not hold in general. In fact, if $\hat{X}$ extends radially to $X$ and $X$ is not continuous at $z_0\in \mathbb{S}^1$, then $\hat{X}$ does not extend continuously on $z_0$. Indeed, without loss of generality, assume that $X(z_0)=0$, then since $X$ is not continuous at $z_0$, then there is a sequence of points $(z_n)_{n\in \mathbb{N}}$ in $\mathbb{S}^1$ such that: 
 \begin{equation}\label{epsilon}
     \lVert X(z_n)\rVert_{euc}\geq \epsilon,
 \end{equation} for some $\epsilon>0$, where $\lVert \cdot\rVert_{euc}$ denotes the Euclidean norm in $\mathbb{R}^2$. Let $\ell_n$ denote the segment of extremity $(0,0)$ and $z_n$. By continuity of $\hat{X}$ on $\ell_n$, we conclude that for each $n\in \mathbb{N}$, there exists $\delta_n>0$ converging to zero such that: 
 \begin{equation}\label{equationradiallyyy}
     \mathrm{If} \ \eta\in\ell_n\ \mathrm{and}\ \lVert\eta-z_n\rVert_{euc}\leq\delta_n, \ \mathrm{then}\ \lVert\hat{X}(\eta)-X(z_n)\rVert_{euc} <\frac{\epsilon}{2}.
 \end{equation}
 Now take $x_n=(1-\delta_n)z_n\in \mathbb{D}^2$ which lies on the segment $\ell_n$ and satisfies $\lVert\eta-z_n\rVert_{euc}\leq\delta_n$. Thus by \eqref{equationradiallyyy}, we have: $$\lVert \hat{X}(x_n)-X(z_n)\rVert_{euc}\leq\frac{\epsilon}{2}.$$ Therefore, based on \eqref{epsilon}, we deduce that: $$\lVert\hat{X}(x_n)\rVert_{euc}\geq \frac{\epsilon}{2}.$$ Hence, $\lim\limits_{n \rightarrow +\infty} \lVert \hat{X}(x_n)\rVert_{euc}\neq 0$ and so $\hat{X}$ does not extend continuously on $z_0$. 
\end{remark}

\begin{example}
Let $X$ be a lower semicontinuous vector field on the circle, and consider $\phi_X:\mathbb{S}^1\to \mathbb{R}$ its support function. As example of vector field $\hat{X}$ that extends radially to $X$, one can take $\hat{X}(z)=iz\phi_X^-(z)$. It is easy to see $\hat{X}$ extends radially to $X$, since $\phi_X^-$ extends radially to $\phi_X$ (see Property $(\mathrm{P}2)$ in \ref{P2}).\end{example}

\section{Earthquakes and infinitesimal earthquakes}\label{novsec}

\subsection{Earthquake in hyperbolic plane}
In this section, we will recall the notion of earthquake map
of $\mathbb{H}^2$ and introduce the infinitesimal earthquake on $\mathbb{H}^2$. First we need to introduce the notion of measured geodesic lamination of $\mathbb{H}^2$. Recall that each oriented geodesic in $\mathbb{H}^2$ is uniquely determined by the pair of its endpoints
on $\mathbb{S}^1$, the initial point and the end point. Then the space $\mathcal{G}$ of unoriented geodesics in $\mathbb{H}^2$ is homeomorphic to $(\mathbb{S}^1\times\mathbb{S}^1\setminus \mathrm{diag})/\sim$, where $(a, b) \sim (b, a)$ and $\mathrm{diag}=\{ (a,a) \mid a\in\mathbb{S}^1  \}$.

\begin{defi}
A \textit{geodesic lamination} on $\mathbb{H}^2$ is a closed subset of $\mathcal{G}$ such that its elements
are pairwise disjoint complete geodesics of $\mathbb{H}^2$ . A \textit{measured geodesic lamination} is a locally finite
Borel measure on $\mathcal{G}$ such that its support is a geodesic lamination.
\end{defi}
For a measured geodesic lamination $\lambda$ of $\mathbb{H}^2$, we denote by $\vert \lambda\vert$ the \textit{support} of $\lambda$. The geodesics in $\vert \lambda\vert$ are called \textit{leaves}. The connected components of the complement $\mathbb{H}^2\setminus\vert\lambda\vert$ are called gaps. The \textit{strata} of $\lambda$ are the leaves and the gaps. It is worth to remark that each measured geodesic lamination $\lambda$ induces a transverse measure to its support. Namely an assignment of a positive Borel measure to each closed arc $I$ whose support is $\vert\lambda\vert\cap I$. More precisely, the measure of $I$ that we keep denoted by $\lambda(I)$ is the $\lambda$-measure of the set of geodesic in $\mathcal{G}$ which intersect the arc $I$. We have also the converse construction, namely the assignment of a positive Borel measure on each closed arc transverse to a geodesic lamination $\vert\lambda\vert$ and invariant under homotopy with respect to $\vert\lambda\vert$, gives rise to a measured lamination $\lambda$ whose support is $\vert\lambda\vert$ (see \cite[Section 1]{Bonahon_holder} for more details). Denote by $\mathcal{ML}(\mathbb{H}^2)$ the set of measured geodesic laminations of $\mathbb{H}^2$. Then we have:
\begin{defi}\label{thursnorm2}
Let $\lambda$ be a measured geodesic lamination of $\mathbb{H}^2$, then the \textit{Thurston norm} of $\lambda$ is defined as:
$$     \lVert \lambda\rVert_{\mathrm{Th}}:=\underset{I \ \ \ \ \ \  }{\sup\lambda(I)},          $$
where $I$ varies over all geodesic segments of length $1$ transverse to the support of the 
geodesic lamination $\lambda.$ 
\end{defi}
A measured geodesic lamination is \textit{bounded} if its Thurston norm is finite. We denote the space of bounded measured geodesic laminations by $\mathcal{ML}_b(\mathbb{H}^2).$

\begin{defi}\label{earthquake}
    A left (resp. right) earthquake $\mathrm{E}$ along a geodesic lamination $\lambda$ is a bijective map $\mathrm{E} : \mathbb{H}^2 \to\mathbb{H}^2$ such that for any stratum $S$ of $\lambda$, the restriction $\mathrm{E}_{S}$ of $\mathrm{E}$ to $S$ is equal to the restriction of
an isometry of $\mathbb{H}^2$, and 
for any two strata $S$ and $S'$ of $\lambda$, the \textit{comparison map}
$$\mathrm{Comp}(S,S'):=(\mathrm{E}|_{S})^{-1}\circ \mathrm{E}|_{S'}$$ is the restriction on an isometry $\gamma$ of $\mathbb{H}^2$, such that:
\begin{itemize}
\item $\gamma$ is different from the identity, unless possibly when one of the two strata $S$ and $S'$ is contained in the closure of the other;
\item 
when it is not the identity, $\gamma$ is a hyperbolic isometry whose axis $\ell$ weakly separates $S$ and $S'$;
\item furthermore, $\gamma$
translates to the left (resp. right), seen from $S$ to $S'$. 
\end{itemize}
\end{defi}
Let us explain the meaning of the last condition. Consider a smooth arc $c:[0,1]\to\mathbb{H}^2$ such that $c(0)\in S$ and $c(1)\in S^{'}$. Suppose also that the image of $c$ intersects $\ell$ exactly at one point, denoted as $x_0=c(t_0)$. Let $v=c'(t_0)\in \mathrm{T}_{x_0} \mathbb{H}^2$ be the tangent vector at the intersection point. Let $w \in\mathrm{T}_{x_0} \mathbb{H}^2$ be a vector tangent to the geodesic $\ell$ pointing towards $\gamma(x_0)$. Then we say that $\gamma$ translates to the left (resp. right) when seen from $S$ to $S^{'}$ if the vectors $v$ and $w$ form a positive (resp. negative) basis of $\mathrm{T}_{x_0} \mathbb{H}^2$, with respect to the standard orientation of $\mathbb{H}^2$.\\
Given an earthquake $\mathrm{E}$ of $\mathbb{H}^2$ along a geodesic lamination $\lambda$, one can construct a measured lamination supported on $\lambda$ called \textit{earthquake measure} associated to $\mathrm{E}$, we keep denoting such measure by $\lambda$. This measures the amount of shearing along the support of the earthquake along the support of $\lambda$. For example if the support of $\lambda$ consists of finite number of geodesics, then for each closed oriented arc $I$ transverse to $\lambda$, let $S_1,\cdots S_k$ denote the consecutive strata of $\lambda$ intersecting $I$. The $\lambda$-measure of $I$ is given by: 
$$\sum_{i=1}^{k-1}\mathrm{T}(\mathrm{Comp}(S_i,S_{i+1}))$$
where $\mathrm{T}(\gamma)$ denotes the \textit{signed translation length}
of the hyperbolic isometry $\gamma$. The reader can consult \cite{Thurston,miyachisaric} for the general construction of earthquake measure. It turns out that an earthquake measure $\lambda$ uniquely determines an earthquake map denoted by $\mathrm{E}^{\lambda}:\mathbb{H}^2\to\mathbb{H}^2$ up to post-composition by an isometry of $\mathbb{H}^2$, see \cite[Proposition 1.6.1: Metering earthquakes]{Thurston}.

Thurston \cite{Thurston} proved that, for a given left or right earthquake $\mathrm{E}$, even if $\mathrm{E}$ itself may not be continuous on $\mathbb{H}^2$, $\mathrm{E}$ extends uniquely
to an orientation-preserving homeomorphism of $\mathbb{S}^1$ which is the ideal boundary of the hyperbolic plane $\mathbb{H}^2$. We denote such extension by $\mathrm{E}|_{\mathbb{S}^1}$. Moreover Thurston proved the converse statement.
\begin{theorem}\cite[Geology is transitive]{Thurston}\label{thurs}
    Let $\Phi:\mathbb{S}^1\to\mathbb{S}^1$ be an orientation-preserving homeomorphism. Then there exists a left (resp. a right) earthquake map $\mathrm{E}$ of $\mathbb{H}^2$ along a geodesic lamination $\lambda$, which extends continuously to $\Phi$ on $\mathbb{S}^1.$ Moreover the earthquake is unique, except that there is a range of choices for the earthquake on any leaf of $\lambda$ where $\mathrm{E}$ is discontinuous. 
\end{theorem}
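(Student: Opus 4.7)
The plan is to mirror in Anti-de Sitter geometry the Half-pipe construction developed in this paper for the infinitesimal setting. I would work in the model $\ads = \isom(\mathbb{H}^2)$ equipped with its bi-invariant Killing metric, whose boundary at infinity is naturally identified with $\mathbb{S}^1 \times \mathbb{S}^1$ carrying a causal structure in which a Jordan curve is acausal exactly when it is the graph of an orientation-preserving homeomorphism of $\mathbb{S}^1$. Under this identification, the given $\Phi$ corresponds to an acausal curve $\Gamma_\Phi \subset \partial_\infty \ads$.

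First I would take the convex hull $\mathcal{C}(\Gamma_\Phi)$ of $\Gamma_\Phi$ in $\ads$. Its topological boundary splits into two pleated spacelike surfaces $\partial_\pm \mathcal{C}(\Gamma_\Phi)$, each of which is intrinsically isometric to a region of $\mathbb{H}^2$ and carries a bending lamination $\lambda^\pm$. This is the direct AdS analogue of the decomposition $\partial_\pm \mathcal{C}(X)$ produced in Section \ref{section2} from the graph of the support function.

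The crucial step is the projective duality between points in $\ads$ and spacelike totally geodesic planes; equivalently, via $\ads \cong \isom(\mathbb{H}^2)$, every support plane of $\partial_\pm \mathcal{C}(\Gamma_\Phi)$ is itself an isometry of $\mathbb{H}^2$. I would define $\mathrm{E}: \mathbb{H}^2 \to \mathbb{H}^2$ stratum by stratum: identify each stratum of $\lambda^\pm$ on the pleated surface with a subset of $\mathbb{H}^2$ via one of the two projections $\ads \to \mathbb{H}^2$, choose a support plane at that stratum, and let $\mathrm{E}$ act there by the isometry dual to this plane. Because two consecutive strata share a support spacelike line in $\ads$, the comparison map between them is a hyperbolic isometry whose axis is the projection of that line, and the convexity of $\mathcal{C}(\Gamma_\Phi)$ pins down the shearing direction as left or right depending on whether one works with $\partial_-$ or $\partial_+$.

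Two things remain. First, that $\mathrm{E}$ extends continuously to $\Phi$ on $\mathbb{S}^1$: as $\eta \in \mathbb{H}^2$ approaches a boundary point $z$, one must show that the support planes of $\partial_\pm \mathcal{C}(\Gamma_\Phi)$ above $\eta$ asymptote to planes whose closure contains the AdS boundary point $(z,\Phi(z)) \in \Gamma_\Phi$, so that $\mathrm{E}(\eta) \to \Phi(z)$. Second, uniqueness up to the stated ambiguity: on any non-atomic stratum the support plane is uniquely determined by $\Gamma_\Phi$, whereas on a leaf of positive measure one has a one-parameter family of support planes producing the range of admissible choices. I would expect the extension step to be the main obstacle, requiring a careful convergence argument for support planes of convex bodies in $\ads$, entirely parallel to Proposition \ref{extension} in the half-pipe setting.
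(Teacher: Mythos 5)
A point of context first: the paper does not prove Theorem \ref{thurs} at all --- it is quoted from Thurston \cite{Thurston}, and for the Anti-de Sitter route you propose the paper explicitly points to \cite{Mess} and to \cite{diafseppi2023antide}. So there is no internal proof to compare against; what your outline reproduces is the strategy of that cited AdS proof, which is the macroscopic counterpart of the Half-pipe argument the paper actually carries out for the infinitesimal statement (Theorem \ref{TH1}): graph of $\Phi$ as an acausal curve $\Gamma_\Phi\subset\partial\ads$, convex hull, the two pleated boundary components, support planes read as isometries of $\mathbb{H}^2$ via duality, the left/right dichotomy coming from convexity of the two boundary components, continuity of the boundary extension via convergence of support planes, and non-uniqueness exactly on the leaves of discontinuity. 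As a strategy this is correct and is the established one.

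As a proof, however, the proposal has genuine gaps beyond the two you flag. (i) An earthquake in the sense of Definition \ref{earthquake} is a \emph{bijection} of $\mathbb{H}^2$; nothing in your outline shows that the map defined stratum by stratum is defined on all of $\mathbb{H}^2$, injective, or surjective. This is precisely where the finite case is harder than the infinitesimal one treated in this paper: a vector field as in Definition \ref{infiearth} only needs to be assigned pointwise, whereas here you must prove that the left (resp.\ right) projections of the strata of $\partial_\mp\mathcal{C}(\Gamma_\Phi)$ cover $\mathbb{H}^2$ and that their images under the associated isometries again tile $\mathbb{H}^2$; this occupies a substantial part of the AdS proof and cannot be waved through. (ii) The earthquake condition constrains the comparison map between an \emph{arbitrary} pair of strata, not only ``consecutive'' ones sharing a support line; for a general pair one needs that any two support planes meet along a spacelike geodesic (the AdS analogue of Lemma \ref{planes_intersect}, which also forces you to set aside the degenerate case where $\Phi$ is the restriction of an isometry and the convex hull is a single plane), and that the resulting composition is hyperbolic with axis the projection of that intersection, weakly separating the two strata; your sketch does not address this. (iii) The boundary extension and the uniqueness statement are exactly the analogues of Proposition \ref{extension} and Proposition \ref{uniqness_of_earthquake} and are the technical heart of the matter; deferring them, together with (i) and (ii), leaves the proposal as an accurate roadmap of the known AdS proof rather than a proof.
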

The previous discussion allows us to define an injective map:
\begin{equation}\label{earthqaukemeasure}
\mathrm{EM}: \mathrm{Isom}(\mathbb{H}^2)\backslash \mathrm{Homeo}_+(\mathbb{S}^1)\to\mathcal{ML}(\mathbb{H}^2)\end{equation}
which associates to each class of orientation preserving homeomorphism $\Phi$ of $\mathbb{S}^1$ the earthquake measure $\lambda$ such that $\mathrm{E}^{\lambda}|_{\mathbb{S}^1}=\Phi.$ The map $\mathrm{EM}$ is not surjective, namely one can find a measured geodesic lamination which does not arise as an earthquake measure, see example in \cite{Thurston}. Nevertheless the set of bounded lamination is in the image of the map $\mathrm{EM}$. To explain this we need to recall some terminology. Recall that given an orientation-preserving homeomorphism $\Phi: \mathbb{RP}^1\to \mathbb{RP}^1$ (where $\mathbb{RP}^1\cong\mathbb{S}^1$), the \textit{cross-ratio norm} is defined as: 
 $$\lVert  \Phi \rVert_{cr}=\underset{\mathrm{cr}(Q)=1\ \ \ \ \ \ \ \ \ \ \ \ \ }{\sup \vert\ln{\mathrm{cr}\left(\Phi(Q)\right)}    \vert }\in[0,+\infty],$$ where $Q=[a,b,c,d]$ is a quadruple of points on $\mathbb{RP}^1$ and $$\mathrm{cr}(Q)=\frac{  (b-a)(d-c) }{ (c-b)(d-a)   }$$ is the cross ratio of $Q$.
\begin{defi}
    Let $\Phi:\mathbb{S}^1\to\mathbb{S}^1$ be an orientation preserving homeomorphism, then we say that $\Phi$ is \textit{quasisymmetric} if the cross-ratio norm $\lVert \Phi\rVert_{cr}$ of $\Phi$ is finite.
\end{defi}
Denote $\mathcal{QS}(\mathbb{S}^1)$ the subgroup of $\mathrm{Homeo}_+(\mathbb{S}^1)$ consisting of quasisymmetric homeomorphism of $\mathbb{S}^1$, then we define the \textit{universal Teichm\"uller space} as the space of quasisymmetric homeomorphisms of the circle up to post-composition with an isometry of $\mathbb{H}^2$:
$$\mathcal{T}(\mathbb{H}^2)=\mathrm{Isom}(\mathbb{H}^2)\backslash \mathcal{QS}(\mathbb{S}^1).$$ We then have the following Theorem anticipated by Thurston which characterises quasisymmetric homeomorphisms in term of earthquake measures. 
\begin{theorem}\cite{GHL,Saric}\label{boundedinfi}
    The map $\mathrm{EM}$ \eqref{earthqaukemeasure} is a bijection form the universal Teichm\"uller space $\mathcal{T}(\mathbb{H}^2)$ to the set of bounded measured laminations $\mathcal{ML}_b(\mathbb{H}^2)$.
\end{theorem}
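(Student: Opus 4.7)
Since $\mathrm{EM}$ is already injective on $\mathrm{Isom}(\mathbb{H}^2)\backslash \mathrm{Homeo}_+(\mathbb{S}^1)$ by Thurston's earthquake theorem (Theorem \ref{thurs}), the task reduces to showing two quantitative equivalences: (i) if $\Phi \in \mathcal{QS}(\mathbb{S}^1)$ then its earthquake measure has finite Thurston norm, and (ii) conversely, if $\lambda \in \mathcal{ML}_b(\mathbb{H}^2)$ then the earthquake $\mathrm{E}^{\lambda}$ extends to a quasisymmetric homeomorphism. The natural strategy is therefore to establish two-sided estimates of the form
\[
c\,\lVert \lambda\rVert_{\mathrm{Th}} \;\leq\; \lVert \mathrm{E}^{\lambda}|_{\mathbb{S}^1}\rVert_{cr} \;\leq\; C\,\lVert \lambda\rVert_{\mathrm{Th}},
\]
for universal constants $0<c<C$, from which both the surjectivity of $\mathrm{EM}$ onto $\mathcal{ML}_b(\mathbb{H}^2)$ and the characterization of its image follow.

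For direction (ii) (bounded implies quasisymmetric), I would pick an arbitrary quadruple $(a,b,c,d)$ on $\mathbb{S}^1$ with $\mathrm{cr}(a,b,c,d)=1$ and estimate $\lvert \ln \mathrm{cr}(\Phi(a),\Phi(b),\Phi(c),\Phi(d))\rvert$, where $\Phi=\mathrm{E}^{\lambda}|_{\mathbb{S}^1}$. The key geometric observation is that the assumption $\mathrm{cr}(a,b,c,d)=1$ forces the geodesics $(a,c)$ and $(b,d)$ in $\mathbb{H}^2$ to be at a bounded universal distance, so any arc joining them transverse to $\lvert \lambda\rvert$ has length bounded by a universal constant, say $\ell_0$. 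The earthquake $\mathrm{E}^{\lambda}$ accumulates shear along all leaves crossing such an arc, and this total shear is controlled by $\lambda$ evaluated on an arc of length $\leq \ell_0$, hence by $\ell_0\,\lVert \lambda\rVert_{\mathrm{Th}}$. A calculation in hyperbolic trigonometry then converts the bound on total shear into a bound on the cross-ratio of the images. For laminations with finite support this is an explicit finite composition of hyperbolic isometries; for a general bounded $\lambda$ one approximates by finitely supported laminations, verifies convergence of $\mathrm{EM}$ in an appropriate topology, and passes to the limit.

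For direction (i) (quasisymmetric implies bounded), I would take a unit-length arc $I$ transverse to $\lvert\lambda\rvert$ and construct an ideal quadruple $(a,b,c,d)$ whose cross-ratio is bounded above and below by universal constants, chosen so that $\lvert \ln \mathrm{cr}(\Phi(a),\Phi(b),\Phi(c),\Phi(d))\rvert$ is bounded below by a monotone increasing function of $\lambda(I)$ tending to $+\infty$. A natural choice is to take endpoints of two leaves of $\lvert\lambda\rvert$ meeting $I$ near its extremities, or nearby approximations. The quasisymmetry hypothesis then yields $\lambda(I) \leq C'\,\lVert \Phi\rVert_{cr}$, uniformly in $I$, giving $\lVert \lambda\rVert_{\mathrm{Th}}<+\infty$.

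The main obstacle is the sharp quantitative estimate in direction (ii), particularly because $\mathrm{E}^{\lambda}$ is in general discontinuous along atomic leaves and because for non-finitely-supported laminations one must carefully justify the passage to the limit. Handling this requires either a density argument (finitely-supported laminations are dense in a suitable topology on $\mathcal{ML}_b$) combined with continuity of $\mathrm{EM}$, or a direct integration argument expressing the cross-ratio distortion of $\Phi$ on $(a,b,c,d)$ as a genuine integral of an explicit kernel against $d\lambda$ over the strip between $(a,c)$ and $(b,d)$, which can then be dominated using the definition of $\lVert \lambda\rVert_{\mathrm{Th}}$.
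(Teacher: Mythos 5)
There is no internal proof to compare against here: the paper does not prove Theorem \ref{boundedinfi}, it quotes it from \cite{GHL,Saric}. Your outline mirrors the strategy of those references (injectivity from the uniqueness part of Theorem \ref{thurs} together with Thurston's metering proposition, plus the two implications ``bounded Thurston norm $\Rightarrow$ quasisymmetric'' and ``quasisymmetric $\Rightarrow$ bounded Thurston norm''), so as a roadmap it is sound; but as a proof it has genuine gaps, because the steps that carry all the difficulty are asserted rather than established.

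First, you anchor the argument on a universal two-sided \emph{linear} estimate $c\lVert\lambda\rVert_{\mathrm{Th}}\leq\lVert \mathrm{E}^{\lambda}|_{\mathbb{S}^1}\rVert_{cr}\leq C\lVert\lambda\rVert_{\mathrm{Th}}$. Nothing in your sketch proves such an estimate, and it is stronger than what the literature provides in the non-infinitesimal setting: the known quantitative comparisons of Hu and Gardiner--Hu--Lakic have constants depending on an a priori bound on the relevant norm, and only the infinitesimal (Zygmund vector field) version cited in this paper as \cite{Junhu_Zygnorm} is linear. The bijection only needs the finiteness equivalence, so the claim should be weakened; but then the ``calculation in hyperbolic trigonometry'' converting a bound on transverse measure into a cross-ratio bound is exactly where the work lies, and in your direction (i) you must also rule out cancellation: the shears along the possibly infinitely many leaves crossing $I$ have to contribute with a coherent sign to the cross-ratio distortion of a single quadruple, which requires a monotonicity or composition argument, not merely a choice of quadruple near the endpoints of $I$. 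Second, and more seriously, in direction (ii) the passage from finitely supported laminations to a general bounded $\lambda$ is the heart of \cite{GHL,Saric}: one must show that the approximating earthquakes converge, that the limit map is again an earthquake, that its earthquake measure is exactly $\lambda$ (this is what gives surjectivity of $\mathrm{EM}$ onto $\mathcal{ML}_b(\mathbb{H}^2)$), and that the quasisymmetry constants remain uniformly controlled along the limit, despite the discontinuities on atomic leaves. The phrase ``verifies convergence of $\mathrm{EM}$ in an appropriate topology, and passes to the limit'' names this problem rather than solving it. So the proposal is a reasonable sketch of the known strategy, but the decisive estimates and the limiting argument are missing.
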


\subsection{Infinitesimal earthquake} 
In this section we are going to discuss the notion of \textit{infinitesimal earthquake}. Let $X$ be a vector field on $\mathbb{S}^1$, then the cross-ratio distortion norm of $X$ is defined as 
$$\lVert  X \rVert_{cr}=\underset{\mathrm{cr}(Q)=1\ \ \ \ \ \ \ \ \ \ \ \ \ }{\sup \vert X[Q]\vert} \in[0,+\infty],$$ where 
$$X[Q]=\frac{X(b)-X(a)}{b-a}-\frac{X(c)-X(b)}{c-b}+\frac{X(d)-X(c)}{d-c}-\frac{X(a)-X(d)}{a-d},$$
for $Q=[a,b,c,d]$. For instance, one can check that $X$ is an extension to $\mathbb{S}^1$ of a Killing vector field of $\mathbb{H}^2$ if and only if $\lVert X\rVert_{cr}=0.$
\begin{defi}
   A continuous vector field $X$ of $\mathbb{S}^1$ is \textit{Zygmund} if and only if the cross-ratio distortion norm of $X$ is finite.
   In that case, we say that the support function $\phi_X:\mathbb{S}^1\to\mathbb{R}$ is in the Zygmund class.
\end{defi}
It has been proved that a vector field $X$ of $\mathbb{S}^1$ is Zygmund if and only if there exist a smooth path $t\to f_t$ of quasisymmetric maps such that $f_0=\mathrm{Id}$ and $\frac{d}{dt}\big\lvert_{t=0} f_t=X$ (see \cite[Section 16.6, Theorem 5]{Gardiner1999QuasiconformalTT}). In fact, Gardiner, Hu and Lakic \cite{GHL} have proved that for a given measured geodesic lamination $\lambda\in \mathcal{ML}_b(\mathbb{H}^2)$, the path $t\to\mathrm{E}_l^{t\lambda}\mid_{\mathbb{S}^1}$ is differentiable. Moreover its derivative at $t=0$ is a Zygmund vector field, called the \textit{bounded left infinitesimal earthquake}, and we denote it by
\begin{equation}\label{bounded_infinitesimal_earthquake}\Dot{\mathrm{E}_l^{\lambda}}(z)=\frac{d}{dt}\big\lvert_{t=0}\mathrm{E}_l^{t\lambda}(z),\end{equation} for any $z\in \mathbb{S}^1$. Similarly, one can define the bounded right infinitesimal earthquake $\Dot{\mathrm{E}_l^{\lambda}}$ by taking, in \eqref{bounded_infinitesimal_earthquake}, the derivative of the right infinitesimal earthquake $\mathrm{E}_r^{t\lambda}$. We have the following converse statement proved by Gardiner: 
\begin{theorem}\cite{Gardinerthurston}\label{GardThurs}
    Let $X$ be a Zygmund vector field of $\mathbb{S}^1$, then there exists a bounded measured geodesic
lamination $\lambda$ such that $X$ is the bounded left (resp. right) infinitesimal earthquake along $\lambda$, namely
$$X=\Dot{\mathrm{E}_l^{\lambda}} \ (\mathrm{resp}. \ X=\Dot{\mathrm{E}_r^{\lambda}} ).$$
\end{theorem}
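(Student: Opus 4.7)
The plan is to deduce Theorem \ref{GardThurs} by combining Theorem \ref{TH1} (which extends Gardiner's result to merely continuous vector fields) with the quantitative estimates of Theorem \ref{TH2} and the bounded earthquake theorem recalled as Theorem \ref{boundedinfi}. In particular, the convex-hull construction in half-pipe geometry does all the work of \emph{producing} a candidate lamination, so the only remaining issue will be to identify this lamination with the measure attached to a bounded infinitesimal earthquake.

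Since a Zygmund vector field is, by definition, continuous with finite cross-ratio norm, the first step is to apply Theorem \ref{TH1}: this produces a left infinitesimal earthquake $\mathcal{E}=\mathcal{E}_X^-$ on $\mathbb{H}^2$ along some measured geodesic lamination $\lambda$ that extends continuously to $X$ on $\mathbb{S}^1$ (for the ``right'' version, one would use $\mathcal{E}_X^+$). The next step is to show that $\lambda$ is bounded. Here Theorem \ref{TH2} provides exactly what is needed: chaining the two estimates gives
$$\lVert\lambda\rVert_{\mathrm{Th}}\;\leq\;\frac{2\sqrt{2}}{1-\tanh(1)}\,w(X)\;\leq\;\frac{16\sqrt{2}}{3(1-\tanh(1))}\,\lVert X\rVert_{cr},$$
and the hypothesis $\lVert X\rVert_{cr}<\infty$ forces $\lambda\in\mathcal{ML}_b(\mathbb{H}^2)$.

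Because $\lambda$ is bounded, Theorem \ref{boundedinfi} (together with the Gardiner--Hu--Lakic differentiability result recalled just before the statement) produces the path $t\mapsto \mathrm{E}_l^{t\lambda}|_{\mathbb{S}^1}$ of quasisymmetric homeomorphisms, which is smooth at $t=0$ and whose derivative is the bounded left infinitesimal earthquake $\dot{\mathrm{E}}_l^{\lambda}$. To conclude the proof it remains to identify $X$ with $\dot{\mathrm{E}}_l^{\lambda}$ on $\mathbb{S}^1$. I would argue through uniqueness: the interior vector field $\frac{d}{dt}\bigl|_{t=0}\mathrm{E}_l^{t\lambda}$ is itself a left infinitesimal earthquake on $\mathbb{H}^2$ along $\lambda$, because by Definition \ref{earthquake} the restriction of $\mathrm{E}_l^{t\lambda}$ to each stratum is a path of isometries of $\mathbb{H}^2$ (so its derivative is a Killing field on the stratum), while the translation lengths of the comparison maps $\mathrm{Comp}(S,S')$ depend linearly on $t$ (so the underlying infinitesimal earthquake measure is exactly $\lambda$). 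Both $X$ and $\dot{\mathrm{E}}_l^{\lambda}$ are therefore continuous boundary values on $\mathbb{S}^1$ of infinitesimal earthquakes along the same measured lamination $\lambda$, so the uniqueness clause in Theorem \ref{TH1} forces $X=\dot{\mathrm{E}}_l^{\lambda}$.

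The step I expect to require the most care is the last one. The extraction of a Killing field on each stratum is formal, and the linear dependence of the shear on $t$ identifies the measure; the subtle point is that the boundary value of the pointwise derivative $\frac{d}{dt}\big|_{t=0}\mathrm{E}_l^{t\lambda}$ is genuinely $\dot{\mathrm{E}}_l^{\lambda}$, i.e.\ that differentiation in $t$ commutes with passage to the boundary circle. This requires a uniformity statement along the path $\mathrm{E}_l^{t\lambda}$ near $t=0$ that is built into the Gardiner--Hu--Lakic framework, and which one must invoke explicitly in order to apply the uniqueness part of Theorem \ref{TH1}.
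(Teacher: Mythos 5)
First, note that the paper does not prove Theorem \ref{GardThurs} at all: it is quoted from \cite{Gardinerthurston} as background, so your strategy of re-deriving it from Theorems \ref{TH1} and \ref{TH2} is by construction a different route, and its first two steps are sound — Zygmund implies continuous, Theorem \ref{TH1} produces $\mathcal{E}_X^-$ along the bending lamination $\lambda$, and chaining Propositions \ref{lamination_petit_que_width} and \ref{zygmund_to_width} does give $\lVert\lambda\rVert_{\mathrm{Th}}\leq \tfrac{16\sqrt{2}}{3(1-\tanh(1))}\lVert X\rVert_{cr}<\infty$, so $\lambda\in\mathcal{ML}_b(\mathbb{H}^2)$.

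The gap is in the final identification $X=\dot{\mathrm{E}}_l^{\lambda}$. You invoke ``the uniqueness clause in Theorem \ref{TH1}'', but that clause goes the wrong way: it says that a given continuous boundary field admits an essentially unique extending infinitesimal earthquake; it does \emph{not} say that two left infinitesimal earthquakes along the same measured lamination have the same boundary values. Indeed they need not: adding any Killing field produces another left infinitesimal earthquake with the same earthquake measure and a different boundary value, and even the statement that the measure determines the earthquake \emph{up to} a Killing field is an infinitesimal analogue of Thurston's ``metering earthquakes'' \cite{Thurston} which is nowhere proved in the paper and is essentially the substantive content you are trying to establish (in the equivariant setting it is what the Mess/Bonsante–Seppi machinery \cite{Mess,BS17} provides, e.g.\ via Proposition \ref{BS17_left_inf_earth}-type statements). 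So from ``$X$ extends $\mathcal{E}_X^-$ along $\lambda$'' and ``$\dot{\mathrm{E}}_l^{\lambda}$ extends $\frac{d}{dt}\big|_{t=0}\mathrm{E}_l^{t\lambda}$ along $\lambda$'' you cannot conclude equality as written; at best one could hope for agreement modulo a Killing field, which moreover has to be absorbed into the normalization of the path $t\mapsto\mathrm{E}_l^{t\lambda}$ (which is only defined up to post-composition by isometries), a point the argument never addresses. Finally, the step you yourself flag — that the boundary value of the pointwise interior derivative $\frac{d}{dt}\big|_{t=0}\mathrm{E}_l^{t\lambda}$ coincides with the Gardiner--Hu--Lakic boundary derivative $\dot{\mathrm{E}}_l^{\lambda}$, i.e.\ that differentiation in $t$ commutes with passage to $\mathbb{S}^1$ — is asserted to be ``built into'' \cite{GHL} but not established, and it is a genuine analytic input (Epstein--Marden/GHL uniform estimates near $t=0$). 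Until the measure-determines-earthquake step and this interchange-of-limits step are supplied, the proposed deduction of Theorem \ref{GardThurs} from the paper's results is incomplete.
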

Theorem \ref{GardThurs} can be seen as the analogue of Thurston's earthquake Theorem \ref{thurs}. However, unlike Thurston's theorem, which deals with circle homeomorphisms, Gardiner's theorem only deals with the case of Zygmund vector fields, which are the infinitesimal version of quasisymmetric maps. To overcome this problem, we will first define infinitesimal earthquakes in a more general and intrinsic way, generalizing bounded infinitesimal earthquakes.
\begin{defi}\label{infiearth}
A left (resp. right) infinitesimal earthquake $\mathcal{E}$ is a vector field on $\mathbb{H}^2$ such that there exists a  geodesic lamination $\lambda$ for which
the restriction $\mathcal{E}_{S}$ of $\mathcal{E}$ to any stratum $S$ of $\lambda$ is equal to the restriction of
a Killing field of $\mathbb{H}^2$, and 
for any two strata $S$ and $S'$ of $\lambda$, the \textit{comparison vector field}
$$\mathrm{Comp}(S,S'):=\mathcal{E}|_{S'}-\mathcal{E}|_{S}$$ is the restriction of a Killing field $\mathfrak{a}$ of $\mathbb{H}^2$, such that:
\begin{itemize}
\item $\mathfrak{a}$ is different from zero, unless possibly when one of the two strata $S$ and $S'$ is contained in the closure of the other;
\item 
when it is not the zero, 
 $\mathfrak{a}$ is a hyperbolic Killing field whose axis $\ell$ weakly separates $S$ and $S'$;
\item furthermore, $\mathfrak{a}$
translates to the left (resp. right), seen from $S$ to $S'$. 
\end{itemize}
\end{defi}
The last condition means that the one-parameter family of hyperbolic isometries given by $\{\exp(t\mathfrak{a})\}_{t\geq 0}$ consists of hyperbolic transformations that translate to the left (resp. right) seen from $S$ to $S^{'}$ in the sense of Definition \ref{earthquake}.
As an example of an infinitesimal earthquake, the bounded infinitesimal earthquake is viewed as a vector field on $\mathbb{H}^2$. Indeed, according to Epstein and Marden \cite{Epsmarden}, for each $z\in \mathbb{H}^2$, the path $t\to\mathrm{E}^{t\lambda}(z)$ is differentiable at $t=0$. Therefore, by the definition of an earthquake map, the vector field $z\to\frac{d}{dt}\big\lvert_{t=0}\mathrm{E}^{t\lambda}(z)$ satisfies Definition \ref{infiearth}. As a result, an infinitesimal earthquake on $\mathbb{H}^2$ may not necessarily be continuous.

\subsection{Simple infinitesimal earthquake}\label{simple}
In this section, we will examine the situation of an infinitesimal earthquake for which the associated geodesic lamination $\lambda$ is represented by a single geodesic $\ell$. We will refer to such a vector field as a \textit{simple infinitesimal earthquake} and denote it as $\mathcal{E}^{\ell}$. We will extract two fundamental properties of these vector fields, which will later allow us to prove certain properties required to establish that a vector field is a left (or right) infinitesimal earthquake.

First, remark that, up to the action of the isometry group of the hyperbolic plane, we can assume that $\ell=\mathbb{H}^2\cap e_3^{\bot}$, where $e_3 = (0,0,1)$. Let $S_1$ and $S_2$ be the two connected components of $\mathbb{H}^2\setminus\lambda$ such that:
$$S_1\subset\{(\eta_1,\eta_2)\in \mathbb{D}^2\mid \eta_2<0  \},\ S_2\subset\{(\eta_1,\eta_2)\in \mathbb{D}^2\mid \eta_2>0\}.$$
Since the restriction of $\mathcal{E}^{\ell}$ to the strata $S_1$ is a Killing vector field, say $\mathrm{K}$, then up to considering $\mathcal{E}^{\ell}-\mathrm{K}$ instead of $\mathcal{E}^{\ell}$, one may assume that the restriction of $\mathcal{E}^{\ell}$ to $S_1$ is identically zero. Therefore, based on the definition of an infinitesimal earthquake, the vector field $\mathcal{E}^{\ell}$ should have the following form:
$$
    \mathcal{E}^{\ell}(\eta)=\begin{cases}
			0, & \text{if $\eta\in S_1$}\\
            \mathrm{d}\Pi_{(1,\eta)}((1,\eta)\boxtimes ae_3)&\text{if $\eta\in\ell$}\\
             \mathrm{d}\Pi_{(1,\eta)}((1,\eta)\boxtimes be_3) &\text{if $\eta\in S_2$}
		 \end{cases}$$
To determine whether $\mathcal{E}^{\ell}$ is a left or a right infinitesimal earthquake, we should compute the comparison vector field. For instance let $Y=\mathrm{Comp}(S_1,S_2)$ be the comparison vector field between $S_1$ and $S_2$. Then
$$Y(\eta)=\mathrm{d}\Pi_{(1,\eta)}((1,\eta)\boxtimes be_3).$$
By Lemma \ref{hyperbolic_killing_field}, $Y$ is the hyperbolic killing field $\Lambda(be_3)$. To determine the sense of translation, remark that: $$\exp\left(t\Lambda(be_3)\right)=\begin{pmatrix}

 \cosh{(bt)} & -\sinh{(bt)} & 0\\
 -\sinh{(bt)} & \cosh{(bt)} &0 \\
 0           &     0&       1
\end{pmatrix}.$$ So, the one-parameter family of isometries $\exp\left(t\Lambda(be_3)\right)$ can be written in the Klein model $\mathbb{D}^2$ as follows:
$$\begin{array}{ccccc}
\exp\left(t\Lambda(be_3)\right) & : & \mathbb{D}^2 & \to &\mathbb{D}^2 \\
 & & (\eta_1,\eta_2) & \mapsto & \left( \frac{-\sinh(bt)+\cosh(bt)\eta_1}{\cosh(bt)-\sinh(bt)\eta_1},  \frac{\eta_2}{\cosh(bt)-\sinh(bt)\eta_1}    \right). \\
\end{array}$$
We can deduce from this that the comparison vector field $\mathrm{Comp}(S_1,S_2)$ translates to the left (resp. right), seen from $S_1$ to $S_2$ if and only if $b\geq 0$ (resp. $b\leq 0$) see Figure \ref{leftrightpicture}. Similarly, we can prove that the comparison vector fields 
\begin{itemize}
    \item $\mathrm{Comp}(S_1,\ell)$ translates to the left (resp. right), seen from $S_1$ to $\ell$ if and only if $a\geq0$ (resp. $a\leq0$).
    \item $\mathrm{Comp}(\ell,S_2)$ translates to the left (resp. right), seen from $\ell$ to $S_2$ if and only if $b\geq a$ (resp. $b\leq a)$.
\end{itemize}
In the end, we conclude that $\mathcal{E}^{\ell}$ is a left (resp. right) infinitesimal earthquake if and only if $0\leq a\leq b$ (resp. $b\leq a\leq 0).$
 \begin{figure}[h]
\centering
\includegraphics[width=.5\textwidth]{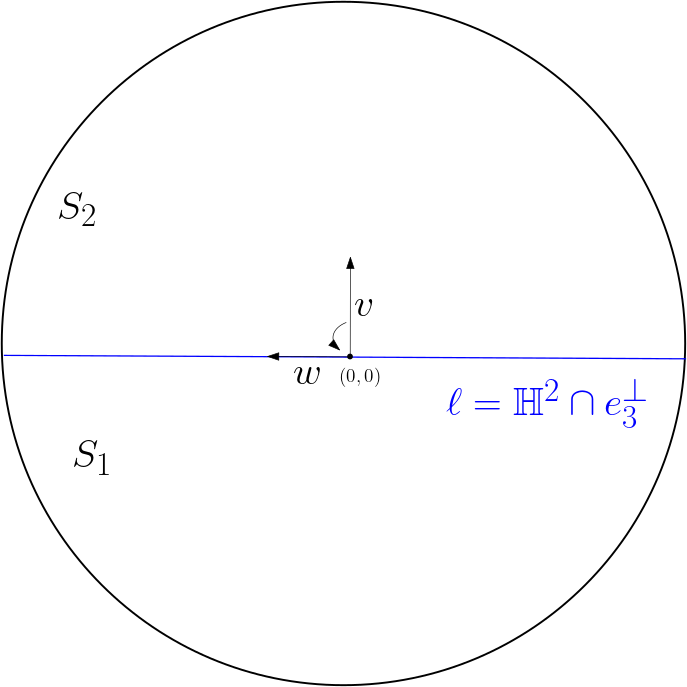}
\caption{The vector $v\in\mathrm{T}_{(0,0)}\mathbb{H}^2$ pointing from $S_1$ towards $S_2$, the vector $w$ tangent to the geodesic $\ell$ at $(0,0)$ and pointing towards $\exp(t\Lambda(be_3))(0,0)$ for $b$ and $t$ positive. Therefore, $(v, w)$ forms a positive basis with respect to the standard orientation of $\mathbb{H}^2.$}\label{leftrightpicture}
\end{figure}  

Now using Corollary \ref{support_function_of_killing}, it is not difficult to see that the vector field $\mathcal{E}^{\ell}$ extends to a vector field on $\mathbb{S}^1$, and its support function, denoted by $\phi_{\ell}:\mathbb{S}^1\to \mathbb{R}$, is defined as follows:
 $$\phi_{\ell}(\eta_1,\eta_2)=\begin{cases}
			0, & \text{if $\eta_2\leq0$}\\
          b\eta_2 &\text{if $\eta_2>0$}
		 \end{cases}.$$
The formula for $\phi_{\ell}$ also makes sense in the disk $\mathbb{D}^2$. Therefore, let us denote by $\overline{\phi_{\ell}}:\mathbb{D}^2\to\mathbb{R}$ the function defined by:
\begin{equation}\label{phil}
\overline{\phi_{\ell}}(\eta_1,\eta_2)=\begin{cases}
		    0 &\text{if $(\eta_1,\eta_2)\in\mathbb{D}^2$ $\mathrm{and}\ \eta_2\leq 0$}\\
           b\eta_2 &\text{if $(\eta_1,\eta_2)\in\mathbb{D}^2$ $\mathrm{and}\ \eta_2>0$}
		 \end{cases}\end{equation}
It worth noting that $\overline{\phi_{\ell}}$ is a convex (resp. concave) function if and only if $b\geq 0$ (or $b\leq 0$). The above discussion leads us to the following Lemma.
\begin{lemma}\label{left_convex}
 Let $\ell$ be a geodesic in hyperbolic plane and $\mathcal{E}^{\ell}$ be a simple infinitesimal earthquake along $\ell.$ Consider $\overline{\phi_{\ell}}:\mathbb{D}^2\to\mathbb{R}$ the extension to $\mathbb{D}^2$ of the support function of $\mathcal{E}^{\ell}$ as in \eqref{phil}. Then: 
 \begin{itemize}
     \item If $\mathcal{E}^{\ell}$ is a left infinitesimal earthquake then $\overline{\phi}_{\ell}$ is a convex function.
        \item If $\mathcal{E}^{\ell}$ is a right infinitesimal earthquake then $\overline{\phi}_{\ell}$ is a concave function.    
 \end{itemize}
\end{lemma}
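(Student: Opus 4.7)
The plan is to exploit the explicit normalization carried out immediately before the statement. After applying an isometry of $\mathbb{H}^{2}$ to put $\ell=\mathbb{H}^{2}\cap e_{3}^{\bot}$ and subtracting the Killing field that agrees with $\mathcal{E}^{\ell}$ on $S_{1}$, the simple infinitesimal earthquake is completely described by the two real parameters $a$ and $b$. The preceding case analysis of the comparison vector fields $\mathrm{Comp}(S_{1},\ell)$, $\mathrm{Comp}(\ell,S_{2})$ and $\mathrm{Comp}(S_{1},S_{2})$, together with the explicit form of $\exp(t\Lambda(be_{3}))$, has already shown that $\mathcal{E}^{\ell}$ is a left (resp.\ right) infinitesimal earthquake if and only if $0\leq a\leq b$ (resp.\ $b\leq a\leq 0$). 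Because $\overline{\phi_{\ell}}$ from \eqref{phil} depends only on the parameter $b$, the lemma reduces to showing that $b\geq 0$ forces $\overline{\phi_{\ell}}$ to be convex and $b\leq 0$ forces it to be concave.

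The key observation I would make is that $\overline{\phi_{\ell}}$ admits a closed-form expression on all of $\mathbb{D}^{2}$ as an extremum of two affine functions, namely the constant $0$ and the linear map $\eta\mapsto b\eta_{2}$. When $b\geq 0$, one has $b\eta_{2}\geq 0$ exactly when $\eta_{2}\geq 0$, so comparing with \eqref{phil} in the two sub-cases of that formula yields $\overline{\phi_{\ell}}(\eta)=\max\{0,\,b\eta_{2}\}$ on the whole disk. When $b\leq 0$ the signs flip and the analogous check gives $\overline{\phi_{\ell}}(\eta)=\min\{0,\,b\eta_{2}\}$.

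From here the conclusion is immediate, since the pointwise maximum of affine functions is convex and the pointwise minimum of affine functions is concave. Combining this with the sign characterization of left versus right simple infinitesimal earthquakes recalled in the first paragraph yields both assertions of the lemma. I do not anticipate any serious obstacle: the substantive work has already been done in the sign analysis above, and the present statement is essentially a repackaging of that analysis in terms of the convexity or concavity of $\overline{\phi_{\ell}}$.
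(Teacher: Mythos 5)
Your argument is correct and follows essentially the same route as the paper: the paper also reduces, via the normalization $\ell=\mathbb{H}^2\cap e_3^{\bot}$ and the sign analysis of the comparison fields, to the fact that left (resp.\ right) corresponds to $0\leq a\leq b$ (resp.\ $b\leq a\leq 0$), and then reads off convexity/concavity of $\overline{\phi_{\ell}}$ from the sign of $b$. Your observation that $\overline{\phi_{\ell}}=\max\{0,b\eta_2\}$ for $b\geq 0$ and $\min\{0,b\eta_2\}$ for $b\leq 0$ just makes explicit the convexity check the paper leaves as a remark.
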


Another property satisfied by simple infinitesimal earthquakes and used later in Section \ref{unique} is the following.
\begin{lemma}\label{2.27.}
Let $\mathcal{E}^{\ell}$ be a left (resp. right) simple infinitesimal earthquake along $\ell$. Let $S_1$ and $S_2$ be two strata of the geodesic lamination $\ell$. Consider $Y:=\mathrm{Comp}(S_1,S_2)$ the comparison vector field and $\phi_Y:\mathbb{S}^1\to\mathbb{R}$ its support function. Then, for all $x\in S_2$, we have:
    $$\phi_Y(x)\geq 0,\ (\mathrm{resp.}\ \phi_Y(x)\leq 0).$$ Similarly, for $x\in S_1$, we have: $\phi_Y(x)\leq 0,\ (\mathrm{resp.}\ \phi_Y(x)\geq 0).$  
\end{lemma}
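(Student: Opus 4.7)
My approach is to reduce to the explicit normalization of $\mathcal{E}^\ell$ carried out in the paragraphs immediately preceding the lemma, then compute $\phi_Y$ and read off its sign on each stratum. First, up to conjugation by an isometry of $\mathbb{H}^2$, I may assume $\ell=\mathbb{H}^2\cap e_3^\perp$ with $e_3=(0,0,1)$, and denote the two gaps by $S_-$ (where $\eta_2<0$) and $S_+$ (where $\eta_2>0$). Since the comparison field $\mathrm{Comp}(S_1,S_2)$ is unaffected by adding a common Killing field to every restriction $\mathcal{E}^\ell|_{S_i}$, I may further assume that $\mathcal{E}^\ell$ vanishes on $S_-$, equals $\Lambda(a e_3)$ on $\ell$, and equals $\Lambda(b e_3)$ on $S_+$. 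The discussion preceding the lemma then shows that $\mathcal{E}^\ell$ is a left (resp.\ right) simple infinitesimal earthquake if and only if $0\le a\le b$ (resp.\ $b\le a\le 0$).

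Next, by Corollary \ref{support_function_of_killing}, the support function of $\Lambda(c e_3)$ at $z=(z_1,z_2)\in\mathbb{S}^1$ equals $\langle(1,z),c e_3\rangle_{1,2}=c z_2$, which extends canonically to the affine function $(\eta_1,\eta_2)\mapsto c\eta_2$ on $\overline{\mathbb{D}^2}$; this is how I interpret $\phi_Y(x)$ for $x\in S_i\subset\mathbb{H}^2$. Consequently, for any of the (finitely many) ordered pairs $(S_1,S_2)$ taken from $\{S_-,\ell,S_+\}$, the field $Y=\mathrm{Comp}(S_1,S_2)$ equals $\Lambda(c e_3)$ for some explicit $c\in\{\pm b,\pm a,\pm(b-a)\}$, and so $\phi_Y(\eta)=c\eta_2$.

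Finally, in every such ordered pair the sign of $c$ is exactly that predicted by the inequalities $0\le a\le b$ in the left case (and $b\le a\le 0$ in the right case), using the relation $\mathrm{Comp}(S_2,S_1)=-\mathrm{Comp}(S_1,S_2)$ to handle reversed orderings. Combined with the obvious signs of $\eta_2$ on the three strata ($\eta_2>0$ on $S_+$, $\eta_2<0$ on $S_-$, $\eta_2=0$ on $\ell$), multiplication yields $\phi_Y(x)\ge 0$ for $x\in S_2$ and $\phi_Y(x)\le 0$ for $x\in S_1$ in the left case, and the reverse in the right case. There is no serious obstacle; the only point requiring care is the bookkeeping in this last step, ensuring that relabelling the ordered pair $(S_1,S_2)$ flips the sign of $Y$ and of the relevant inequalities in a mutually consistent way, so that the statement of the lemma is indeed symmetric under this relabelling.
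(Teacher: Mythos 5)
Your proposal is correct and follows essentially the same route as the paper's proof: normalize $\ell=\mathbb{H}^2\cap e_3^\perp$ and use the explicit form of $\mathcal{E}^\ell$ with $0\le a\le b$ (resp. $b\le a\le 0$) from the preceding discussion, compute $\phi_Y(\eta_1,\eta_2)=c\eta_2$ via Corollary \ref{support_function_of_killing}, and read off the signs on each stratum. You are merely more explicit than the paper about the remaining ordered pairs of strata and about interpreting $\phi_Y$ at interior points through its affine extension, which the paper leaves as ``the other choices can be handled in a similar way.''
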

\begin{proof}
The proof basically follows from the previous discussion. We will only treat the case of a left simple infinitesimal earthquake since the proof of the other situation is analogous. So, up to the action of $\mathrm{O}_0(1,2)\ltimes\minko$, one may assume that: $$
    \mathcal{E}^{\ell}(\eta)=\begin{cases}
			0, & \text{if $\eta\in S_1$}\\
            \mathrm{d}\Pi_{(1,\eta)}((1,\eta)\boxtimes ae_3)&\text{if $\eta\in\ell$}\\
             \mathrm{d}\Pi_{(1,\eta)}((1,\eta)\boxtimes be_3) &\text{if $\eta\in S_2$}
		 \end{cases}$$
   for some $0\leq a\leq b$. Here, the geodesic lamination $\ell$ has three strata: the two connected components of $\mathbb{H}^2\setminus \ell$ and the leaf $\ell$ itself. Let $S_1=\{(\eta_1,\eta_2)\mid \eta_2<0\}$ and $S_2=\{(\eta_1,\eta_2)\mid \eta_2>0\}$. Then the support function $\phi_Y$ of the comparison vector field $Y=\mathrm{Comp}(S_1,S_2)$ is given by: $$\phi_Y(\eta_1,\eta_2)=b\eta_2.$$ The proof is completed in this case since $b>0$. The other choices of $S_1$ and $S_2$ can be handled in a similar way. 
\end{proof}

\section{Proof of the infinitesimal earthquake Theorem}\label{Section3}
In this section, we will prove Theorems \ref{TH1} and \ref{THrad}. Recall that for a convex subset $K$ of an affine space, we say that an affine plane $\mathrm{P}$ is a support plane of $K$ (at $x\in\partial K$) if $\mathrm{P}$ contains $x$ and if all of $K$ is contained in one of the two closed half-spaces bounded by $\mathrm{P}$. We will adopt the same terminology for convex sets in $\HP$. Specifically, for a convex set $K$ in $\HP$, we say that a plane $\mathrm{P}\subset\HP$ is a \textit{support plane} of $K$ at $p\in \partial K$ if $p\in\overline{\mathrm{P}}\subset\overline{\HP}$ and $\mathrm{P}$ is disjoint from the interior of $K$.
Let $X$ be a vector field on $\mathbb{S}^1$, and let $\phi_X$ be its support function. Consider $\phi_X^{-}$ and $\phi_X^{+}$, defined as maps from $\overline{\mathbb{D}}^2$ to $\mathbb{R}$, as given in equations \eqref{phi-} and \eqref{phi+}. It is worth noting that if $\mathrm{P}$ is a support plane of the convex set $\mathrm{epi}^{\mp}(\phi_X^{\pm})$, then $\mathrm{P}$ is necessarily non-vertical, making it a spacelike plane in $\HP$ (see Figure \ref{supportplane}).
Therefore, if $\sigma\in\minko$, then the spacelike plane $\mathrm{P}_{\sigma}\subset\HP$ dual to $\sigma$ is a support plane at a point in $\mathrm{gr}(\phi_X^{-}|_{\mathbb{D}^2})$ (resp. $\mathrm{gr}(\phi_X^{+}|_{\mathbb{D}^2})$) if for all $\eta\in \mathbb{D}^2$,
\begin{equation}\label{Condition_de_plan_support}
\inner{(1,\eta),\sigma}_{1,2}\leq\phi_X^-(\eta)\ \mathrm{resp.}\ \inner{(1,\eta),\sigma}_{1,2}\geq\phi_X^+(\eta),\end{equation} with equality at some $\eta_0\in \mathbb{D}^2.$ \begin{figure}[htb]
\centering
\includegraphics[width=.6\textwidth]{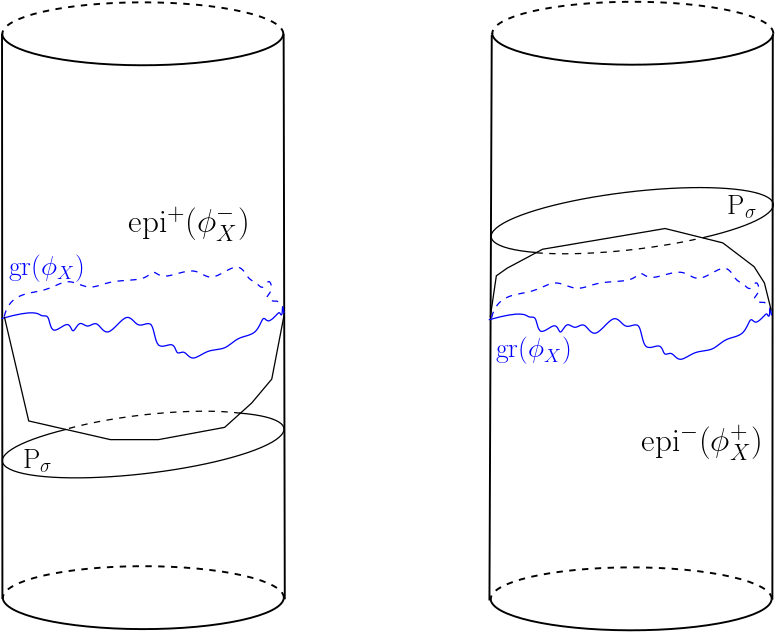}
\caption{On the left a support plane of $\mathrm{epi}^{+}(\phi_X^{-})$ at point of $\mathrm{gr}(\phi_X^{-}|_{\mathbb{D}^2}).$ On the right a support plane of $\mathrm{epi}^{-}(\phi_X^{+})$ at point of $\mathrm{gr}(\phi_X^{+}|_{\mathbb{D}^2})$.}\label{supportplane}
\end{figure}
The following Lemma shows that two support planes should necessarily intersect.
\begin{lemma}\label{planes_intersect}
 Let $X$ be a vector field of $\mathbb{S}^1$ which is not Killing. Let $\phi_X$ be its support function. Then any two spacelike support planes of $\mathrm{epi^{\mp}}(\phi^{\pm}_X)$ at points of $\mathrm{gr}(\phi_X^{\pm}\mid_{\mathbb{D}^2})$ intersect in a spacelike geodesic of $\HP$.
\end{lemma}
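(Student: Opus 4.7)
The plan is to invoke Proposition \ref{hpangle}. Any two spacelike planes of $\HP$ are of the form $P_{v_1}$ and $P_{v_2}$ for vectors $v_1, v_2 \in \minko$ via the duality \eqref{D_star}, and the two planes are distinct if and only if $v_1 \ne v_2$; thus it suffices to show that $v_1 - v_2$ is spacelike. The hypothesis that $X$ is not Killing ensures $\phi_X^{\pm}$ is not affine, so genuinely distinct support planes can occur (otherwise the statement is vacuous). I would treat the case of two support planes of $\mathrm{epi}^+(\phi_X^-)$ meeting $\mathrm{gr}(\phi_X^-|_{\mathbb{D}^2})$ at $(\eta_1, \phi_X^-(\eta_1))$ and $(\eta_2, \phi_X^-(\eta_2))$ with $\eta_1, \eta_2 \in \mathbb{D}^2$; the case of $\mathrm{epi}^-(\phi_X^+)$ is entirely analogous, with reversed inequalities.

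By the support plane characterization \eqref{Condition_de_plan_support}, one has $\langle (1, \eta), v_i \rangle_{1,2} \leq \phi_X^-(\eta)$ for every $\eta \in \mathbb{D}^2$, with equality at $\eta = \eta_i$. Evaluating the inequality for $v_2$ at $\eta_1$ and subtracting the equality for $v_1$ yields $\langle (1, \eta_1), v_1 - v_2 \rangle_{1,2} \geq 0$; by the symmetric argument at $\eta_2$, $\langle (1, \eta_2), v_1 - v_2 \rangle_{1,2} \leq 0$. The upshot is that $v_1 - v_2$ has nonnegative Minkowski pairing with one future-directed timelike vector $(1, \eta_1)$ and nonpositive pairing with another such vector $(1, \eta_2)$.

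To finish, I would argue by contradiction: suppose $v_1 - v_2$ is not spacelike. Since $v_1 \ne v_2$, it is then causal (timelike or lightlike, nonzero). A standard reverse Cauchy--Schwarz in Lorentzian signature shows that for any nonzero causal vector $w$ and any future-directed timelike vector $u$, the pairing $\langle u, w\rangle_{1,2}$ is strictly nonzero, with a sign determined purely by the time-orientation of $w$. Because $(1, \eta)$ is future-directed timelike for every $\eta$ in the open disk $\mathbb{D}^2$, the pairing $\langle (1, \eta), v_1 - v_2\rangle_{1,2}$ would then have a constant strict sign over $\mathbb{D}^2$, contradicting whichever of the two inequalities above has the wrong orientation. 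Hence $v_1 - v_2$ is spacelike and Proposition \ref{hpangle} concludes. The only step that requires a touch of care is the strict-sign statement for Minkowski pairings, which is where the \emph{openness} of $\mathbb{D}^2$ (encoded in the restriction $\phi_X^{\pm}|_{\mathbb{D}^2}$) is essential: were $\eta_1$ or $\eta_2$ allowed on $\mathbb{S}^1$, the argument would only give $v_1 - v_2$ non-timelike. This is a routine coordinate computation rather than a genuine obstacle.
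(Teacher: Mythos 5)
Your proof is correct, but it runs through the dual Minkowski picture rather than the paper's direct argument in $\HP$. The paper assumes for contradiction that the two support planes are \emph{disjoint}: since both are graphs of affine functions over the connected disk, disjointness forces a global strict ordering, say $\inner{(1,\eta),\sigma_1}_{1,2}<\inner{(1,\eta),\sigma_2}_{1,2}$ for all $\eta\in\mathbb{D}^2$, and combining this with the support inequality for $\mathrm{P}_{\sigma_2}$ contradicts the fact that $\mathrm{P}_{\sigma_1}$ touches $\mathrm{gr}(\phi_X^-|_{\mathbb{D}^2})$; the nonempty intersection of two distinct non-vertical planes is then a spacelike geodesic. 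You instead extract from the two touching points the mixed signs $\inner{(1,\eta_1),v_1-v_2}_{1,2}\geq 0$ and $\inner{(1,\eta_2),v_1-v_2}_{1,2}\leq 0$, rule out a causal $v_1-v_2$ by the reverse Cauchy--Schwarz inequality (a nonzero causal vector pairs with every future-directed timelike vector with a fixed strict sign), and then invoke Proposition \ref{hpangle}. Both arguments hinge on the same support-plane inequalities; what your route buys is that it produces the spacelike character of $v_1-v_2$ directly, which is exactly the form in which the lemma is later consumed (e.g.\ in Proposition \ref{earthquake_properties}), and it makes explicit why the intersection is a \emph{spacelike} geodesic, a point the paper leaves implicit in its final sentence. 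What the paper's route buys is that it avoids the Lorentzian sign lemma altogether, needing only the ordering of disjoint affine graphs over a connected domain. Your side remarks are also accurate: both proofs implicitly treat distinct support planes (for equal planes the conclusion is vacuous), and the openness of $\mathbb{D}^2$ is genuinely needed, since at boundary points $(1,\eta)$ is only lightlike and the strict-sign step degenerates.
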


\begin{proof}
    Let us consider the case of $\mathrm{epi^+}(\phi_X^-)$, the other case being analogous. Let $\sigma_1,\sigma_2\in\minko$ such that $\mathrm{P}_{\sigma_1}$ and $\mathrm{P}_{\sigma_2}$ are support planes intersecting $\mathrm{gr}(\phi_X^-|_{\mathbb{D}^2})$. Suppose by contradiction that $\mathrm{P}_{\sigma_1}$ and $\mathrm{P}_{\sigma_2}$ are disjoint. Then for all $\eta\in\mathbb{D}^2$ either 
    $$\inner{(1,\eta),\sigma_1}_{1,2}< \inner{(1,\eta),\sigma_2}_{1,2} \ \mathrm{or} \ \inner{(1,\eta),\sigma_2}_{1,2}<\inner{(1,\eta),\sigma_1}_{1,2}.$$
    Without loss of generality, we assume that $\inner{(1,\eta),\sigma_1}_{1,2}<\inner{(1,\eta),\sigma_2}_{1,2}$ for all $\eta\in \mathbb{D}^2.$ Since $\mathrm{P}_{\sigma_2}$ is a support plane of $\mathrm{epi^{\mp}}(\phi^{\pm}_X)$, then: 
    $$\inner{(1,\eta),\sigma_2}_{1,2}\leq\phi_X^-(\eta),$$ hence $$\inner{(1,\eta),\sigma_1}_{1,2}<\phi_X^-(\eta),$$ for all $\eta\in \mathbb{D}^2.$ This is a contradiction since the equation $\inner{(1,\eta),\sigma_1}_{1,2}=\phi_X^-(\eta)$ has at least one solution in $\mathbb{D}^2$. Therefore the intersection is non empty and so it is a spacelike geodesic.
\end{proof}
We will also need this Lemma proved in  \cite{ConvexAnal} about the piecewise linear structure of the graph of $\phi_X^{\pm}$.

\begin{lemma}\cite[Lemma 4.9]{ConvexAnal}\label{structure_of_convex_core}
Let $X$ be a vector field of $\mathbb{S}^1$ and $\phi_X:\mathbb{S}^1\to\mathbb{R}$ be its support function. Let $\sigma\in\minko$ such that $\mathrm{P}_{\sigma}$ is a support plane of $\mathrm{epi}^{\mp}(\phi_X^{\pm})$. Then
\begin{enumerate}
\item \label{item1} If $\phi_X$ is lower semicontinuous, then $\{\eta\in\overline{\mathbb{D}^2}\mid \phi^-_X(\eta)=\langle(1,\eta),\sigma\rangle_{1,2} \}$ is the convex hull of the set $\{\eta\in\mathbb{S}^1\mid \phi^-_X(\eta)=\langle(1,\eta),\sigma\rangle_{1,2} \}.$ 
 \item \label{item2} If $\phi_X$ is upper semicontinuous, then $\{\eta\in\overline{\mathbb{D}^2}\mid \phi^+_X(\eta)=\langle(1,\eta),\sigma\rangle_{1,2} \}$ is the convex hull of the set $\{\eta\in\mathbb{S}^1\mid \phi^+_X(\eta)=\langle(1,\eta),\sigma\rangle_{1,2} \}.$
\end{enumerate}
\end{lemma}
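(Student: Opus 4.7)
The plan is to use the convex envelope characterization of $\phi_X^-$ as an infimum over convex combinations of boundary values, and then exploit affineness of the support form together with the non-negativity of $\phi_X - a$ on $\mathbb{S}^1$, where $a(\eta) := \langle (1,\eta),\sigma\rangle_{1,2}$. The crucial elementary principle is that a sum of non-negative terms vanishes if and only if each summand does. I will only treat case (1); case (2) is dual, exchanging min/sup/convex/l.s.c.\ with max/inf/concave/u.s.c.

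The first step is to establish the envelope formula: for every $\eta \in \overline{\mathbb{D}^2}$,
\[
\phi_X^-(\eta) \;=\; \min\Bigl\{ \sum_{i=1}^{3} \lambda_i\, \phi_X(z_i) \;:\; z_i \in \mathbb{S}^1,\ \lambda_i \geq 0,\ \sum_i \lambda_i = 1,\ \sum_i \lambda_i z_i = \eta \Bigr\}.
\]
The inequality ``$\leq$'' follows because any affine minorant $a$ of $\phi_X|_{\mathbb{S}^1}$ satisfies $a(\eta) = \sum_i \lambda_i a(z_i) \leq \sum_i \lambda_i \phi_X(z_i)$ on each admissible triple; taking the supremum over $a$ yields the bound. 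For ``$\geq$'', the right-hand side, viewed as a function of $\eta$, is convex and bounded above by $\phi_X$ on $\mathbb{S}^1$ (the trivial one-point representation), hence dominated by any affine minorant, so by definition of $\phi_X^-$ it dominates $\phi_X^-$. Carath\'eodory's theorem in $\mathbb{R}^2$ permits at most three summands; attainment of the minimum comes from lower semicontinuity of $\phi_X$ on the compact set $\mathbb{S}^1$ and compactness of the space of admissible triples.

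With the formula in hand, both inclusions are immediate. Since $\mathrm{P}_\sigma$ is a support plane of $\mathrm{epi}^+(\phi_X^-)$, $a \leq \phi_X^-$ on $\overline{\mathbb{D}^2}$; combined with property (P2), this gives $a \leq \phi_X$ on $\mathbb{S}^1$, so $\phi_X - a \geq 0$ on $\mathbb{S}^1$. For ``$\supseteq$'', if $\eta = \sum_i \lambda_i z_i$ with $\phi_X^-(z_i) = a(z_i)$ for each $i$, affineness of $a$ yields
\[
\phi_X^-(\eta) \leq \sum_i \lambda_i \phi_X(z_i) = \sum_i \lambda_i a(z_i) = a(\eta) \leq \phi_X^-(\eta),
\]
forcing equality. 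For ``$\subseteq$'', pick $\eta$ with $\phi_X^-(\eta) = a(\eta)$ and choose a minimizing triple. The chain
\[
a(\eta) = \phi_X^-(\eta) = \sum_i \lambda_i \phi_X(z_i) \geq \sum_i \lambda_i a(z_i) = a(\eta)
\]
collapses to equalities throughout; since each summand $\lambda_i(\phi_X(z_i) - a(z_i)) \geq 0$, every $z_i$ with $\lambda_i > 0$ lies in the boundary contact set, expressing $\eta$ as a convex combination of such points.

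The only delicate point is the attainment of the minimum in the envelope formula, which genuinely uses lower semicontinuity of $\phi_X$ (together with $\phi_X$ being bounded below on the compact $\mathbb{S}^1$, to avoid degeneracies in products $\lambda_i \phi_X(z_i)$); the remainder is purely algebraic. Case (2) proceeds identically: replace the convex envelope by the concave envelope, minimum by maximum, and lower by upper semicontinuity, and reverse the support-plane inequalities throughout.
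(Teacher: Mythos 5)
The paper does not actually prove this lemma (it is quoted from \cite[Lemma 4.9]{ConvexAnal}), so your argument stands on its own; and the overall route you take is the natural one: establish that $\phi_X^-(\eta)$ equals an \emph{attained} minimum of $\sum_i\lambda_i\phi_X(z_i)$ over three-point convex representations of $\eta$ by circle points, then read off both inclusions from the vanishing of a sum of non-negative terms. Your "$\leq$" direction, the attainment discussion (lower semicontinuity of $(\lambda,z)\mapsto\lambda\phi_X(z)$ using boundedness below, plus compactness of the set of admissible triples), and the two inclusion arguments (using (P2) to identify $\phi_X^-$ with $\phi_X$ on $\mathbb{S}^1$, and the support-plane inequality $a\leq\phi_X^-$) are all correct.

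The one genuine defect is the sentence handling the "$\geq$" half of the envelope formula. Writing $g(\eta)$ for the three-point infimum, what you must show there is $g\leq\phi_X^-$, i.e.\ that $g$ is \emph{dominated by} $\phi_X^-$; your sentence asserts the reverse ("it dominates $\phi_X^-$", which is just the already-proved "$\leq$" direction), and the intermediate claim that $g$ is "dominated by any affine minorant" is false (every affine minorant of $\phi_X$ satisfies $a\leq\phi_X^-\leq g$). Moreover the needed inequality is not "by definition of $\phi_X^-$": by definition $\phi_X^-$ is a supremum of affine minorants, and the fact that it dominates an arbitrary convex function whose boundary values lie below $\phi_X$ is precisely property (P4) (\cite[Corollary 4.5]{ConvexAnal}), equivalently the Fenchel--Moreau biconjugate theorem -- this, together with attainment, is where the real content sits. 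The repair is short: $g$ is convex (because the three-point infimum coincides with the full convex envelope, by Carath\'eodory in the form of Rockafellar, Cor.~17.1.5 -- this is also what licenses restricting to three summands, and is worth saying since convexity of the truncated infimum is not obvious directly); its radial boundary values at $z\in\mathbb{S}^1$ are at most $g(z)\leq\phi_X(z)$ by convexity along segments and the one-point representation; hence (P4) gives $g\leq\phi_X^-$ on $\mathbb{D}^2$, and on $\mathbb{S}^1$ one has $g\leq\phi_X=\phi_X^-$ by (P2). With that sentence corrected and (P4) cited, your proof is complete, and the dual case (2) goes through as you indicate.
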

Consider $\mathrm{P}_{\sigma}$ a support plane of $\mathrm{epi}^{\mp}(\phi_X^{\pm})$, then if $\mathrm{P}_{\sigma}\cap \mathrm{epi}^{\mp}(\phi^{\pm}_X)$ is line, it is called a \textit{bending line}, other wise $\mathrm{P}_{\sigma}\cap \mathrm{epi}^{\mp}(\phi^{\pm}_X)$ has non empty interior and it is called a \textit{flat piece}. We will also call bending line a boundary geodesic of a flat piece. Lemma \ref{structure_of_convex_core} implies that each point $(\eta,\phi_X^{\pm}(\eta))$ in the graph of $\phi_X^{\pm}$ (which is the boundary of the convex $\mathrm{epi}^{\mp}(\phi_X^{\pm})$ ) is either in a bending line or in the interior of a  flat piece $\mathrm{P}_{\sigma}\cap \mathrm{epi}^{\mp}(\phi_X^{\pm})$, in that case the plane $\mathrm{P}_{\sigma}$ is the unique support plane at $(\eta,\phi_X^{\pm}(\eta)).$ If $\mathcal{L}$ is a bending line, the support plane at points of $\mathcal{L}$ is in general not unique. Following \cite[Section II.1.6]{Epsmarden}, we have:
\begin{lemma}\label{extremelemma}
Let $X$ be a vector field on $\mathbb{S}^1$, and let $\phi_X : \mathbb{S}^1 \to \mathbb{R}$ be its support function. Let $\mathcal{L}$ be a bending line of $\mathrm{epi}^{\mp}(\phi_X^{\pm})$. Consider the set:
$$\mathcal{S}(\mathcal{L})=\{\sigma\in\minko\mid \mathrm{P}_{\sigma}\ \mathrm{support} \ \mathrm{planes}\ \mathrm{at} \ \mathcal{L} \}.$$
Then, either $\mathcal{S}(\mathcal{L})$ is a single point, or it constitutes a compact spacelike segment. We refer to the two planes corresponding to the endpoint of this segment as the \textit{extreme support planes}.
\end{lemma}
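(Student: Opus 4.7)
The plan is to establish that $\mathcal{S}(\mathcal{L})$ is a nonempty, closed, bounded, convex subset of an affine line with spacelike direction in $\minko$; the dichotomy stated in the lemma then follows at once. I treat only the case of $\mathrm{epi}^{+}(\phi_X^{-})$, the case of $\mathrm{epi}^{-}(\phi_X^{+})$ being entirely analogous. By the very definition of a bending line, at least one support plane contains $\mathcal{L}$, so $\mathcal{S}(\mathcal{L})$ is nonempty. Convexity and closedness are formal: the support-plane condition
$$\inner{(1,\eta),\sigma}_{1,2}\leq \phi_X^-(\eta)\quad\text{for all }\eta\in\mathbb{D}^2,$$
together with equality along the projection $\ell\subset\overline{\mathbb{D}^2}$ of $\mathcal{L}$, is a family of closed linear conditions on $\sigma\in\minko$, and these are preserved under convex combinations.

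Next I would show that $\mathcal{S}(\mathcal{L})$ is contained in an affine line $L\subset\minko$ whose direction is spacelike. Writing $\mathcal{L}$ in the Klein model $\mathbb{D}^2\times\mathbb{R}$ as the graph of an affine function over $\ell$, the condition $\mathrm{P}_\sigma\supset\mathcal{L}$ becomes two independent linear equations on $\sigma\in\mathbb{R}^3$, obtained by evaluating at two distinct points of $\ell$ (the vectors $(1,\eta_1),(1,\eta_2)\in\mathbb{R}^3$ are linearly independent whenever $\eta_1\neq \eta_2$). Hence $\{\sigma\in\minko:\mathrm{P}_\sigma\supset\mathcal{L}\}$ is an affine line $L$. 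For any two distinct $\sigma_1,\sigma_2\in\mathcal{S}(\mathcal{L})\subset L$, Lemma \ref{planes_intersect} applied to the support planes $\mathrm{P}_{\sigma_1},\mathrm{P}_{\sigma_2}$ (whose intersection must therefore be the spacelike geodesic $\mathcal{L}$), combined with Proposition \ref{hpangle}, forces $\sigma_1-\sigma_2$ to be spacelike. Thus $L$ has spacelike direction.

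The main step is boundedness. Fix $\sigma_0\in\mathcal{S}(\mathcal{L})$ and parametrize $L=\{\sigma_0+tv:t\in\mathbb{R}\}$ with $v$ spacelike. The affine function $h(\eta):=\inner{(1,\eta),v}_{1,2}$ vanishes exactly on the Euclidean line through $\ell$; since that line meets $\mathbb{D}^2$ and divides it into two nonempty half-discs, $h$ takes both strictly positive and strictly negative values on $\mathbb{D}^2$. For any $\eta_+\in\mathbb{D}^2$ with $h(\eta_+)>0$, the support condition at $\eta_+$ reads
$$t\,h(\eta_+)\leq \phi_X^-(\eta_+)-\inner{(1,\eta_+),\sigma_0}_{1,2},$$
whose right-hand side is finite (since $\phi_X^-$ is finite on the open disk, being convex and majorized by $\phi_X$), and this bounds $t$ from above; an $\eta_-$ with $h(\eta_-)<0$ provides a lower bound in the same way. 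Hence $\mathcal{S}(\mathcal{L})$ corresponds to a bounded interval in the parameter $t$, so it is compact. Being a nonempty compact convex subset of an affine line with spacelike direction, $\mathcal{S}(\mathcal{L})$ is either a single point or a compact spacelike segment, as claimed.

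The technical core is this last paragraph; everything else is essentially bookkeeping based on the duality between $\minko$ and the spacelike planes of $\HP$. The only mild subtlety I anticipate is verifying that $\phi_X^-$ does not blow up at the chosen interior points, which is where the convexity of $\phi_X^-$ and the presence of the affine minorant $\eta\mapsto \inner{(1,\eta),\sigma_0}_{1,2}$ enter crucially.
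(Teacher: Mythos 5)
Your proof is correct, and it reaches the same structural conclusion as the paper (a nonempty, closed, convex, bounded subset of an affine line with spacelike direction), but the two key ingredients are handled by genuinely different means. The paper first proves compactness softly: it takes a sequence of support planes at $\mathcal{L}$, extracts a convergent subsequence in the compact space of projective planes, and argues that the limit is again a support plane at points of $\mathcal{L}$, hence spacelike of the form $\mathrm{P}_\sigma$ (the tacit point being that a vertical limit plane is impossible at an interior contact point); it then normalizes $\mathcal{L}$ to $\{(x,0,0)\}$ by an isometry of $\HP$ and computes explicitly that all such $\sigma$ lie on the spacelike line $\{(0,0,s)\}$, with convexity coming from the support-plane inequality. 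You instead work without normalization: containment in an affine line follows from evaluating the plane equation at two points of $\ell$, the spacelike direction from Proposition \ref{hpangle} applied to the common spacelike geodesic $\mathcal{L}$ (you do not really need Lemma \ref{planes_intersect} here, since containment of $\mathcal{L}$ already identifies the intersection), and boundedness from the explicit inequality $t\,h(\eta_\pm)\leq \phi_X^-(\eta_\pm)-\inner{(1,\eta_\pm),\sigma_0}_{1,2}$ at interior points on the two sides of $\ell$. What your route buys is that the delicate step in the paper---ruling out degeneration of support planes to a vertical plane---is replaced by an explicit quantitative bound on the parameter $t$, at the price of needing finiteness of $\phi_X^-$ at interior points; your parenthetical justification for this ("convex and majorized by $\phi_X$") is slightly loose, since $\phi_X$ only bounds $\phi_X^-$ on $\mathbb{S}^1$, but the finiteness itself is immediate (any affine minorant of $\phi_X$ on $\mathbb{S}^1$ is bounded at an interior point by the values of $\phi_X$ at the endpoints of a chord through it) and is in any case built into the paper's convention that $\phi_X^\pm:\overline{\mathbb{D}^2}\to\mathbb{R}$ are real-valued.
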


\begin{proof}
Let us first show that $\mathcal{S}(\mathcal{L})$ is a compact set. Let $\mathrm{P}_{\sigma_n}$ be a sequence of support planes at $\mathcal{L}$. By compactness of the space of planes in projective space, we can extract a subsequence such that the planes $\mathrm{P}_{\sigma_n}$ converge to some plane $\mathrm{P}$. Now, note that the condition to be a support plane is a closed condition, hence $\mathrm{P}$ is a support plane at points on $\mathcal{L}$, and thus it is necessarily a spacelike plane of the form $\mathrm{P}_{\sigma}$, so $\sigma_n\to\sigma$. 

Assume that $\mathcal{S}(\mathcal{L})$ is not a point, and we will show that $\mathcal{S}(\mathcal{L})$ is an arc. Without loss of generality, we may assume, through the action of the isometry group $\isom(\HP)$, that the geodesic $\mathcal{L}$ is given by:
$$\mathcal{L}=\{ (x,0,0)\mid x\in(-1,1)   \}\subset\mathbb{D}^2\times\mathbb{R}.$$
Let $\sigma=(\sigma_0,\sigma_1,\sigma_2)\in\minko$ such that $\mathcal{L}\subset \mathrm{P}_{\sigma}$, then $\inner{(1,x,0),\sigma}_{1,2}=0$. Hence we have $-\sigma_0+x\sigma_1=0$ for all $x\in(-1,1)$. This implies that $\sigma_0=\sigma_1=0$, and so
$$\mathcal{S}(\mathcal{L})\subset \{ (0,0,s)\mid s\in\mathbb{R} \}.$$ Since $\mathcal{S}(\mathcal{L})$ is a compact subset of a spacelike geodesic in $\minko$, in order to show that $\mathcal{S}(\mathcal{L})$ is a segment, it is enough to show that $\mathcal{S}(\mathcal{L})$ is convex, meaning that if $(0,0,s_1)$ and $(0,0,s_2)$ are in $\mathcal{S}(\mathcal{L})$, then for any $t\in [0,1]$, the point $(0,0,(1-t)s_1+ts_2)$ is contained in $\mathcal{S}(\mathcal{L})$. This follows easily from the support plane condition \eqref{Condition_de_plan_support}. In the end, we have shown that $\mathcal{S}(\mathcal{L})$ is a compact connected arc. 
\end{proof}
Recall that $\Pi$ is the radial projection defined in \eqref{radial}. We then define:
\begin{equation}\label{infearth_convexhull}\begin{array}{ccccc}
\mathcal{E}_{X}^- & : & \mathbb{D}^2 & \to & \mathbb{R}^2 \\
 & & \eta & \mapsto & \mathrm{d}_{(1,\eta)}\Pi\left((1,\eta)\boxtimes  \sigma\right), \\
\end{array}\end{equation}where $\sigma\in \mathbb{R}^{2,1}$ is a point for which the dual  spacelike plane $\mathrm{P}_{\sigma}\subset\HP$ is a support plane of $\mathrm{epi}^+(\phi_X^-)$ at $(\eta,\phi_X^-(\eta))$. In cases where $(\eta,\phi_X^{\pm}(\eta))$ lies on a bending line with multiple support planes, we make a canonical choice as follows: Let $\mathrm{P}_{\sigma_1}$ and $\mathrm{P}_{\sigma_2}$ be the two extremal support planes of $\mathrm{epi}^+(\phi_X^-)$ at $(\eta,\phi_X^-(\eta))$ as provided in Lemma \ref{extremelemma}. We then choose the medial plane $\mathrm{P}_{\frac{\sigma_1+\sigma_2}{2}}$ as the support plane at $(\eta,\phi_X^-(\eta))$.
Analogously, we define 
\begin{equation}\label{supearth_convexhull}\begin{array}{ccccc}
\mathcal{E}_{X}^+ & : & \mathbb{D}^2 & \to & \mathbb{R}^2 \\
 & & \eta & \mapsto & \mathrm{d}_{(1,\eta)}\Pi\left((1,\eta)\boxtimes  \sigma\right), \\
\end{array}\end{equation}
This time, $\mathrm{P}_{\sigma}$ is a support plane of $\mathrm{epi}^-(\phi_X^+)$ at $(\eta,\phi_X^+(\eta))$, and similarly, we take the medial support plane if there are several support planes.
Notice that the vector fields $\mathcal{E}_X^\pm$ are the same vector fields \eqref{vector_field_duality} given in the Introduction and are defined in the Klein model of the hyperbolic plane.
\begin{remark}
In the construction of $\mathcal{E}_X^\pm$, we \textit{choose} the medial support plane when multiple support planes are present. This choice is not very significant; any support plane can be chosen. In fact, the left and right infinitesimal earthquake that extend to $X$ is unique up to choice that has to be made at every bending line where it is discontinuous.
\end{remark}

\begin{example}\label{exx1}
  Let $v\in\minko$, and let $\phi_{\Lambda(v)}:z\mapsto\inner{(1,z),v}_{1,2}$ be the support function of the Killing vector field $\Lambda(v)$. We claim that the vector $\mathcal{E}_{\Lambda(v)}^\pm$ is exactly the Killing vector field $\Lambda(v)$ written in the Klein model. Indeed, since $\phi_{\Lambda(v)}$ is the restriction of an affine map to $\mathbb{S}^1$, then $\phi_{\Lambda(v)}^{\pm}(\eta)=\inner{(1,\eta),v}_{1,2}$ for all $\eta\in\overline{\mathbb{D}^2}$.
Hence, the graph of $\phi_{\Lambda(v)}^{\pm}|_{\mathbb{D}^2}$ is exactly the spacelike plane $\mathrm{P}_{v}$ in $\HP$ dual to $v$. Clearly, for any $\eta\in\mathbb{D}^2$, the plane $\mathrm{P}_v$ is the unique support plane at $(\eta,\phi_{\Lambda(v)}^{\pm}(\eta))$. Therefore, by definition, we have $\mathcal{E}_{\Lambda(v)}^{\pm}(\eta)=\mathrm{d}_{(1,\eta)}\Pi\left((1,\eta)\boxtimes v\right)$, which, according to Lemma \ref{kil_in_the_disc}, is the Killing vector field $\Lambda(v)$. By similar arguments, one may also check that simple infinitesimal earthquakes can be written as $\mathcal{E}_X^{\pm}$ for a vector field $X$ for which, its support function $\phi_X$ is the restriction to $\mathbb{S}^1$ of a piecewise affine map.
\end{example}
\subsection{Extension to the boundary}
We start investigating the extension property of the vector field $\mathcal{E}_X^{\pm}$.
\begin{prop}\label{extension}
Let $X$ be a vector field on $\mathbb{S}^1$. Then:
\begin{enumerate}
 \item \label{1} If $X$ is continuous, then $\mathcal{E}_X^+$ and $\mathcal{E}_X^-$ extend continuously to $X$.
    \item \label{2} If $X$ is lower semicontinuous, then $\mathcal{E}_X^-$ extends radially to $X$.
    \item \label{3} If $X$ is upper semicontinuous, then $\mathcal{E}_X^+$ extends radially to $X.$
\end{enumerate}
\end{prop}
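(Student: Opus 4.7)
The strategy is to write $\mathcal{E}_X^-(\eta)$ in a form that separates its dependence on $\eta$ from its dependence on the chosen support plane. For each $\eta \in \mathbb{D}^2$, let $\sigma_\eta = (\sigma_{\eta,0}, \sigma_{\eta,1}, \sigma_{\eta,2}) \in \minko$ be the vector whose dual plane is the chosen support plane of $\mathrm{epi}^+(\phi_X^-)$ at $(\eta, \phi_X^-(\eta))$, so that by Lemma~\ref{kil_in_the_disc} one has $\mathcal{E}_X^-(\eta) = \Lambda(\sigma_\eta)(\eta)$. The contact condition $\inner{(1, \eta), \sigma_\eta}_{1,2} = \phi_X^-(\eta)$ eliminates $\sigma_{\eta,0}$, and expanding $\mathrm{d}_{(1,\eta)}\Pi \bigl( (1, \eta) \boxtimes \sigma_\eta \bigr)$ yields after simplification the key identity
\begin{equation*}
\mathcal{E}_X^-(\eta) \;=\; \phi_X^-(\eta)\,(-\eta_2, \eta_1) \;+\; (1 - |\eta|^2)\,(-\sigma_{\eta,2}, \sigma_{\eta,1}).
\end{equation*}
Since $X(z) = iz\,\phi_X(z) = \phi_X(z)(-z_2, z_1)$, proving items (1) and (2) for $\mathcal{E}_X^-$ reduces to two limits as $\eta \to z \in \mathbb{S}^1$: (a) $\phi_X^-(\eta) \to \phi_X(z)$, and (b) $(1 - |\eta|^2) (\sigma_{\eta,1}, \sigma_{\eta,2}) \to 0$.

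Part (a) is provided by the Preliminaries: property (P3) handles the continuous case for arbitrary $\eta_n \to z$, and property (P2) handles radial convergence in the lower semicontinuous case. For part (b), the key input is that the support plane condition $\inner{(1, \zeta), \sigma_\eta}_{1,2} \leq \phi_X^-(\zeta)$ is exactly the statement that $g_\eta := (\sigma_{\eta,1}, \sigma_{\eta,2})$ is a subgradient of the convex function $\phi_X^-$ at $\eta$:
\[
\phi_X^-(\zeta) - \phi_X^-(\eta) \;\geq\; g_\eta \cdot (\zeta - \eta) \qquad \text{for all } \zeta \in \mathbb{D}^2.
\]
Arguing by contradiction, suppose $(1 - |\eta_n|^2) g_{\eta_n} \to v \neq 0$ along a subsequence. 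Multiplying the subgradient inequality by $(1 - |\eta_n|^2)$ and passing to the limit using (a) yields $v \cdot (\zeta - z) \leq 0$ for every $\zeta \in \mathbb{D}^2$; taking the supremum over $\zeta$ together with Cauchy--Schwarz forces $v = |v|\,z$, so the prospective limit is necessarily radial. A second application of the subgradient inequality at $\zeta = z$ (legitimized by continuity of $\phi_X^-$ on $\overline{\mathbb{D}^2}$ in case (1), and by restricting to the radial ray with property (P2) in case (2)) gives $g_{\eta_n} \cdot (z - \eta_n) \leq \phi_X(z) - \phi_X^-(\eta_n) \to 0$; the elementary identity $(1 - z \cdot \eta_n) = \tfrac{1}{2}(1 - |\eta_n|^2) + \tfrac{1}{2}|z - \eta_n|^2$ shows $(1 - z \cdot \eta_n)/(1 - |\eta_n|^2) \geq \tfrac{1}{2}$, and pairing with $v = |v|\,z$ produces $\tfrac{|v|}{2} \leq 0$, contradicting $|v| > 0$. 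This establishes (b), and together with (a) yields items (1) and (2). Item (3) is symmetric: one replaces $\mathrm{epi}^+(\phi_X^-)$ by $\mathrm{epi}^-(\phi_X^+)$, the convex function by the concave one, and the subgradient inequality by its supergradient version.

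The main obstacle is part (b). The subgradient $g_\eta$ genuinely blows up as $\eta \to \partial \mathbb{D}^2$ — already for $\phi_X^-(\eta) = -\sqrt{1 - |\eta|^2}$ one computes $|g_\eta| \sim (1 - |\eta|^2)^{-1/2}$ — so the factor $1 - |\eta|^2$ in the identity is precisely what cancels this singularity, and one must convert the merely qualitative convergence of $\phi_X^-$ at $z$ into a quantitative estimate on the rescaled subgradient. The argument proceeds in two steps (direction, then magnitude), and is most subtle in the semicontinuous cases, where the boundary value of $\phi_X^-$ (respectively $\phi_X^+$) is known only along the single radial ray approaching $z$, so the second step must be carried out by restricting to that ray and invoking (P2). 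In case (1), tangential approach directions also require some technical care, essentially because the correction terms in the expansion of $g_{\eta_n} \cdot (z - \eta_n)$ can only be bounded once the sub-sequence has been refined so that the unit vector $(z - \eta_n)/|z - \eta_n|$ converges.
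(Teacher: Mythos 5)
Your identity $\mathcal{E}_X^-(\eta)=\phi_X^-(\eta)(-\eta_2,\eta_1)+(1-|\eta|^2)(-\sigma_{\eta,2},\sigma_{\eta,1})$ is correct, and reading the support-plane condition \eqref{Condition_de_plan_support} as the subgradient inequality for $g_\eta=(\sigma_{\eta,1},\sigma_{\eta,2})$ is exactly the inequality the paper exploits; your reduction to the limits (a) and (b) is a clean repackaging of the paper's computation, which likewise boils down to showing the rescaled coefficients $(1-r_n^2)b_n$ and $(1-r_n^2)c_n$ tend to zero. The gap is in your concluding step for (b) in item (1). A minor point first: before speaking of a subsequence with $(1-|\eta_n|^2)g_{\eta_n}\to v\neq 0$ you must rule out the unbounded alternative (in case (1) boundedness follows from the subgradient inequality tested at $\eta_n+\tfrac{1-|\eta_n|}{2}\tfrac{g_{\eta_n}}{|g_{\eta_n}|}$ together with boundedness of $\phi_X^-$ on $\overline{\mathbb{D}^2}$, but this is not said, and in case (2) $\phi_X^-$ need not be bounded above). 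More seriously, your contradiction uses the subgradient inequality only at the fixed point $\zeta=z$. With $t_n=1-|\eta_n|^2$ this gives $|v|\tfrac{1-z\cdot\eta_n}{t_n}+\tfrac{(t_ng_{\eta_n}-v)\cdot(z-\eta_n)}{t_n}\leq \phi_X(z)-\phi_X^-(\eta_n)\to 0$; the first term is indeed $\geq |v|/2$, but the error term is only $o(1)\cdot\tfrac{|z-\eta_n|}{t_n}$, and for tangentially approaching sequences $\tfrac{|z-\eta_n|}{t_n}$ is unbounded. Refining the subsequence so that $(z-\eta_n)/|z-\eta_n|$ converges, as you propose, gives no control on this product: the two facts you retain ($t_ng_{\eta_n}\to|v|z$ and the inequality at $\zeta=z$) are mutually consistent for tangential sequences, since the error term may be negative of size comparable to $|z-\eta_n|^2/t_n$. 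So no contradiction can be extracted from them alone; more of the subgradient information is needed.

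The repair stays entirely inside your framework: test at the moving boundary point $\zeta_n=\eta_n/|\eta_n|$ (the affine minorant $\zeta\mapsto\inner{(1,\zeta),\sigma_{\eta_n}}_{1,2}$ lies below $\phi_X^-$ on all of $\overline{\mathbb{D}^2}$, so this is legitimate). Then $\zeta_n-\eta_n=\tfrac{1-|\eta_n|}{|\eta_n|}\eta_n$ has length comparable to $t_n$ and direction converging to $z$, whence $g_{\eta_n}\cdot(\zeta_n-\eta_n)=\tfrac{(t_ng_{\eta_n})\cdot\eta_n}{|\eta_n|(1+|\eta_n|)}\to\tfrac{|v|}{2}>0$, while $\phi_X^-(\zeta_n)-\phi_X^-(\eta_n)\to 0$ by (P3); this is the desired contradiction, uniformly in the direction of approach. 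This is precisely the role of the paper's test points $(\tfrac{x_n}{r_n},\tfrac{y_n}{r_n})$ and $(\tfrac{x_n}{r_n}(2r_n-1),\tfrac{y_n}{r_n}(2r_n-1))$ in \eqref{equationofextension}, which produce the factor $1-r_n$ exactly. For item (2) your argument is essentially sound as written (modulo the compactness point above), because approach along a segment from an interior point is non-tangential, so $|z-\eta_n|/t_n$ stays bounded and (P2) applies along the ray; this matches the paper's treatment, where the analogous test points lie on the segment itself.
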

To prove Proposition \ref{extension}, we need the following Lemma.
\begin{lemma}\label{4.7}
    Let $\mathrm{A}\in \mathrm{O}_0(1,2)$ and $v\in\minko$, then the following holds:
    \begin{equation}\label{38}
        \mathcal{E}^{\pm}_{\mathrm{A}_*X+\Lambda(v)}=\mathrm{A}_*\mathcal{E}_{X}^{\pm}+\mathcal{E}^{\pm}_{\Lambda(v)}.
    \end{equation}
\end{lemma}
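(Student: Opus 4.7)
The plan is to combine three equivariances: of the graph construction (Lemma \ref{equii}), of support planes under the duality $\mathcal{D}^*$ (Equations \eqref{D_star}--\eqref{groupeduality}), and the adjoint equivariance of Killing fields \eqref{adequivariance}. First, I would observe that by Lemma \ref{equii}, $\mathrm{gr}(\phi_{\mathrm{A}_*X + \Lambda(v)}) = \mathrm{Is}(\mathrm{A},v)\,\mathrm{gr}(\phi_X)$; since $\mathrm{Is}(\mathrm{A},v)$ is an isometry of $\HP$, it sends convex hulls to convex hulls, hence maps $\mathrm{epi}^{\mp}(\phi_X^\pm)$ to $\mathrm{epi}^{\mp}(\phi_{\mathrm{A}_*X + \Lambda(v)}^\pm)$ and carries support planes bijectively onto support planes. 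Second, by the compatibility of the duality \eqref{D_star} with the isomorphism $\mathrm{Is}$ of \eqref{groupeduality}, the action on a spacelike plane $\mathrm{P}_\sigma\subset\HP$ is
$$\mathrm{Is}(\mathrm{A},v)\cdot\mathrm{P}_\sigma = \mathrm{P}_{\mathrm{A}\sigma+v},$$
so bending lines and their extremal arcs $\mathcal{S}(\mathcal{L})$ from Lemma \ref{extremelemma} transport equivariantly; because affine isometries preserve midpoints of segments, the canonical medial choice used to define $\mathcal{E}^\pm_X$ is itself preserved.

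Next, I would fix $\eta \in \mathbb{D}^2$, set $\eta' = \mathrm{A}^{-1}\cdot \eta$, and let $\mathrm{P}_\sigma$ be the chosen support plane at $(\eta', \phi_X^\pm(\eta'))$. By the previous paragraph, the chosen support plane at $(\eta, \phi_{\mathrm{A}_*X + \Lambda(v)}^\pm(\eta))$ is $\mathrm{P}_{\mathrm{A}\sigma + v}$. The definitions \eqref{infearth_convexhull}--\eqref{supearth_convexhull} combined with Lemma \ref{kil_in_the_disc} then give
$$\mathcal{E}^\pm_X(\eta') = \Lambda(\sigma)(\eta') \quad \text{and} \quad \mathcal{E}^\pm_{\mathrm{A}_*X + \Lambda(v)}(\eta) = \Lambda(\mathrm{A}\sigma + v)(\eta).$$
Linearity of $\Lambda$ in its argument together with the equivariance \eqref{adequivariance} (which reads $\Lambda(\mathrm{A}\sigma) = \mathrm{A}_*\Lambda(\sigma)$ as Killing fields on $\mathbb{D}^2$) yields
$$\Lambda(\mathrm{A}\sigma + v)(\eta) = \mathrm{A}_*\Lambda(\sigma)(\eta) + \Lambda(v)(\eta).$$
Finally, since $\mathcal{E}_X^\pm$ coincides with $\Lambda(\sigma)$ at $\eta'$, their pushforwards by $\mathrm{A}$ coincide at $\eta$, and Example \ref{exx1} identifies $\Lambda(v)$ in the Klein model with $\mathcal{E}^\pm_{\Lambda(v)}$, giving the desired identity \eqref{38}.

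The main obstacle I expect is verifying that the canonical medial choice used in defining $\mathcal{E}^\pm_X$ is genuinely equivariant under $\mathrm{Is}(\mathrm{A},v)$: I must check that extremal support planes of a bending line $\mathcal{L}$ are mapped under $\mathrm{Is}(\mathrm{A},v)$ to extremal support planes of $\mathrm{Is}(\mathrm{A},v)\mathcal{L}$, so that the image of the midpoint of $\mathcal{S}(\mathcal{L})$ is the midpoint of $\mathcal{S}(\mathrm{Is}(\mathrm{A},v)\mathcal{L})$. This reduces to the fact that the duality $\mathcal{D}^*$ is an isomorphism of the convex structures on both sides, and then the midpoint identity follows since $(\mathrm{A},v)$ acts affinely on $\minko$. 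Once this is in place, the rest is a short algebraic calculation, the cleanest packaging of which is the observation that $\mathcal{E}^\pm_X(\eta) = \Lambda(\sigma)(\eta)$ for the chosen $\sigma$, reducing the lemma to an identity between Killing fields evaluated at a single point.
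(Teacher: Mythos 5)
Your proposal is correct and follows essentially the same route as the paper's proof: equivariance of the graphs via Lemma \ref{equii}, equivariance of (medial) support planes under $\mathrm{Is}(\mathrm{A},v)$, and the identification of $\mathcal{E}^{\pm}_{\Lambda(v)}$ with $\Lambda(v)$ from Example \ref{exx1}. You merely spell out the details the paper leaves implicit, namely the formula $\mathrm{Is}(\mathrm{A},v)\cdot\mathrm{P}_{\sigma}=\mathrm{P}_{\mathrm{A}\sigma+v}$, the affine transport of the extremal arcs $\mathcal{S}(\mathcal{L})$ justifying the medial choice, and the final computation via linearity of $\Lambda$ and \eqref{adequivariance}.
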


\begin{proof}
To prove the lemma, let's begin by remarking that the relation \eqref{38} is trivially satisfied when $X$ is a Killing vector field. In this case, as shown in Example \ref{exx1}, the vector field $\mathcal{E}_X^{\pm}$ is simply the Killing vector field $X$.
Secondly, by Lemma \ref{equii}, we have:
$$\mathrm{gr}(\phi_{\mathrm{A}_*X+\Lambda(v)})=\mathrm{Is}(\mathrm{A},v)\mathrm{gr}(\phi_X).$$
Finally, to conclude the proof, we need to observe that the vector field $\mathcal{E}_X^{\pm}$ is constructed using Killing vectors coming from the support planes of $\mathrm{epi}^{\mp}(\phi_X^{\pm})$. We must prove that our choice of support plane is equivariant under the action of $\mathrm{O}_0(1,2)\ltimes\minko$. Observe that if a point $p\in\mathrm{gr}(\phi_X)$ has a unique support plane $\mathrm{P}$, then $\mathrm{Is}(\mathrm{A},v)\mathrm{P}$ is also the unique support plane at $\mathrm{Is}(\mathrm{A},v)p$.
Now, in cases where several support planes exist, we have chosen the medial support plane, and it is evident that this choice is equivariant with respect to the action of $\mathrm{O}_0(1,2)\ltimes\minko$. This concludes the proof.\end{proof}
We can now make this Remark.
\begin{remark}\label{EXequivariant}
  The property of continuous and radial extension of $\mathcal{E}_X^{\pm}$ is invariant under the action of $\mathrm{O}_0(1,2)\ltimes\minko$. Indeed if $\mathrm{A}\in \mathrm{O}_0(1,2)$ and $v\in\minko$, then by Lemma \ref{4.7}
    \begin{equation}
        \mathcal{E}^{\pm}_{\mathrm{A}_*X+\Lambda(v)}=\mathrm{A}_*\mathcal{E}_{X}^{\pm}+\mathcal{E}^{\pm}_{\Lambda(v)},
    \end{equation}
Now, from Example \ref{exx1}, $\mathcal{E}_{\Lambda(v)}^{\pm}$ is a Killing vector field of $\mathbb{H}^2$, and thus, it extends continuously to $\mathbb{S}^1$. Therefore, $\mathcal{E}_X^{\pm}$ extends continuously (resp. radially) to vector fields on $\mathbb{S}^1$ if and only if $\mathcal{E}_{\mathrm{A}_*X+\Lambda(v)}^{\pm}$ extends continuously (resp. radially) to vector field of $\mathbb{S}^1$.
\end{remark}
We start first by providing the proof of statement \eqref{1} of Proposition \ref{extension}.

\begin{proof}[Proof of Proposition \ref{extension}-\eqref{1}]
Before presenting the proof, it is essential to note that in this case, since $\phi_X$ is continuous, then $\phi_X^{\pm}:\overline{\mathbb{D}^2}\to\mathbb{R}$ is continuous until the boundary, see Property $(\mathrm{P}3)$ in Section \ref{P3}, page 12.\\ Let $\eta_n=(x_n,y_n)\in \mathbb{D}^2$ be a sequence converging  to $\eta_{\infty}=(x,y)\in \mathbb{S}^1$. The goal is to show that: 
    $$\mathcal{E}_X^{\pm}(\eta_n)\to X(\eta_{\infty}).$$
    We will only give the proof for the vector field $\mathcal{E}_X^-$ as the proof for
$\mathcal{E}_X^+$ can be obtained in the same way.
Let $\mathrm{P}_{\sigma_n}$ be a sequence of spacelike support planes of $\mathrm{epi}^+(\phi_X^-)$ at $(\eta_n,\phi_X^-(\eta_n))$. Up to extracting a subsequence, there are two possibilities: either $\sigma_n\to\sigma_{\infty}$
and $\mathrm{P}_{\sigma_n}$ converges to the spacelike support plane $\mathrm{P}_{\sigma_{\infty}}$ , or $\sigma_n$ diverges in $\minko$ and $\mathrm{P}_{\sigma_n}$
converges to a vertical plane. We will treat these two situations separately.\\ Consider the case where $\sigma_n \to \sigma_{\infty}$. Then, by the continuity of the Minkowski cross product and the differential of the radial projection $\Pi$, we obtain:
$$\mathcal{E}_X^-(\eta_n)=\mathrm{d}_{(1,\eta_n)}\Pi\left((1,\eta_n)\boxtimes \sigma_n\right)\to\mathrm{d}_{(1,\eta_{\infty})}\Pi\left((1,\eta_{\infty})\boxtimes \sigma_{\infty}\right).$$ Now, from the equation \eqref{pi(z,1)} of Corollary \ref{support_function_of_killing}, we get: $$\mathrm{d}_{(1,\eta_{\infty})}\Pi\left((1,\eta_{\infty})\boxtimes \sigma_{\infty}\right)=\inner{(1,\eta_{\infty}),\sigma_{\infty}}_{1,2}(-y,x).$$ However, by continuity of $\phi_X^-$ on $\overline{\mathbb{D}}^2$, we have $\phi_X^-(\eta_n)\to\phi_X^-(\eta_{\infty})=\phi_X(\eta_{\infty})$. Thus $(\eta_{\infty}, \phi_X(\eta_{\infty})) \in \mathrm{P}_{\sigma_{\infty}}$, then $\langle (1,\eta_{\infty}), \sigma_{\infty}\rangle_{1,2} = \phi_X(\eta_{\infty})$, and so: $$\mathrm{d}_{(\eta_{\infty},1)}\Pi\left((1,\eta_{\infty})\boxtimes  \sigma_{\infty}\right)=\phi_X(\eta_{\infty})(-y,x)=i\cdot\eta_{\infty}\phi_X(\eta_{\infty})=X(\eta_{\infty}).$$
and the proof is complete in that case. \\
Now, let's move on to the other case, where $\sigma_n$ diverges. It follows from Remark \ref{EXequivariant} that up to the action of $\mathrm{O}_0(1,2)\ltimes\minko$ on $X$, one may assume that:
$$\eta_{\infty}=(1,0),\ \phi_X(1, 0) = \phi_X(0, 1) = \phi_X(0, -1) = 0,$$ see Remark \ref{remarqueabc}. The goal now is to show that: \begin{equation}\label{convergence_to_zero}
   \mathcal{E}_X^-(\eta_n)\to 0.\end{equation}\color{black} 
\color{black}Let $p = (1, 0, 0)$, $r_n = \sqrt{x_n^2 + y_n^2}$, and $v_n = \frac{1}{r_n}(-y_n, x_n)$. Consider $w_n = \frac{1}{r_n}\eta_n$ so that $(p, (0,w_n), (0,v_n))$ is an oriented orthonormal basis, see Figure \ref{Bonpicture}.
\begin{figure}[htb]
\centering
\includegraphics[width=.5\textwidth]{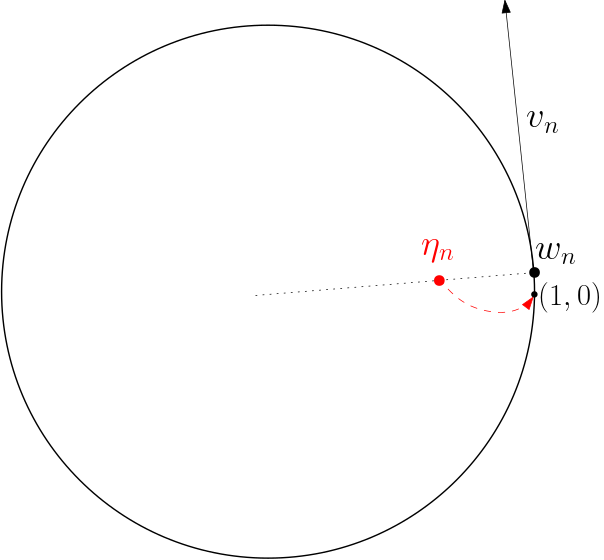}
\caption{The sequences $w_n$ and $v_n$ are such that $(p, (0, w_n), (0, v_n))$ forms an oriented orthonormal basis.}\label{Bonpicture}
\end{figure}

\noindent We can write $\sigma_n$ in this basis as: $$\sigma_n=a_np+b_n(0,w_n)+c_n(0,v_n).$$ Since $(1,\eta_n)=p+(0,\eta_n)$, then by a computation similar to Lemma \ref{crosspructlemma} (see formulas \eqref{cross_product_basis}), we get:
$$(1,\eta_n)\boxtimes\sigma_n= -r_nc_np-c_n(0,w_n)-(r_na_n-b_n)(0,v_n).$$
Now, by an elementary computation, one may check that the differential of the radial projection $\Pi$ satisfies the following:
$$\mathrm{d}_{(1,x,y)}\Pi(v_0,v_1,v_2)=\left( -x v_0+v_1, -y v_0+v_2       \right),$$ for all $(x,y)\in\mathbb{D}^2$ and $(v_0,v_1,v_2)\in\mathbb{R}^3$. This implies that $$\mathrm{d}_{(1,\eta_n)}\Pi(p) = -\eta_n,\  \mathrm{d}_{(1,\eta_n)}\Pi(0,w_n) = w_n, \ \mathrm{d}_{(1,\eta_n )}\Pi(0,v_n) = v_n,$$ 
therefore:
$$
    \mathrm{d}_{(1,\eta_n)}\Pi( (1,\eta_n)\boxtimes\sigma_n )=c_n(r_n^2-1)w_n+(b_n-r_na_n)v_n,
$$
and since $(\eta_n, \phi_X^-(\eta_n)) \in \mathrm{P}_{\sigma_n}$, then $\langle (1,\eta_n), \sigma_n \rangle_{1,2} = \phi_X^-(\eta_n)$, so: $$\phi_X^-(\eta_n)=r_nb_n-a_n.$$ As consequence, we have
\begin{equation}
\mathcal{E}_X^-(\eta_n)=\mathrm{d}_{(1,\eta_n)}\Pi((1,\eta_n)\boxtimes\sigma_n)=r_n\phi^-(\eta_n)v_n+(1-r_n^2)b_nv_n-(1-r_n^2)c_nw_n.
\end{equation}
Since $\phi_X^-(\eta_n) \to \phi_X(1, 0) = 0$, in order to prove \eqref{convergence_to_zero}, it is enough to show that:
\begin{equation}\label{necessary_condition}
    (1-r_n^2)b_n\to0 \ \mathrm{and}\ (1-r_n^2)c_n\to 0.
\end{equation}
To prove this, consider the following estimate obtained from the support plane property:
$$\label{support_plane_estimate}
\mathrm{For}\ \mathrm{all} \ \eta\in \mathbb{D}^2,\ \inner{(1,\eta),\sigma_n}_{1,2}\leq \phi_X^-(\eta),$$
which can be written as:
\begin{equation}\label{support_plane_estimate}
    -a_n+\frac{b_nx_n-c_ny_n}{r_n}x+\frac{b_ny_n+c_nx_n}{r_n}y\leq\phi_X^-(x,y)
\end{equation}
for all $(x,y)\in \overline{\mathbb{D}^2}$. Applying the inequality \eqref{support_plane_estimate} to $(x,y)\in\{   (1,0),(0,1),(0,-1)\}$, we get
\begin{equation}\label{ineq}
    0\leq\displaystyle\left\lvert \frac{b_ny_n+c_nx_n}{r_n}\right\rvert\leq a_n, \ \ \frac{b_nx_n-c_ny_n}{r_n}\leq a_n.\end{equation} Now, denote by $\overline{\mathrm{P}}_{\sigma_n}$ the closure of the spacelike plane $\mathrm{P}_{\sigma_n}$ in $\overline{\mathbb{D}^2}\times\mathbb{R}$, namely:
    $$\overline{\mathrm{P}}_{\sigma_n}=\left\{ (x,y,t)\in\overline{\mathbb{D}^2}\times\mathbb{R}\mid \inner{(1,x,y),\sigma_n}_{1,2}=t  \right\},$$ then, we have:
\begin{equation}
\overline{\mathrm{P}}_{\sigma_n}=\left\{ (x,y,t)\in\overline{\mathbb{D}^2}\times\mathbb{R}\mid -\frac{t}{a_n}+\frac{b_nx_n-c_ny_n}{r_na_n}x + \frac{b_ny_n+c_nx_n}{r_na_n}y=1  \right\}.\end{equation}

Now, remark that both conditions that $(\eta_n,\phi_X^-(\eta_n)) \in \mathrm{epi}^+(\phi_X^-)$ and that $\overline{\mathrm{P}}_{\sigma_n}$ is a support plane of $\mathrm{epi}^+(\phi_X^-)$ are closed conditions, hence up to subsequence the plane $\overline{\mathrm{P}}_{\sigma_n}$ will converge to a support plane $\mathrm{P}_{\infty}\subset \overline{\mathbb{D}^2}\times\mathbb{R}$ of $\mathrm{epi}^+(\phi_X^-)$ at $\eta_{\infty}=(1,0)$, by assumption, such support plane is vertical since it is not spacelike. This implies that $\mathrm{P}_{\infty}=\{(x, y, t)\in\overline{\mathbb{D}^2}\times\mathbb{R} \mid x = 1\}$, see Figure \ref{extensionpicture}. 
\begin{figure}[t]
\centering
\includegraphics[width=.4\textwidth]{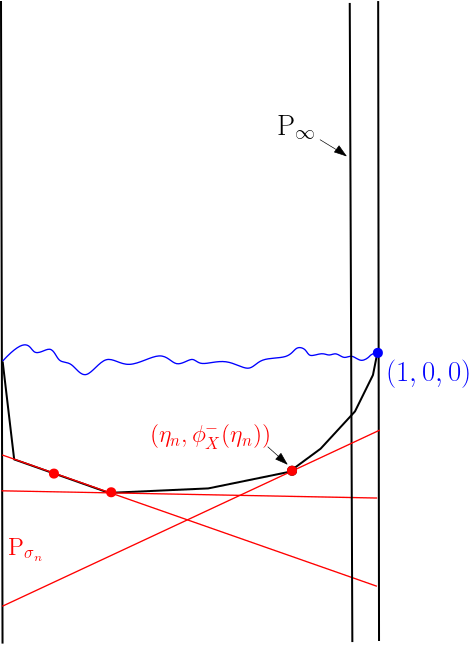}
\caption{A two dimensional picture illustrating the proof, when the sequence of support planes $\mathrm{P}_{\sigma_n}$ (in red) does not converge to a spacelike plane in $\HP$, it will converges to a vertical plane, which is a support for $\mathrm{epi}^+(\phi_X^-)$. From the image, the euclidean normal of the limiting plane $\mathrm{P}_{\infty}$ must be $(1,0,0)$. otherwise, it would intersect transversely the cylinder and would fail to be a support plane of $\mathrm{epi^+}(\phi_X^-)$. Now, since $\mathrm{P}_{\infty}$ contains the point $(1, 0, 0)$, it follows that $\mathrm{P}_{\infty}$ is equal to the plane $x=1$.}\label{extensionpicture}
\end{figure}

As a consequence, given that $a_n\geq 0$ and satisfies the estimates \eqref{ineq}, we can deduce the following limits:
\begin{equation}\label{44}
a_n\to +\infty,\ \frac{b_nx_n-c_ny_n}{a_n}\to1,\ \frac{b_ny_n+c_nx_n}{a_n}\to 0.
\end{equation} But \begin{equation}\label{45}
    r_nb_n-a_n=\phi_X^-(\eta_n),
\end{equation} and so since $\phi_X^-(\eta_n)\to 0$ and $r_n\to 1$, then equation \eqref{44} implies that: 

\begin{equation}\label{46}
    \frac{b_n}{a_n}\to 1.
\end{equation}
Now, from \eqref{44}, we have $\frac{b_ny_n+c_nx_n}{a_n}\to 0$, thus we get by \eqref{46}:

\begin{equation}\label{47}
\frac{c_n}{a_n}\to 0.\end{equation}
Next, using \eqref{45} we can rewrite the condition \eqref{support_plane_estimate} as follows:
\begin{equation}\label{48}
    \underbrace{\left(\frac{x_n}{r_n}x+\frac{y_n}{r_n}y-1\right)}_{\mathcal{Q}_n(x,y)}b_n+\underbrace{\left( \frac{x_n}{r_n}y-\frac{y_n}{r_n}x   \right)}_{\mathcal{R}_n(x,y)}c_n+(1-r_n)b_n\leq\phi_X^-(x,y)-\phi_X^-(\eta_n),\end{equation}
for all $(x, y) \in \overline{\mathbb{D}^2}$ where 
$$\begin{cases}
\mathcal{Q}_n(x,y)=\frac{x_n}{r_n}x+\frac{y_n}{r_n}y-1,\\
\mathcal{R}_n(x,y)=\frac{x_n}{r_n}y-\frac{y_n}{r_n}x.
\end{cases}$$
The goal now is to bound $(1-r_n)b_n$ from above and below by an expression that converges to zero. To achieve this, observe that the point $(\frac{x_n}{r_n}, \frac{y_n}{r_n}) \in \overline{\mathbb{D}^2}$ is the solution to the system of equations given by:
$$\mathcal{Q}_n(x,y)=\mathcal{R}_n(x,y)=0.$$
Substituting this solution into \eqref{48}, we obtain:
$$(1-r_n)b_n\leq \phi_X^-(\frac{x_n}{r_n},\frac{y_n}{r_n})-\phi_X^-(\eta_n).$$ Next observe that the point $\left(\frac{x_n}{r_n}(2r_n-1),\frac{y_n}{r_n}(2r_n-1)\right) \in \overline{\mathbb{D}^2}$ is the solution to the system of equations given by:
$$\mathcal{Q}_n(x,y)=-2(1-r_n),\ \mathcal{R}_n(x,y)=0.$$
Substituting $(\frac{x_n}{r_n}(2r_n-1),\frac{y_n}{r_n}(2r_n-1))\in \overline{\mathbb{D}^2}$ into \eqref{48}, we obtain:
$$\phi_X^-(\eta_n)-\phi_X^-\left(\frac{x_n}{r_n}(2r_n-1),\frac{y_n}{r_n}(2r_n-1)\right)\leq (1-r_n)b_n.$$ In the end we have the following 
\begin{equation}\label{equationofextension}
    \phi_X^-(\eta_n)-\phi_X^-\left(\frac{x_n}{r_n}(2r_n-1),\frac{y_n}{r_n}(2r_n-1)\right)\leq (1-r_n)b_n\leq\phi_X^-\left(\frac{x_n}{r_n},\frac{y_n}{r_n}\right)-\phi_X^-(\eta_n).\end{equation} 
Recall that $\phi_X^{-}:\overline{\mathbb{D}^2}\to\mathbb{R}$ is continuous until the boundary, so passing to the limit in \eqref{equationofextension}, we can conclude:
$$(1-r_n)b_n\to 0.$$ Now, since $\frac{c_n}{b_n}\to 0$ by \eqref{47}, we also have:
$$(1-r_n)c_n\to 0.$$ This completes the proof of \eqref{necessary_condition} and, consequently, the proof of the first statement of the Proposition.\end{proof}

\begin{proof}[Proof of Proposition \ref{extension} (\ref{2}) and (\ref{3})]
Again, one only needs to prove the second statement of Proposition \ref{extension}, as the third statement can be obtained in an analogous way. Let $x\in\mathbb{D}^2$ and $\eta_{\infty}\in\mathbb{S}^1$. We need to show that if $\eta_n$ is a sequence in the segment $[x,\eta_{\infty})$ converging to $\eta_{\infty}$, then $\mathcal{E}_X^-(\eta_n)$ converges to $X(\eta_{\infty})$. By the equivariance property of $\mathcal{E}_X^-$ (see Remark \ref{EXequivariant}), we may assume that $x=(0,0)$ and $\eta_{\infty}=(1,0)$. Moreover,
\begin{align*}
\phi_X(1,0) &= \phi_X(0,1) = \phi_X(0,-1) = 0.
\end{align*}
Take $\eta_n=(x_n,0)$ a sequence in the segment $[(0,0),(1,0))$ converging to $(1,0)$ and keep the same notation as in the proof of the first statement of Proposition \ref{extension}, one has
\begin{equation}
\mathcal{E}_X^-(\eta_n) = r_n\phi^-(\eta_n)v_n + (1-r_n^2)b_nv_n - (1-r_n^2)c_nw_n.
\end{equation}
Since $\phi_X$ is lower semicontinuous, then the boundary value of $\phi_X^-$ is equal to $\phi_X$ (see property $(\mathrm{P}2)$ in Section \ref{P2}, Page $12$) hence $\phi_X^-(\eta_n)\to\phi_X(\eta_{\infty})=0$. A computation similar to that in the proof of the first statement of Proposition \ref{extension} (see limits \eqref{46} and \eqref{47}) shows that: \begin{equation}\label{51}
    \frac{c_n}{b_n}\to 0.
\end{equation} Moreover, we have the inequalities:
\begin{equation}\label{equationofextension2}
\phi_X^-(\eta_n) - \phi_X^-\left(\frac{x_n}{r_n}(2r_n-1),0\right) \leq (1-r_n)b_n \leq \phi_X^-\left(\frac{x_n}{r_n},0\right) - \phi_X^-(\eta_n),
\end{equation}
as in \eqref{equationofextension}. Now, remark that the points $\left(\frac{x_n}{r_n},0\right)$ and $\left(\frac{x_n}{r_n}(2r_n-1),0\right)$ are in the segment $[(0,0),(1,0))$ and hence, by the lower semicontinuity of $\phi_X$, we get 
$$\phi_X^-(\frac{x_n}{r_n},0)\to\phi_X(1,0)=0, \ \mathrm{and}\ \phi_X^-(\frac{x_n}{r_n}(2r_n-1),0)\to\phi_X(1,0)=0.$$
Therefore, passing to the limit in \eqref{equationofextension2}, we obtain that $(1-r_n)b_n\to 0$, and so by \eqref{51} we get $(1-r_n)c_n\to0$. In conclusion we have $\mathcal{E}_X^-(\eta_n)\to 0$ and this completes the proof.

\end{proof}

\subsection{Infinitesimal earthquake properties}
In this section, we will prove that the vector fields $\mathcal{E}_X^{\pm}$, satisfies the properties defining infinitesimal earthquakes.

\begin{prop}\label{earthquake_properties}
Let $X$ be a vector field of the circle. Then
\begin{itemize}
    \item If $X$ is lower semicontinuous, then $\mathcal{E}_X^-$ is a left infinitesimal earthquake.
    \item If $X$ is upper semicontinuous, then $\mathcal{E}_X^+$ is a right infinitesimal earthquake.
\end{itemize}  
\end{prop}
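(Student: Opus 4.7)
The plan is to focus on the left case with $X$ lower semicontinuous (the right case follows by analogous arguments, replacing $\phi_X^-$ convex with $\phi_X^+$ concave). The lamination $\lambda^-$ is read off from the bending geometry of the convex surface $\mathrm{gr}(\phi_X^-|_{\mathbb{D}^2})$: by Lemma~\ref{structure_of_convex_core}, for each support plane $\mathrm{P}_\sigma$ of $\mathrm{epi}^+(\phi_X^-)$ the contact set $\{\eta\in\overline{\mathbb{D}^2}:\phi_X^-(\eta)=\langle(1,\eta),\sigma\rangle_{1,2}\}$ is the convex hull of its trace on $\mathbb{S}^1$. Its instances of non-empty interior project to gaps of $\lambda^-$, and its $1$-dimensional instances project to geodesics of $\mathbb{H}^2$ in the Klein model, which become the leaves. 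On each stratum $S$ there is a canonical choice $\sigma_S\in\minko$ (unique for flat pieces; medial extreme support plane for bending lines), and by Lemma~\ref{kil_in_the_disc} the restriction $\mathcal{E}_X^-|_S$ equals $\Lambda(\sigma_S)|_S$. This verifies the first requirement in Definition~\ref{infiearth}.

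For any two strata $S_1,S_2$, the comparison vector field is
\[
Y := \mathcal{E}_X^-|_{S_2} - \mathcal{E}_X^-|_{S_1} = \Lambda(\sigma_{S_2} - \sigma_{S_1}).
\]
If $\sigma_{S_1}=\sigma_{S_2}$, then $Y=0$; using Lemma~\ref{structure_of_convex_core}, both strata then lie inside a single contact set, so one is in the closure of the other. Otherwise, Lemma~\ref{planes_intersect} guarantees that $\mathrm{P}_{\sigma_{S_1}}$ and $\mathrm{P}_{\sigma_{S_2}}$ meet along a spacelike geodesic of $\HP$; Proposition~\ref{hpangle} then forces $\sigma_{S_2}-\sigma_{S_1}$ to be spacelike, and Lemma~\ref{hyperbolic_killing_field} identifies $Y$ as a hyperbolic Killing field.

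The axis of $Y$ is $\mathbb{H}^2\cap(\sigma_{S_2}-\sigma_{S_1})^\perp$, which in the Klein model is the geodesic $\{\eta\in\mathbb{D}^2:\langle(1,\eta),\sigma_{S_2}-\sigma_{S_1}\rangle_{1,2}=0\}$. The support plane inequalities $\langle(1,\eta),\sigma_{S_i}\rangle_{1,2}\leq\phi_X^-(\eta)$ with equality on $S_i$ yield, upon subtraction,
\[
\langle(1,\eta),\sigma_{S_2}-\sigma_{S_1}\rangle_{1,2}\leq 0 \text{ on } S_1,\qquad \langle(1,\eta),\sigma_{S_2}-\sigma_{S_1}\rangle_{1,2}\geq 0 \text{ on } S_2,
\]
which shows that the axis weakly separates $S_1$ and $S_2$.

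The main obstacle is verifying the correct translation sense. By the equivariance of the construction (Remark~\ref{EXequivariant} together with Lemma~\ref{4.7}), we may apply an element of $\mathrm{O}_0(1,2)\ltimes\minko$ so that $\sigma_{S_2}-\sigma_{S_1}=(0,0,b)$ and the axis becomes $\{(x,0):x\in(-1,1)\}$. The sign constraints above become $by\leq 0$ on $S_1$ and $by\geq 0$ on $S_2$, forcing $S_1$ and $S_2$ onto opposite sides of $\{y=0\}$, with which side each occupies determined by the sign of $b$. Comparing with the explicit analysis of Section~\ref{simple}---after, if necessary, composing with the orientation-preserving rotation by $\pi$ in the disk, which swaps $\{y>0\}$ and $\{y<0\}$ and flips the sign of $b$---we match in every case the left simple-earthquake conclusion $b\geq 0$. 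Hence $Y$ translates to the left seen from $S_1$ to $S_2$, completing the check of Definition~\ref{infiearth}.
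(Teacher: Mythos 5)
Your proof is correct and shares the paper's overall skeleton: you build $\lambda^{\pm}$ from the contact sets via Lemma~\ref{structure_of_convex_core}, identify $\mathcal{E}_X^\pm$ on each stratum with a Killing field via Lemma~\ref{kil_in_the_disc}, and use Lemma~\ref{planes_intersect} and Proposition~\ref{hpangle} to see that a nonzero comparison field is a hyperbolic Killing field; your explicit derivation of the weak separation from the two support-plane inequalities is in fact spelled out more cleanly than in the paper. Where you genuinely diverge is the determination of the translation sense. The paper packages the two support vectors into an auxiliary simple earthquake $\mathcal{E}^\ell$ (formula \eqref{simplemedium}), observes it must be left or right, and excludes the right case by a contradiction with Lemma~\ref{left_convex}: a right simple earthquake would have concave piecewise-affine support function $\overline{\phi_\ell}$, incompatible with $\overline{\phi_\ell}\le\phi_X^-$ and the convexity coming from $\mathrm{epi}^+(\phi_X^-)$ unless $v=w$. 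You instead normalize by an isometry so that $\sigma_{S_2}-\sigma_{S_1}=(0,0,b)$, use the sign inequalities to decide which side of the axis each stratum occupies as a function of $\mathrm{sign}(b)$, and match the explicit criterion of Section~\ref{simple} (left iff $b\ge 0$ with $S_1$ below, $S_2$ above), reducing the case $b<0$ by the rotation by $\pi$, which is orientation-preserving and conjugates $\Lambda((0,0,b))$ to $\Lambda((0,0,-b))$ by \eqref{adequivariance}. Both arguments ultimately rest on the model computation of Section~\ref{simple}; yours is more direct (no auxiliary field, no contradiction) and makes transparent that the left/right dichotomy comes precisely from the direction of the support-plane inequality for $\mathrm{epi}^+(\phi_X^-)$ versus $\mathrm{epi}^-(\phi_X^+)$, while the paper's version hides the orientation bookkeeping inside Lemma~\ref{left_convex}. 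Two minor points, both at the same level of precision as the paper: you do not check that the union of the leaves is closed (the paper dispatches this in one sentence using continuity of $\phi_X^\pm$ and closedness of the support-plane condition), and your claim that $\sigma_{S_1}=\sigma_{S_2}$ forces one stratum to lie in the closure of the other is asserted rather than proved — the paper sidesteps the same issue by treating the contact sets $S_X^-(v)$ themselves as strata — so neither treatment is weaker than the other here.
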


\begin{proof}   
First, we need to define the geodesic lamination $\lambda^{\pm}$ associated to $\mathcal{E}_X^{\pm}$. For each $v \in \minko$ such that the spacelike plane $\mathrm{P}_v$ is a support plane of $\mathrm{epi}^{\mp}(\phi_X^\pm)$ at a point of $\mathrm{gr}(\phi_X^\pm|_{\mathbb{D}^2})$, we denote by $S_X^{\pm}(v)$ the set:\begin{equation}\label{strata}
S_X^{\pm}(v)=\{\eta\in\mathbb{D}^2\mid \phi^\pm_X(\eta)=\langle(1,\eta),v\rangle_{1,2} \}.\end{equation} The image of this set under $\phi_X^\pm$ corresponds to $\mathrm{P}_v \cap \mathrm{gr}(\phi_X^\pm|_{\mathbb{D}^2})$. 
Therefore if $X$ is lower semicontinuous, then we define the geodesic lamination $\lambda^{-}$ as the collection of all the connected components of $S_X^{-}(v) \setminus \mathrm{Int}(S_X^{-}(v))$, as $\mathrm{P}_v$ varies over all support planes of $\mathrm{gr}(\phi_X^{\pm}|_{\mathbb{D}^2})$. These collection are in fact geodesic because of item \eqref{item1} of Lemma \ref{structure_of_convex_core}. Similarly if $X$ is upper semicontinuous, we define $\lambda^{+}$ as the collection of all the connected components of $S_X^{+}(v) \setminus \mathrm{Int}(S_X^{+}(v))$, as $\mathrm{P}_v$ varies over all support planes of $\mathrm{gr}(\phi_X^{+}|_{\mathbb{D}^2})$, and it is a collection of geodesic because of item \eqref{item2} of Lemma \ref{structure_of_convex_core}. The closeness of $\lambda^{\pm}$ as a subset of $\mathbb{D}^2$ follows from the continuity of $\phi^{\pm}_X$ in the disk, and the fact that being a support plane is a closed condition.

Having defining the geodesic lamination, we will now verify the earthquake properties. We will provide the proof for $\mathcal{E}_X^-$ as the proof for $\mathcal{E}_X^+$ is analogous.  
By definition of $\mathcal{E}_X^-$ and by Lemma \ref{kil_in_the_disc}, the restriction of $\mathcal{E}_X^-$ to each stratum of $\lambda^-$ is a Killing vector field. So, we only need to verify that the comparison vector field satisfies the necessary properties. Let $S_X^-(v)$ and $S_X^-(w)$ be two strata of $\lambda^-$. Since $\mathrm{P}_v$ and $\mathrm{P}_w$ are support planes of $\mathrm{epi^+}(\phi_X^-)$, then for $\eta \in \mathbb{D}^2$, we have:
\begin{equation}\label{41}
\inner{(1,\eta),v}_{1,2}\leq\phi^-_X(\eta), \ \mathrm{and}\ \inner{(1,\eta),w}_{1,2}\leq\phi^-_X(\eta).
\end{equation}
Let us start with the trivial case where $v=w$. In this case, we clearly have: $$\mathrm{Comp}(S_X^-(v),S_X^-(w))=0.$$ In fact, it is allowed in Definition \ref{infiearth} for the comparison vector field to be zero in this case.

Now we treat the remaining case where $v \neq w$. According to Lemma \ref{planes_intersect}, the planes $\mathrm{P}_v$ and $\mathrm{P}_w$ intersect along a spacelike geodesic in $\HP$. Furthermore, by Proposition \ref{hpangle}, $v-w$ is a spacelike vector in $\minko$. Thus, the orthogonal of $v-w$ defines a geodesic in $\mathbb{H}^2$, which we will denote by $\ell$. 

Consider $S_1$ and $S_2$, the two connected components of $\mathbb{H}^2 \setminus \ell$, such that $S_X^-(v) \subset S_1$ and $S_X^-(w) \subset S_2$. Now, consider the vector field $\mathcal{E}^{\ell}$ defined by: \begin{equation}\label{simplemedium}
    \mathcal{E}^{\ell}(\eta)=\begin{cases}
			\mathrm{d}_{(1,\eta)}\Pi\left((1,\eta)\boxtimes  v\right), & \text{if $\eta\in S_1$}\\
            \mathrm{d}_{(1,\eta)}\Pi\left((1,\eta)\boxtimes  \frac{v+w}{2}\right)&\text{if $\eta\in \ell$}\\
           \mathrm{d}_{(1,\eta)}\Pi\left((1,\eta)\boxtimes  w\right) &\text{if $\eta\in S_2$}
		 \end{cases}\end{equation}
Denote by $\mathrm{Comp}_{\ell}$ the comparison vector field of $\mathcal{E}^{\ell}$, then we have
\begin{equation}\label{42}
    \mathrm{Comp}_{\ell}(S_1,\ell)(\eta)=\mathrm{Comp}_{\ell}(\ell,S_2)(\eta)=\mathrm{d}_{(1,\eta)}\Pi\left((1,\eta)\boxtimes  \frac{w-v}{2}\right).
\end{equation}

\begin{equation}\label{43}
    \mathrm{Comp}_{\ell}(S_1,S_2)(\eta)=\mathrm{d}_{(1,\eta)}\Pi\left((1,\eta)\boxtimes  (w-v)\right).
\end{equation}

Remark that both vector fields in $\eqref{42}$ and $\eqref{43}$ are hyperbolic Killing vector field with axis $\ell$ (see Lemma \ref{hyperbolic_killing_field}). Moreover $\mathrm{Comp}_{\ell}(S_1,\ell)$ translates to the left (resp. right), seen from $S_1$ to $\ell$ if and only if $\mathrm{Comp}_{\ell}(\ell,S_2)$ translates to the left (resp. right), seen from $\ell$ to $S_2$ if and only if $\mathrm{Comp}_{\ell}(S_1,S_2)$ seen from $S_1$ to $S_2$. Therefore we deduce that $\mathcal{E}^{\ell}$ is either a left or a right simple infinitesimal earthquake.
Since the comparison vector $\mathrm{Comp}(S_X^-(v),S_X^-(v))$ coincides with the comparison vector field $\mathrm{Comp}_{\ell}(S_1,S_2)$, then it is enough to prove that $\mathcal{E}^{\ell}$ is a left infinitesimal earthquake to deduce that $\mathcal{E}_X^-$ is a left infinitesimal earthquake.

Consider $\phi_l:\mathbb{S}^1\to\mathbb{R}$ the support function of $\mathcal{E}^{\ell}$ and assume by contradiction that $\mathcal{E}^{\ell}$ is a right infinitesimal earthquake, then by Lemma \ref{left_convex} the function $\overline{\phi_l} : \mathbb{D}^2 \to \mathbb{R}$ defined by:

$$\overline{\phi_l}(\eta)=\begin{cases}
			\inner{(1,\eta),v}_{1,2}, & \text{if $\eta\in S_1\cup \ell$}\\
           \inner{(1,\eta),w}_{1,2} &\text{if $\eta\in S_2$}
		 \end{cases}$$
is a concave function. However, from equation \eqref{41}, $\overline{\phi_{\ell}}\leq \phi_X^-$, thus: 
 $$\mathrm{epi^+}(\phi_X^-)\subset\mathrm{epi^+}(\overline{\phi_l})$$
As $\phi_X^-$ is convex, the epigraph of $\phi_X^-$ is also convex. Consequently, the epigraph of $\overline{\phi_l}$ must also be convex. In the end $\overline{\phi}_{\ell}$ is convex and concave, and thus from the formula of $\overline{\phi_{\ell}}$, the vector $v$ should be equal to $w$, and this is a contradiction. Thus $\mathcal{E}^{\ell}$ is a left infinitesimal earthquake and hence $\mathrm{Comp}(S_X^-(v),S_X^-(v))$ translates to the left seen from $S_X^-(v)$ to $S_X^-(w)$, this concludes the proof.
\end{proof}
The proof of Theorem \ref{THrad} is then completed since it  follows from Propositions \ref{extension} and \ref{earthquake_properties}.

\subsection{Uniqueness of infinitesimal earthquake extension}\label{unique}
In this section, we will show that the vector fields $\mathcal{E}_X^{+}$ and $\mathcal{E}_X^-$ constructed in \eqref{infearth_convexhull} are is some sense the unique way to get left and right infinitesimal earthquakes on $\mathbb{D}^2$ that extend to $X$. More precisely, we have: 

\begin{prop}\label{uniqness_of_earthquake}
Let $X$ be a continuous vector field of $\mathbb{S}^1$. Let $\mathcal{E}_X^{\pm}$ be the infinitesimal earthquake along the geodesic lamination $\lambda^{\pm}$ extending to $X.$
Assume that there exists another left (resp. right) infinitesimal earthquake $\mathcal{E}_1^-$ (resp. $\mathcal{E}_1^+$) which extends continuously to $X$. Denote by $\lambda_1^{\pm}$ the geodesic lamination associated to $\mathcal{E}_1^{\pm}$. Then 
\begin{itemize}
    \item The geodesic lamination $\lambda_1^{\pm}$ coincides with $\lambda^{\pm}.$
    \item The vector field $\mathcal{E}_1^{\pm}$ coincides with $\mathcal{E}_X^{\pm}$ in all gaps of $\lambda^{\pm}$ and in all leaves of $\lambda^{\pm}$ where $\mathcal{E}_X^{\pm}$ is continuous.
\end{itemize}

\end{prop}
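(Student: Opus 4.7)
The plan is to compare the two left infinitesimal earthquakes $\mathcal{E}_X^-$ and $\mathcal{E}_1^-$ stratum by stratum through their shearing data; I carry out the left case, the right being analogous. For each stratum $S$ of $\lambda^-$ (resp.\ $\lambda_1^-$) write $\mathcal{E}_X^-\vert_S=\Lambda(u_S)$ (resp.\ $\mathcal{E}_1^-\vert_S=\Lambda(v_S)$) with $u_S,v_S\in\minko$. The key boundary observation is that if $z\in\overline{S}\cap\mathbb{S}^1$ and $\eta_n\in S$ converges to $z$, then by Lemma~\ref{kil_in_the_disc} and Corollary~\ref{support_function_of_killing} the Killing field $\Lambda(v_S)(\eta_n)$ converges in the Klein model to $iz\,\langle(1,z),v_S\rangle_{1,2}$, while by continuity of the extension $\mathcal{E}_1^-(\eta_n)\to X(z)=iz\,\phi_X(z)$. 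Therefore $\langle(1,z),v_S\rangle_{1,2}=\phi_X(z)$ for every $z\in\overline{S}\cap\mathbb{S}^1$, and the analogous identity holds for each $u_S$.

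The central step is to prove $\mathcal{E}_1^-=\mathcal{E}_X^-$ on every gap of $\lambda^-$. Fix such a gap $G$ with $\mathcal{E}_X^-\vert_G=\Lambda(u_G)$. For any two-dimensional stratum $S'$ of $\lambda_1^-$ contained in $G$, the inclusion $\overline{S'}\cap\mathbb{S}^1\subseteq\overline{G}\cap\mathbb{S}^1$ combined with the boundary observation forces the affine functions $z\mapsto\langle(1,z),v_{S'}\rangle_{1,2}$ and $z\mapsto\langle(1,z),u_G\rangle_{1,2}$ to coincide on $\overline{S'}\cap\mathbb{S}^1$. Any two-dimensional gap of a geodesic lamination in the Klein disk contains at least three non-collinear ideal points in its boundary on $\mathbb{S}^1$ (either an arc, or at least three distinct ideal vertices, non-collinear because a line meets $\mathbb{S}^1$ in at most two points); since an affine function on $\mathbb{R}^2$ is determined by three non-collinear values and the map $v\mapsto\langle(1,\cdot),v\rangle_{1,2}$ is a bijection from $\minko$ onto affine functions on $\mathbb{R}^2$, I conclude $v_{S'}=u_G$. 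For a one-dimensional leaf $\ell'$ of $\lambda_1^-$ inside $G$ with two-dimensional $\lambda_1^-$-neighbors, the ordering property for simple left infinitesimal earthquakes extracted in Section~\ref{simple} (the inequality $0\leq a\leq b$ for the Killing vectors across a leaf) forces $v_{\ell'}=u_G$ as well; in the remaining case where the $\lambda_1^-$-neighbors of $\ell'$ are themselves leaves, a limit of this argument propagates the equality. Hence $\mathcal{E}_1^-=\Lambda(u_G)=\mathcal{E}_X^-$ throughout $G$.

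From the central step the inclusion $\vert\lambda_1^-\vert\subseteq\vert\lambda^-\vert$ is immediate, since any non-trivial bending of $\mathcal{E}_1^-$ inside a $\lambda^-$-gap would contradict the equality just shown. For the reverse, if $\ell$ is a bending leaf of $\lambda^-$ (so the adjacent $\lambda^-$-gaps $G_1,G_2$ satisfy $u_{G_1}\neq u_{G_2}$) but $\ell\notin\vert\lambda_1^-\vert$, then $\ell$ lies in a single $\lambda_1^-$-gap $G'\supseteq G_1\cup G_2$, and applying the central step to $G_1,G_2\subseteq G'$ yields $u_{G_1}=v_{G'}=u_{G_2}$, a contradiction; hence $\lambda_1^-=\lambda^-$. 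On a leaf $\ell$ of $\lambda^-$ where $\mathcal{E}_X^-$ is continuous, both adjacent $\lambda^-$-gaps share the same $u$ and the ordering property from Section~\ref{simple} again forces $v_\ell=u$, so $\mathcal{E}_1^-\vert_\ell=\mathcal{E}_X^-\vert_\ell$. The main obstacle I anticipate is the rigorous handling of one-dimensional leaves of $\lambda_1^-$ inside a $\lambda^-$-gap whose $\lambda_1^-$-neighbors are themselves leaves; its resolution needs a careful limit argument invoking the simple infinitesimal earthquake analysis of Section~\ref{simple} to carry the equality $v=u_G$ from two-dimensional strata to all strata in $G$.
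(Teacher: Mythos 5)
Your central step is where the genuine gap lies. You only compare $v_{S'}$ with $u_G$ for strata $S'$ of $\lambda_1^-$ that are \emph{entirely contained} in the gap $G$ of $\lambda^-$. But a priori nothing prevents a gap or leaf of $\lambda_1^-$ from crossing leaves of $\lambda^-$ transversally and meeting $G$ only partially; ruling this out is precisely the content of the first bullet of the proposition, so you cannot assume it. In that overlapping configuration your boundary observation only tells you that $z\mapsto\langle(1,z),v_{S'}\rangle_{1,2}$ agrees with $\phi_X$ on $\overline{S'}\cap\mathbb{S}^1$, a set which need not contain three non-collinear points of $\overline{G}\cap\mathbb{S}^1$ (it may not meet it at all), so you cannot conclude $v_{S'}=u_G$, and the points of $S'\cap G$ are covered by none of your cases. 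What is missing is a \emph{global} comparison: in the paper one subtracts the Killing field $\mathrm{K}_S=\Lambda(v)$ of $\mathcal{E}_1^-$ on a stratum $S$ and proves, using Lemma~\ref{2.27.} applied to the comparison with every other stratum together with a density/continuity argument, that $\phi_X(z)\geq\langle(1,z),v\rangle_{1,2}$ for \emph{all} $z\in\mathbb{S}^1$; then property $(\mathrm{P}4)$ and Lemma~\ref{structure_of_convex_core} upgrade this to the statement that $\mathrm{P}_v$ is a support plane and $S\subset S_X^-(v)$, i.e.\ every stratum of $\lambda_1^-$ sits inside a face of $\lambda^-$. Your equality-on-the-ideal-boundary-of-$S'$ argument is strictly weaker than this support-plane inequality and does not yield the stratum inclusion on which everything else rests.

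A second, related family of problems: your argument presupposes that leaves have two-dimensional neighbors. Geodesic laminations of $\mathbb{H}^2$ may foliate open regions (for instance all geodesics orthogonal to a fixed geodesic), so a leaf of $\lambda_1^-$ or of $\lambda^-$ need not bound any gap; the case you defer ("a limit of this argument propagates the equality") is not a marginal case and is left unproved. In the reverse inclusion you assume a bending leaf $\ell$ of $\lambda^-$ has adjacent gaps $G_1,G_2$, and you assert that $\ell\notin\vert\lambda_1^-\vert$ forces $\ell$ to lie in a single $\lambda_1^-$-gap $G'\supseteq G_1\cup G_2$: neither implication holds ($\ell$ can cross leaves of $\lambda_1^-$ even if it is not one of them, and even when $\ell$ lies in a gap $G'$ there is no reason for $G'$ to contain $G_1\cup G_2$). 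Finally, on a leaf of $\lambda^-$ where $\mathcal{E}_X^-$ is continuous, "both adjacent gaps share the same $u$" cannot occur for the bending lamination of the convex hull (two gaps with the same Killing vector lie in a common face, so $\ell$ would not be a leaf); continuity leaves are typically limits of leaves with no adjacent gaps, and the correct treatment is the paper's: continuity of $\mathcal{E}_X^-$ at $\eta\in\ell$ is equivalent to uniqueness of the support plane at $(\eta,\phi_X^-(\eta))$ (Lemma~\ref{carachterisation_conti_bending}), which combined with the stratum inclusion forces $\mathcal{E}_1^-=\mathcal{E}_X^-$ there.
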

The next lemma characterizes the continuity of $\mathcal{E}_X^{\pm}$ in terms of the uniqueness of support planes.
\begin{lemma}\label{carachterisation_conti_bending}
    Let $X$ be a continuous vector field and $\phi_X$ be its support function. We take $\eta\in \mathbb{D}^2$, then $\mathcal{E}^{\pm}_X$ is continuous on $\eta$ if and only if there is a unique support plane of $\mathrm{epi}^{\mp}(\phi_X^{\pm})$ at $(\eta,\phi_X^{\pm}(\eta)).$ 
\end{lemma}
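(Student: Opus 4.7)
I prove both implications using the correspondence between spacelike support planes of $\mathrm{epi}^{\mp}(\phi_X^{\pm})$ at $(\eta,\phi_X^{\pm}(\eta))$ and elements of the sub/superdifferential of the convex/concave function $\phi_X^{\pm}$ at $\eta$, together with the formula $\mathcal{E}_X^{\pm}(\eta_n)=\mathrm{d}_{(1,\eta_n)}\Pi((1,\eta_n)\boxtimes \tau_n)=\Lambda(\tau_n)(\eta_n)$, where $\tau_n\in\minko$ parametrises the chosen support plane at $(\eta_n,\phi_X^{\pm}(\eta_n))$. I will carry out the case of $\mathcal{E}_X^{-}$; the case of $\mathcal{E}_X^{+}$ is entirely analogous.

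\textbf{Uniqueness implies continuity.} Let $\mathrm{P}_\sigma$ be the unique support plane at $(\eta,\phi_X^-(\eta))$, and take $\eta_n\to\eta$ in $\mathbb{D}^2$. I first show $\{\tau_n\}$ is bounded in $\minko$: if $|\tau_n|\to\infty$, then $\hat\tau_n=\tau_n/|\tau_n|$ subconverges to some $\hat\tau$ of unit Euclidean norm, and dividing the support inequality $\langle(1,\zeta),\tau_n\rangle_{1,2}\le\phi_X^-(\zeta)$ by $|\tau_n|$ and passing to the limit (using continuity of $\phi_X^-$ on $\overline{\mathbb{D}^2}$, property $(\mathrm{P}3)$) gives $\langle(1,\zeta),\hat\tau\rangle_{1,2}\le 0$ on $\overline{\mathbb{D}^2}$, with equality at $\zeta=\eta$ coming from the contact equality $\langle(1,\eta_n),\tau_n\rangle_{1,2}=\phi_X^-(\eta_n)$. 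An affine function that is nonpositive on $\overline{\mathbb{D}^2}$ and attains $0$ at the interior point $\eta$ vanishes identically, forcing $\hat\tau=0$: a contradiction. Any subsequential limit $\tau$ of $\tau_n$ then defines a support plane at $(\eta,\phi_X^-(\eta))$ (support plane is a closed condition), so $\tau=\sigma$ by uniqueness; hence $\tau_n\to\sigma$, and by continuity of $\boxtimes$ and of $\mathrm{d}\Pi$ we conclude $\mathcal{E}_X^-(\eta_n)\to\Lambda(\sigma)(\eta)=\mathcal{E}_X^-(\eta)$.

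\textbf{Non-uniqueness implies discontinuity.} By Lemma~\ref{extremelemma} the set of support planes at $(\eta,\phi_X^-(\eta))$ corresponds to a nontrivial spacelike segment $[\sigma_1,\sigma_2]\subset\minko$, and by construction $\mathcal{E}_X^-(\eta)=\Lambda(\sigma_m)(\eta)$ with $\sigma_m=(\sigma_1+\sigma_2)/2$. Identifying the set of support planes at $(\eta',\phi_X^-(\eta'))$ with the subdifferential of the convex function $\phi_X^-$ at $\eta'$, I invoke the classical convex-analytic fact that each extreme point of the subdifferential is a limit of gradients at nearby differentiability points. This yields a sequence $\eta_n\to\eta$ at which $\phi_X^-$ is differentiable (so the support plane $\mathrm{P}_{\tau_n}$ is unique) with $\tau_n\to\sigma_1$. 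Then $\mathcal{E}_X^-(\eta_n)\to\Lambda(\sigma_1)(\eta)$, whereas $\mathcal{E}_X^-(\eta)=\Lambda(\sigma_m)(\eta)$; their difference equals $\tfrac{1}{2}\Lambda(\sigma_1-\sigma_2)(\eta)$. By Proposition~\ref{hpangle} the vector $\sigma_1-\sigma_2$ is spacelike and nonzero, so $\Lambda(\sigma_1-\sigma_2)$ is a hyperbolic Killing field; by Lemma~\ref{hyperbolic_killing_field} its flow is a one-parameter group of hyperbolic isometries translating along its axis, which has no fixed point in $\mathbb{H}^2$. Consequently $\Lambda(\sigma_1-\sigma_2)(\eta)\neq 0$ and $\mathcal{E}_X^-(\eta_n)\not\to\mathcal{E}_X^-(\eta)$.

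\textbf{Main obstacle.} The delicate step is producing, in the second direction, a sequence along which the selected support plane converges to the extreme $\sigma_1$ rather than to the medial choice $\sigma_m$. The convex-analytic input above is the cleanest route; it can alternatively be replaced by a geometric argument in which one approaches $\eta$ through $\mathbb{D}^2\setminus\mathcal{L}$ on a single side of the bending line $\mathcal{L}$ and verifies that the (generically unique) support planes on that side subconverge to the extreme support plane adjacent to that side. Boundedness of $\tau_n$ near the interior point $\eta$, established via the affine maximum-principle argument above, is the main technical ingredient shared by both directions.
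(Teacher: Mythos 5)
Your proof is correct. The first implication (uniqueness of the support plane implies continuity) is essentially the paper's argument: extract a convergent subsequence of support planes at $(\eta_n,\phi_X^-(\eta_n))$, use that being a support plane is a closed condition, and conclude by uniqueness; your normalization argument showing the dual vectors $\tau_n$ stay bounded just makes explicit a compactness point the paper leaves implicit (at an interior point the limit plane cannot be vertical). The second implication is where you genuinely diverge. The paper argues by contradiction via the Epstein--Marden statement on limits of support planes \cite[II.1.7.3]{Epsmarden}: approaching the bending line from the left (resp.\ right) forces the support planes to converge to the extreme plane $\mathrm{P}_v$ (resp.\ $\mathrm{P}_w$), so the two one-sided limits of $\mathcal{E}_X^-$ are $\Lambda(v)(\eta)$ and $\Lambda(w)(\eta)$, which differ since $(1,\eta)\boxtimes(v-w)\neq 0$. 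You instead identify the support planes at $(\eta,\phi_X^-(\eta))$ with the subdifferential of $\phi_X^-$ at $\eta$ and invoke the classical fact (Rockafellar's Theorem 25.6 together with Milman's theorem, cf.\ \cite{Rockconvex}) that extreme points of the subdifferential are limits of gradients at nearby differentiability points; this yields a sequence along which $\mathcal{E}_X^-$ tends to $\Lambda(\sigma_1)(\eta)$, differing from the medial value $\Lambda(\sigma_m)(\eta)$ by $\tfrac12\Lambda(\sigma_1-\sigma_2)(\eta)\neq 0$, the nonvanishing of a hyperbolic Killing field on $\mathbb{H}^2$ (equivalent to the paper's orthogonality computation, via Lemma~\ref{extremelemma} and Proposition~\ref{hpangle}). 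Your route trades the geometric Epstein--Marden input for standard convex analysis, so it is more self-contained; the paper's route is slightly stronger in that it shows the two one-sided limits themselves disagree, hence $\mathcal{E}_X^-$ would be discontinuous at $\eta$ for \emph{any} choice of support plane there, whereas your argument as written leans on the medial convention (if an extreme plane had been the chosen value at $\eta$, you would need to repeat the argument with the other endpoint of the subdifferential segment, which is immediate since both endpoints are limits of gradients).
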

\begin{proof}
We will focus on proving the Lemma for the left infinitesimal earthquake $\mathcal{E}_X^-$ since the proof for the right infinitesimal earthquake $\mathcal{E}_X^+$ is analogous. Let $\eta\in \mathbb{D}^2$ and assume that there is a unique support plane $\mathrm{P}$ of $\mathrm{epi}^{\mp}(\phi_X^{\pm})$ at $(\eta,\phi_X(\eta))$, we will prove that the left infinitesimal earthquake $\mathcal{E}_X^{-}$ is continuous on $\eta.$ Let $\eta_n$ be a sequence of points converging to $\eta.$ Let $\sigma_n\in \minko$ be such that $\mathrm{P}_{\sigma_n}$ is a support plane at $(\eta_n,\phi_X(\eta_n))$ and 
$$\mathcal{E}_{X}^+(\eta_n)= \mathrm{d}_{(1,\eta)}\Pi\left((1,\eta)\boxtimes  \sigma_n\right).$$ Up to subsequence, the plane $\mathrm{P}_{\sigma_n}$ converges to a plane $\mathrm{Q}$ which is a support plane at $(\eta,\phi_X(\eta))$, and so by the uniqueness of support plane $\mathrm{Q}=\mathrm{P}$ and hence $\mathcal{E}_X^-(\eta_n)\to\mathcal{E}_X^-(\eta).$

Now, let's prove the converse statement. By contradiction, assume that $\mathcal{E}_X^-$ is continuous at $\eta$, and there exist several support planes at $(\eta,\phi_X^-(\eta))$. Therefore, the point $(\eta,\phi_X^-(\eta))$ must be on a bending line $\mathcal{L}$. We take $\mathrm{P}_v$ and $\mathrm{P}_w$ the two extreme support planes at $(\eta,\phi_X^-(\eta))$, where $v$ and $w$ are different vectors in $\minko$, and $\mathrm{P}_v\cap\mathrm{P}_w=\mathcal{L}$. Denote by $\ell$ the projection of $\mathcal{L}$ onto $\mathbb{D}^2$, namely:
\begin{equation}\label{elll}
    \ell=\{ \eta\in\mathbb{D}^2\mid \inner{(1,\eta),v}_{1,2}=\inner{(1,\eta),w}_{1,2}  \}.
\end{equation}
We consider $S_X^-(v)$ and $S_X^-(w)$ the sets defined in \eqref{strata}. Now, orient the geodesic $\ell$ and assume that $S_X^-(v)$ is to the left of $\ell$ while $S_X^-(w)$ is to the right of $\ell$. According to \cite[II.1.7.3, Limits of support planes]{Epsmarden}, we have the following:
\begin{enumerate}
    \item \label{item11} If $\eta_n$ is a sequence converging to $\eta$ from the left, and $\mathrm{P}_n$ is a support plane at $(\eta,\phi_X^-(\eta_n))$. Then $\mathrm{P}_n$ converges to $\mathrm{P}_v$, see Figure \ref{continuityleft}. 
    \item \label{item22} Similarly, if $\eta_n$ is a sequence converging to $\eta$ from the right, and $\mathrm{P}_n$ is a support plane at $(\eta,\phi_X^-(\eta_n))$. Then $\mathrm{P}_n$ converges to $\mathrm{P}_w$.
\end{enumerate}
\begin{figure}[h]
\centering
\includegraphics[width=.4\textwidth]{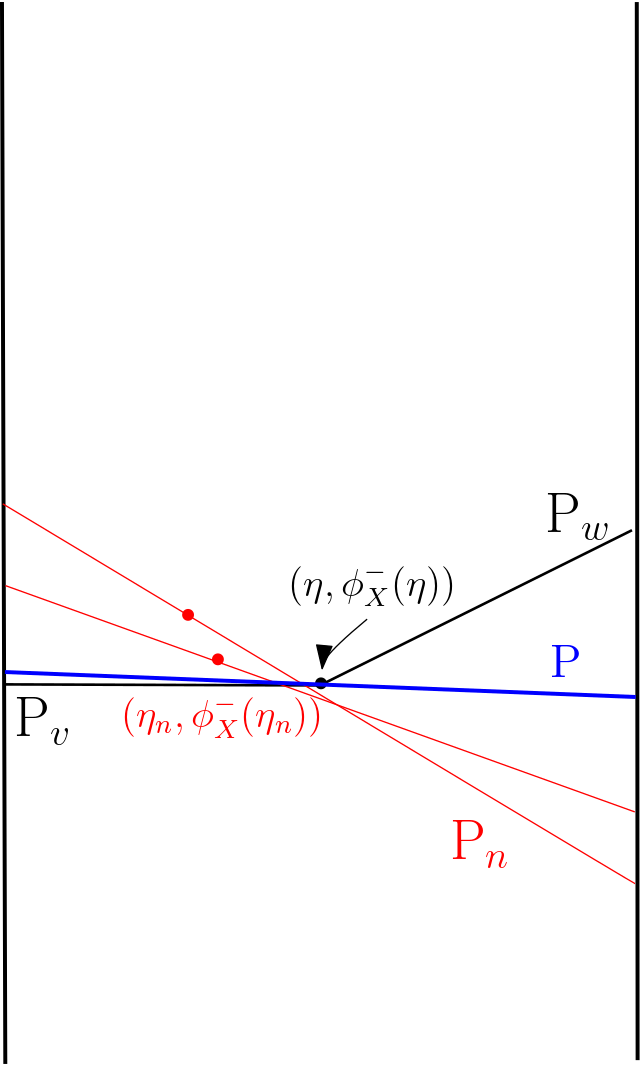}
\caption{A two dimensional picture illustrating the result used. If $\eta_n$ is in the left of $\ell$, (In this two dimensional picture $\ell$ corresponds to the point $(\eta,\phi_X^-(\eta)))$, then from the configuration, the limit of $\mathrm{P}_n$ is a support plane $\mathrm{P}$ at $(\eta,\phi_X^-(\eta))$ such that the left half of $\mathrm{P}$ containing $(\eta,\phi_X^-(\eta))$ is above $\mathrm{P}_v$, and this contradicts the extremality of $\mathrm{P}_v$.}\label{continuityleft}
\end{figure}

Now, we will apply this fact to find a contradiction of the continuity of $\mathcal{E}_X^-$. Take $\eta_n^1$ as a sequence to the left of $\ell$ converging to $\eta$. By the first item, we have:
\begin{equation}\label{contileft}
\mathcal{E}_X^-(\eta_n^1)\to \mathrm{d}_{(1,\eta)}\Pi\left((1,\eta)\boxtimes v\right).
\end{equation}
Similarly, if we take a sequence $\eta_n^2$ converging to $\eta$ from the right, then by the second item, we get:
\begin{equation}\label{continuright}
\mathcal{E}_X^-(\eta_n^2)\to \mathrm{d}_{(1,\eta)}\Pi\left((1,\eta)\boxtimes w\right).
\end{equation}
Hence, for the continuity of $\mathcal{E}_X^-$, we would require:
$$\mathrm{d}_{(1,\eta)}\Pi\left((1,\eta)\boxtimes  (v-w)\right)=0.$$ However, this leads to a contradiction, as it implies that $(1,\eta)\boxtimes (v-w)=0$. This is not possible unless $v=w$. Indeed since $\eta\in\ell$, then from \eqref{elll} $(1,\eta)$ is orthogonal to $v-w$ and, therefore $(1,\eta)\boxtimes (v-w)\neq 0$.

\end{proof}

\begin{proof}[Proof of Proposition \ref{uniqness_of_earthquake}] The proof is inspired from \cite{Thurston}, and the reader can also refer to \cite{mareike}. We treat the case of left infinitesimal earthquakes as the case of right infinitesimal earthquakes can be treated by the same way. First let us prove that the two geodesic laminations $\lambda^-$ and $\lambda^-_1$ are equal. Let $S$ be a stratum for $\lambda_1^-$. Since $\mathcal{E}_1^-$ is a left infinitesimal earthquake, there exists a Killing vector field $\mathrm{K}_{S}$ on $\mathbb{H}^2$ such that $\mathcal{E}^-_{1}|_{S}=\mathrm{K}_{S}$. Let us consider $\mathcal{Y}=\mathcal{E}^-_1-\mathrm{K}_S$. Then $\mathcal{Y}$ is a left infinitesimal earthquake that extends continuously to $Y=X-\mathrm{K}_S$ on $\mathbb{S}^1$. Furthermore, by definition, $\mathcal{Y}$ vanishes on the stratum $S$.

In terms of functions, if $\phi_X$ is the support function of $X$, then the support function of $Y$ is given by: 
$$\phi_Y(z)=\phi_X(z)-\inner{(1,z),v}_{1,2},$$ where $\mathrm{K}_S=\Lambda(v)$ (see Corollary \ref{support_function_of_killing}). We now claim that for all $z\in\mathbb{S}^1,$
\begin{equation}\label{equationextremleft}
\phi_{Y}(z)\geq 0.
\end{equation}
Denote by $\overline{S}$ the closure of $S$ in $\overline{\mathbb{D}^2}$, then $\phi_Y$ vanishes at all points contained in $\overline{S}\cap\mathbb{S}^1$, making the inequality \eqref{equationextremleft} trivially satisfied. For $x\in \mathbb{S}^1\setminus\overline{S}$, we distinguish two cases:
The first case is when $x$ is contained in the closure of some other stratum $S'$ of $\lambda_1^-$, different from $S$. Consider $\mathrm{K}_{S'}$ to be a Killing vector field of $\mathbb{H}^2$ such that $\mathcal{E}^-_{1}|_{S'}=\mathrm{K}_{S'}$. Then,
$$Y(x)=\mathcal{E}_1(x)-\mathrm{K}_{S}(x)=\mathrm{K}_{S'}(x)-\mathrm{K}_{S}(x)=\mathrm{Comp}_1(S,S')(x),$$ where $\mathrm{Comp}_1$ is the comparison vector field of the infinitesimal earthquake $\mathcal{E}_1$. Since $x\in \overline{S'}$ then by Lemma \ref{2.27.}, $\phi_Y(x)\geq 0$.

The second case is when $x$ is not contained in the closure of any stratum. In that case, there exists a sequence $(x_n)_{n\in\mathbb{N}}\in \mathbb{S}^1$ converging to $x$, with $x_n\in\ell_n$ for a leaf $\ell_n$ of $\lambda^-_1$ for all $n\in\mathbb{N}$, thus by continuity of $\phi_Y$, we have
$$\phi_Y(x)=\lim_{n \to +\infty} \phi_Y(x_n).$$ As previously shown, $\phi_Y(x_n)\geq 0$, and thus we get $\phi_Y(x)\geq 0$. This concludes the proof of the claim. So we have proved that: 
$$\phi_X(z)\geq \inner{(1,z),v}_{1,2},\ \mathrm{for}\ \mathrm{all}\ z\in \mathbb{S}^1;$$
according to the properties of the function $\phi_X^-$ (see Property $(\mathrm{P}4)$ in Section \ref{P4}), we have
\begin{equation}\label{supportplane_in_uniq_thm}
    \phi_X^-(\eta)\geq \inner{(1,\eta),v}_{1,2},\ \mathrm{for}\ \mathrm{all}\ \eta\in \mathbb{D}^2.
\end{equation}
Denote by $\partial S=\overline{S}\setminus S\subset \mathbb{S}^1$, then we have 
$$\partial S\subset \{\eta\in\mathbb{S}^1\mid \phi_X^-(\eta)=\langle(1,\eta),v\rangle_{1,2} \}.$$ From the definition of a stratum, the closure $\overline{S}$ of $S$ is compact, connected, then $\overline{S}$ is equal to the convex hull of $\partial S.$ Therefore, by Lemma \ref{structure_of_convex_core}, we deduce that: 
$$\overline{S}\subset \{\eta\in\overline{\mathbb{D}^2}\mid \phi_X^-(\eta)=\langle(1,\eta),v\rangle_{1,2} \}.$$
In conclusion, we have:
\begin{equation}\label{stratum_inclusion}
S\subset S_{X}^-(v).
\end{equation} where
 $$S_{X}^-(v)=\{\eta\in\mathbb{D}^2\mid \phi_X^-(\eta)=\langle(1,\eta),v\rangle_{1,2} \}.$$
In particular, the last inclusion \eqref{stratum_inclusion} implies that we have equality in equation \eqref{supportplane_in_uniq_thm} for points contained in $S$. As a consequence, $\mathrm{P}_v$ is a support plane of $\mathrm{epi}^+(\phi_X^-)$ at points of $S$ and so any stratum of $\lambda_1^-$ is contained in a stratum of $\lambda^-$.

Now, observe that if $S$ is a gap, then $S_{X}^-(v)$ has non empty interior and so for each point $\eta\in S$, the plane $\mathrm{P}_v$ is the unique support plane at $(\eta,\phi_X^-(\eta))$, and so by construction, $\mathcal{E}_X^-(\eta)=\Lambda(v)(\eta)=\mathrm{K}_S(\eta)$ for all $\eta\in S$, hence $\mathcal{E}_X^-$ and $\mathcal{E}_1^-$ coincide on any gap of $\lambda_1^-$.

We claim now that $\lambda^-=\lambda_1^-$. To prove this, remark that if $S$ and $T$ are two gaps of $\lambda_1^-$ contained in $S_{X}^-(v)$, then since $\mathcal{E}_X^-$ and $\mathcal{E}_1^-$ coincide on any gap of $\lambda_1^-$, we have,
$$\mathcal{E}^-_{1}|_{S}=\mathcal{E}^-_{1}|_{T}$$
and hence $\mathrm{Comp}_1(S,T)=\mathrm{Id}$. This can only occur if one stratum is contained in the closure of the other. As both $S$ and $T$ are gaps, it follows that $S= T$. So for each $v\in\minko$ such that $\mathrm{P}_v$ is a support plane of $\mathrm{epi}^+(\phi_X^-)$, the set $S_X^-(v)$ contains at most one gap of $\lambda^-_1$ and any gap of $\lambda_1^-$ is contained in a unique $S_X^-(v)$, therefore the fact that the strata of $\lambda_1^-$ cover all of $\mathbb{D}^2$ then gives us $\lambda_1^-=\lambda^-$.

Having showing that $\lambda_1^-=\lambda^-$, we will now prove the second statement. Note that we have already proved that $\mathcal{E}_1^-$ and $\mathcal{E}_X^-$ coincide on all gaps of $\lambda^-.$ Now will show that they coincide on leaves where $\mathcal{E}_X^-$ is continuous. Consider such leaf that we denote by $S$, then since $S$ is a particular stratum, then we have seen from \eqref{stratum_inclusion} that if $\mathcal{E}^-_1=\Lambda(v)$ on $S$, then 
$\mathrm{P}_v$ is a support plane of $\mathrm{epi}^+(\phi_X^-)$. Now by Lemma \ref{carachterisation_conti_bending}, such support plane is unique, and hence by definition, we have $\mathcal{E}_X^-=\Lambda(v)$, and so $\mathcal{E}_X^-$ and $\mathcal{E}_1^-$ coincide. This concludes the proof.
\end{proof}
The proof of Theorem \ref{TH1} is thus completed.
\subsection{Equivariant infinitesimal earthquakes}\label{equiva_inf_earth}
Let $\Sigma_{g,n}$ be a connected oriented surface of hyperbolic type with genus $g\geq 0$ and $n\geq0$ punctures. We denote by $\pi_1(\Sigma_{g,n})$ the fundamental group of $\Sigma_{g,n}$. In this section, we will see how to associate, to each infinitesimal deformation of a complete hyperbolic metric on $\Sigma_{g,n}$, an equivariant infinitesimal earthquake in a suitable sense. For this purpose, we need to recall some definitions.  A representation $\rho:\pi_1(\Sigma_{g,n})\to\isom(\mathbb{H}^2)$ is \textit{Fuchsian} if $\rho$ is discrete and faithful and which maps loops around punctures to parabolics isometries of $\mathbb{H}^2$. The \textit{Teichmüller space} $\mathcal{T}(\Sigma_{g,n})$ of $\Sigma_{g,n}$ is the space of Fuchsian representations up to conjugacy.
\begin{defi}
   Let $\rho: \pi_1(\Sigma_{g,n})\to\isom(\mathbb{H}^2)$ be a Fuchsian representation. Then we say that map $\tau:\pi_1(\Sigma_{g,n})\to\mathfrak{isom}(\mathbb{H}^2)$ is a cocyle for $\rho$ if $\tau$ satisfies the following condition:  \begin{equation}\label{cocylecondition}
       \tau(\alpha\beta)=\tau(\alpha)+\mathrm{Ad}\rho(\alpha)\cdot\tau(\beta),\end{equation}
for all $\alpha$, $\beta\in \pi_1(\Sigma_{g,n})$. Moreover, we say that $\tau$ is \textit{admissible} if for every loop $\alpha_i$ around a puncture, there exists $\mathfrak{a_i}\in \mathfrak{isom}(\mathbb{H}^2)$ such that: $$\tau(\alpha_i)=\mathrm{Ad}(\rho(\alpha_i))\cdot\mathfrak{a_i}-\mathfrak{a_i}.$$
\end{defi}
Furthermore, we define:
\begin{itemize}
    \item $\mathrm{Z}^1_{\mathrm{Ad}\rho}(\pi_1(\Sigma_{g,n}), \mathfrak{isom}(\mathbb{H}^2))$: the space of admissible cocycles.
\item $\mathrm{B}^1_{\mathrm{Ad}\rho}(\pi_1(\Sigma_{g,n}), \mathfrak{isom}(\mathbb{H}^2))$: the space of \textit{coboundaries} for $\rho$. These are admissible cocycles $\tau:\pi_1(\Sigma_{g,n})\to \mathfrak{isom}(\mathbb{H}^2)$ of the form: $$\tau(\alpha)=\mathrm{Ad}(\rho(\alpha))\cdot\mathfrak{a}-\mathfrak{a},$$ for all $\alpha\in \pi_1(\Sigma_{g,n})$.
\item We then define the vector space $\mathrm{H}^1_{\mathrm{Ad}\rho}(\pi_1(\Sigma_{g,n}),\mathfrak{isom}(\mathbb{H}^2))$ as the quotient 
$$\mathrm{H}^1_{\mathrm{Ad}\rho}(\pi_1(\Sigma_{g,n}),\mathfrak{isom}(\mathbb{H}^2))=\frac{\mathrm{Z}^1_{\mathrm{Ad}\rho}(\pi_1(\Sigma_{g,n}),\mathfrak{isom}(\mathbb{H}^2))}{\mathrm{B}^1_{\mathrm{Ad}\rho}(\pi_1(\Sigma_{g,n}),\mathfrak{isom}(\mathbb{H}^2))}.$$
 \end{itemize}
In fact, an admissible cocycle should be thought of as a derivative of a smooth path of Fuchsian representations. If $t \to \rho_t$ is such a smooth path with $\rho_0 = \rho$, then one can check that:
\begin{equation}\label{derivative}
\tau(\alpha) = \frac{d}{dt}|_{t=0} \rho_t(\alpha) \rho(\alpha)^{-1},
\end{equation}
satisfies the cocycle condition \eqref{cocylecondition}. Now, since there is only one conjugacy class of parabolic isometries in $\mathrm{Isom}(\mathbb{H}^2)$, then for each loop $\alpha_i$ around a puncture, there exists a smooth family of isometries $g_t^i$ such that $\rho_t(\alpha_i) = (g_t^i)^{-1} \rho(\alpha_i) g_t^i$ for some smooth path $g_t^i$ of isometries of $\mathbb{H}^2$. Taking the derivative in \eqref{derivative}, we get:
$$\tau(\alpha_i) = \mathrm{Ad}(\rho(\alpha_i)) \cdot \mathfrak{a_i} - \mathfrak{a_i},$$ where $\mathfrak{a_i}$ is the tangent vector to $g_t^i$ at $t=0$ and so $\tau$ is an admissible cocycle.
A celebrated theorem of Goldman \cite{Goldman_tangent_space} asserts that, when $\Sigma_{g,n}$ is closed (i.e, $n=0$), the tangent space of $\mathcal{T}(\Sigma_{g,n})$ at $[\rho]$ is identified with $\mathrm{H}^1_{\mathrm{Ad}\rho}(\pi_1(\Sigma),\mathfrak{isom}(\mathbb{H}^2))$. See \cite[Section 4.2]{arnaud} for a discussion of Goldman's Theorem in the case of punctured surfaces.

Following \cite{DGK_complete_lorentz}, we have this definition.
\begin{defi}
 Let $\rho:\pi_1(\Sigma_{g,n})\to\isom(\mathbb{H}^2)$ be a Fuchsian representation and $\tau$ a cocyle. We say that a vector field $V$ on $\mathbb{H}^2$ is $(\rho, \tau)$-\textit{equivariant} if for all $\alpha\in\pi_1(\Sigma_{g,n})$, we have:
\begin{equation}V=\rho(\alpha)_*V+\tau(\alpha).\end{equation}
\end{defi}

\begin{remark}
 Let $\rho_t$ be a smooth family of Fuchsian representations with $\rho_0=\rho$. Consider the admissible cocycle $\tau(\alpha) = \frac{d}{dt}|_{t=0} \rho_t(\alpha) \rho(\alpha)^{-1}.$ Then for any smooth family $f_t:\mathbb{H}^2\to\mathbb{H}^2$ such that $f_t$ is $(\rho,\rho_t)$ equivariant and $f_0=\mathrm{id}_{\mathbb{H}^2}$. The derivative $V(\eta):=\frac{d}{dt}|_{t=0} f_t(\eta)$ is $(\rho,\tau)-$equivariant vector field on $\mathbb{H}^2$. Indeed, let $\eta\in\mathbb{H}^2$ and $\alpha\in\pi_1(\Sigma_{g,n})$, by equivariance of $f_t$, we have:

 $$f_t(\rho(\alpha)\cdot \eta)=\left(\rho_t(\alpha)\rho(\alpha)^{-1}\right)\rho(\alpha)\cdot f_t(\eta).$$
 Therefore, taking the derivative at $t=0$, we get:
\begin{align*}
V(\rho(\alpha)\cdot \eta)&= \left( \frac{d}{dt}|_{t=0} \rho_t(\alpha) \rho(\alpha)^{-1}   \right)(\rho(\alpha)\cdot \eta)+\mathrm{d}(\mathrm{id}_{\mathbb{H}^2})\left(\frac{d}{dt}|_{t=0} \rho(\alpha)\cdot f_t(\eta) \right)   \\
 &=\tau(\alpha)(\rho(\alpha)\cdot \eta)+\mathrm{d}\rho(\alpha)(V(\eta)).
\end{align*}
Equivalently$$ V=\rho(\alpha)_*V+\tau(\alpha).$$
\end{remark}

We can now state the main result of this section, which can be seen as the analogue of Kerckhoff's Theorem \cite[Theorem 3.5]{kerker}, which states that any tangent vector in the Teichmüller space is tangent to a unique earthquake path.
\begin{prop}\label{kerkhoffvector}
Let $\rho:\pi_1(\Sigma_{g,n})\to\mathrm{Isom}(\mathbb{H}^2)$ be a 
Fuchsian representation and take an admissible cocycle $\tau\in\mathrm{Z}^1_{\mathrm{Ad}\rho}(\pi_1(\Sigma_{g,n}),\mathfrak{isom}(\mathbb{H}^2))$. Then there exists a $(\rho,\tau)$-equivariant left (resp. right) infinitesimal earthquake $\mathcal{E}$ of $\mathbb{H}^2$ which extends continuously to a continuous vector field on $\mathbb{S}^1$. The left (resp. right) $(\rho,\tau)$-infinitesimal earthquake $\mathcal{E}$ is unique except on the leaves of the geodesic lamination $\lambda$ associated to $\mathcal{E}$, where $\mathcal{E}$ is discontinuous. 
\end{prop}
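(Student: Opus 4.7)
The plan is to reduce Proposition~\ref{kerkhoffvector} to Theorem~\ref{TH1} by first producing a continuous $(\rho,\tau)$-equivariant vector field $X$ on $\mathbb{S}^1 = \partial\mathbb{H}^2$, and then invoking the naturality of the convex hull construction of Section~\ref{Section3} to transfer the equivariance to the interior. Here a vector field $X$ on $\mathbb{S}^1$ is declared $(\rho,\tau)$-equivariant if
\[
X = \rho(\alpha)_*X + \tau(\alpha)|_{\mathbb{S}^1}, \qquad \forall\, \alpha \in \pi_1(\Sigma_{g,n}),
\]
where $\tau(\alpha) \in \mathfrak{isom}(\mathbb{H}^2)$ is identified with its continuous extension as a Killing vector field to $\overline{\mathbb{H}^2}$.

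The principal obstacle is the construction of such an $X$. I would integrate the infinitesimal deformation $\tau$ into a smooth path $t\mapsto \rho_t$ of Fuchsian representations with $\rho_0=\rho$ and $\tau(\alpha) = \frac{d}{dt}\big|_{t=0}\rho_t(\alpha)\rho(\alpha)^{-1}$; the admissibility hypothesis is precisely what guarantees that the $\rho_t$ remain in $\mathcal{T}(\Sigma_{g,n})$, i.e.\ that $\rho_t(\alpha_i)$ stays parabolic at every puncture loop $\alpha_i$. A standard construction (quasi-symmetric conjugacy for closed surfaces, combined with an explicit matching of parabolic fixed points at the cusps in the punctured case) then produces a smooth family of orientation-preserving circle homeomorphisms $f_t:\mathbb{S}^1\to\mathbb{S}^1$ with $f_0=\mathrm{id}_{\mathbb{S}^1}$ satisfying $f_t\circ\rho(\alpha) = \rho_t(\alpha)\circ f_t$ for every $\alpha$. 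Differentiating this relation at $t=0$ shows that $X(z) := \frac{d}{dt}\big|_{t=0} f_t(z)$ is a continuous $(\rho,\tau)$-equivariant vector field on $\mathbb{S}^1$. Uniqueness of $X$ then comes for free: if $X_1,X_2$ are both $(\rho,\tau)$-equivariant and continuous, their difference is $\rho$-invariant and continuous, hence must vanish at every fixed point of every hyperbolic element of $\rho$ (since the derivative there is not $1$); as these fixed points are dense in $\mathbb{S}^1$, the difference is identically zero.

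Once $X$ is in hand, set $\mathcal{E}:=\mathcal{E}_X^{-}$ (respectively $\mathcal{E}_X^{+}$) via the convex hull construction \eqref{infearth_convexhull} (resp.~\eqref{supearth_convexhull}). Propositions~\ref{extension} and~\ref{earthquake_properties} immediately give that $\mathcal{E}$ is a left (resp.\ right) infinitesimal earthquake on $\mathbb{H}^2$ that extends continuously to $X$. The $(\rho,\tau)$-equivariance of $\mathcal{E}$ follows from Lemma~\ref{4.7} applied with $(\mathrm{A},v) = (\rho(\alpha),v_\alpha)$, where $v_\alpha \in \minko$ satisfies $\Lambda(v_\alpha) = \tau(\alpha)$; combined with Example~\ref{exx1} and the equivariance of $X$, this gives
\[
\mathcal{E}_X^{\pm} = \mathcal{E}^{\pm}_{\rho(\alpha)_* X + \Lambda(v_\alpha)} = \rho(\alpha)_*\mathcal{E}_X^{\pm} + \mathcal{E}^{\pm}_{\Lambda(v_\alpha)} = \rho(\alpha)_*\mathcal{E}_X^{\pm} + \tau(\alpha)
\]
for every $\alpha \in \pi_1(\Sigma_{g,n})$. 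For the uniqueness clause, any other $(\rho,\tau)$-equivariant left (resp.\ right) infinitesimal earthquake extending continuously to a vector field on $\mathbb{S}^1$ must, by the uniqueness of the continuous equivariant boundary extension proved above, extend to the same $X$; Proposition~\ref{uniqness_of_earthquake} then forces it to coincide with $\mathcal{E}$ on every gap of $\lambda$ and on every leaf of $\lambda$ where $\mathcal{E}$ is continuous.
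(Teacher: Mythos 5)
Your second half coincides with the paper's argument: once a continuous $(\rho,\tau)$-equivariant vector field $X$ on $\mathbb{S}^1$ is available, both you and the paper set $\mathcal{E}=\mathcal{E}_X^{\pm}$, get the earthquake property and the continuous extension from Propositions \ref{extension} and \ref{earthquake_properties}, derive equivariance from Lemma \ref{4.7} together with Example \ref{exx1} exactly as you wrote it, and settle uniqueness via Proposition \ref{uniqness_of_earthquake}. The genuine difference is how $X$ is produced. The paper obtains it from Theorem \ref{nieseppi} (Nie--Seppi): among all functions $\phi:\mathbb{S}^1\to\mathbb{R}$ whose graph is preserved by $\mathrm{Is}(\rho,\overline{\tau})$ there is a unique continuous one, and $X(z)=iz\phi_{\rho,\tau}(z)$ is then $(\rho,\tau)$-equivariant by Lemma \ref{equii}. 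You instead integrate $\tau$ to a path $\rho_t$ of Fuchsian representations, take the conjugating boundary homeomorphisms $f_t$, and differentiate. This is a legitimate alternative, and your density argument for uniqueness of the continuous equivariant $X$ (vanishing of a $\rho$-invariant continuous field at the dense set of hyperbolic fixed points) is a nice elementary replacement for the uniqueness part of Theorem \ref{nieseppi}; it is also exactly what is needed to see that any competitor earthquake with a continuous boundary extension must extend to the same $X$ (note you are implicitly using that the boundary value of a $(\rho,\tau)$-equivariant field is again $(\rho,\tau)$-equivariant, which follows by continuity but should be said). What your route buys is independence from the Half-pipe input \cite{affine}; what it costs is two nontrivial imports from classical Teichm\"uller theory that you assert as ``standard'' without justification: (i) that an \emph{admissible} cocycle is tangent to, and can be integrated inside, the type-preserving (relative) Teichm\"uller space $\mathcal{T}(\Sigma_{g,n})$ -- for closed surfaces this is Goldman, for punctured surfaces it needs the relative character variety statement the paper attributes to \cite{arnaud}; and (ii) that the conjugating boundary maps $f_t$ can be chosen so that $t\mapsto f_t(z)$ is differentiable with $\frac{d}{dt}\big|_{t=0}f_t$ a \emph{continuous} vector field on $\mathbb{S}^1$ -- this is Ahlfors--Bers/holomorphic-motion machinery (the derivative is in fact Zygmund), not something that comes for free from the mere existence of equivariant boundary conjugacies. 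With those two facts properly cited, your proof is correct; as written, they are the places where the argument is only sketched, and they carry essentially the same weight as the external theorem the paper invokes.
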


Let $\rho$ be a Fuchsian representation, and consider an admissible cocycle $\tau$. Observe that from the equivariance property \eqref{adequivariance} of the isomorphism $\Lambda: \minko \to \mathfrak{isom}(\mathbb{H}^2)$, the map $\overline{\tau}: \pi_1(\Sigma_{g,n}) \to \minko$ given by $\Lambda \circ \overline{\tau} = \tau$ satisfies the following relation:
$$\overline{\tau}(\alpha\beta)=\overline{\tau}(\alpha)+\rho(\alpha)\overline{\tau}(\beta),$$
for all $\alpha$, $\beta\in \pi_1(\Sigma_{g,n})$. This relation allows us to define an action on Half-pipe space given by:
\begin{equation}\label{dualrepresentation}
    \begin{array}{ccccc}
 \mathrm{Is}(\rho,\overline{\tau})& : & \pi_1(\Sigma_{g,n}) & \to & \isom(\HP) \\
 & & \alpha & \mapsto & \begin{bmatrix}
\rho(\alpha) & 0  \\
^T \overline{\tau}(\alpha)\mathrm{J}\rho(\alpha) & 1 
\end{bmatrix}, \\
\end{array}
\end{equation} see \eqref{groupeduality}. To prove Proposition \ref{kerkhoffvector}, we need this fact due to Nie and Seppi.

\begin{theorem}\cite{affine}\label{nieseppi}
Let $\rho: \pi_1(\Sigma_{g,n}) \to \mathrm{Isom}(\mathbb{H}^2)$ be a Fuchsian representation, and let $\tau$ be an admissible cocycle for $\rho$. Consider the map $\overline{\tau}: \pi_1(\Sigma_{g,n}) \to \minko$ as described above. Then, there exists a bijective correspondence between $[0,+\infty[^n$ (where $n$ is the number of punctures) and lower (resp. upper) semicontinuous functions $\phi: \mathbb{S}^1 \to \mathbb{R}$ with graphs preserved by the representation $\mathrm{Is}(\rho, \overline{\tau})$ as defined in equation \eqref{dualrepresentation}.

Furthermore, among all these functions, there exists a unique continuous function $\phi_{\rho,\tau}: \mathbb{S}^1 \to \mathbb{R}$ with graph preserved by the representation $\mathrm{Is}(\rho, \overline{\tau})$.
\end{theorem}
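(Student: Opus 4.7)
The plan is to translate the invariance condition into a cocycle-type equation on support functions, build extremal invariant semicontinuous solutions by a Perron-type envelope method, and then analyze the residual freedom at each parabolic fixed point to read off the $[0,+\infty[^n$ parameterization and single out the continuous representative.

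By Lemma \ref{equii} together with the explicit formula of Lemma \ref{rotationHP}, a function $\phi:\mathbb{S}^1\to\mathbb{R}$ has graph preserved by $\mathrm{Is}(\rho,\overline{\tau})$ if and only if $\phi$ is the support function of a vector field $X$ on $\mathbb{S}^1$ satisfying the cocycle equivariance $X=\rho(\alpha)_{*}X+\Lambda(\overline{\tau}(\alpha))$ for every $\alpha\in\pi_1(\Sigma_{g,n})$. As a seed, I pick any smooth path $\rho_t$ of Fuchsian representations with $\frac{d}{dt}\big|_{t=0}\rho_t(\alpha)\rho(\alpha)^{-1}=\tau(\alpha)$, whose existence follows from Goldman's identification of $T_{[\rho]}\mathcal{T}(\Sigma_{g,n})$ with $\mathrm{H}^1_{\mathrm{Ad}\rho}$; differentiating any smooth family of $(\rho,\rho_t)$-equivariant diffeomorphisms of $\mathbb{H}^2$ yields a continuous $(\rho,\tau)$-equivariant vector field, hence a continuous invariant barrier function $\phi_0:\mathbb{S}^1\to\mathbb{R}$. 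Given a tuple $c=(c_1,\dots,c_n)\in[0,+\infty[^n$ attached to the cusps, I then define $\phi^{-}_{c}$ as the pointwise supremum over all $\mathrm{Is}(\rho,\overline{\tau})$-invariant lower semicontinuous functions dominated by a suitable $c$-localized barrier built from $\phi_0$, and dually $\phi^{+}_{c}$. The envelope properties $(\mathrm{P}1)$--$(\mathrm{P}3)$ guarantee that $\phi^{\pm}_{c}$ are well-defined extremal invariant semicontinuous solutions.

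For a closed surface ($n=0$), cocompactness of $\rho(\pi_1(\Sigma_g))$ and density of its orbits in $\mathbb{S}^1$ force $\phi^{-}=\phi^{+}$ by a squeezing argument, producing a single continuous invariant function and no free parameters. For the punctured case, the crucial observation at each parabolic fixed point $z_i\in\mathbb{S}^1$ is that the admissibility decomposition $\overline{\tau}(\alpha_i)=\rho(\alpha_i)\overline{\mathfrak{a}}_i-\overline{\mathfrak{a}}_i$ makes $\mathrm{Is}(\rho(\alpha_i),\overline{\tau}(\alpha_i))$ conjugate in $\isom(\HP)$ to $\mathrm{Is}(\rho(\alpha_i),0)$, so it fixes pointwise a closed vertical half-line in $\partial\HP$ above $z_i$. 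Any invariant lower (resp. upper) semicontinuous $\phi$ must accumulate at $z_i$ onto a point of this half-line; the choice of height ranges freely in $[0,+\infty[$, independently at each cusp, yielding the product parameterization. The unique continuous representative is then picked out by requiring the accumulation heights from both sides at every $z_i$ to agree, which determines one distinguished tuple in $[0,+\infty[^n$.

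The main obstacle will be the cusp analysis: verifying that the fixed locus of $\mathrm{Is}(\rho(\alpha_i),\overline{\tau}(\alpha_i))$ above $z_i$ is exactly a half-line (whose direction must match the lower/upper semicontinuous dichotomy), that semicontinuous invariance forces accumulation onto this half-line from the correct side, and that the resulting height parameterizes $[0,+\infty[$ bijectively. I would attack this by conjugating $z_i$ to a standard parabolic fixed point on $\mathbb{S}^1$, writing the $\HP$-action explicitly via Lemma \ref{rotationHP}, and then applying a monotonicity/comparison principle for the envelopes $\phi^{\pm}_{c}$ showing that varying the cusp parameter $c_i$ shifts the accumulation height monotonically and surjectively onto $[0,+\infty[$.
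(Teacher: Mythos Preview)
The paper does not prove this theorem. It is quoted from Nie--Seppi \cite{affine} and used as a black box in the proof of Proposition \ref{kerkhoffvector}; there is no argument in the paper to compare your proposal against.

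As an independent sketch, your architecture is plausible but has real gaps. The seed construction is essentially circular: differentiating a smooth family of $(\rho,\rho_t)$-equivariant diffeomorphisms of $\mathbb{H}^2$ produces a vector field on $\mathbb{H}^2$, not on $\mathbb{S}^1$, and passing continuously to the boundary is precisely the kind of extension result the present paper builds its machinery to obtain --- you cannot assume it for free here (and in the punctured, non-cocompact case the boundary behavior near parabolic fixed points is exactly the delicate issue). More seriously, your cusp claim is incorrect as written. A parabolic $\rho(\alpha_i)$ fixes the null direction $(1,z_i)$ with eigenvalue $1$, so by Lemma \ref{rotationHP} the isometry $\mathrm{Is}(\rho(\alpha_i),0)$ fixes the \emph{entire} vertical fiber $\{z_i\}\times\mathbb{R}\subset\partial\HP$, not a half-line. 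The restriction of the parameter to $[0,+\infty[$ therefore cannot come from the local fixed-point set of a single peripheral element; it must come from the interaction between the semicontinuity hypothesis and the global $\pi_1$-invariance (equivalently, from the convex geometry of the associated domain of dependence in $\minko$ near the cusp), which your outline does not address.
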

\begin{proof}[Proof of Proposition \ref{kerkhoffvector}]\label{proofkerkhoffvector}
Let $\rho:\pi_1(\Sigma_{g,n})\to\mathrm{Isom}(\mathbb{H}^2)$ be a Fuchsian representation and $\tau\in\mathrm{Z}^1_{\mathrm{Ad}\rho}(\pi_1(\Sigma_{g,n}),\mathfrak{isom}(\mathbb{H}^2))$ be a cocycle. Then, by Proposition \ref{nieseppi}, there is a unique continuous function $\phi_{\rho,\tau}:\mathbb{S}^1\to \mathbb{R}$ with a graph preserved by the representation $\mathrm{Is}(\rho,\overline{\tau})$, where $\Lambda\circ\overline{\tau}=\tau$. We consider $X$ to be the vector field on the circle for which its support function is given by $\phi_{\rho,\tau}$, namely: $$X(z)=iz\phi_{\rho,\tau}(z).$$ Now we consider $\mathcal{E}_X^-$ and $\mathcal{E}_X^+$, the left and right infinitesimal earthquakes that extend to $X$ continuously. Following Lemma \ref{4.7}, we have:
\begin{equation}
        \mathrm{A}_*\mathcal{E}_{X}^{\pm}+\mathcal{E}^{\pm}_{\Lambda(v)}=\mathcal{E}^{\pm}_{\mathrm{A}_*X+\Lambda(v)},
    \end{equation}
for all $\mathrm{A}\in \mathrm{O}_0(1,2)$ and $v\in\minko$. In particular, we have:
\begin{equation}\label{courage}
\rho(\alpha)_*\mathcal{E}_{X}^{\pm}+\mathcal{E}^{\pm}_{\tau(\alpha)}=\mathcal{E}^{\pm}_{\rho(\alpha)_*X+\tau(\alpha)}.
\end{equation}
However, since the graph of $\phi$ is preserved by $\mathrm{Is}(\rho,\overline{\tau})$, then by Lemma \ref{equii}, we have: $$X=\rho(\alpha)_*X+\tau(\alpha).$$
Now, by Example \ref{exx1}, we have $\mathcal{E}^{\pm}_{\tau(\alpha)}=\tau(\alpha)$, thus equation \eqref{courage} becomes
    $$\mathcal{E}_X^{\pm}=\rho(\alpha)_*\mathcal{E}_{X}^{\pm}+\tau(\alpha).$$ This means that $\mathcal{E}_X^{\pm}$ is $(\rho,\tau)$-equivariant. The uniqueness part follows form Proposition \ref{uniqness_of_earthquake}. This concludes the proof.
\end{proof}

\begin{remark}
For a Fuchsian representation $\rho$ and an admissible cocycle $\tau$, one can prove, in the same way as in Proposition \ref{kerkhoffvector}, that for each lower semi-continuous function $\phi: \mathbb{S} \to \mathbb{R}$ with graph preserved by $\mathrm{Is}(\rho,\overline{\tau})$ (see Theorem \ref{nieseppi}), there exists a left (resp. right) infinitesimal earthquake of $\mathbb{H}^2$ which is $(\rho, \tau)$-equivariant and extends radially to $X(z) = iz\phi(z)$. However, we don't have the uniqueness part since Proposition \ref{uniqness_of_earthquake} holds only for continuous vector fields.  
\end{remark}

\section{Zygmund vector field $\implies$ Finite width}\label{section4}
The purpose of this section is to show that if $X$ is a Zygmund vector field, then the width of $X$ is finite. More precisely, we will show the following quantitative estimate.
\begin{prop}\label{zygmund_to_width}
 Given any Zygmund vector field $X$ of $\mathbb{S}^1$, let $w(X)$ be the width of $X$. Then
 $$w(X)\leq \frac{8}{3}\lVert X\rVert_{cr}.$$
\end{prop}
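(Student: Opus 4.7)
The plan is to reduce the estimate to a model configuration via the equivariance of both $w(X)$ and $\lVert X\rVert_{cr}$, and then to extract a Zygmund-type second-difference inequality from a one-parameter family of cross-ratio-$1$ quadruples on $\mathbb{S}^1$. The width $w(X)$ is invariant under the action $X \mapsto \mathrm{A}_* X + \Lambda(v)$ of $\mathrm{O}_0(1,2)\ltimes\minko$ by Lemma \ref{width_equivariant}, and $\lVert X\rVert_{cr}$ is invariant as well: the $\mathrm{O}_0(1,2)$-part acts as M\"obius transformations of $\mathbb{S}^1$, which preserve cross ratios, and Killing vector fields have zero cross-ratio distortion. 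Using \eqref{width_with_function} for the width and the invariance of fiber-distances recorded in \eqref{equivariant_length}, for any $\eta_0\in\mathbb{D}^2$ an isometry of $\mathbb{H}^2$ sending $\eta_0\mapsto 0$ reduces the estimate of $(\phi_X^+(\eta_0)-\phi_X^-(\eta_0))/\sqrt{1-|\eta_0|^2}$ to the case $\eta_0=0$. By adding the unique Killing vector field guaranteed by Remark \ref{remarqueabc}, I may further assume $\phi_X(1)=\phi_X(i)=\phi_X(-1)=0$ (which changes neither $\phi_X^+(0)-\phi_X^-(0)$ nor $\lVert X\rVert_{cr}$), so that it suffices to prove $\phi_X^+(0)-\phi_X^-(0)\leq\tfrac{8}{3}\lVert X\rVert_{cr}$ under this normalization.

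The central analytic input is the following Zygmund-type inequality. For any $z \in \mathbb{S}^1$ and $\psi \in (0,\pi)$, the quadruple $Q_{z,\psi}=(ze^{-i\psi},z,ze^{i\psi},-z)$ has cross ratio exactly $1$; writing $z=e^{i\theta_0}$, the sine-formula for the cross ratio applied to the angles $(\theta_0-\psi,\theta_0,\theta_0+\psi,\theta_0+\pi)$ gives both numerator and denominator equal to $\sin(\psi/2)\cos(\psi/2)$. Substituting $X(w)=iw\phi_X(w)$ in the definition of $X[Q]$ causes the common factor $z$ to cancel in every quotient, and a short trigonometric computation (using $\sin\psi\cot(\psi/2)=1+\cos\psi$ and $\sin\psi\tan(\psi/2)=1-\cos\psi$) yields
\begin{equation*}
X[Q_{z,\psi}]=\cot(\psi/2)\,\phi_X(z)+\tan(\psi/2)\,\phi_X(-z)-\frac{\phi_X(ze^{i\psi})+\phi_X(ze^{-i\psi})}{\sin\psi}.
\end{equation*}
Multiplying by $\sin\psi$ and invoking $|X[Q_{z,\psi}]|\leq\lVert X\rVert_{cr}$ gives the second-difference inequality
\begin{equation*}
\bigl|\phi_X(ze^{i\psi})+\phi_X(ze^{-i\psi})-(1+\cos\psi)\phi_X(z)-(1-\cos\psi)\phi_X(-z)\bigr|\leq\sin\psi\cdot\lVert X\rVert_{cr},
\end{equation*}
valid for all $z\in\mathbb{S}^1$ and $\psi\in(0,\pi)$.

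To finish, I would combine the envelope bound
\begin{equation*}
\phi_X^+(0)-\phi_X^-(0)\leq\inf_{a\text{ affine}}\bigl(\sup_{\mathbb{S}^1}(\phi_X-a)-\inf_{\mathbb{S}^1}(\phi_X-a)\bigr),
\end{equation*}
which follows directly from the characterization of $\phi_X^{\pm}$ as upper/lower envelopes of affine functions, with pointwise control of $|\phi_X(e^{i\theta})|$ obtained by propagating the Zygmund-type inequality from the anchor data $\phi_X(1)=\phi_X(i)=\phi_X(-1)=0$ together with the auxiliary bound $|\phi_X(-i)|\leq\lVert X\rVert_{cr}$ (obtained by taking $z=1$, $\psi=\pi/2$ in the Zygmund inequality). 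Each of the four arcs between consecutive elements of $\{1,i,-1,-i\}$ is then controlled by centering the second-difference inequality at the nearest element.

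The main obstacle is extracting the precise constant $\tfrac{8}{3}$. The Zygmund-type inequalities yield pointwise bounds $|\phi_X(e^{i\theta})|\leq C(\theta)\,\lVert X\rVert_{cr}$, but obtaining a tight oscillation estimate of $\tfrac{8}{3}\lVert X\rVert_{cr}$ requires a careful case analysis of the worst-case $\theta$, together with a judicious non-zero choice of the affine function $a$ in the envelope bound rather than the naive choice $a=0$. The factor $\tfrac{8}{3}$ will presumably correspond to an extremal profile in which several of the second-difference inequalities saturate simultaneously, and pinning down this extremal configuration is where the numerical content of the estimate lies.
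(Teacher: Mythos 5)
Your reduction steps are essentially the ones the paper uses: equivariance of the fiber-distance (Lemma \ref{width_equivariant} and the identity $\mathrm{F}_{\mathrm{A}_*X}(\eta)=\mathrm{F}_X(\mathrm{A}^{-1}\cdot\eta)$) brings the estimate to the point $\eta_0=0$, a Killing correction as in Remark \ref{remarqueabc} normalizes $\phi_X$ to vanish at three points, and the envelope bound $\phi_X^+(0)-\phi_X^-(0)\leq \sup_{\mathbb{S}^1}(\phi_X-a)-\inf_{\mathbb{S}^1}(\phi_X-a)$ is exactly how the paper passes from a sup-norm bound on $\phi_X$ to a bound on the width (with the naive choice $a=0$). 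So the skeleton is sound and matches the paper.

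The genuine gap is the quantitative heart of the statement. In the paper the entire numerical content is delegated to the Fan--Hu estimate (Lemma \ref{funju}): under the normalization $X(1,0)=X(0,1)=X(-1,0)=0$ one has $\max_{z}\lVert X(z)\rVert_{euc}\leq\tfrac{4}{3}\lVert X\rVert_{cr}$, and then $w(X)\leq 2\max_z\lvert\phi_X(z)\rvert\leq\tfrac{8}{3}\lVert X\rVert_{cr}$ follows immediately. You instead attempt to rederive this pointwise bound from scratch via the second-difference inequalities coming from the cross-ratio-one quadruples $(ze^{-i\psi},z,ze^{i\psi},-z)$, but you stop at the decisive step: you acknowledge that extracting the constant (your ``careful case analysis'' of the worst-case $\theta$, the ``judicious'' affine function $a$, and the unidentified ``extremal profile'') is still to be done. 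That step is precisely the content of Fan and Hu's lemma, and it is not routine — propagating a Zygmund-type second-difference bound from three anchor points to a sharp sup-norm bound with constant $\tfrac{4}{3}$ requires a genuine argument, not just bookkeeping. As written, your proposal proves $w(X)\leq C\lVert X\rVert_{cr}$ for some unspecified constant at best, but not the stated inequality with $\tfrac{8}{3}$. To close the gap you should either cite \cite{FanJun} as the paper does, or actually carry out the iteration/optimization that produces the factor $\tfrac{4}{3}$ (and, while doing so, verify carefully that your complex-difference formula for $X[Q]$ on $\mathbb{S}^1$ agrees with the cross-ratio distortion norm as defined on $\mathbb{RP}^1$ in the paper, since the constant is sensitive to that normalization).
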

The key step to prove Proposition \ref{zygmund_to_width} is the following Lemma due to Fan and Hu.

\begin{lemma}\cite[Lemma 4]{FanJun}\label{funju}
    Let $X$ be a Zygmund vector field of $\mathbb{S}^1$ such that $X(1,0)=X(0,1)=X(-1,0)=0$. Then 
    $$\max_{z\in \mathbb{S}^1}\lVert X(z)\rVert_{euc}\leq\frac{4}{3}\lVert X\rVert_{cr},$$ where $\lVert\cdot \rVert_{euc}$ denotes the standard euclidean norm in $\mathbb{R}^2$.
\end{lemma}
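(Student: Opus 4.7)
The plan is to bound $\lVert X(z)\rVert_{euc}$ at an arbitrary $z \in \mathbb{S}^1$ by expressing $X(z)$ as a bounded linear combination of values $X[Q_j]$ on cross-ratio-one quadruples $Q_j$. Once such a representation $X(z) = \sum_j c_j(z)\, X[Q_j]$ is established, the triangle inequality together with $\vert X[Q_j]\vert \leq \lVert X\rVert_{cr}$ yields $\lVert X(z)\rVert_{euc} \leq \bigl(\sum_j \vert c_j(z)\vert\bigr)\lVert X\rVert_{cr}$, so the task reduces to the uniform estimate $\sum_j \vert c_j(z)\vert \leq 4/3$.

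As a first reduction, I would exploit the action on $\mathbb{S}^1$ of the group of Möbius transformations of $\mathbb{S}^1$ permuting $\{1,i,-1\}$, together with complex conjugation. This finite group preserves both $\lVert X\rVert_{cr}$ (by Möbius invariance of the cross-ratio) and the vanishing conditions $X(1)=X(i)=X(-1)=0$, so it suffices to prove the estimate for $z$ ranging over a fundamental domain, for instance a single arc delimited by two of the marked points.

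The core computation is the construction of the quadruples. The distinguished configuration $Q_0 = [1,i,-1,-i]$ has cross-ratio $1$ and, thanks to the triple vanishing of $X$, the alternating sum defining $X[Q_0]$ collapses immediately to $X(-i)$, giving $\lVert X(-i)\rVert_{euc}\leq \lVert X\rVert_{cr}$. For a general $z$ in the fundamental domain, I would consider quadruples $Q = [p,z,q,w]$ in cyclic order with $p,q\in\{1,i,-1\}$ and the fourth point $w\in\mathbb{S}^1$ determined by the constraint $\mathrm{cr}(Q)=1$; the vanishing at $p$ and $q$ reduces $X[Q]$ to an identity of the form $X[Q] = \alpha(z) X(z) + \beta(z) X(w)$ with explicit rational coefficients in $z$. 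Forming two (or three) such identities for different choices of the pair $\{p,q\}\subset\{1,i,-1\}$ and arranging so that the auxiliary points $w$ either coincide or are related by a symmetry of the configuration should allow one to eliminate the $X(w)$ terms and solve for $X(z)$ as a combination $c_1(z)X[Q_1] + c_2(z)X[Q_2]$.

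The main obstacle is the final optimization: among all such representations one must select one whose coefficient sum is uniformly bounded by $4/3$ on the fundamental domain. The value $4/3$ suggests that the extremal position for $z$ lies at a symmetric point between two of the vanishing points, and that the optimal representation uses two quadruples related by an involution whose coefficients sum to exactly $4/3$ at this extremal $z$. The bookkeeping is significantly simplified by the identity $\vert e^{i\alpha}-e^{i\beta}\vert = 2\vert\sin((\alpha-\beta)/2)\vert$, reducing the coefficient estimates to one-variable trigonometric inequalities, with the supremum of $\vert c_1(z)\vert + \vert c_2(z)\vert$ attained at the predicted symmetric point.
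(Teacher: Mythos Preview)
The paper does not prove this lemma at all: it is quoted verbatim from \cite[Lemma~4]{FanJun} and used as a black box in the proof of Proposition~\ref{zygmund_to_width}. So there is no ``paper's own proof'' to compare against; any sound argument you supply would be a genuine addition.

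That said, what you have written is a strategy, not a proof. The sample computation for the special point $z=-i$ is correct (indeed $X[Q_0]=X(-i)$ with $Q_0=[1,i,-1,-i]$ and $\mathrm{cr}(Q_0)=1$), and the overall idea of expressing $X(z)$ as a bounded combination of cross-ratio distortions is the natural one. But the heart of the matter is precisely the part you flag as ``the main obstacle'' and then do not do: for a generic $z$ you must actually exhibit quadruples $Q_1,Q_2$ (or more) through $z$ and two of the vanishing points, show concretely how the auxiliary fourth points can be made to coincide or be eliminated, solve the resulting linear system, and then prove the one-variable inequality $\sum_j|c_j(z)|\le 4/3$ on your fundamental arc. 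None of these steps is automatic. In particular, it is not evident a priori that two such quadruples can be chosen with the \emph{same} auxiliary point $w$, nor that the elimination does not blow up the coefficients near the endpoints of the arc; and the sharp constant $4/3$ will only emerge from an explicit calculation, not from soft symmetry arguments. Until those computations are written out, this remains a plan rather than a proof.
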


\begin{proof}[Proof of Proposition \ref{zygmund_to_width}]
Let $X$ be a Zygmund vector field of $\mathbb{S}^1$ and $\phi_X$ be its support function, then by the definition of the width \eqref{width_with_function}, we have
$$w(X):=\underset{\eta\in \mathbb{D}^2}{\sup} \mathrm{F}_X(\eta),$$
where $\mathrm{F}_X:\mathbb{D}^2\to\mathbb{R}$ is the function defined by $\mathrm{F}(\eta)= \mathrm{L}(\eta,\phi_X^+(\eta))-\mathrm{L}(\eta,\phi_X^-(\eta))$. 
Using the formula \eqref{formule_L} of the height function $\mathrm{L}$, we deduce that: $$\mathrm{F}_X(0,0)=\phi_X^+(0,0)-\phi_X^-(0,0).$$
First, let us assume that $X$ satisfies the hypothesis of Lemma \ref{funju}, namely 
$X(1,0)=X(0,1)=X(-1,0)=0.$ Since
$$-\max_{z\in \mathbb{S}^1}\lvert \phi_X(z)\rvert\leq  \phi_X(z)\leq \max_{z\in \mathbb{S}^1}\lvert \phi_X(z)\rvert,$$ the properties of $\phi_X^{\pm}$ (see Page $11$ in Section \ref{page 8}) then imply that $\phi_X^+\leq \max_{z\in \mathbb{S}^1}\lvert \phi_X(z)\rvert$ and $-\max_{z\in \mathbb{S}^1}\lvert \phi_X(z)\rvert\leq \phi_X^-$
and hence 
$$
    \mathrm{F}_{X}(0,0)\leq 2\max_{z\in \mathbb{S}^1}\lvert \phi_X(z)\rvert.
$$
Since $X(z)=iz\phi_X(z)$, then the euclidean norm of $X$ is the same as the modulus of $\phi_X$ and thus by Proposition \ref{funju}, we get:
\begin{equation}\label{Fx}
    \mathrm{F}_{X}(0,0)\leq \frac{8}{3}\lVert X\rVert_{cr}.
\end{equation}
The equation \eqref{Fx} holds for any Zygmund vector fields which vanishes on $(1,0)$, $(0,1)$ and $(-1,0)$. But since the cross ratio norm and $\mathrm{F}_{X}$ are invariant by translation by Killing vector fields (see Remark \ref{remarqueabc}), then the inequality\eqref{Fx} holds for any Zygmund vector field.
In order to prove Proposition \ref{zygmund_to_width}, we need to show that $\mathrm{F}_X(\eta)\leq\frac{8}{3}\lVert X\rVert_{cr}$ holds for any $\eta\in \mathbb{D}^2,$ and then passing to the the supremum we get the desired inequalities. This follows from the fact that:
\begin{equation}\label{5}
    \mathrm{F}_{\mathrm{A}_*X}(\eta)=\mathrm{F}_X(\mathrm{A}^{-1}\cdot\eta)
\end{equation}
for all $\mathrm{A}\in\mathrm{O}_0(1,2)$ and $\eta\in \mathbb{D}^2.$ Indeed
\begin{align*}
\mathrm{F}_{\mathrm{A}_*X}(\eta)&= \mathrm{L}(\eta,\phi_{\mathrm{A}_*X}^+(\eta))-\mathrm{L}(\eta,\phi_{\mathrm{A}_*X}^-(\eta))\\
&= \mathrm{L}\left(\mathrm{Is}(\mathrm{A}^{-1},0)(\eta,\phi_{\mathrm{A}_*X}^+(\eta)\right)-\mathrm{L}\left(\mathrm{Is}(\mathrm{A}^{-1},0)(\eta,\phi_{\mathrm{A}_*X}^-(\eta)\right)\\
&= \mathrm{L}(\mathrm{A}^{-1}\cdot\eta,\phi_{X}^+(\mathrm{A}^{-1}\cdot\eta))-\mathrm{L}(\mathrm{A}^{-1}\cdot\eta,\phi_{X}^-(\mathrm{A}^{-1}\cdot\eta))\\
&=\mathrm{F}_X(\mathrm{A}^{-1}\cdot\eta),
\end{align*}
where we used in the first equality that the distance along the fiber is invariant by $\mathrm{Isom}(\HP)$ (see equation \eqref{equivariant_length} in the proof of Lemma \ref{width_equivariant}). In the second equality, we used equation \eqref{17} in Lemma \ref{equii}. This completes the proof.
\end{proof}

\section{Finite width $\implies$ Bounded lamination}\label{section5}
In this section we will prove a quantitative estimate between the width of a vector field and the Thurston norm of its infinitesimal earthquake measure. First let us briefly recall the definition of infinitesimal earthquake measure. For a given vector field $X$, we have seen in Proposition \ref{earthquake_properties} that $\mathcal{E}_X^{\pm}$ is an infinitesimal earthquake along a geodesic lamination $\lambda^{\pm}$. Recall that this geodesic lamination is given by the connected components of $S_X^{\pm}(v)\setminus \mathrm{Int}(S_X^{\pm}(v))$, as $\mathrm{P}_v$ varies over all support planes of $\mathrm{gr}(\phi_X^{\pm}|_{\mathbb{D}^2})$, where $$S_X^{\pm}(v)=\{\eta\in\mathbb{D}^2\mid \phi^{\pm}_X(\eta)=\langle(1,\eta),v\rangle_{1,2} \},$$ (see identity \eqref{strata} in the proof of  Proposition \ref{earthquake_properties}). The geodesic lamination $\lambda^{\pm}$ has a naturally associated transverse measure called the \textit{bending measure} of the \textit{pleated surface} $\mathrm{gr}(\phi_X^{\pm}|_{\mathbb{D}^2})$. This is described in detail in \cite[Section 8]{Epstein} in the case of pleated surfaces in hyperbolic space. In our case, the bending measure is described in work of Mess \cite{Mess}, see also \cite[Section 2.5]{BS17}. Here, we recall briefly the construction. We define the \textit{"normal bundle"} of the surface $\mathrm{gr}(\phi_X^{\pm}|_{\mathbb{D}^2})$ as follow:
$$\mathcal{N}\mathrm{gr}(\phi_X^{\pm}|_{\mathbb{D}^2}):=\{ (x,v)\in\mathrm{gr}(\phi_X^{\pm}|_{\mathbb{D}^2})\times\minko \mid \mathrm{P}_v \ \mathrm{is} \ \mathrm{a}\ \mathrm{support}\ \mathrm{plane}\ \mathrm{of}\ \mathrm{gr}(\phi_X^{\pm}|_{\mathbb{D}^2})\ \mathrm{at}\ x      \}.$$ Let $c:[0,1]\to \mathbb{D}^2$ be an arc. A \textit{polygonal approximation} of $c$ on $\mathrm{gr}(\phi_X^{\pm})$ is a sequence 
$$\mathcal{P}=\{(\phi_X^{\pm}(c(t_i)),v_i)  \}\subset \mathcal{N}\mathrm{gr}(\phi_X^{\pm}|_{\mathbb{D}^2}),\ 0=t_0<t_1<\ldots <t_n=1,$$ where $\mathrm{P}_{v_i}$ is a support plane of $\mathrm{gr}(\phi_X^{\pm}|_{\mathbb{D}^2})$ at $\phi_X^{\pm}(c(t_i))$. For each $i=0,\ldots ,n$, denote by $\theta_i$ the angle between $\mathrm{P}_{v_i}$ and $\mathrm{P}_{v_{i+1}}$ (see Proposition \ref{hpangle}). This angle could be identically zero if $\mathrm{P}_{v_i}=\mathrm{P}_{v_{i+1}}$.

\begin{defi}\label{def6.1}
Let $X$ be a continuous vector field of $\mathbb{S}^1$ and consider $c:[0,1]\to \mathbb{D}^2$ an arc, then we define:
 \begin{itemize}
     \item The bending measure of $\mathrm{gr}(\phi_X^-)$ is a transverse measure on $\lambda^-$ defined by: $$ \lambda^{-}(c):=\inf_{\mathcal{P}}\sum_{i=0}^n\theta_i,$$ where $\mathcal{P}$ runs over all polygonal approximations of $c$ on $\mathrm{gr}(\phi_X^{-})$.
      \item Similarly, the bending measure of $\mathrm{gr}(\phi_X^+)$ is a transverse measure on $\lambda^+$ defined by: $$ \lambda^{+}(c):=\inf_{\mathcal{P}}\sum_{i=0}^n\theta_i,$$ where $\mathcal{P}$ runs over all polygonal approximations of $c$ on $\mathrm{gr}(\phi_X^{+})$.
 \end{itemize}
Let $\mathcal{E}_X^{\pm}$ be the infinitesimal earthquake that extends to $X$ continuously. Then we define the \textit{infinitesimal earthquake measure} of $\mathcal{E}_X^{\pm}$ as the bending measure $\lambda^{\pm}$.
\end{defi}

\begin{remark}\label{HPBENDING}
Let $c:[0,1]\to\mathbb{D}^2$ be an arc, a particular case of polygonal approximation of $c$ on $\mathrm{gr}(\phi_X^{\pm})$ is $$\mathcal{P}=\{(\phi_X^{\pm}(c(0)),v_0), (\phi_X^{\pm}(c(1)),v_1)   \},$$ where $\mathrm{P}_{v_i}$ is a support plane of $\mathrm{gr}(\phi_X^{\pm})$ at $\phi_X^{\pm}(c(t_i))$, for $i=0,1.$ Hence by definition we have
$$\lambda^{\pm}(c)\leq \lVert v_0-v_1 \rVert,$$ where we recall that by Proposition \ref{hpangle} the angle between $\mathrm{P}_{v_0}$ and $\mathrm{P}_{v_1}$ is given by $\lVert v_0-v_1 \rVert$, where $\lVert v_0-v_1\rVert= \sqrt{\inner{v_0-v_1,v_0-v_1}_{1,2}}$.
\end{remark}
We have the following estimate.
\begin{prop}\label{lamination_petit_que_width}
Let $X$ be a continuous vector field of finite width. Let $\lambda^{\pm}$ be the infinitesimal earthquake measure of $\mathcal{E}^{\pm}_X$. Then: 
    $$\lVert\lambda^{\pm}\rVert_{\mathrm{Th}}\leq \frac{2\sqrt{2}}{1-\tanh(1)}w(X).$$
\end{prop}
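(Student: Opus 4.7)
The plan is to reduce the Thurston norm estimate to an estimate on the Minkowski distance between the dual vectors of support planes at the endpoints of a unit hyperbolic arc, and then obtain that estimate by an explicit coordinate computation.

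By Remark~\ref{HPBENDING}, for any geodesic arc $I\subset\mathbb{D}^2$ of hyperbolic length $1$ transverse to $|\lambda^+|$, with endpoints $\eta_0,\eta_1$, and for any $v_0,v_1\in\minko$ such that $\mathrm{P}_{v_i}$ is a support plane of $\mathrm{epi}^-(\phi_X^+)$ at $(\eta_i,\phi_X^+(\eta_i))$, the two-point polygonal approximation yields $\lambda^+(I)\le\|v_0-v_1\|_{1,2}$. It therefore suffices to prove $\|v_0-v_1\|_{1,2}\le \tfrac{2\sqrt2}{1-\tanh 1}\,w(X)$; the argument for $\lambda^-$ is identical after replacing $\mathrm{epi}^-(\phi_X^+)$ by $\mathrm{epi}^+(\phi_X^-)$. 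By the $\isom(\mathbb{H}^2)$-equivariance of both sides (Lemmas~\ref{width_equivariant} and~\ref{4.7}) and the formula $d_{\mathbb{H}^2}(0,\eta)=\arctanh|\eta|$ valid in the Klein model, normalize $\eta_0=(0,0)$ and $\eta_1=(\tanh 1,0)$.

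Writing $v_i=(a_i,b_i,c_i)$, the support plane equalities at $\eta_i$ give $a_0=-\phi_X^+(0,0)$ and $a_1=b_1\tanh 1-\phi_X^+(\eta_1)$, while the inequality $-a_i+b_i x+c_i y\ge \phi_X^+(x,y)$ holds on all of $\overline{\mathbb{D}^2}$ (using $\phi_X^+|_{\mathbb{S}^1}=\phi_X$ when $\phi_X$ is continuous). Since $v_0-v_1$ is spacelike or zero by Lemma~\ref{planes_intersect},
\[ \|v_0-v_1\|_{1,2}^2 \;=\; -(a_0-a_1)^2+(b_0-b_1)^2+(c_0-c_1)^2\;\le\;(b_0-b_1)^2+(c_0-c_1)^2. \]
The task is thus reduced to showing $|b_0-b_1|,\,|c_0-c_1|\le \tfrac{2}{1-\tanh 1}\,w(X)$, which will yield the announced constant $\tfrac{2\sqrt 2}{1-\tanh 1}$ by Cauchy--Schwarz.

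To obtain these coordinate estimates, evaluate the support plane inequality at the boundary points $(\pm 1,0),(0,\pm 1),(\tanh 1,\pm 1/\cosh 1)$ and at the inner point $(-\tanh 1,0)$. The concavity of $\phi_X^+$ (resp.\ convexity of $\phi_X^-$) bounds $\phi_X^+(\eta_i)$ from below (resp.\ $\phi_X^-(\eta_i)$ from above) by affine interpolations of the boundary data of $\phi_X$ along chords through $\eta_i$, while the width inequality $\phi_X^+(\eta)-\phi_X^-(\eta)\le w(X)\sqrt{1-|\eta|^2}$ applied at $\eta_0,\eta_1$ pins $\phi_X^+(\eta_i)$ within intervals of length $w(X)$ and $w(X)/\cosh 1$ respectively. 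Combining these three sources of information and solving the resulting linear inequalities yields the required one-sided coordinate bounds; the denominator $1-\tanh 1$ emerges by evaluating the inequality for $v_1$ at $(1,0)\in\mathbb{S}^1$, and the identity $\cosh 1\,(1-\tanh 1)=e^{-1}$ pins down the final constant. The main obstacle is precisely this coordinate computation: selecting the test points so that the combination of support plane, width, and (co)convexity inequalities collapses cleanly to the constant $\tfrac{2}{1-\tanh 1}$ requires delicate bookkeeping. Once this is established, plugging back into the Minkowski norm bound and passing to the supremum over unit arcs $I$ concludes the proof.
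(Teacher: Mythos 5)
Your overall skeleton matches the paper's: reduce via the two-point polygonal approximation of Remark \ref{HPBENDING} to bounding $\lVert v_0-v_1\rVert_{1,2}$ for support planes at the endpoints, normalize the unit arc to $[(0,0),(\tanh 1,0)]$ by invariance, discard the timelike coordinate of $v_0-v_1$, and extract coordinate bounds from support-plane inequalities tested at cardinal boundary points. However, the proposal stops exactly where the proof actually lives: the bounds $\lvert b_0-b_1\rvert,\ \lvert c_0-c_1\rvert\le \frac{2}{1-\tanh 1}\,w(X)$ are asserted to follow from ``delicate bookkeeping'' that you yourself flag as the main obstacle and never carry out, so the constant — which is the entire content of the statement — is not established.

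Moreover, the sketch you give for that bookkeeping does not work as stated. The width only controls the \emph{difference} $\phi_X^+(\eta)-\phi_X^-(\eta)\le w(X)\sqrt{1-\lvert\eta\rvert^2}$; it cannot ``pin $\phi_X^+(\eta_i)$ within an interval of length $w(X)$'' in any absolute sense, since everything is invariant under adding a Killing field, i.e.\ replacing each $v_i$ by $v_i+\sigma$. The paper resolves this by using the full group $\mathrm{O}_0(1,2)\ltimes\minko$ (not just the $\isom(\mathbb{H}^2)$ part you invoke) to normalize $v_0=0$, so that one support plane becomes $\mathbb{D}^2\times\{0\}$ and hence $\phi_X^-\ge 0$ with $\phi_X^-(0,0)=0$; the key intermediate Lemma \ref{simple_result} then converts the width at the single point $(0,0)$ into the uniform boundary bound $\phi_X(z)\le 2\,w(X)$, the factor $2$ coming from the concavity/midpoint argument for $\phi_X^+$ along chords through the origin. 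With that bound, the support-plane inequality for $v_1=w$ evaluated at $(\pm 1,0),(0,\pm 1)$, combined with the sign condition $-w_0+\tanh(1)\,w_1\ge 0$ at the second endpoint, yields $\lvert w_1\rvert,\lvert w_2\rvert\le\frac{2}{1-\tanh 1}\,w(X)$ and hence the stated constant. You need either to supply this normalization-plus-Lemma-\ref{simple_result} step (or an equally explicit substitute phrased purely in terms of the invariant differences), or the argument remains an outline rather than a proof.
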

To establish Proposition \ref{lamination_petit_que_width}, we rely on the following fundamental Lemma:
\begin{lemma}\label{simple_result}
    Let $X$ be a continuous vector field of $\mathbb{S}^1$ of finite width $w(X)$. Let $\phi_X:\mathbb{S}^1\to \mathbb{R}$ be its support function. Assume that $\mathbb{D}^2\times\{0\}$ is a support plane of $\mathrm{gr}(\phi_X^-)$ at $(0,0,\phi_X^{-}(0,0))$, (and so $\phi_X^-(0,0)=0$). Then for all $z\in\mathbb{S}^1$ we have:
    $$\phi_X(z)\leq 2 w(X). $$
\end{lemma}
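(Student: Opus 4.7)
The plan is to unpack the support plane assumption, extract a bound at the single point $\eta=(0,0)$ from the width hypothesis, and then push that pointwise bound out to the circle using concavity of $\phi_X^+$.

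First I would interpret the hypothesis. The spacelike plane $\mathbb{D}^2\times\{0\}$ is the dual $\mathrm{P}_0$ of the origin in $\minko$, so saying it is a support plane of $\mathrm{gr}(\phi_X^-)$ at $(0,0,\phi_X^-(0,0))$ means $\phi_X^-\ge 0$ on $\mathbb{D}^2$ with equality at the origin (see \eqref{Condition_de_plan_support}). Since $X$ is continuous, $\phi_X$ is continuous, and by properties $(\mathrm{P}2)$--$(\mathrm{P}3)$ the boundary value of $\phi_X^-$ equals $\phi_X$ on $\mathbb{S}^1$; in particular $\phi_X(z)\ge 0$ for all $z\in\mathbb{S}^1$.

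Next I would read off a bound at the origin from the width. Using the formula \eqref{formule_L} for the height function with $x=(1,0,0)$, one has $\mathrm{L}((0,0),t)=t$; hence by \eqref{width_with_function}, evaluating at $\eta=(0,0)$,
\[
\phi_X^+(0,0)=\phi_X^+(0,0)-\phi_X^-(0,0)=\mathrm{L}((0,0),\phi_X^+(0,0))-\mathrm{L}((0,0),\phi_X^-(0,0))\le w(X).
\]
This gives me a single-point bound on $\phi_X^+$.

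Finally I would transfer this bound to the circle via concavity. By $(\mathrm{P}1)$ and $(\mathrm{P}3)$, $\phi_X^+$ is concave and continuous on $\overline{\mathbb{D}^2}$, and its boundary values equal $\phi_X$. For any $z\in\mathbb{S}^1$, writing $(0,0)=\tfrac12 z+\tfrac12(-z)$, concavity yields
\[
\phi_X^+(0,0)\ge \tfrac12\phi_X^+(z)+\tfrac12\phi_X^+(-z)=\tfrac12\phi_X(z)+\tfrac12\phi_X(-z)\ge \tfrac12\phi_X(z),
\]
where in the last step I used $\phi_X(-z)\ge 0$ from the first paragraph. Combining with $\phi_X^+(0,0)\le w(X)$ gives $\phi_X(z)\le 2w(X)$, as desired.

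The argument is short and essentially free of obstacles; the only subtle point is to remember that the support plane hypothesis gives both the normalization $\phi_X^-(0,0)=0$ and the nonnegativity of $\phi_X$ on $\mathbb{S}^1$, and that it is the latter that is needed to discard the term $\phi_X(-z)$ at the end.
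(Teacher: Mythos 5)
Your proposal is correct and follows essentially the same route as the paper: nonnegativity of $\phi_X$ from the support plane hypothesis, the bound $\phi_X^+(0,0)\le w(X)$ from the width at the origin (where the height function is just the vertical coordinate), and concavity of $\phi_X^+$ at the midpoint of $z$ and $-z$. The paper phrases the concavity step via an auxiliary segment through $(-z,0)$ and $(z,\phi_X(z))$ meeting the fiber over the origin at height $m=\tfrac12\phi_X(z)$, but this is the same computation you carry out directly.
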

\begin{proof}
First, note that since $\mathbb{D}^2\times\{0\}$ is a support plane of $\mathrm{epi}^+(\phi_X^-)$, then for all $\eta\in \overline{\mathbb{D}^2}$: 
$$\phi_X^-(\eta)\geq 0.$$ In particular, $\phi_X\geq 0.$ Let $z\in \mathbb{S}^1$ and let $\mathcal{L}$ be the segment in $\mathbb{D}^2\times\mathbb{R}$ with endpoints at $(-z,0)$ and $(z,\phi_X(z))$. Let $(0,0,m)$ be the unique intersection point between the segment $\mathcal{L}$ and the vertical line $\{(0,0)\}\times\mathbb{R}$ (see Figure \ref{thales}).
\begin{figure}[htb]
\centering
\includegraphics[width=.5\textwidth]{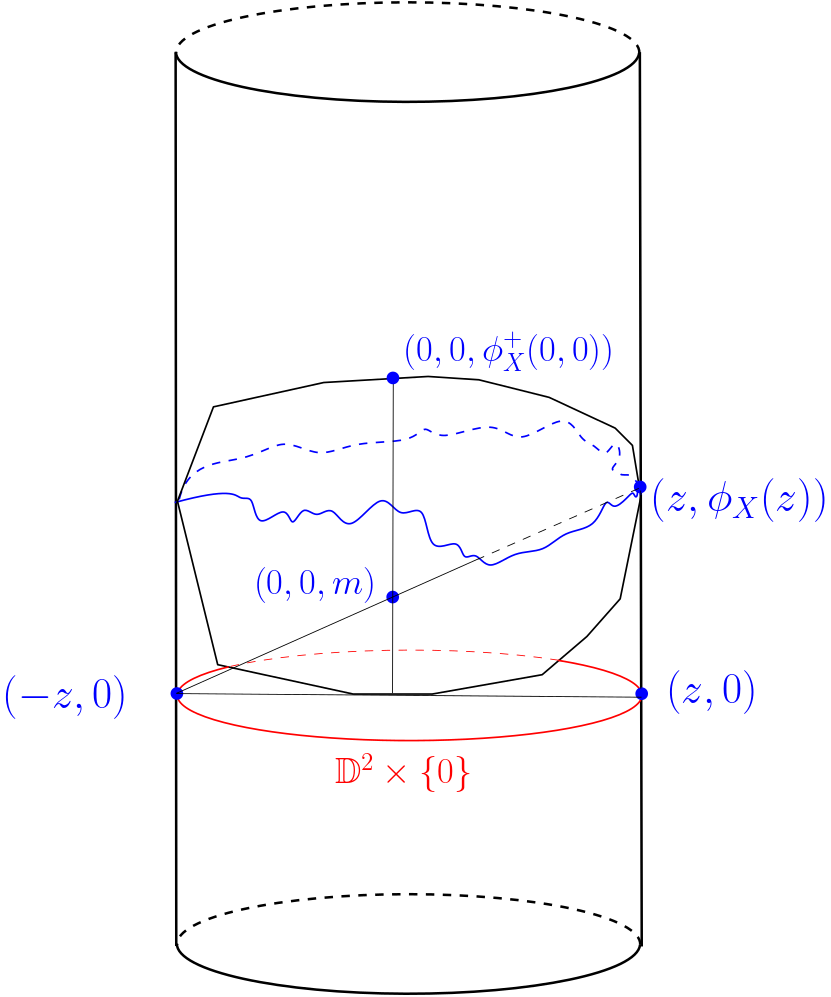}
\caption{Illustration for the proof of Lemma \ref{simple_result}}\label{thales}
\end{figure}

By elementary computation, we find that $m=\frac{1}{2}\phi_X(z)$. Now, due to the concavity of $\phi_X^+$, we have
$$\phi_X^+(0,0)\geq\frac{1}{2}\phi_X^+(-z)+\frac{1}{2}\phi_X^+(z).$$
Since $\phi_X^+=\phi_X$ on $\mathbb{S}^1$, we obtain: $$\phi_X^+(0,0)\geq\frac{1}{2}\phi_X(-z)+\frac{1}{2}\phi_X(z).$$
Hence, by the positivity of $\phi_X$, we deduce that $\phi_X^+(0,0)\geq m$. Since $\mathbb{D}^2\times\{0\}$ is a support plane at the point $(0,0,\phi_X^-(0,0))$, then by definition $\phi_X^{-}(0,0)=0$ and hence $$m= \phi_X^+(0,0)-\phi^-_X(0,0)\leq w(X),$$
where we recall that $w(X):=\underset{\eta\in\mathbb{D}^2}{\sup} \left( \mathrm{L}(\eta,\phi_X^+(\eta))-\mathrm{L}(\eta,\phi_X^{-}(\eta))\right)$. Therefore: $$\phi_X(z)=2m\leq 2w(X).$$ This concludes the proof.
\end{proof}
With Lemma \ref{simple_result} in place, we now possess all the necessary tools to prove Proposition \ref{lamination_petit_que_width}.
\begin{proof}[Proof of Proposition \ref{lamination_petit_que_width}]

Let $\lambda^-$ be the infinitesimal earthquake measure of $\mathcal{E}^-_X$. Let $I=[x,y]$ be a geodesic segment in $\mathbb{D}^2$ of length one and transverse to $\lambda$. Let $v$ and $w\in\minko$ such that $\mathrm{P}_v$ and $\mathrm{P}_w$ are support planes of $\mathrm{epi^+}(\phi_X^-)$ at $(x,\phi_X^-(x))$ and $(y,\phi_X^-(y))$, respectively. As the width of a vector field is invariant under the action of $\mathrm{O}_0(1,2)\ltimes\minko$ (see Lemma \ref{width_equivariant}), we can assume that $v=0$ so that $\mathrm{P}_v=\mathbb{D}^2\times\{0\}$, $x=(0,0)$, and $I$ is contained in the horizontal line joining $(-1,0)$ and $(1,0)$, which implies that $y=(\tanh(1),0)$.
By Lemma \ref{planes_intersect}, the planes $\mathrm{P}_v$ and $\mathrm{P}_w$ intersect in $\HP$, then $v-w$ is spacelike. It follows from Remark \ref{HPBENDING} that:
   \begin{equation}\label{53}
       \lambda^-(I)\leq \lVert v-w\rVert=\lVert w\rVert.\end{equation} If $w=(w_0,w_1,w_2)$ then since $(y,\phi_X^-(y))\in\mathrm{P}_w$ we get:
$$  -w_0+\tanh(1)w_1=\phi_X^-(y),$$
however since $\mathbb{D}^2\times\{0\}$ is a support plane then $\phi_X^-(y)\geq 0$ and so: \begin{equation}\label{54}
-w_0+\tanh(1)w_1\geq0.\end{equation} Now since $\mathrm{P}_w$ is a support plane for $\mathrm{epi^+}(\phi_X^-)$, then for all $\eta\in \overline{\mathbb{D}^2},$ we have:
\begin{equation}\label{55}
    \inner{(1,\eta),w}\leq\phi_X^-(\eta).
\end{equation}
By applying this inequality for $\eta\in\{(1,0),(-1,0),(0,1),(0,-1)\}$ and using Lemma \ref{simple_result}, we derive the following inequalities:
\begin{equation}\label{56}
    -w_0+w_1\leq 2w(X).
\end{equation}
\begin{equation}\label{57}
    -w_0-w_1\leq 2w(X).
\end{equation}
\begin{equation}\label{58}
    -w_0+w_2\leq 2w(X).
\end{equation}
\begin{equation}\label{59}
    -w_0-w_2\leq 2w(X).
\end{equation}
Our goal now is to use these inequalities to bound the norm of each coordinate of the vector $w$ by the width of $X$.
First using inequalities \eqref{54} and \eqref{56}, we obtain:
$$w_0\leq \frac{2\tanh(1)}{1-\tanh(1)}w(X).$$ Adding the inequalities \eqref{56} and \eqref{57}, we get:
$$-2w(X)\leq w_0.$$ Therefore we conclude:

\begin{equation}\label{60}
    \lvert w_0\rvert \leq \frac{2\tanh(1)}{1-\tanh(1)}w(X).
\end{equation}
Now, from inequalities \eqref{54} and \eqref{60}, we have: 
$$\frac{-2}{1-\tanh(1)}w(X)\leq w_1.$$ From inequalities \eqref{56} and \eqref{60}, we get:
$$w_1\leq \frac{2}{1-\tanh(1)}w(X).$$ Hence we can establish:
\begin{equation}\label{61}
    \lvert w_1\rvert \leq \frac{2}{1-\tanh(1)}w(X).
\end{equation}
Now according to inequalities \eqref{58} and \eqref{60} we have:
$$w_2\leq\frac{2}{1-\tanh(1)}w(X).$$
Finally from inequalities \eqref{59} and \eqref{60} we get:
$$\frac{-2}{1-\tanh(1)}w(X)\leq w_2.$$
Therefore: \begin{equation}\label{62}
    \lvert w_2\rvert\leq \frac{2}{1-\tanh(1)}w(X).
\end{equation}
Hence, by combining inequalities \eqref{61} and \eqref{62}, the Minkowski norm of $w$ satisfies:

\begin{align*}
\lVert w\rVert&=\sqrt{-w_0^2+w_1^2+w_2^2}\\
&\leq \sqrt{w_1^2+w_2^2}\\
&\leq \frac{2\sqrt{2}}{1-\tanh(1)}w(X).
\end{align*}
From equation \eqref{53}, we can then deduce that: $$\lambda^-(I)\leq \frac{2\sqrt{2}}{1-\tanh(1)}w(X).$$ The proof for estimating $\lambda^+$ can be done in a similar manner. The proof is complete.
\end{proof}
The proof of Theorem \ref{TH2} is then completed by combining Propositions \ref{lamination_petit_que_width} and \ref{zygmund_to_width}.

\section{Length and Thurston norm}\label{sec7}
In this section we will work on a closed surface $\Sigma_g$ of genus $g\geq 2$. For each Fuchsian representation $\rho:\pi_1(\Sigma)\to\isom(\mathbb{H}^2)$, we denote by $S_{\rho}$ the quotient: $$S_{\rho}:=\mathbb{H}^2/\rho(\pi_1(\Sigma_g)).$$
\begin{defi}
Let $\rho:\pi_1(\Sigma)\to\isom(\mathbb{H}^2)$ be a Fuchsian representation. Then we say that $\lambda$ is a measured geodesic lamination on $S_{\rho}$ if $\lambda$ lifts to a $\rho(\pi_1(\Sigma_g))-$invariant measured geodesic lamination in $\mathbb{H}^2.$   
\end{defi}
We denote by $\mathcal{ML}(S_{\rho})$ the set of measured geodesic laminations on the closed hyperbolic surface $S_{\rho}$. A \textit{weighted multicurve} on $S_{\rho}$ is a geodesic lamination $\lambda$ of the form:
$$\lambda=\sum_{i=1}^ka_i\alpha_i,$$ where $\alpha_i$ are non-trivial simple closed geodesics, pairwise non-homotopic, and $a_i>0$. For each closed geodesic $\alpha$ of $S_{\rho}$, we define the length $\ell_{\rho}(\alpha)$ to be the hyperbolic length of the geodesic $\alpha$. It is known that the set of weighted measured geodesic laminations is dense in $\mathcal{ML}(S_{\rho})$. Kerckhoff \cite{Kerearth} has shown that the length function can be extended to general measured geodesic lamination $\lambda\in\mathcal{ML}(S_{\rho})$. More precisely, for each Fuchsian representation $\rho$, we define the length function of $\rho$
$$\begin{array}{ccccc}
\ell_{\rho} & : & \mathcal{ML}(S_{\rho}) & \to & [0,+\infty[\\
 & & \lambda & \mapsto & \ell_{\rho}(\lambda) \\
\end{array}$$
to be the unique continuous function such that, for every weighted multicurve $\lambda=\sum_{i=1}^ka_i\alpha_i$, we have
$$\ell_{\rho}(\lambda)=\sum_{i=1}^ka_i\ell_{\rho}(\alpha_i).$$
The goal of this section is to prove Proposition \ref{lengthvsThurs}. Let us now fix once and for all a Fuchsian representation $\rho$.
\begin{prop}
Let $\lambda$ be a measured geodesic lamination on the hyperbolic surface $S_{\rho} := \mathbb{H}^2/\rho(\pi_1(\Sigma_g))$. Then, the following inequality holds:
\begin{equation}
\ell_{\rho}(\lambda) \leq \frac{64\pi(3e-1)}{3(e-1)}(g-1) \lVert \lambda\rVert_{\mathrm{Th}}^{\rho}.
\end{equation}
\end{prop}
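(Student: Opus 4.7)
The plan is to realize the length $\ell_\rho(\lambda)$ as a volume in Half-pipe geometry and then bound this volume by the product of the width of an associated equivariant vector field and the hyperbolic area of $S_\rho$.

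First, I would lift $\lambda$ to a $\rho(\pi_1(\Sigma_g))$-invariant measured geodesic lamination $\widetilde{\lambda}$ on $\mathbb{H}^2$; since $S_\rho$ is closed, $\widetilde{\lambda}$ is bounded. Gardiner's Theorem \ref{GardThurs} produces a Zygmund infinitesimal earthquake $V_{\widetilde{\lambda}}$ along $\widetilde{\lambda}$, whose boundary value is a Zygmund (hence continuous) vector field $X$ on $\mathbb{S}^1$ that is $(\rho,\tau)$-equivariant for a canonical admissible cocycle $\tau$ representing $\widetilde{\lambda}$. By Proposition \ref{kerkhoffvector} together with the uniqueness in Proposition \ref{uniqness_of_earthquake}, the convex hull construction recovers $V_{\widetilde{\lambda}}$ as $\mathcal{E}_X^-$ (up to leaves of discontinuity), so the earthquake measure of $\mathcal{E}_X^-$ is exactly $\widetilde{\lambda}$. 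The convex hull $\mathcal{C}(X) \subset \HP$ is preserved by the equivariant action $\mathrm{Is}(\rho,\overline{\tau})$ of \eqref{dualrepresentation}, and the quotient $\mathcal{C}(X)/\mathrm{Is}(\rho,\overline{\tau})$ is compact by cocompactness of $\rho$ and continuity of $\phi_X^\pm$ on $\overline{\mathbb{D}^2}$ (property $(\mathrm{P}3)$).

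Next, I would invoke Barbot--Fillastre's interpretation of the length function, which identifies $\ell_\rho(\lambda)$ with the HP-volume of the quotient $\mathcal{C}(X)/\mathrm{Is}(\rho,\overline{\tau})$. In the model $\HP \cong \mathbb{H}^2 \times \mathbb{R}$ obtained via the height function $L$ from \eqref{formule_L}, the natural HP-volume form factors as $d\mathrm{vol}_{\mathbb{H}^2}\wedge dL$, so Fubini yields
\begin{equation*}
\ell_\rho(\lambda) = \int_{\mathbb{D}^2/\rho}\bigl(L(\eta,\phi_X^+(\eta)) - L(\eta,\phi_X^-(\eta))\bigr)\, d\mathrm{vol}_{\mathbb{H}^2}(\eta).
\end{equation*}
The integrand is pointwise bounded by $w(X)$ by the formula \eqref{width_with_function}, and Gauss--Bonnet gives $\mathrm{Area}(S_\rho) = 4\pi(g-1)$, whence $\ell_\rho(\lambda) \leq 4\pi(g-1)\, w(X)$.

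Finally, combining the upper width estimate of Theorem \ref{TH2}, namely $w(X) \leq \frac{8}{3}\|X\|_{cr}$, with Hu's inequality from \cite{Junhu_Zygnorm}, $\|X\|_{cr} \leq \frac{2(3e-1)}{e-1}\|\widetilde{\lambda}\|_{\mathrm{Th}} = \frac{2(3e-1)}{e-1}\|\lambda\|_{\mathrm{Th}}^\rho$, the three estimates chain to
\begin{equation*}
\ell_\rho(\lambda) \leq 4\pi(g-1)\cdot\frac{8}{3}\cdot\frac{2(3e-1)}{e-1}\,\|\lambda\|_{\mathrm{Th}}^\rho = \frac{64\pi(3e-1)}{3(e-1)}(g-1)\,\|\lambda\|_{\mathrm{Th}}^\rho.
\end{equation*}
The main obstacle is the second step: invoking Barbot--Fillastre's volume identity requires identifying the HP-volume form precisely enough to justify the decomposition $d\mathrm{vol}_{\mathbb{H}^2}\wedge dL$ and the ensuing pointwise bound by $w(X)$. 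Once this interpretation is in place, the rest is a direct chaining of Theorem \ref{TH2} with Hu's estimate.
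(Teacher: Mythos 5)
Your overall strategy is the same as the paper's: produce the equivariant boundary vector field $X$ associated to $\widetilde{\lambda}$, chain Hu's estimate $\lVert X\rVert_{cr}\leq \frac{2(3e-1)}{e-1}\lVert\lambda\rVert_{\mathrm{Th}}^{\rho}$ with the width bound $w(X)\leq\frac{8}{3}\lVert X\rVert_{cr}$ of Theorem \ref{TH2}, and convert a Half-pipe volume bound into the length bound via Gauss--Bonnet, with exactly the constants $4\pi(g-1)\cdot\frac{8}{3}\cdot\frac{2(3e-1)}{e-1}$. But the step you yourself flag as the "main obstacle" is genuinely where your argument breaks from what Barbot--Fillastre actually prove. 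Their result (stated in the paper as Proposition \ref{Fillastre_barbot_length}) does \emph{not} identify $\ell_\rho(\lambda)$ with the volume of the full quotient of the convex hull $\mathcal{C}(X)$, i.e.\ with $\int\bigl(\mathrm{L}(\eta,\phi_X^+(\eta))-\mathrm{L}(\eta,\phi_X^-(\eta))\bigr)\,\mathrm{d}\mathrm{A}_\rho$. It identifies $\ell_\rho(\lambda)$ with the volume of the region between the \emph{lower} boundary $\mathrm{gr}(\phi^-_{\rho,\tau_\lambda})$ and a zero-mean-curvature surface $\mathrm{gr}(\phi^m_{\rho,\tau_\lambda})$ sandwiched between $\phi^-_{\rho,\tau_\lambda}$ and $\phi^+_{\rho,\tau_\lambda}$:
\begin{equation*}
\ell_{\rho}(\lambda)=\int_{S_{\rho}} \bigl(\overline{\phi_{\rho,\tau_{\lambda}}^m}-\overline{\phi_{\rho,\tau_{\lambda}}^-}\bigr)\, \mathrm{d}\mathrm{A}_{\rho}.
\end{equation*}
Your claimed equality with the full convex-hull volume is not available (and there is no reason to expect it to hold); what saves the argument is that only an upper bound is needed, and since $\overline{\phi^m_{\rho,\tau_\lambda}}\leq\overline{\phi^+_{\rho,\tau_\lambda}}$ the correct identity still gives $\ell_\rho(\lambda)\leq\int(\overline{\phi^+_{\rho,\tau_\lambda}}-\overline{\phi^-_{\rho,\tau_\lambda}})\,\mathrm{d}\mathrm{A}_\rho\leq 4\pi(g-1)\,w(X)$. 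So you should replace the asserted volume identity by this sandwich inequality; as written, the key middle step is misquoted rather than merely technically incomplete.

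A second, smaller gap is the identification of \emph{which} support function Barbot--Fillastre's formula is about. Their statement concerns $\phi_{\rho,\tau_\lambda}$, the unique continuous function whose graph is preserved by $\mathrm{Is}(\rho,\overline{\tau_\lambda})$ for the specific cocycle $\tau_\lambda$ built from $\lambda$ as in \eqref{cocycle_de_lamination}. You need to know that the support function $\phi_X$ of your vector field $X$ coincides with $\phi_{\rho,\tau_\lambda}$, and also that the bending/earthquake measure of $\mathrm{gr}(\phi_X^-)$ is exactly $\widetilde{\lambda}$ (so that Hu's estimate is applied to the right lamination). The paper obtains both points from Bonsante--Seppi: the bending lamination of $\mathrm{gr}(\phi_X^-)$ is $\widetilde{\lambda}$, and $\mathrm{gr}(\phi_X^-)$ is invariant under $\mathrm{Is}(\rho,\tau_\lambda)$, whence $\phi_X=\phi_{\rho,\tau_\lambda}$ by the uniqueness in Theorem \ref{nieseppi}. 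Your appeal to "a canonical admissible cocycle $\tau$ representing $\widetilde{\lambda}$" together with Proposition \ref{kerkhoffvector} leaves precisely this identification implicit (and note that the construction of $X$ from a bounded lamination is the Gardiner--Hu--Lakic differentiability statement \eqref{bounded_infinitesimal_earthquake}, not Theorem \ref{GardThurs}, which goes in the converse direction). With these two repairs your argument becomes the paper's proof.
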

We recall that the Thurston norm $\lVert \lambda\rVert_{\mathrm{Th}}^{\rho}$
is defined as the Thurston norm (in the sense of Definition \ref{thursnorm2}) of the lift of $\widetilde{\lambda}$ in $\mathbb{H}^2$ by the covering map $\mathbb{H}^2\to S_{\rho}$. Observe that by co compactness of the action of $\rho(\pi_1(\Sigma_g))$ on $\mathbb{H}^2$, the measured geodesic lamination $\widetilde{\lambda}$ is bounded.
In \cite{Mess}, Mess identifies the set of measured geodesic laminations $\mathcal{ML}(S_{\rho})$ with the tangent space $\mathrm{T}_{\rho}\mathcal{T}(\Sigma_g)$ at $\rho$ of Teichm\"uller space, which is $\mathrm{H}^1_{\mathrm{Ad}\rho}(\pi_1(\Sigma),\mathfrak{isom}(\mathbb{H}^2))$, (see Section \ref{equiva_inf_earth}). For the reader’s convenience, we describe the cocycle associated to a measured geodesic lamination $\lambda$, referring to \cite{Mess,BS17,barbotfillastre} for more details.

Let us fix a point $x_0\in\mathbb{H}^2\setminus\widetilde{\lambda}$. For any $\alpha\in\pi_1(\Sigma_g)$, consider $\mathcal{G}[x_0,\rho(\alpha)(x_0)]$, the space of geodesics in $\mathbb{H}^2$ intersecting $[x_0,\rho(\alpha)(x_0)]$. Then we define $\sigma:\mathcal{G}[x_0,\rho(\alpha)(x_0)]\to\minko$ as the map assigning to each geodesic $\ell$ (intersecting $[x_0,\rho(\alpha)(x_0)]$) the unit spacelike vector in $\minko$ orthogonal to $\ell$ and pointing towards $\rho(\alpha)(x_0)$. Then, we set:
\begin{equation}\label{cocycle_de_lamination}
\tau_{\lambda}(\alpha):=\Lambda\left(\displaystyle \int_{\mathcal{G}[x_0,\rho(\alpha)(x_0)]} \sigma  \mathrm{d}\lambda\right)
\end{equation}
It can be proven that $\tau_{\lambda}\in\mathrm{Z}^1_{\mathrm{Ad}\rho}(\pi_1(\Sigma),\mathfrak{isom}(\mathbb{H}^2))$. Moreover, if the basepoint $x_0$ is changed, the new cocycle differs from the preceding one by a coboundary.

Now, the key point to prove Proposition \ref{lengthvsThurs} is an interpretation of the length function in terms of Half-Pipe geometry due to Barbot and Fillastre. Before explaining this relation, we need to fix some notation: For any function $\psi:\mathbb{D}^2\to \mathbb{R}$, we denote by $\overline{\psi}:\mathbb{H}^2\to\mathbb{R}$ the function defined on the hyperboloid $\mathbb{H}^2$ (see \eqref{hyperboloid}) obtained by identifying $\mathbb{H}^2$ with $\mathbb{D}^2$ through the radial projection $\Pi$ defined in \eqref{radial}, namely for $\eta\in\mathbb{D}^2$:

$$\overline{\psi}(\Pi^{-1}(\eta))=\frac{\psi(\eta)}{\sqrt{1-\lVert\eta\rVert_{euc}^2}}.$$

\begin{prop}\cite[Propostion 3.31]{barbotfillastre}\label{Fillastre_barbot_length}
Let $\lambda$ be a measured geodesic lamination on $S_{\rho}$, and let $\tau_{\lambda}$ be the $\rho$-cocycle associated with $\lambda$. Consider $\phi_{\rho,\tau_{\lambda}}:\mathbb{S}^1\to\mathbb{R}$ to be the unique continuous function whose graph is preserved by the representation $\mathrm{Is}(\rho,\tau_{\lambda})$ (see Theorem \ref{nieseppi}). Then there exists a function $\phi_{\rho,\tau_{\lambda}}^m:\mathbb{D}^2\to\mathbb{R}$ with a graph preserved by $\mathrm{Is}(\rho,\tau_{\lambda})$ such that:
    \begin{enumerate}
        \item $\overline{\phi_{\rho,\tau_{\lambda}}^-}\leq\overline{\phi_{\rho,\tau_{\lambda}}^m}\leq\overline{\phi_{\rho,\tau_{\lambda}}^+}$. 
        \item The length of $\lambda$ is given by:
        \begin{equation}\label{944}\ell_{\rho}(\lambda)=\displaystyle \int_{S_{\rho}} (\overline{\phi_{\rho,\tau_{\lambda}}^m}-\overline{\phi_{\rho,\tau_{\lambda}}^-}) \, \mathrm{d}\mathrm{A}_{\rho},\end{equation}
    \end{enumerate}
     where $\mathrm{A}_{\rho}$ is the area form of the hyperbolic surface $S_{\rho}$.
\end{prop}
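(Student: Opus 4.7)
The plan is to construct $\phi_{\rho,\tau_\lambda}^m$ explicitly as the support function of a pleated surface in $\HP$ whose bending data is exactly the lift $\widetilde{\lambda}$ of $\lambda$, and then verify the three required properties (equivariance, sandwich inequality, length formula) one at a time, the last by reduction to weighted multicurves.

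First, I would fix a basepoint $x_0\in\mathbb{H}^2\setminus|\widetilde{\lambda}|$ and a Killing field $K_0\in\mathfrak{isom}(\mathbb{H}^2)$, and define $\phi^m=\phi_{\rho,\tau_\lambda}^m$ on the hyperboloid model as follows: for $y\in\mathbb{H}^2$ connected to $x_0$ by the geodesic arc $[x_0,y]$ which is transverse to $\widetilde{\lambda}$ almost everywhere, set
\[
\overline{\phi^m}(y)\;=\;\inner{y,v_0}_{1,2}\;+\;\int_{\mathcal{G}[x_0,y]}\inner{y,\sigma(\ell)}_{1,2}\,\mathrm{d}\widetilde{\lambda}(\ell),
\]
where $\sigma(\ell)$ is the unit spacelike vector normal to $\ell$ pointing towards $y$ and $v_0\in\minko$ satisfies $\Lambda(v_0)=K_0$. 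In the Klein model, $\overline{\phi^m}$ is the pullback of $\phi^m$ by the radial projection $\Pi$. This formula is (up to a coboundary) the half-pipe analogue of the standard bending construction: over each gap of $\widetilde{\lambda}$ the function $\phi^m$ is affine, hence corresponds via $\mathcal{D}^*$ of \eqref{D_star} to a spacelike plane in $\HP$, and two consecutive gaps are separated by a bending line whose dihedral angle (in the sense of Proposition \ref{hpangle}) equals the $\widetilde{\lambda}$-mass of the crossing. Independence of $\phi^m$ from the chosen path, modulo the affine term $\inner{\cdot,v_0}$, follows from the fact that for each closed loop the integral produces a translation vector in $\minko$ depending only on the endpoints, which is the content of the cocycle identity satisfied by \eqref{cocycle_de_lamination}.

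Next, I would verify the $\mathrm{Is}(\rho,\tau_\lambda)$-equivariance of the graph of $\phi^m$ by a direct computation: replacing $y$ by $\rho(\alpha)y$ and changing the integration path from $[x_0,\rho(\alpha)y]$ to $\rho(\alpha)[x_0,y]\cup[x_0,\rho(\alpha)x_0]$, the first piece reproduces $\overline{\phi^m}(y)$ by $\rho(\alpha)$-invariance of $\widetilde\lambda$, while the second piece contributes precisely $\inner{\rho(\alpha)y,\overline{\tau_\lambda(\alpha)}}_{1,2}$, which by Lemma \ref{equii} is exactly the shift required to match the action of $\mathrm{Is}(\rho,\tau_\lambda)$. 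For the sandwich inequality $\overline{\phi^-}\le\overline{\phi^m}\le\overline{\phi^+}$, I observe that, restricted to any stratum $S$ of $\widetilde\lambda$, the function $\phi^m$ is the support function of some spacelike plane $\mathrm{P}_v\subset\HP$ and the equivariance, continuity and periodicity on $S_\rho$ force the plane $\mathrm{P}_v$ to be a support plane of $\mathrm{epi}^{\mp}(\phi_{\rho,\tau_\lambda}^{\pm})$ at every point above $S$; property $(\mathrm{P}4)$ of Section \ref{page 8} then yields the two-sided inequality.

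Finally, for the length formula I would first establish it for a weighted simple closed geodesic $\lambda=a\alpha$ by direct calculation: over a standard fundamental collar of $\alpha$ one may choose coordinates $(r,s)$ where $r$ is signed distance to $\alpha$ and $s\in[0,\ell_\rho(\alpha))$ parametrizes $\alpha$, and in these coordinates the function $\overline{\phi^m}-\overline{\phi^-}$ computes explicitly to $a\cdot|r|\cdot(\text{cosh-weight})$ inside the collar and vanishes outside, whereupon Fubini gives the integral $a\,\ell_\rho(\alpha)=\ell_\rho(\lambda)$ (the explicit factor cosh is exactly what the hyperboloid area form $\mathrm{d}\mathrm{A}_\rho$ contributes to cancel the $1/\sqrt{1-\lVert\eta\rVert_{euc}^2}$ in the definition of $\overline{\phi^m}$). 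For a weighted multicurve the contributions add since the $\overline{\phi^m}-\overline{\phi^-}$ construction is additive in $\lambda$ for disjoint supports and the collars can be chosen disjointly. Both sides of \eqref{944} are continuous in $\lambda\in\mathcal{ML}(S_\rho)$, so density of weighted multicurves upgrades the identity to all measured geodesic laminations.

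The main obstacle is the second paragraph: proving that the pleated surface $\phi^m$ genuinely lies between the two boundary components of the convex hull of $\mathrm{gr}(\phi_{\rho,\tau_\lambda})$. This is delicate because $\phi^m$ is defined by an integration prescription that uses only the bending data, so one must show that no accidental ``crossing'' of $\mathrm{gr}(\phi^{\pm})$ occurs; the argument has to combine the cocompactness of the $\rho$-action (which rules out unbounded drift) with a local extremality property saying that at each bending line the chosen support plane of $\mathrm{epi}^{\mp}(\phi^{\pm})$ is one of the two extreme support planes provided by Lemma \ref{extremelemma}.
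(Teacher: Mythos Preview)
The paper does not give its own proof of this proposition; it is quoted from Barbot--Fillastre, and the only information the paper adds is the sentence immediately following the statement: the graph of $\phi_{\rho,\tau_\lambda}^m$ is the surface of \emph{zero mean curvature} in $\HP$ with the given boundary data, and the right-hand side of \eqref{944} is the half-pipe volume between $\mathrm{gr}(\phi^-)$ and $\mathrm{gr}(\phi^m)$. So $\phi^m$ is a smooth object obtained from a PDE, not a pleated surface.

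Your construction, by contrast, builds a \emph{pleated} surface with bending measure $\widetilde\lambda$, and this is exactly the object the paper already has a name for: by Proposition~\ref{BS17_left_inf_earth}\eqref{1BS17} the lower boundary $\mathrm{gr}(\phi_{\rho,\tau_\lambda}^-)$ is the pleated surface whose bending lamination equals $\widetilde\lambda$. Concretely, for a single weighted leaf $a\cdot\ell$ with $\ell=\{\eta_2=0\}$ and basepoint on the side $\eta_2<0$, your integral formula gives $\overline{\phi^m}(y)=0$ for $y_2<0$ and $\overline{\phi^m}(y)=a\,\inner{y,e_3}_{1,2}=ay_2$ for $y_2>0$, which in the Klein model is precisely the piecewise-affine convex function $\overline{\phi_\ell}$ of \eqref{phil} with $b=a$; this \emph{is} $\phi^-$. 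Hence your $\phi^m$ coincides with $\phi^-$ up to the affine normalization $\inner{\cdot,v_0}$, the integrand $\overline{\phi^m}-\overline{\phi^-}$ vanishes identically, and \eqref{944} would yield $\ell_\rho(\lambda)=0$ for every $\lambda$. The collar computation you sketch cannot rescue this: on each gap of $\widetilde\lambda$ your $\phi^m$ and $\phi^-$ are the \emph{same} affine function (the support plane touching the ideal boundary of that gap), so their difference is zero on every gap, not $a\lvert r\rvert\cdot(\text{cosh-weight})$. The genuine $\phi^m$ must be taken to be the mean surface, which is strictly between $\phi^-$ and $\phi^+$ in the interior and is where the length is encoded.
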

In fact, the graph of the function $\phi_{\rho,\tau_{\lambda}}^m$ is a surface of zero \textit{mean} curvature in Half-pipe space $\mathbb{D}^2\times\mathbb{R}$ and the integral in \eqref{944} is the Half-Pipe volume of the region bounded between the graph of $\phi_{\rho,\tau_{\lambda}}^-$ and $\phi_{\rho,\tau_{\lambda}}^m$ . In this paper, we will focus on both properties of the function $\phi_{\rho,\tau_{\lambda}}^m$ given in Proposition \ref{Fillastre_barbot_length}, and we refer the reader to \cite[Section 2.3.1]{barbotfillastre} for more details about the construction of $\phi_{\rho,\tau_{\lambda}}^m$. The second result that we need follows from the work of Bonsante and Seppi. We summarize here some results:

\begin{prop}\cite{BS17}\label{BS17_left_inf_earth}
Let $\lambda$ be a measured geodesic lamination on $S_{\rho}$, and let $\widetilde{\lambda}$ be its lift to $\mathbb{H}^2$. Consider the bounded left infinitesimal earthquake of the circle (see \eqref{bounded_infinitesimal_earthquake}) given by:
  $$X(z):=\frac{d}{dt}\big\lvert_{t=0}\mathrm{E}_l^{t\widetilde{\lambda}}(z).$$ Let $\phi_X:\mathbb{S}^1\to\mathbb{R}$ be the support function of $X$, and consider $x_0\in\mathbb{H}^2$ be a point that lies in a stratum of $\widetilde{\lambda}$. By translating by a Killing field, we furthermore assume that $\phi_X^-(x_0)=0$. Then:
  \begin{enumerate}
      \item \label{1BS17}The bending lamination of $\mathrm{gr}(\phi_X^-)$ is $\widetilde{\lambda}$, (see Definition \ref{def6.1}).
      \item\label{2BS17} Let $\tau_{\lambda}$ the cocycle associated to $\lambda$ given by:
      \begin{equation}\label{cocycleBS17}
    \tau_{\lambda}(\alpha)=\Lambda\left(\displaystyle \int_{\mathcal{G}[x_0,\rho(\alpha)(x_0)]} \sigma \, \mathrm{d}\lambda\right)
\end{equation} Then the graph of $\phi_X^-$ is invariant under the representation $\mathrm{Is}(\rho,\tau_{\lambda})$.
  \end{enumerate}
\end{prop}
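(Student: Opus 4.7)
\textbf{Proof plan for Proposition \ref{BS17_left_inf_earth}.}

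\smallskip
\emph{Part (1): identifying the bending lamination.} The plan is to invoke the uniqueness statement of Proposition \ref{uniqness_of_earthquake}. By the Epstein--Marden result cited just after Definition \ref{infiearth}, the vector field $\dot{\mathrm{E}}_l^{\widetilde{\lambda}}$ on $\mathbb{H}^2$ is a left infinitesimal earthquake in the sense of Definition \ref{infiearth}, with associated geodesic lamination $\widetilde{\lambda}$. Since $\dot{\mathrm{E}}_l^{\widetilde{\lambda}}$ is Zygmund (by the Gardiner--Hu--Lakic theorem, Theorem \ref{boundedinfi} combined with \eqref{bounded_infinitesimal_earthquake}), it extends continuously to the circle, and this extension is precisely $X$. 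On the other hand, by Proposition \ref{earthquake_properties} the vector field $\mathcal{E}_X^-$ built from the convex hull construction is also a left infinitesimal earthquake that extends continuously to $X$, and its associated lamination is by definition the bending lamination of $\mathrm{gr}(\phi_X^-)$. Proposition \ref{uniqness_of_earthquake} then forces these two laminations to coincide, which is exactly statement \eqref{1BS17}.

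\smallskip
\emph{Part (2): identifying the cocycle.} The strategy is to first establish the equivariance of $X$ as a vector field on $\mathbb{S}^1$, and then translate it into invariance of $\mathrm{gr}(\phi_X^-)$. Since $\widetilde{\lambda}$ is $\rho(\pi_1(\Sigma_g))$-invariant and $x_0$ lies in a stratum of $\widetilde{\lambda}$, the earthquake maps $\mathrm{E}_l^{t\widetilde{\lambda}}$ can be normalized so that $\mathrm{E}_l^{t\widetilde{\lambda}}(x_0)=x_0$; then $\mathrm{E}_l^{t\widetilde{\lambda}}\circ\rho(\alpha)=\rho_t(\alpha)\circ \mathrm{E}_l^{t\widetilde{\lambda}}$ for a smooth family of Fuchsian representations $\rho_t$ with $\rho_0=\rho$. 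Differentiating at $t=0$ gives that $\dot{\mathrm{E}}_l^{\widetilde{\lambda}}$ is $(\rho,\tau)$-equivariant on $\mathbb{H}^2$ for the cocycle $\tau(\alpha)=\tfrac{d}{dt}\big|_{t=0}\rho_t(\alpha)\rho(\alpha)^{-1}$. The key computation, which encodes the classical identification of an earthquake deformation with the shearing measure, is to check that $\tau$ equals the cocycle $\tau_\lambda$ of \eqref{cocycleBS17}: this is done by writing the path $\mathrm{E}_l^{t\widetilde{\lambda}}(\rho(\alpha)\cdot x_0)$ as a product of shears across each leaf of $\widetilde{\lambda}$ intersecting the segment $[x_0,\rho(\alpha)(x_0)]$, and using that the infinitesimal generator of a hyperbolic transformation with axis $\ell$ and translation length $1$ is, via the isomorphism $\Lambda$, the unit spacelike vector normal to $\ell$ (Lemma \ref{hyperbolic_killing_field}); Riemann--sum convergence then yields the integral formula \eqref{cocycleBS17}.

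\smallskip
\emph{From $(\rho,\tau_\lambda)$-equivariance to invariance of $\mathrm{gr}(\phi_X^-)$.} Passing to the boundary (using the continuous extension of $\dot{\mathrm{E}}_l^{\widetilde{\lambda}}$ to $X$), we obtain $X=\rho(\alpha)_*X+\tau_\lambda(\alpha)$ as vector fields on $\mathbb{S}^1$. By Lemma \ref{equii}, this identity is equivalent to $\mathrm{gr}(\phi_X)=\mathrm{Is}(\rho(\alpha),\overline{\tau}_\lambda(\alpha))\,\mathrm{gr}(\phi_X)$ for every $\alpha\in\pi_1(\Sigma_g)$, where $\overline{\tau}_\lambda:=\Lambda^{-1}\circ\tau_\lambda$. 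Since each $\mathrm{Is}(\rho(\alpha),\overline{\tau}_\lambda(\alpha))$ is an isometry of $\HP$ that preserves the orientation of the fibers (as noted before \eqref{groupeduality}), it preserves the convex set $\mathcal{C}(X)$ and hence preserves each of its two boundary components $\partial_{\pm}\mathcal{C}(X)$ individually. Using property $(\mathrm{P}5)$ on page \pageref{P5}, which identifies $\partial_-\mathcal{C}(X)$ with $\mathrm{gr}(\phi_X^-|_{\mathbb{D}^2})$, this yields the claimed invariance of $\mathrm{gr}(\phi_X^-)$ under $\mathrm{Is}(\rho,\overline{\tau}_\lambda)$, proving \eqref{2BS17}.

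\smallskip
\emph{Main obstacle.} The delicate point is the identification in Part (2) of the derivative cocycle $\tau=\tfrac{d}{dt}\big|_{t=0}\rho_t\cdot\rho^{-1}$ with the integral cocycle $\tau_\lambda$ from \eqref{cocycleBS17}. For weighted multicurves this is a finite computation using Lemma \ref{hyperbolic_killing_field}, but for a general measured lamination it requires a limiting argument and a careful choice of basepoint (which is why the normalization $\phi_X^-(x_0)=0$ appears in the statement, pinning down the coboundary ambiguity inherent in any cocycle construction).
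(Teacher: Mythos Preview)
The paper does not actually prove this proposition: the paragraph following the statement merely records that item \eqref{1BS17} is \cite[Proposition~5.2]{BS17} (phrased there in terms of support functions of domains of dependence) and that item \eqref{2BS17} goes back to Mess, with a pointer to \cite[Proposition~2.13]{BS17}. Your proposal is therefore not a comparison with the paper's proof but an attempt to derive the statement from the machinery built in the present paper. That is a legitimate and interesting exercise, and your Part~(2) together with the final equivariance-to-invariance step is a sound outline, with the delicate point (matching the derivative cocycle with the integral cocycle \eqref{cocycleBS17}) correctly flagged.

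There is, however, a genuine gap in Part~(1). Proposition~\ref{uniqness_of_earthquake} only asserts that the two underlying \emph{geodesic} laminations coincide and that the vector fields agree on the gaps. But statement~\eqref{1BS17}, as it is used in the proof of Proposition~\ref{lengthvsThurs}, is about the \emph{measured} lamination: one needs that the bending \emph{measure} of $\mathrm{gr}(\phi_X^-)$ in the sense of Definition~\ref{def6.1} equals $\widetilde{\lambda}$ as a transverse measure, not merely that their supports agree. Nothing in your argument addresses this. To close the gap within the paper's framework you would have to argue, for instance, that once the infinitesimal earthquakes agree on the gaps, the angle $\lVert v_i-v_{i+1}\rVert$ between consecutive support planes (Proposition~\ref{hpangle}) equals the translation length of the corresponding comparison Killing field (Lemma~\ref{hyperbolic_killing_field}), and then identify the latter with the $\widetilde{\lambda}$-mass of the subarc by differentiating Thurston's metering of $\mathrm{E}_l^{t\widetilde{\lambda}}$ --- the same Riemann-sum/limiting issue you already isolate as the main obstacle for Part~(2). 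Without this step, your Part~(1) proves strictly less than what is claimed.
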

The first statement \eqref{1BS17} is proven in \cite[Proposition 5.2]{BS17}. It is expressed in terms of the support function of the domain of dependence in Minkowski space and its dual lamination (see Remark \ref{remark_on_support_function}). The second statement \eqref{2BS17} follows from the work of Mess \cite{Mess} and is also formulated in terms of the domain of dependence in Minkowski space; see also \cite[Proposition 2.13]{BS17}.

\begin{proof}[Proof of Proposition \ref{lengthvsThurs}]Let $\lambda$ be a measured geodesic lamination on $S_{\rho}$ and $\widetilde{\lambda}$ its lift to $\mathbb{H}^2$. Consider $X$ the vector field on the circle defined by $X:=\frac{d}{dt}\big\lvert_{t=0}\mathrm{E}_l^{t\widetilde{\lambda}}$. Without loss of generality, we may assume that $X$ satisfies the hypothesis of Proposition \ref{BS17_left_inf_earth} up to translating by a Killing field. According to a result of Hu \cite[Theorem 2]{Junhu_Zygnorm}, the following estimate holds:
\begin{equation}\label{93}
\lVert X\rVert_{cr}\leq \frac{2(3e-1)}{e-1}\lVert \lambda\rVert_{\mathrm{Th}}^{\rho},
\end{equation}
where we recall that $\lVert \lambda\rVert_{\mathrm{Th}}^{\rho}=\lVert \widetilde{\lambda}\rVert_{\mathrm{Th}}$.
By Theorem \ref{TH2}, we have
\begin{equation}\label{94}
w(X)\leq \frac{8}{3}\lVert X\rVert_{cr}.
\end{equation}Using the first statement \eqref{1BS17} of Proposition \ref{BS17_left_inf_earth}, the measured lamination $\widetilde{\lambda}$ is the bending measure of the graph of $\phi_X^-$, and so by definition $\widetilde{\lambda}$ is the infinitesimal earthquake measure of the left infinitesimal earthquake, which extends continuously to $X$. Therefore, combining equations \eqref{93} and \eqref{94}, we get
\begin{equation}\label{95}
w(X)\leq \frac{16(3e-1)}{3(e-1)}\lVert \lambda\rVert_{\mathrm{Th}}^{\rho}.
\end{equation}

Now, by the second statement \eqref{2BS17} of Proposition \ref{BS17_left_inf_earth}, the graph of $\phi_X^-$ is preserved by $\mathrm{Is}(\rho,\tau_{\lambda})$, where $\tau_{\lambda}$ is the cocycle given by \eqref{cocycleBS17}. Therefore, by continuity, the graph of $\phi_X$ is also preserved by $\mathrm{Is}(\rho,\tau_{\lambda})$, and by Theorem \ref{nieseppi}, we necessarily have
$$\phi_X=\phi_{\rho,\tau_{\lambda}}.$$
Thus, from the last equation, the width of $X$ is given by
$$ w(X)=\underset{\eta\in\mathbb{D}^2}{\sup} \left( \mathrm{L}(\eta,\phi_{\rho,\tau_{\lambda}}^+(\eta))-\mathrm{L}(\eta,\phi_{\rho,\tau_{\lambda}}^{-}(\eta))\right)=\underset{x\in\mathbb{H}^2}{\sup} \left( \overline{\phi_{\rho,\tau_{\lambda}}^+}(x)-\overline{\phi_{\rho,\tau_{\lambda}}^{-}}(x)\right),$$
and so $0\leq\overline{\phi_{\rho,\tau_{\lambda}}^+}-\overline{\phi_{\rho,\tau_{\lambda}}^-}\leq w(X)$. Now let $\phi_{\rho,\tau_{\lambda}}^m:\mathbb{D}^2\to\mathbb{R}$ be the function given in Proposition \ref{Fillastre_barbot_length}. From the properties of the function $\phi_{\rho,\tau_{\lambda}}^m$, we deduce that
\begin{equation}\label{96}
\overline{\phi_{\rho,\tau_{\lambda}}^m}-\overline{\phi_{\rho,\tau_{\lambda}}^-}\leq\overline{\phi_{\rho,\tau_{\lambda}}^+}-\overline{\phi_{\rho,\tau_{\lambda}}^-}\leq w(X).
\end{equation}

Next, we will use the formula for the length of $\lambda$ given in Proposition \ref{Fillastre_barbot_length} and equations \eqref{95}, \eqref{96} to obtain:
\begin{align*}
\ell_{\rho}(\lambda)&=\displaystyle \int_{S_{\rho}} (\overline{\phi_{\rho,\tau_{\lambda}}^m}-\overline{\phi_{\rho,\tau_{\lambda}}^-}) \, \mathrm{d}\mathrm{A}_{\rho}\\
 &\leq \displaystyle \int_{S_{\rho}} w(X) \, \mathrm{d}\mathrm{A}_{\rho}\\
 &\leq \frac{16(3e-1)}{3(e-1)}\lVert\lambda\rVert_{\mathrm{Th}}^{\rho}\displaystyle \int_{S_{\rho}} 1 \, \mathrm{d}\mathrm{A}_{\rho}\\
 &=\frac{64\pi(3e-1)}{3(e-1)}(g-1)\lVert\lambda\rVert_{\mathrm{Th}}^{\rho},
\end{align*}
where we used in the last step the Gauss-Bonnet Theorem, this concludes the proof.
\end{proof}

\bibliographystyle{alpha}
\bibliography{zygmund.bib}

\newcommand{\etalchar}[1]{$^{#1}$}
\begin{thebibliography}{ABB{\etalchar{+}}07}

\bibitem[ABB{\etalchar{+}}07]{Note_on_paper_mess}
Lars Andersson, Thierry Barbot, Riccardo Benedetti, Francesco Bonsante,
  William~M. Goldman, Fran{\c{c}}ois Labourie, Kevin~P. Scannell, and Jean-Marc
  Schlenker.
\newblock Notes on a paper of {Mess}.
\newblock {\em Geom. Dedicata}, 126:47--70, 2007.

\bibitem[Bar05]{Barbot_flatspacetime}
Thierry Barbot.
\newblock Globally hyperbolic flat space--times.
\newblock {\em J. Geom. Phys.}, 53(2):123--165, 2005.

\bibitem[BB09]{canorot}
Riccardo Benedetti and Francesco Bonsante.
\newblock {\em Canonical {Wick} rotations in 3-dimensional gravity}, volume 926
  of {\em Mem. Am. Math. Soc.}
\newblock Providence, RI: American Mathematical Society (AMS), 2009.

\bibitem[BF20]{barbotfillastre}
Thierry Barbot and Fran{\c{c}}ois Fillastre.
\newblock Quasi-{Fuchsian} co-{Minkowski} manifolds.
\newblock In {\em In the tradition of Thurston. Geometry and topology}, pages
  645--703. Cham: Springer, 2020.

\bibitem[Bon97]{Bonahon_holder}
Francis Bonahon.
\newblock Geodesic laminations with transverse {H{\"o}lder} distributions.
\newblock {\em Ann. Sci. {\'E}c. Norm. Sup{\'e}r. (4)}, 30(2):205--240, 1997.

\bibitem[Bon05]{flatspacetimes_bonsante}
Francesco Bonsante.
\newblock Flat spacetimes with compact hyperbolic {Cauchy} surfaces.
\newblock {\em J. Differ. Geom.}, 69(3):441--521, 2005.

\bibitem[BS09]{earthquekeandparticles}
Francesco Bonsante and Jean-Marc Schlenker.
\newblock {\em AdS manifolds with particles and earthquakes on singular
  surfaces.}
\newblock 19, no. 1, 41–82. MR MR2507219. Geom. Funct. Anal,., 2009.

\bibitem[BS10]{Maximalsurface}
Francesco Bonsante and Jean-Marc Schlenker.
\newblock Maximal surfaces and the universal {Teichm{\"u}ller} space.
\newblock {\em Invent. Math.}, 182(2):279--333, 2010.

\bibitem[BS12]{Fixedpoint}
Francesco Bonsante and Jean-Marc Schlenker.
\newblock Fixed points of compositions of earthquakes.
\newblock {\em Duke Math. J.}, 161(6):1011--1054, 2012.

\bibitem[BS16]{BS_flat_conical}
Francesco Bonsante and Andrea Seppi.
\newblock On {Codazzi} tensors on a hyperbolic surface and flat {Lorentzian}
  geometry.
\newblock {\em Int. Math. Res. Not.}, 2016(2):343--417, 2016.

\bibitem[BS17]{BS17}
Francesco Bonsante and Andrea Seppi.
\newblock Spacelike convex surfaces with prescribed curvature in
  {{\((2+1)\)}}-{Minkowski} space.
\newblock {\em Adv. Math.}, 304:434--493, 2017.

\bibitem[BS18]{Areapreserving}
Francesco Bonsante and Andrea Seppi.
\newblock Area-preserving diffeomorphisms of the hyperbolic plane and
  {{\(K\)}}-surfaces in anti-de {Sitter} space.
\newblock {\em J. Topol.}, 11(2):420--468, 2018.

\bibitem[CEM06]{Epstein}
Richard Canary, David Epstein, and Albert Marden, editors.
\newblock {\em Fundamentals of hyperbolic manifolds: {Selected} expositions},
  volume 328 of {\em Lond. Math. Soc. Lect. Note Ser.}
\newblock Cambridge: Cambridge University Press, reprinted from the series
  {London} {Mathematical} {Society} {Lecture} {Note} {Series} 111(1987) and
  112(1986) edition, 2006.

\bibitem[Dan11]{danciger_thesis}
Jeffrey Danciger.
\newblock {\em Geometric transitions : from hyperbolic to {A}ds geometry}.
\newblock PhD thesis, Stanford University, 2011.

\bibitem[DGK16]{DGK_complete_lorentz}
Jeffrey Danciger, Fran{\c{c}}ois Gu{\'e}ritaud, and Fanny Kassel.
\newblock Geometry and topology of complete {Lorentz} spacetimes of constant
  curvature.
\newblock {\em Ann. Sci. {\'E}c. Norm. Sup{\'e}r. (4)}, 49(1):1--56, 2016.

\bibitem[DS23]{diafseppi2023antide}
Farid Diaf and Andrea Seppi.
\newblock The {A}nti-de sitter proof of {T}hurston's earthquake theorem.
\newblock {\em In the tradition of Thurston III: Geometry and Dynamics, (K.
  Ohshika and A. Papadopoulos ed.). Springer Verlag, 2024.}, 2023.

\bibitem[EM87]{Epsmarden}
D.~B.~A. Epstein and A.~Marden.
\newblock Convex hulls in hyperbolic space, a theorem of {Sullivan}, and
  measured pleated surfaces.
\newblock Analytical and geometric aspects of hyperbolic space, {Symp}.
  {Warwick} and {Durham}/{Engl}. 1984, {Lond}. {Math}. {Soc}. {Lect}. {Note}
  {Ser}. 111, 113-253 (1987)., 1987.

\bibitem[FH22]{FanJun}
Jinhua Fan and Jun Hu.
\newblock Conformally natural extensions of vector fields and applications.
\newblock {\em Pure Appl. Math. Q.}, 18(3):1147--1186, 2022.

\bibitem[Gar95]{Gardinerthurston}
Frederick~P. Gardiner.
\newblock Infinitesimal bending and twisting in one-dimensional dynamics.
\newblock {\em Trans. Am. Math. Soc.}, 347(3):915--937, 1995.

\bibitem[GHL02]{GHL}
F.~P. Gardiner, J.~Hu, and N.~Lakic.
\newblock Earthquake curves.
\newblock In {\em Complex manifolds and hyperbolic geometry. II Iberoamerican
  congress on geometry, CIMAT, Guanajuato, Mexico, January 4--9, 2001}, pages
  141--195. Providence, RI: American Mathematical Society (AMS), 2002.

\bibitem[GL99]{Gardiner1999QuasiconformalTT}
Frederick~P. Gardiner and Nikola Lakic.
\newblock Quasiconformal teichmuller theory.
\newblock 1999.

\bibitem[Gol84]{Goldman_tangent_space}
William~M. Goldman.
\newblock The symplectic nature of fundamental groups of surfaces.
\newblock {\em Adv. Math.}, 54:200--225, 1984.

\bibitem[Hu05]{Junhu_Zygnorm}
Jun Hu.
\newblock Norms on earthquake measures and {Zygmund} functions.
\newblock {\em Proc. Am. Math. Soc.}, 133(1):193--202, 2005.

\bibitem[Ker85]{Kerearth}
Steven~P. Kerckhoff.
\newblock Earthquakes are analytic.
\newblock {\em Comment. Math. Helv.}, 60:17--30, 1985.

\bibitem[Ker92]{kerker}
Steven~P. Kerckhoff.
\newblock Lines of minima in {Teichm{\"u}ller} space.
\newblock {\em Duke Math. J.}, 65(2):187--213, 1992.

\bibitem[Mar22]{arnaud}
Arnaud Maret.
\newblock A note on character varieties, available on:
  https://arnaudmaret.com/files/character-varieties.pdf, 2022.

\bibitem[Mes07]{Mess}
Geoffrey Mess.
\newblock Lorentz spacetimes of constant curvature.
\newblock {\em Geom. Dedicata}, 126:3--45, 2007.

\bibitem[M{\v{S}}12]{miyachisaric}
Hideki Miyachi and Dragomir {\v{S}}ari{\'c}.
\newblock Uniform weak* topology and earthquakes in the hyperbolic plane.
\newblock {\em Proc. Lond. Math. Soc. (3)}, 105(6):1123--1148, 2012.

\bibitem[NS22]{ConvexAnal}
Xin Nie and Andrea Seppi.
\newblock Regular domains and surfaces of constant {Gaussian} curvature in
  {{\(3\)}}-dimensional affine space.
\newblock {\em Anal. PDE}, 15(3):643--697, 2022.

\bibitem[NS23]{affine}
Xin Nie and Andrea Seppi.
\newblock Affine deformations of quasi-divisible convex cones.
\newblock {\em Proc. Lond. Math. Soc. (3)}, 127(1):35--83, 2023.

\bibitem[Pfe17]{mareike}
Mareike Pfeil.
\newblock Earthquakes in the hyperbolic plane, 2017.

\bibitem[Roc70]{Rockconvex}
R.~Tyrrell Rockafellar.
\newblock {\em Convex analysis}, volume~28 of {\em Princeton Math. Ser.}
\newblock Princeton University Press, Princeton, NJ, 1970.

\bibitem[RS22]{riolo_seppi}
Stefano Riolo and Andrea Seppi.
\newblock Geometric transition from hyperbolic to anti-de {Sitter} structures
  in dimension four.
\newblock {\em Ann. Sc. Norm. Super. Pisa, Cl. Sci. (5)}, 23(1):115--176, 2022.

\bibitem[{\v S}ar06]{Saric}
Dragomir {\v S}ari{\'c}.
\newblock Real and complex earthquakes.
\newblock {\em Trans. Am. Math. Soc.}, 358(1):233--249, 2006.

\bibitem[Sep19]{SEP19}
Andrea Seppi.
\newblock Maximal surfaces in {Anti}-de {Sitter} space, width of convex hulls
  and quasiconformal extensions of quasisymmetric homeomorphisms.
\newblock {\em J. Eur. Math. Soc. (JEMS)}, 21(6):1855--1913, 2019.

\bibitem[Thu86]{Thurston}
William~P. Thurston.
\newblock {\em Earthquakes in two-dimensional hyperbolic geometry.}
\newblock volume 112 of London Math. Soc. Lecture Note Ser. Cambridge Univ.
  Press, Cambridge,, 1986.

\bibitem[Wol83]{wolpert}
Scott Wolpert.
\newblock On the symplectic geometry of deformations of a hyperbolic surface.
\newblock {\em Ann. Math. (2)}, 117:207--234, 1983.

\end{thebibliography}

\end{document}